\newcommand{\ZZ}{ {\mathbf{Z}} }
\newcommand{\QQ}{ {\mathbf{Q}} }
\newcommand{\CC}{ {\mathbf{C}} }
\newcommand{\NN}{ {\mathbf{N}} }
\newcommand{\Zp}{ {\ZZ_p} }
\newcommand{\Qp}{ {\QQ_p} }
\newcommand{\Cp}{ {\CC_p} }
\newcommand{\Ql}{ {\QQ_\ell} }
\newcommand{\frakp}{\mathfrak{p}}
\newcommand{\frakm}{\mathfrak{m}}
\newcommand{\fraka}{\mathfrak{a}}
\newcommand{\units}[1]{{#1}^\times}
\newcommand{\blank}{{-}}
\newcommand{\Iw}{{\rm Iw}}
\newcommand{\Frob}{{\rm Frob}}
\newcommand{\ad}{{{\rm Ad}^0}}
\newcommand{\phiGamma}{\varphi,\Gamma}
\newcommand{\Tor}{{\rm Tor}}
\newcommand{\End}{\mathrm{End}}
\newcommand{\Aut}{\mathrm{Aut}}
\newcommand{\Gal}{\mathrm{Gal}}
\newcommand{\Ad}{{ \mathrm{Ad}}}
\newcommand{\Sym}{{ \mathrm{Sym}}}
\newcommand{\HT}{\mathrm{HT}}
\newcommand{\cris}{\mathrm{cris}}
\newcommand{\dR}{\mathrm{dR}}
\newcommand{\Hom}{{\mathrm{Hom}}}
\newcommand{\Spm}{\mathrm{Spm}}
\newcommand{\hatotimes}{\widehat\otimes}
\newcommand{\Fil}{{\mathrm{Fil}}}
\newcommand{\lra}{\longrightarrow }
\newcommand{\cyc}{\chi_{\rm cyc}}
\newcommand{\RGamma}{\mathbf{R\Gamma}}
\newcommand{\RGammaIw}{\mathbf{R\Gamma}_{\rm Iw}}
\newcommand{\rank}{{\rm rank}}
\newcommand{\comm}[1]{}
\numberwithin{equation}{section}
\newcommand{\mylabel}[2]{#2\def\@currentlabel{#2}\label{#1}}
\newtheorem{theorem}{Theorem}[section]
\newtheorem{conjecture}{Conjecture}[section]
\newtheorem{definition}[theorem]{Definition}
\newtheorem{lemma}[theorem]{Lemma}
\newtheorem{remark}[theorem]{Remark}
\newtheorem{corollary}[theorem]{Corollary}
\newtheorem{proposition}[theorem]{Proposition}
\newtheorem*{theorem*}{Theorem}
\newtheorem*{corollary*}{Corollary}
\newtheorem*{lemma*}{Lemma}
\newtheorem*{definition*}{Definition}
\newtheorem*{proposition*}{Proposition}
\newtheorem*{remark*}{Remark}
\newtheorem*{conjecture*}{Conjecture}
\newtheorem*{notation*}{Notation}
\newtheorem*{claim*}{Claim}
\newtheorem*{example*}{Example}
\title{Factorization of Algebraic $p$-adic $L$-functions Attached to Adjoint Representations of Coleman Families: Non-critical Case}
\author{F\i{}rt\i{}na K{üçü}k}
\date{September 2023}
\begin{document}

\begin{abstract}
The main objective of this article is to establish the $p$-adic Artin formalism for the algebraic $p$-adic $L$-functions attached to the adjoint representations of Coleman families of modular forms. In particular, we prove a factorization formula involving the determinants of appropriate Selmer complexes, using tools from rigid geometry, homological algebra, Euler systems, and $p$-adic Hodge theory. This work extends an earlier result of Palvannan to the $p$-non-ordinary setting.
\end{abstract}

\maketitle

\setcounter{tocdepth}{1}
\tableofcontents

\section{Introduction}\label{sect:introduction}
\subsection{Introduction and Background}\label{subsect:introbackground}
\par The main objective of this article is to prove the factorization of the algebraic $p$-adic $L$-function attached to the representation:
\begin{align*}
    \Ad(\mathbf{V}_\mathcal{F})(\psi\chi_{\rm cyc}^j)&:=\End_A(\mathbf{V}_\mathcal{F})(\psi\chi_{\rm cyc}^j),
\end{align*}
where $\mathcal{F}$ is a Coleman family over an affinoid algebra $A$ passing through a non-$\theta$-critical $p$-stabilization of a newform $f\in S_{k+2}(\Gamma_1(N),\varepsilon)$, $\mathbf{V}_\mathcal{F}$ is the big Galois representation attached to $\mathcal{F}$, $\psi$ is a non-trivial Dirichlet character and $\chi_{\rm cyc}$ is the $p$-adic cyclotomic character. Define the representation $ \ad(\mathbf{V}_\mathcal{F})$ as:
\begin{align*}
    \ad(\mathbf{V}_\mathcal{F})&:=\ker(\Ad(\mathbf{V}_\mathcal{F})\xrightarrow{{\rm Tr}}A).
\end{align*}
We then have the following decomposition of the representations: \[\Ad(\mathbf{V}_\mathcal{F})(\psi\chi_{\rm cyc}^j)\cong\ad(\mathbf{V}_\mathcal{F})(\psi\chi_{\rm cyc}^j)\oplus A(\psi\chi_{\rm cyc}^j).\]

\par Let $V_f$ denote the Deligne's (cohomological) representation attached to $f$ over a $p$-adic field $E$, and let $\Ad(V_f):={\rm End}_E(V_f)$ and $\Ad^0(V_f):=\ker(\Ad(V_f)\xrightarrow{{\rm Tr}}E)$. Artin formalism asserts that due to the definition of motivic $L$-functions, the $L$-function attached to $\Ad(f)$ factors as:
\[L(\Ad(V_f)(\psi),s)=L(\ad(V_f)(\psi),s)L(\psi,s).\] Establishing the $p$-adic Artin formalism is, however, far from obvious, as $\Ad(V_f)$ does not have any critical $L$-values (in the sense of Deligne). The first result in this direction (following the spirit of Gross' article \cite{gross}) is due to Dasgupta in \cite{dasgupta}, where the modular form of interest is $p$-ordinary:

\begin{theorem*}[Dasgupta]
    Let $f$ be a $p$-ordinary cuspidal eigenform and $\psi$ be a finite order Dirichlet character with conductor coprime to $p$. Then we have the factorization of $p$-adic $L$-functions
    \begin{align*}
        L_p(f\otimes f\otimes\psi,\sigma)=L_p(\Sym^2(f)\otimes\psi,\sigma)L_p(\varepsilon\psi,\sigma/(z\cdot\kappa)),
    \end{align*}
    where $\kappa(z)=z^k$.
\end{theorem*}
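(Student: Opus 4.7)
My proof plan is a Zariski density / interpolation argument, comparing the two sides of the factorization as $p$-adic analytic elements of the Iwasawa algebra of the cyclotomic variable. The starting point is the classical Galois-theoretic decomposition $V_f\otimes V_f\cong \Sym^2(V_f)\oplus \det(V_f)$, together with the identification $\det(V_f)\cong\varepsilon^{-1}\cyc^{-(k+1)}$ as a one-dimensional representation. Twisting by $\psi$ and comparing Euler factors yields the complex $L$-function identity
\[
L(f\otimes f\otimes \psi,s)=L(\Sym^2(f)\otimes \psi,s)\cdot L(\varepsilon\psi,s-k-1)
\]
at every critical integer $s$. It is this classical identity that the $p$-adic factorization must interpolate, with the integer shift $s\mapsto s-k-1$ corresponding to the twist of $\sigma$ by the weight-$(k+1)$ character $z\cdot\kappa$ appearing in the statement.

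Next I would realize each side as an element of an appropriate Iwasawa algebra. Since $f$ is $p$-ordinary, it lives on a Hida family $\bff$, so I can invoke Hida's three-variable Rankin--Selberg $p$-adic $L$-function $\Lp(\bff,\bfg,\sigma)$ and specialize both $\bff$ and $\bfg$ to $f$ itself to produce $L_p(f\otimes f\otimes \psi,\sigma)$. For the right-hand side, I would use the Hida--Schmidt $p$-adic $L$-function $L_p(\Sym^2(f)\otimes\psi,\sigma)$ together with the Kubota--Leopoldt $L$-function $L_p(\varepsilon\psi,-)$, evaluated at the translated variable $\sigma/(z\cdot\kappa)$.

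The central step is to verify that the two Iwasawa elements agree at a Zariski dense set of cyclotomic characters; rigidity of $p$-adic analytic functions on weight space will then force equality in the full Iwasawa algebra. At each critical specialization the left-hand side interpolates $L(f\otimes f\otimes \psi,s)/\Omega_{f\otimes f}$ for a Rankin--Selberg period proportional to the Petersson self-pairing $\langle f,f\rangle$, while the right-hand side interpolates $L(\Sym^2(f)\otimes \psi,s)/\Omega_{\Sym^2}\cdot L(\varepsilon\psi,s-k-1)$. The classical factorization handles the numerators; the crux of the proof, and the step I expect to be the main obstacle, is matching the periods. This rests on Shimura's formula identifying $\langle f,f\rangle$ with an explicit multiple of $L(\Sym^2(f),k+1)$, which is precisely what manufactures the shifted Kubota--Leopoldt factor on the right-hand side. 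A careful bookkeeping of the modified Euler factors at $p$ (arising from the ordinary $p$-stabilization) and at primes dividing the tame level is also required so that the interpolation formulas match on the nose, and this technical accounting is what I expect to constitute the bulk of the argument.
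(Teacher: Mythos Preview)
The paper does not contain its own proof of this statement: it is quoted in the introduction as a result of Dasgupta \cite{dasgupta} to motivate the paper's algebraic factorization theorems, so there is no in-paper proof to compare against. That said, the paper does explain (in \S\ref{subsect:statement-main} and especially \S\ref{subsect:rankin-selberg}) the strategy Dasgupta used and, more importantly, why your proposed route does not work as written.

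Your plan is to match the two sides at a Zariski-dense set of \emph{cyclotomic} specializations and invoke rigidity. The obstruction is that $\Ad(V_f)(\psi)$ has \emph{no} critical values in the sense of Deligne: as computed in \S\ref{subsect:rankin-selberg}, the critical ranges for $\ad(V_f)(\psi)$ and for $E(\psi)$ are disjoint, so there is no single cyclotomic twist at which both factors on the right simultaneously interpolate a classical $L$-value. Thus you cannot produce even a single point---let alone a dense set---in the cyclotomic line where both sides are pinned down by their interpolation formulas. This is exactly why the paper remarks that ``it is not trivial to establish $p$-adic Artin formalism'' here and why Gross' and Dasgupta's arguments are genuinely delicate.

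Dasgupta's actual method, as summarized in \S\ref{subsect:statement-main}, is to keep the Hida-family weight variable free rather than immediately specializing $\bff$ and $\bfg$ to $f$: one proves a factorization of two-variable (weight $\times$ cyclotomic) $p$-adic $L$-functions, where the extra weight direction supplies enough interpolation points to run the density argument, and then specializes the weight to recover the statement for $f$. Your outline correctly identifies the classical $L$-function identity, the relevant $p$-adic constructions, and the period comparison via $\langle f,f\rangle \sim L(\Sym^2 f,k+1)$ as the main technical ingredients; what is missing is the recognition that the interpolation must take place in the two-variable setting before specialization, not after.
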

 
\par In \cite{palvannan2017selmer}, Palvannan proved that the analogous factorization holds for the associated algebraic $p$-adic $L$-functions (i.e. corresponding Greenberg Selmer groups and their divisors). In \cite{arlandini2021factorisation}, Arlandini and Loeffler generalized the result of Dasgupta in \cite{dasgupta} to the case where $f$ is $p$-non-ordinary and $\psi$ is trivial (see Theorem \ref{thm:Arlandini-Factorization-Result}). 

\par Our main goal in this article is to establish the $p$-adic Artin formalism for the algebraic counterparts of these $p$-adic $L$-functions, where $f$ is $p$-non-ordinary and $\psi$ is non-trivial. In order to accomplish this, we will appeal to Selmer complexes, which were developed by Nekovář in \cite{Nek06} and generalized by Pottharst (in \cite{Pottharst2013} and \cite{pottharst2012cyclotomic}) and Benois (see \cite{benois2014selmer},\cite{benois-p-adic-heights}) to the case where the local conditions at $p$ come from $(\phiGamma)$-modules, and the coefficients of the representations arise from affinoid algebras. In an upcoming work, following the spirit of this article, we will be interested in the case where $f$ is $\theta$-critical, which will require a further investigation.

\subsection{Statement of the main result}\label{subsect:statement-main}
\par Dasgupta proved his result by interpolating $f$ in a Hida family and stated the result by specializing the $p$-adic $L$-function attached to Rankin-Selberg product of Hida families. On the contrary, we deduce our theorem regarding the product of Coleman families by first proving the analogous theorem for a single specialization, then generalize it to families using control theorems for Selmer complexes. Therefore we will first state our main result for a single specialization. Let us present the basic notation and summarize our results below:

\par To simplify the notation, let 
\begin{align*}
    V_\Ad&:=\Ad(V_f)(\psi\chi_{\rm cyc}^j),\\
    V_\ad&:=\ad(V_f)(\psi\chi_{\rm cyc}^j),\\
    V_1:&=E(\psi\chi_{\rm cyc}^j).
\end{align*}
Let $D_\Ad$, $D_\ad$, and $D_1$ be certain $(\phiGamma)$-modules (see Section \ref{sect:phiGamma-and-p-adic-Hodge}, Section \ref{subsect:pottharstselmercomp} and Section \ref{sect:local-cond} for details) attached to $V_\Ad$, $V_\ad$, and $V_1$, respectively. These $(\phiGamma)$-modules are the choice of local conditions at $p$. For the data \[(V,D_V)\in\{(V_{\Ad},D_{\Ad}),(V_{\ad},D_{\ad}),(V_1,D_1)\},\] let $\RGammaIw(V,D_V)$ denote the Pottharst style Selmer complex attached to $(V,D_V)$. We will denote the cohomologies of these complexes by $H^i(V,D_V)$. Let $D_V^\perp$ denote the orthogonal complement of $D_V$ with respect to Grothendieck duality. Finally let $\mathcal{H}=\mathcal{H}_E$ denote the large Iwasawa algebra introduced by Perrin-Riou.
\begin{theorem*}[Theorem \ref{Main-Theorem-Specialization}]
    Under the hypotheses \ref{hyp:p-regularity}-\ref{hyp:big-img-IV} of Section \ref{subsect:assumptions-main-thm} we have:
    \begin{enumerate}[(1)]
        \item For the data \[(V,D_V)\in\{(V_{\Ad},D_{\Ad}),(V_{\ad},D_{\ad}),(V_1,D_1)\},\] the Selmer complexes $\RGammaIw(V,D_V)$ and $\RGammaIw(V^*(1),D_V^\perp)$ lie in the category of perfect $\mathcal{H}$-modules with perfect amplitude at $[1,2]$.
        \item The cohomologies $H^i_\Iw(V,D_V)$ and $H^i_\Iw(V^*(1),D_V^\perp)$ vanish except $i\neq 2$, and we have the following short exact sequences:
        \begin{align*}
            0 \rightarrow H^2_\Iw(V_{\ad},D_{\ad}) &\rightarrow H^2_\Iw(V_{\Ad},D_{\Ad})  \rightarrow H^2_\Iw(V_1,D_1) \rightarrow 0,\\
            0 \rightarrow H^2_\Iw(V_1^*(1),D_1^\perp) \rightarrow & H^2_\Iw(V_{\Ad}^*(1),D_{\Ad}^\perp)\rightarrow  H^2_\Iw(V_{\ad}^*(1),D_{\ad}^\perp)\rightarrow 0.
        \end{align*}
        \item For the pairs $(V,D_V)$ as above, $H^2_\Iw(V,D_V)$ and $H^2_\Iw(V^*(1),D_V^\perp)$ are torsion, and we obtain the following factorizations:
        \begin{align*}
            \det(\RGammaIw(V_{\Ad},D_{\Ad}))&=\det(\RGammaIw(V_{\ad},D_{\ad}))\hatotimes_{\mathcal{H}}\det(\RGammaIw(V_1,D_1)),\\
            \det(\RGammaIw(V_{\Ad}^*(1),D_{\Ad}^\perp))&=\det(\RGammaIw(V_{\ad}^*(1),D_{\ad}^\perp))\hatotimes_\mathcal{H}\det(\RGammaIw(V_1^*(1),D_1^\perp)).
        \end{align*}
    \end{enumerate}
\end{theorem*}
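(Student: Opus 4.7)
The plan is to deduce all three assertions from a single structural input: the decomposition of the pair $(V_\Ad, D_\Ad)$ as a direct sum of $(V_\ad, D_\ad)$ and $(V_1, D_1)$. The Galois-module splitting $V_\Ad \cong V_\ad \oplus V_1$ is given; what needs to be checked is the compatibility $D_\Ad \cong D_\ad \oplus D_1$ as $(\phiGamma)$-modules over the Robba ring. I expect this to follow from unwinding the construction of these local conditions from the canonical triangulation on $\mathbf{D}^\dagger_{\rm rig}(V_f)$ afforded by the non-$\theta$-critical $p$-stabilization: the trace $\End(V_f) \twoheadrightarrow E$ is split by the scalar inclusion, and this splitting restricts compatibly to the submodules cutting out the three local conditions; an analogous argument on the Tate duals handles the orthogonal complements $D_V^\perp$.

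Granted this splitting, functoriality of the Pottharst-style Selmer complex gives an honest direct sum decomposition
\[
\RGammaIw(V_\Ad, D_\Ad) \;\cong\; \RGammaIw(V_\ad, D_\ad) \,\oplus\, \RGammaIw(V_1, D_1),
\]
and likewise for the dual complexes. Parts (1) and (2) then reduce to the analogous statements for each summand. Perfectness follows from Pottharst's general result that such Selmer complexes are perfect $\mathcal H$-complexes of amplitude $[0,3]$; the amplitude $[1,2]$ is obtained by killing $H^0_\Iw$ and $H^3_\Iw$, the former via the big-image hypotheses (the non-trivial twist $\psi\chi_{\rm cyc}^j$ precludes global invariants on each summand), the latter by Poitou--Tate global duality applied to the dual pairs. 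The short exact sequences in (2) are then simply the cohomologies of the above direct sum decomposition, automatically split.

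The content of (3) is the $\mathcal H$-torsion of each relevant $H^2_\Iw$, which together with the amplitude $[1,2]$ and the vanishing of the global Euler--Poincar\'e characteristic forces $H^1_\Iw = 0$. For $V_1 = E(\psi\chi_{\rm cyc}^j)$ with non-trivial $\psi$ this is essentially the classical Iwasawa main conjecture for Dirichlet characters (via Rubin's Euler system of cyclotomic units), and torsion on the Tate dual follows by Poitou--Tate. For $V_\ad$, torsion should be extracted from a non-vanishing input: most naturally the rank-one Beilinson--Flach Euler system exploited in \cite{arlandini2021factorisation}, or alternatively specialization to a single non-critical arithmetic point at which an adjoint $L$-value is known to be non-zero. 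Torsion on $V_\Ad$ and on all the duals then propagates through the direct sum. With these ingredients in place, the determinant factorizations are purely formal: the determinant functor sends direct sums of perfect complexes to completed tensor products of determinant lines over $\mathcal H$, producing exactly the stated identities. The main obstacle in this plan is the torsion of $H^2_\Iw(V_\ad, D_\ad)$ in the non-ordinary, non-trivial-character setting: it is the one step that is not purely categorical and requires genuine arithmetic input (an Euler system divisibility or an $L$-value non-vanishing) beyond the compatibility of local conditions and Pottharst's formalism.
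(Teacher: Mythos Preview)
Your overall strategy is sound, and the paper's proof follows broadly the same architecture: reduce to the pieces $V_\ad$ and $V_1$, establish perfectness in amplitude $[1,2]$, kill $H^1_\Iw$, and deduce torsionness of $H^2_\Iw$ and the determinant factorization. But there is one concrete error and two genuine gaps.

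\textbf{The local conditions do not split.} Your central structural input is false: the scalar section $E\hookrightarrow\Ad(V_f)$, $1\mapsto\id_{V_f}$, does \emph{not} restrict to a map $D_1\to D_\Ad$. Recall $D_\Ad=\Hom(\mathbf{D}_{\rm rig}^\dagger(V_f),D)(\psi+j)$ with $D$ the rank-one triangulation piece; the identity $\id_{V_f}$ does not factor through $D$ precisely because $V_f$ is non-ordinary and the triangulation $0\to D\to\mathbf{D}_{\rm rig}^\dagger(V_f)\to\widetilde D\to 0$ is non-split. What survives is only the short exact sequence $0\to D_\ad\to D_\Ad\to D_1\to 0$ of $(\phiGamma)$-modules, hence an exact \emph{triangle} of Selmer complexes (Theorem~\ref{thm:exact-triangle-selmer-comp}), not a direct sum. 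The paper works with this triangle throughout. This does not damage the final factorization (determinants behave well on exact triangles), but it does mean you must prove $H^1_\Iw=0$ for $V_\ad$ and $V_1$ separately rather than inheriting it from a splitting.

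\textbf{Amplitude $[1,2]$.} Vanishing of $H^0_\Iw$ and $H^3_\Iw$ alone does not cut the perfect amplitude down from $[0,3]$ to $[1,2]$: vanishing of $H^3$ drops the top (Lemma~\ref{lemma:perfect-last-cohomology-vanish}), but vanishing of $H^0$ says nothing about Tor-amplitude at the bottom. The paper closes this via Grothendieck duality (Remark~\ref{remark:perfectness-1-2-using-duality}): once $H^3_\Iw$ vanishes on \emph{both} $(V,D_V)$ and $(V^*(1),D_V^\perp)$, dualizing a complex in $\mathscr{D}_{\rm parf}^{[0,2]}$ forces the other into $[1,3]$, and one intersects.

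\textbf{From $H^2_\Iw$ torsion to $H^1_\Iw=0$.} Your implication ``$H^2_\Iw$ torsion $+$ Euler characteristic zero $\Rightarrow H^1_\Iw=0$'' is missing a step: you only get $\rank_{\mathcal H}H^1_\Iw=0$, i.e.\ $H^1_\Iw$ is torsion. To conclude it vanishes you need it to be torsion-free, which the paper extracts from the Grothendieck-duality exact sequence (Proposition~\ref{prop:Pottharst-Duality-Iwasawa-SelmerComp} at $i=3$): the torsion of $H^1_\Iw(V,D_V)$ is controlled by $H^3_\Iw(V^*(1),D_V^\perp)=0$. Incidentally, the paper runs this implication in the \emph{opposite} direction: it first proves $H^1_\Iw=0$ (for $V_1$ via the vanishing of the relaxed Iwasawa $H^1$ for even characters, and for $V_\ad$ via control to a single point where Corollary~\ref{cor:Vanishing-of-BK-Selmer-grp} gives $H^1_f(\QQ,V_\ad)=0$, combined with freeness of $H^1_\Iw$), and then reads off torsionness of $H^2_\Iw$ from the rank equality in Corollary~\ref{cor:pottharst-proposition-hi-ranks}. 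Your direction can be made to work too, but requires the same torsion-freeness input and a genuinely Iwasawa-theoretic Euler-system bound rather than a pointwise one.
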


\par Let $\mathbf{V}_\Ad$ denote $\Ad(\mathbf{V}_\mathcal{F})(\psi\chi_{\rm cyc}^j)$, we define $\mathbf{V}_\ad$ and $\mathbf{V}_1$ similarly. For $\mathbf{V}$ one of those representations, let $\mathbf{D}_\mathbf{V}$ denote the local condition at $p$ attached to $\mathbf{V}$, and let $\mathbf{D}_\mathbf{V}^\perp$ denote the corresponding dual local condition. From the theorem above we deduce the following: 
\begin{theorem*}[Theorem \ref{Main-Theorem}]
    Under our hypotheses in Section \ref{subsect:assumptions-main-thm}, we have:
\begin{enumerate}[(1)]
    \item For the data \[(\mathbf{V},\mathbf{D}_\mathbf{V})\in\{(\mathbf{V}_{\Ad},\mathbf{D}_{\Ad}),(\mathbf{V}_{\ad},\mathbf{D}_{\ad}),(\mathbf{V}_1,\mathbf{D}_1)\},\] the Selmer complexes $\RGammaIw(\mathbf{V},\mathbf{D}_\mathbf{V})$ and $\RGammaIw(\mathbf{V}^*(1),\mathbf{D}_\mathbf{V}^\perp)$ lie in the category of perfect complexes of $\mathcal{H}_A:=\mathcal{H}\hatotimes_E A$-modules with perfect amplitude at $[1,2]$.

    \item The cohomologies $H^i_\Iw(\mathbf{V},\mathbf{D}_\mathbf{V})$ and $H^i_\Iw(\mathbf{V}^*(1),\mathbf{D}_\mathbf{V}^\perp)$ vanish except $i\neq 2$, and we have the short exact sequences:
    \begin{align*}
        0 \rightarrow H^2_\Iw(\mathbf{V}_{\ad},\mathbf{D}_{\ad}) &\rightarrow H^2_\Iw(\mathbf{V}_{\Ad},\mathbf{D}_{\Ad})  \rightarrow H^2_\Iw(\mathbf{V}_1,\mathbf{D}_1) \rightarrow 0,\\
        0 \rightarrow H^2_\Iw(\mathbf{V}_1^*(1),\mathbf{D}_1^\perp) \rightarrow & H^2_\Iw(\mathbf{V}_{\Ad}^*(1),\mathbf{D}_{\Ad}^\perp)\rightarrow  H^2_\Iw(\mathbf{V}_{\ad}^*(1),\mathbf{D}_{\ad}^\perp)\rightarrow 0.
    \end{align*} 
    \item We have the following factorizations:
    \begin{align*}
        \det(\RGammaIw(\mathbf{V}_{\Ad},\mathbf{D}_{\Ad}))&=\det(\RGammaIw(\mathbf{V}_{\ad},\mathbf{D}_{\ad}))\hatotimes_{\mathcal{H}_A}\det(\RGammaIw(\mathbf{V}_1,\mathbf{D}_1)),\\
        \det(\RGammaIw(\mathbf{V}_{\Ad}^*(1),\mathbf{D}_{\Ad}^\perp))&=\det(\RGammaIw(\mathbf{V}_{\ad}^*(1),\mathbf{D}_{\ad}^\perp))\hatotimes_{\mathcal{H}_A}\det(\RGammaIw(\mathbf{V}_1^*(1),\mathbf{D}_1^\perp)).
    \end{align*}
\end{enumerate}
\end{theorem*}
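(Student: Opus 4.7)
The plan is to bootstrap from the specialization theorem (Theorem \ref{Main-Theorem-Specialization}) to the family statement by applying control theorems for Pottharst/Benois style Selmer complexes over affinoid algebras, together with Zariski density of classical non-$\theta$-critical arithmetic points in the Coleman family.

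First, I would assemble the building blocks at the level of complexes. The short exact sequence
\begin{align*}
0 \to \mathbf{V}_\ad \to \mathbf{V}_\Ad \to \mathbf{V}_1 \to 0
\end{align*}
is compatible with a short exact sequence of triangulations $0 \to \mathbf{D}_\ad \to \mathbf{D}_\Ad \to \mathbf{D}_1 \to 0$ and its Grothendieck dual. This yields exact triangles of Selmer complexes, and the perfectness theorems of Pottharst and Benois imply that each $\RGammaIw(\mathbf{V}, \mathbf{D}_\mathbf{V})$ lies in the category of perfect $\mathcal{H}_A$-complexes. To obtain the amplitude $[1,2]$, I would show that $H^0_\Iw$ vanishes via absence of $G_{\QQ,\Sigma}$-invariants in the big representations (reducing, under the running big image/irreducibility hypotheses, to a statement about the residual representation attached to $f$), and then derive $H^3_\Iw=0$ from global duality applied to $(\mathbf{V}^*(1),\mathbf{D}_\mathbf{V}^\perp)$.

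Next, I would invoke the control theorem: for any classical non-$\theta$-critical arithmetic point $x\colon A\to k(x)$, there is a natural quasi-isomorphism
\begin{align*}
\RGammaIw(\mathbf{V},\mathbf{D}_\mathbf{V})\otimes^{\mathbf{L}}_{\mathcal{H}_A}\mathcal{H}_{k(x)} \; \simeq\; \RGammaIw(V_x,D_{V_x}).
\end{align*}
Such $x$ form a Zariski-dense subset of $\Spm A$. By Theorem \ref{Main-Theorem-Specialization}, the right-hand side has cohomology concentrated in degree $2$, and the three $H^2$'s assemble into the claimed short exact sequences. Combining this with the perfect amplitude $[1,2]$ of the family complex and the universal coefficient spectral sequence for the base change, one sees that $H^1_\Iw(\mathbf{V},\mathbf{D}_\mathbf{V})$ specializes to zero at a Zariski-dense set of points; since it is a coherent $\mathcal{H}_A$-module sitting inside a perfect complex, this forces $H^1_\Iw(\mathbf{V},\mathbf{D}_\mathbf{V})=0$. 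The same argument treats the dual pair, and the long exact sequences coming from the two exact triangles then collapse to the desired short exact sequences in degree $2$.

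Finally, once we have the exact triangles of perfect complexes whose cohomology is concentrated in degree $2$ (with torsion $H^2$, by the same control/specialization argument plus generic flatness on $\Spm A$), the factorization of determinants follows from the multiplicativity of $\det$ in exact triangles in the derived category of perfect $\mathcal{H}_A$-complexes. The main obstacle I anticipate is precisely the transition from pointwise to global vanishing of $H^1_\Iw$ in step two: even with Zariski density of good specializations, one must rule out cohomological jumps coming from non-flatness of the individual $H^i_\Iw$. I would handle this by working at the level of the perfect complex rather than its cohomology, using upper semicontinuity of fiber dimensions together with the fact that the Euler characteristic is locally constant on $\Spm A$, and if necessary inverting a non-zero-divisor to descend from the total ring of fractions of $\mathcal{H}_A$ back to $\mathcal{H}_A$ itself.
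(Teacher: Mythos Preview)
Your proposal has a genuine gap in the step where you invoke Theorem \ref{Main-Theorem-Specialization} at a Zariski-dense set of classical points. The hypotheses in Section \ref{subsect:assumptions-main-thm}, in particular \ref{hyp:Lp-nonzero} (the non-vanishing of the relevant $p$-adic $L$-value), are imposed only for the fixed newform $f$ at the single point $x_0$; there is no reason to expect them to hold at all (or even densely many) classical specializations of $\mathcal{F}$. Consequently, you only know $H^1_\Iw(\mathbf{V}_x,(\mathbf{D}_\mathbf{V})_x)=0$ at $x=x_0$, not on a dense set, and your semicontinuity/density argument for the vanishing of $H^1_\Iw(\mathbf{V},\mathbf{D}_\mathbf{V})$ does not go through.

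The paper circumvents this as follows (for $\mathbf{V}=\mathbf{V}_\ad$, the key case). First, it descends from $\mathcal{H}_A$ to $A$: the complex $\RGamma(\mathbf{V}_\ad,\mathbf{D}_\ad)$ lies in $\mathscr{D}_{\rm parf}^{[1,2]}(A)$ with $A$ a PID (nice affinoid), so $H^1(\mathbf{V}_\ad,\mathbf{D}_\ad)$ is free over $A$; its fiber at the single point $x_0$ vanishes by Theorem \ref{Main-Theorem-Specialization}, hence it is zero. This gives $H^1_\Iw(\mathbf{V}_\ad,\mathbf{D}_\ad)_{\Gamma_0}=0$. Second, for an arbitrary $x\in U$, the paper shows that $H^1_\Iw(\mathbf{V}_\ad,\mathbf{D}_\ad)\hatotimes_{\mathcal{H}_A}\mathcal{H}_A/\frakp_x$ injects into $H^1_\Iw(\mathbf{V}_x,(\mathbf{D}_\mathbf{V})_x)$, which is a free $\mathcal{H}$-module (using $H^3_\Iw$ of the dual vanishes, via hypothesis \ref{hyp:parf-1-2-ad0}); since its $\Gamma_0$-coinvariants are zero by the previous step, the free rank is zero. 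Only then does one apply Lemma \ref{lemma:coherent-sheaves-zero-fibers} over all $x$ to conclude. The point is that the vanishing at each fiber is obtained by this two-stage descent/ascent through $A$, not by applying the specialization theorem at each $x$. Your proposal is missing this mechanism; fixing it essentially reproduces the paper's argument.
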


\par For the perfectness of the complexes above, we prove and use the following theorem, in which we generalize the arguments of \cite[\S 4.5]{danielebuyukboduksakamoto2023} to our setting:
\begin{theorem*}[Theorem \ref{thm:Perfectness-amplitude-1-2}]
    Assume that the following conditions hold:
\begin{enumerate}[(i)]
    \item $H^3_\Iw(\mathbf{V},\mathbf{D})=0$.
    \item For each prime ideal $\mathfrak{p}$ of $\Lambda_A[1/p]:=\Lambda[1/p]\hatotimes A$, we have $(\mathbf{V}\otimes_A(\Lambda_A[1/p]^\iota)/\frakp)^{G_{\QQ,S}}=0$, 
    \item For each $\ell\in S_f$, $\ell\neq p$, $H^1(I_\ell,\mathbf{V})$ is projective as $A$-module. 
\end{enumerate}
Then $\RGamma_{\rm Iw}(\mathbf{V},\mathbf{D})\in \mathscr{D}_{\rm parf}^{[1,2]}(\mathcal{H}_A)$.
\end{theorem*}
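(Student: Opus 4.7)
The plan is to decompose $\RGammaIw(\mathbf{V},\mathbf{D})$ via its defining distinguished triangle, establish perfectness of each vertex separately, and then argue the amplitude bound $[1,2]$ by checking the vanishing of $H^0$ and $H^3$ upon derived specialization at every prime of $\mathcal{H}_A$. Concretely, the Selmer complex is the mapping fiber
\[
\RGammaIw(\mathbf{V},\mathbf{D}) \lra \RGammaIw(G_{\QQ,S},\mathbf{V}) \oplus \bigoplus_{v\in S} \mathbf{D}_v \lra \bigoplus_{v\in S}\RGammaIw(G_v,\mathbf{V}),
\]
so perfectness and amplitude estimates reduce to statements about the global and local factors together with a cohomological cancellation argument at the extreme degrees.

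\medskip

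For perfectness of the vertices, the global term $\RGammaIw(G_{\QQ,S},\mathbf{V})$ is perfect over $\mathcal{H}_A$ by the Pottharst formalism, applied to the finitely generated $\Lambda$-complex of continuous Galois cohomology after base change to $\mathcal{H}_A$. The local Iwasawa complex at $p$ and the $(\phiGamma)$-module cohomology of $\mathbf{D}_p$ are both perfect with amplitude $[0,2]$ via the Kedlaya--Pottharst--Xiao two-term resolution of Robba-ring $(\phiGamma)$-modules. For $\ell \in S_f\setminus\{p\}$, assumption (iii) guarantees that the unramified local Iwasawa complex can be represented by a perfect two-term complex of projective $\mathcal{H}_A$-modules built from $H^1(I_\ell,\mathbf{V})^{\Fr_\ell=1}$ and $H^1(I_\ell,\mathbf{V})_{\Fr_\ell=1}$. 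The mapping fiber $\RGammaIw(\mathbf{V},\mathbf{D})$ is therefore perfect, with \emph{a priori} amplitude $[0,3]$.

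\medskip

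To sharpen this to amplitude $[1,2]$, I would invoke the standard criterion that a perfect complex $C$ has Tor-amplitude in $[a,b]$ precisely when $H^i(C\otimes^{\mathbf{L}}k(\frakp))$ vanishes for $i\notin[a,b]$ at every prime $\frakp$; by the flatness relations between $\Lambda_A[1/p]$ and $\mathcal{H}_A$ it suffices to test at primes of $\Lambda_A[1/p]$. Since top cohomology commutes with derived base change, hypothesis (i) immediately forces $H^3(C\otimes^{\mathbf{L}}k(\frakp))=0$ for every $\frakp$. For the $H^0$ vanishing, the long exact sequence attached to the Selmer triangle after specialization reduces the question to the triviality of $(\mathbf{V}\otimes_A(\Lambda_A[1/p]^\iota/\frakp))^{G_{\QQ,S}}$ — which is precisely hypothesis (ii) — provided one knows that the local factors do not contribute in degree $0$; this is a direct unramified calculation at $v\ne p$ and follows from the arrangement of $\mathbf{D}_p$ at $v=p$.

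\medskip

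The main technical obstacle I anticipate is matching the derived $H^0$ of the specialization of the Selmer complex with the concrete Galois invariants appearing in (ii), since a priori the hyperTor spectral sequence contributes $\Tor$-terms from $H^1$ and $H^2$ in degree $0$ of the fiber. Following the strategy of \cite[\S 4.5]{danielebuyukboduksakamoto2023}, this requires chasing the spectral sequence through the Selmer triangle uniformly over all primes of $\Lambda_A[1/p]$, rather than merely at classical arithmetic specializations, and verifying compatibility both with the coefficient base change $E\hookrightarrow A$ and with the passage $\Lambda_A[1/p]\to \mathcal{H}_A$. Carrying out this bookkeeping in the affinoid-coefficient Coleman setting is what constitutes the genuine work and the nontrivial extension of loc. cit.
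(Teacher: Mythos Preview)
Your perfectness argument in $\mathscr{D}_{\rm parf}^{[0,3]}(\mathcal{H}_A)$ is fine and matches the paper's Lemma \ref{lemma:perfectness-unramified-tamagawa}. The gap is in your amplitude argument, specifically in the sentence ``by the flatness relations between $\Lambda_A[1/p]$ and $\mathcal{H}_A$ it suffices to test at primes of $\Lambda_A[1/p]$.'' This does not work as stated: the criterion reducing Tor-amplitude to a check at residue fields requires the base ring to be Noetherian, which $\mathcal{H}_A$ is not; and faithful flatness of $\mathcal{H}_A$ over $\Lambda_A[1/p]$ only lets you transfer Tor-amplitude if the complex itself descends to $\Lambda_A[1/p]$. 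The Selmer complex $\RGammaIw(\mathbf{V},\mathbf{D})$ with its $(\phiGamma)$-module local condition at $p$ does \emph{not} descend --- the Fontaine--Herr/Iwasawa cohomology of $\mathbf{D}$ is genuinely an $\mathcal{H}_A$-linear object with no $\Lambda_A[1/p]$-model.

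The paper's key maneuver, which you are missing, is to first split off the $(\phiGamma)$-contribution via the exact triangle
\[
\RGammaIw(\mathbf{V},\mathbf{D}) \longrightarrow \RGamma_{(p),\Iw}(\QQ,\mathbf{V}) \longrightarrow \RGammaIw(\widetilde{\mathbf{D}}),
\]
where $\widetilde{\mathbf{D}}=\mathbf{D}_{\mathrm{rig},A}^\dagger(\mathbf{V})/\mathbf{D}$. The third vertex lies in $\mathscr{D}_{\rm parf}^{[0,2]}(\mathcal{H}_A)$ by Kedlaya--Pottharst--Xiao, so by the two-out-of-three amplitude lemma it suffices to place the middle vertex in $\mathscr{D}_{\rm parf}^{[1,3]}(\mathcal{H}_A)$. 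But the $p$-relaxed complex $\RGamma_{(p),\Iw}(\QQ,\mathbf{V})$ has no $(\phiGamma)$-module input and \emph{does} descend: it is $K^\bullet\otimes_{\Lambda_A[1/p]}\mathcal{H}_A$ for $K^\bullet=\RGamma_{(p)}(\QQ,\mathbf{V}\otimes_A\Lambda_A[1/p]^\iota)$. Now $\Lambda_A[1/p]$ is Noetherian, so the local-criterion-for-flatness argument of \cite[\S 4.5]{danielebuyukboduksakamoto2023} goes through verbatim using hypothesis (ii), yielding Tor-amplitude $[1,3]$ for $K^\bullet$ and hence for the middle vertex. Finally, hypothesis (i) together with Lemma \ref{lemma:perfect-last-cohomology-vanish} trims $[1,3]$ down to $[1,2]$. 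Your anticipated ``main technical obstacle'' is in fact bypassed entirely by this triangle --- no spectral-sequence chase through the full Selmer triangle is needed.
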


\par We also establish a partial result on comparison of algebraic $p$-adic $L$-functions (i.e. determinants of appropriate Selmer complexes) attached to $\mathbf{V}_\mathcal{F}\hatotimes_E\mathbf{V}_\mathcal{F}^*(\psi\chi_{\rm cyc}^j)$ and $\mathbf{V}_\Ad$, see Theorem \ref{thm:comparison-3-Ad} for the exact statement. 

\par We will end this section by pointing out some key differences between the result of Palvannan and our theorems. Firstly, Palvannan assumes that $f$ is $p$-ordinary, in this case there is a $\Gal(\overline{\QQ}_p/\Qp)$-stable subspace of $V_f$ (and $\mathbf{V}_\mathbf{f}$, where $\mathbf{f}$ is the Hida family passing through $f$) which satisfies the Panchishkin condition. Palvannan forms the local conditions attached to the relevant representations by using this subspace. In the $p$-non ordinary case, which we assume, $V_f$ and $\mathbf{V}_\mathcal{F}$ are irreducible as $\Gal(\overline{\QQ}_p/\Qp)$ representations, hence there is no such subspace. This is why we need to work with Pottharst style Selmer complexes, which allows us to choose our local conditions as $(\phiGamma)$-modules that comes from the ``triangulation'' of $\mathbf{V}_\mathcal{F}$ (see Theorem \ref{thm:triangulation} for details). We should note that the theory of $(\phiGamma)$-modules are very closely related to $p$-adic Hodge theory. We also use the advantage of appealing to the theory of Selmer complexes, since it is very convenient to use the language of derived categories. 

\par Another technical difference is that in the non-ordinary case, the $p$-adic $L$-functions are no longer integral, this is why we use $\mathcal{H}$, the algebra of locally analytic distributions on $\Gamma_0$, for cyclotomic deformations. Note that $\mathcal{H}$ is a non-Noetherian ring. Fortunately there is a rich theory of coadmissible modules over $\mathcal{H}$ thanks to Schneider and Teitelbaum \cite{Schneider-Teitelbaum}, which is applied to the theory of Selmer complexes by Pottharst and Benois. We also require some tools from rigid geometry, as our coefficient ring is an affinoid algebra and $\mathcal{H}$ is the global sections of the structure sheaf on rigid open unit ball.

\par Finally we utilize Beilinson-Flach Euler system in Section \ref{sect:ES-BF} to establish the analogue of the AD-TOR assumption of \cite{palvannan2017selmer} in our setting. In order to do this, we adapt the results of Loeffler and Zerbes in \cite{LZ16} to our setting, combining with the ideas from \cite{LoefflerZerbesSym2J_Reine_Angew_Math_2019} and \cite{BuyukbodukLeiVenkat2021Documenta}.

\subsection{Notation and Conventions}\label{subsect:notations}
\par Let $p$ always be an odd prime. For a field $F$ (with characteristic $0$) we will denote $\Gal(\overline{F}/F)$ by $G_F$, and when $F=\Qp$ we will write simply $G_p=G_{\Qp}$. If $F$ is a finite extension of $\QQ$ or $\Qp$, we will denote the ring of integers of $F$ by $\mathcal{O}_F$. 

\par Let $\QQ^{\rm cyc}=\bigcup_{n\geq 0}\QQ(\zeta_{p^{n}})$ be the compositum of the $p$-power cyclotomic extensions of $\QQ$, and let $\Gamma:=\Gal(\QQ^{\rm cyc}/\QQ)\cong \ZZ_p^\times$. Let $\Delta:=\Gal(\QQ(\zeta_p)/\QQ)$ be the torsion part of $\Gamma$, and for all $n\geq 0$, let $\QQ_n:=\QQ(\zeta_{p^{n+1}})^\Delta$, $\QQ_\infty:=\bigcup_{n\geq 0}\QQ_n=(\QQ^{\rm cyc})^\Delta$ be the cyclotomic $\Zp$ extension of $\QQ$, and $\Gamma_n:=\Gal(\QQ_\infty/\QQ_n)$. Then we have $\Gamma\cong\Gamma_0\times\Delta$. Under this isomorphism, we also have the decomposition of characters $\cyc=\omega\cdot\langle\cyc\rangle$, where $\omega$ is the Teichmüller character. Note that the $p$-adic cyclotomic character $\cyc$ gives a canonical isomorphism $\cyc:\Gamma\xrightarrow{\sim}\ZZ_p^\times$, and we have $\Gamma_0\cong (1+p\Zp)\cong \Zp$ and $\Delta\cong (\ZZ/p\ZZ)^\times$. 

\par For a finite extension $K$ of $\Qp$ we can also define $K^{\rm cyc}=K\cdot\QQ^{\rm cyc}=\bigcup_{n\geq 0}K(\zeta_{p^{n}})$, $\Gamma_K:=\Gal(K^{\rm cyc}/K)$, $\Delta_K:=\Gal(K(\zeta_p)/K)$, $K_n:=K(\zeta_{p^{n}})^\Delta$, $K_\infty:=(K^{\rm cyc})^\Delta$, and $\Gamma_K^0:=\Gal(K_\infty/K)$. For our purposes we will assume that $K$ is unramified over $\Qp$.

\par Note that $\Gamma\cong\Gamma_{\Qp}$, $\Delta\cong\Delta_{\Qp}$, $\Gamma_0\cong\Gamma_{\Qp}^0$. 

\par We define the Iwasawa algebra $\Lambda_E(\Gamma)$ as \[\Lambda_E(\Gamma)=\mathcal{O}_E[[\Gamma]]:=\varprojlim_{H}\mathcal{O}_E[\Gamma/H].\] We also set the large Iwasawa algebra, introduced by Perrin-Riou in \cite{perrin_riou} as \[\mathcal{H}_E(T):=\{f\in E[[T]]:f\textnormal{ converges in the open unit ball}\}.\]

\par If $V$ is a $p$-adic representation with coefficients in a finite extension of $\Qp$ which is Hodge-Tate, then we have the following Hodge-Tate decomposition: \[V\otimes\Cp\cong\bigoplus_{i\in\ZZ}\Cp(i)^{h_i},\] where $h_i=0$ for all but finitely many integers $i$. The indices $i$ for which $h_i\neq 0$ will be called the Hodge-Tate weights of $V$. In that case, the jumps appearing in the de Rham filtration of $\mathbf{D}_{\rm dR}(V)$ will be the negatives of the Hodge-Tate weights of $V$. In particular, the Hodge-Tate weight of the $p$-adic cyclotomic character will be $+1$.

\par Let $M$ be a pure motive, which is geometric. The $L$-function attached to $M$ is defined as: \[L(M,s):=\prod_\ell \det (1-\Frob^{-1}_\ell X|M_p^{I_\ell})_{X=\ell^{-s}}^{-1},\] where $\ell$ runs through all integer primes with $p\neq \ell$. Here the geometric Frobenius at $\ell$ is denoted by $\Frob^{-1}_\ell$. For instance, the $L$-function attached to a newform $f\in S_{k+2}(\Gamma_0(N))$, which is defined as \[L(f,s):=\sum_{n=1}^{\infty}\frac{a_n(f)}{n^s}\] will differ from the motivic $L$-function $L(M(f),s)$ up to a finitely many Euler factors, where $M(f)$ denotes the motive attached to $f$ by Scholl in \cite{scholl}. The $p$-adic realization $V_f:=M_p(f)$ of $M(f)$ is Deligne's cohomological representation attached to $f$, and with our conventions on the Hodge-Tate weights, $V_f$ has the Hodge-Tate weights $\{-1-k,0\}$.

\par To fix the possible confusion with our conventions when we can consider the twists by Dirichlet characters, we will assume that if $\psi$ denotes a Dirichlet character, then $\psi(p)=\psi(\Frob^{-1}_p)$, as done in \cite{arlandini2021factorisation}. We also note that we will write $\psi+j$ to mean the character $\psi\chi_{\rm cyc}^j$.

\subsection{Acknowledgements}
This article is derived from my PhD thesis \cite{mythesis}, which was supervised by Assoc. Prof. Kâzım Büyükboduk and funded by University College Dublin PhD Advance Scheme. Therefore, I would first like to thank my supervisor for suggesting this problem and for his incredible support during my Ph.D and the preparation of my thesis and this article. I would also like to thank to my colleagues Dr. Erman Işık and Dr. Daniele Casazza, and also to the external examiner for my viva voce, Prof. Matteo Longo, for their valuable suggestions.

\section{Affinoid Spaces and the Weight Space}\label{sect:affinoid}
In this section, we will review definitions and some properties of affinoid algebras, their spectra and the weight space. The details on affinoid algebras and affinoid spaces can be found in \cite{bosch}, and the details on the weight space can be found in \cite{eigenbook} and \cite{benoisbuyukboduk2020critical}.

\subsection{Affinoid Algebras}\label{subsect:Affinoid-alg}
\begin{definition}[Tate algebra] Let $E$ be a finite extension of $\Qp$. The $E$-algebra \[E\langle x_1,...,x_n\rangle:=\left\{\sum_{\alpha\in\NN^n}c_\alpha x^\alpha\in E[[x_1,...,x_n]]: \abs{c_\alpha}\rightarrow 0\textrm{\ as }\alpha\rightarrow \infty\right\}\] is called the Tate algebra.
\end{definition}

\par We will also denote $E\langle x_1,...,x_n\rangle$ by $T_n$, if $E$ is inferred from the context. Tate algebras are also characterized by the following property given in \cite[\S 2.2 Lemma 1]{bosch}:
\begin{proposition}
    A formal power series with $n$ variables \[f=\sum_{\alpha\in\NN^n}c_\alpha x^\alpha\in E[[x_1,...,x_n]]\] lies in $T_n$ iff $f$ converges on the closed unit ball \[B^n(\overline{E}):=\{x=(x_1,...,x_n)\in \overline{E}^n: \abs{x_i}\leq 1\}.\]
\end{proposition}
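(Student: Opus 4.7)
The plan is to deduce both implications from the fundamental non-archimedean principle that, in a complete non-archimedean field $K$ (here $\Cp$), a series $\sum_\alpha a_\alpha$ indexed by a countable set converges unconditionally if and only if $a_\alpha\to 0$ in the sense that, for every $\epsilon>0$, only finitely many indices $\alpha$ satisfy $|a_\alpha|\geq\epsilon$. This equivalence follows at once from the ultrametric estimate $\bigl|\sum_{\alpha\in F_2\setminus F_1}a_\alpha\bigr|\leq\max_{\alpha\in F_2\setminus F_1}|a_\alpha|$ applied to nested finite subsets, which makes the net of partial sums Cauchy precisely when the terms tend to $0$.

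For the forward direction, I would assume $f\in T_n$, so $|c_\alpha|\to 0$. For any $x=(x_1,\ldots,x_n)\in B^n(\overline{E})$ the condition $|x_i|\leq 1$ forces $|c_\alpha x^\alpha|=|c_\alpha|\prod_i|x_i|^{\alpha_i}\leq |c_\alpha|$, hence the terms of $f(x)$ also tend to $0$. By the criterion above, the series $\sum_\alpha c_\alpha x^\alpha$ converges unconditionally in $\Cp$; since its partial sums already lie in the finite extension of $E$ generated by the $x_i$, this is precisely the required convergence on $B^n(\overline{E})$.

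For the converse, I would evaluate $f$ at the single point $x_0=(1,\ldots,1)\in B^n(E)\subseteq B^n(\overline{E})$. The hypothesis gives convergence of $\sum_\alpha c_\alpha$ in $E$, and the criterion above forces $|c_\alpha|\to 0$ as $|\alpha|\to\infty$, i.e.\ $f\in T_n$. Note that a single carefully chosen point is enough for this direction; one does not need to use the full hypothesis of convergence on the whole polydisc.

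The argument is essentially formal; the only mildly delicate point is clarifying what unordered convergence of a series indexed by the multi-index set $\NN^n$ means in the non-archimedean setting, and this is exactly the content of the preliminary equivalence recalled in the first paragraph. I do not expect any serious obstacle beyond this bookkeeping.
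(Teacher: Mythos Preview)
Your argument is correct and is exactly the standard proof of this characterization. Note, however, that the paper does not actually prove this proposition: it merely states it as a citation to \cite[\S 2.2 Lemma 1]{bosch}, so there is no in-paper proof to compare against. Your write-up supplies precisely the elementary argument one finds in Bosch, relying on the non-archimedean convergence criterion and the evaluation at $(1,\dots,1)$ for the converse.
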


\par Some properties of the Tate algebras are as follows: For each $n$, $T_n$ is UFD, Noetherian, Jacobson, and regular of Krull dimension $n$. For each maximal ideal $\frakm$ of $T_n$, the residue field $T_n/\frakm$ is a finite extension of $E$. The Tate algebra $T_n$ is also complete with respect to the Gauss norm: \[\norm{f}:={\rm max}_\alpha\abs{c_\alpha},\] where \[f(x)=\sum_\alpha c_\alpha x^\alpha\in T_n.\] 

\par In particular, $T_1$ is an Euclidean domain, hence it is a PID. This last property will be useful when we discuss nice affinoid neighborhoods in Definition \ref{def:nice-affinoid-neigh}.

\begin{definition}[Affinoid algebra]
    An $E$-algebra $A$ is called an affinoid algebra over $E$ if there exists an $E$-algebra epimorphism $e:T_n\twoheadrightarrow A$ from a Tate algebra $T_n$ to $A$. 
\end{definition}

\par In other words, affinoid algebras are quotients of Tate algebras by some ideals. Affinoid algebras are Noetherian and Jacobson. There is a norm on affinoid algebras induced by the Gauss norm on Tate algebras, and all $E$-algebra morphisms between affinoid algebras are continuous. Affinoid algebras also satisfy Noether normalization theorem, i.e. for some $d$ there exists a finite monomorphism $T_d\rightarrow A$, and in particular the Krull dimension is $d$.

\par There exists a category of affinoid $E$-algebras with morphisms given by $E$-algebra maps which admits pushouts, where we give the universal property of pushouts in the following (see \cite[\S 3.1 Proposition 2, Appendix A Proposition 6]{bosch}):

\begin{proposition}[Completed tensor product]
    Let $R$, $A_1$, $A_2$ be affinoid $E$-algebras such that $A_1$ and $A_2$ are $R$-algebras. Then there exists a completed tensor product $A_1\hatotimes_R A_2$ of $A_1$ and $A_2$ over $R$ satisfying the following universal property: There are $R$-algebra homomorphisms $i_1:A_1\rightarrow A_1\hatotimes_R A_2$ and $i_2:A_2\rightarrow A_1\hatotimes_R A_2$, and for any $R$-algebra $A$ lying in this category and $R$-algebra homomorphisms $f_1:A_1\rightarrow A$ and $f_2:A_2\rightarrow A$, there exists a unique $R$-algebra homomorphism $f:A_1\hatotimes_R A_2\rightarrow A$ such that the following diagram commutes:
    \[ \begin{tikzcd}[column sep=large, row sep=large]
        & A & \\
        A_1 \arrow[ru, "f_1"] \arrow[r,"i_1"] & A_1\hatotimes_R A_2 \arrow[u, dashed, "\exists! f"'] & A_2 \arrow[l, "i_2"'] \arrow[lu, "f_2"'].
    \end{tikzcd}\]
\end{proposition}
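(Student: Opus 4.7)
The plan is to construct $A_1\hatotimes_R A_2$ explicitly as a quotient of a suitable Tate algebra and then verify the universal property directly. The key reduction is to the absolute case over $E$: once $A_1\hatotimes_E A_2$ is constructed, the relative object is obtained by quotienting by the closed two-sided ideal $J$ generated by elements of the form $i_1(r)\otimes 1 - 1\otimes i_2(r)$ for $r\in R$, where $i_1,i_2$ are the given $R$-algebra structure maps. Since affinoid algebras are Noetherian and every ideal in an affinoid algebra is closed, this quotient is again an affinoid $E$-algebra.

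For the absolute construction, I would fix admissible epimorphisms $e_1\colon T_m\twoheadrightarrow A_1$ and $e_2\colon T_n\twoheadrightarrow A_2$ with kernels $\mathfrak{a}_1,\mathfrak{a}_2$, and set
\[
A_1\hatotimes_E A_2 := T_{m+n}\,/\,(\mathfrak{a}_1 T_{m+n} + \mathfrak{a}_2 T_{m+n}),
\]
where $T_{m+n}=E\langle x_1,\ldots,x_m,y_1,\ldots,y_n\rangle$ is viewed through the obvious inclusions $T_m,T_n\hookrightarrow T_{m+n}$. The underlying analytic fact is that $T_{m+n}$ is the completion of the algebraic tensor product $T_m\otimes_E T_n$ under the Gauss norm, equivalently the algebra of power series converging on the closed unit polydisc, which matches the universal property of a completed tensor product of Tate algebras. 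The structural maps $A_j\to A_1\hatotimes_E A_2$ are those induced by $T_m,T_n\hookrightarrow T_{m+n}$ after passing to quotients.

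To verify the universal property, I would take $R$-algebra maps $f_1\colon A_1\to A$ and $f_2\colon A_2\to A$; these are automatically continuous since any $E$-algebra morphism between affinoid algebras is continuous. Precomposing with $e_1,e_2$ gives continuous maps $T_m\to A$ and $T_n\to A$, which by the universal property of the Tate algebra $T_{m+n}$ (continuous maps from $T_{m+n}$ to a Banach $E$-algebra correspond to assignments of topologically nilpotent, or more generally power-bounded, images for the variables) combine into a unique continuous $E$-algebra map $F\colon T_{m+n}\to A$. One checks that $F$ kills $\mathfrak{a}_1 T_{m+n}+\mathfrak{a}_2 T_{m+n}$ and (in the relative case) the ideal $J$ above, so it descends to the desired $f\colon A_1\hatotimes_R A_2\to A$. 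Uniqueness follows because the image of $A_1\otimes_R A_2$ in $A_1\hatotimes_R A_2$ is dense and $f$ is continuous.

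The main technical obstacle, though mild, is twofold: first, confirming closedness of the ideals $\mathfrak{a}_1 T_{m+n}+\mathfrak{a}_2 T_{m+n}$ and $J$ so that the quotients inherit an affinoid Banach structure — this rests on the fact that ideals in Noetherian Banach $E$-algebras of the kind under consideration are automatically closed (an application of the open mapping theorem together with Noetherianness of Tate algebras). Second, showing that the construction is independent of the chosen presentations $e_1,e_2$: this is forced by the universal property itself, but to invoke it one must already have exhibited the object as a colimit, so I would verify independence directly by producing canonical isomorphisms between the candidates built from any two presentations, using the universal property of each as established above.
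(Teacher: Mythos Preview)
Your proof is correct and follows the standard construction. Note, however, that the paper does not give its own proof of this proposition: it is stated with a reference to \cite[\S 3.1 Proposition 2, Appendix A Proposition 6]{bosch}, so there is nothing to compare against in the paper itself. Your argument is essentially the one found in Bosch --- build $A_1\hatotimes_E A_2$ as $T_{m+n}/(\mathfrak{a}_1 T_{m+n}+\mathfrak{a}_2 T_{m+n})$ from chosen presentations, then pass to the relative case by killing the ideal generated by $r\cdot 1\otimes 1 - 1\otimes r\cdot 1$ --- and you have correctly flagged the two technical ingredients (automatic closedness of ideals in affinoid algebras, and independence of presentation). One small cosmetic point: when you invoke the universal property of $T_{m+n}$, the relevant condition on the images of the variables is that they be power-bounded in $A$ (i.e.\ have spectral value $\leq 1$), which is automatic here since $f_j\circ e_j$ is a map of affinoid algebras; ``topologically nilpotent'' would be too restrictive.
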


\par The completed tensor product $A_1\hatotimes_R A_2$ can be constructed so that the maps $i_1:A_1\rightarrow A_1\hatotimes_R A_2$ and $i_2:A_2\rightarrow A_1\hatotimes_R A_2$ can be given as $a_1\mapsto a_1\otimes 1$ and $a_2\mapsto 1 \otimes a_2$, respectively.

\subsection{Affinoid and Rigid Spaces}\label{subsect:Affinoid-space}

\begin{definition}[Affinoid space]
$X$ is called an affinoid space over $E$ if $X=\Spm(A)$ for some affinoid algebra $A$ over $E$, where $\Spm(A)$ is the maximal spectra of $A$ (i.e. the set of maximal ideals of $A$). $X$ is equipped with the following data:
\begin{itemize}
    \item $X$ has the Zariski topology, where the closed subsets of $X$ are precisely the following: \[V(\fraka):=\{x\in\Spm(A):\fraka\subseteq\frakm_x\},\] where $\fraka$ is an ideal of $A$, and $\frakm_x$ is the corresponding maximal ideal in $A$ to the point $x$,
    \item $A$ is the ring of ``functions'' on $X$ viewed as follows: if $f\in A$ and $x\in\Spm(A)$, then $f(x)=f{\rm\ mod\ }\frakm_x\in A/\frakm_x$.
\end{itemize}
\end{definition}

\par Note that in this case, for an ideal $\fraka$ of $A$, the closed subset $V(\fraka)$ of $\Spm(A)$ is precisely \[V(\fraka):=\{x\in\Spm(A):f(x)=0{\rm\ for\ all\ }f\in\fraka\}.\] The morphisms of affinoid spaces are defined so that the category of affinoid spaces is the opposite category of the category of affioid algebras. For instance, the pullbacks (fibered products) are defined as $\Spm(A_1)\times_{\Spm(R)}\Spm(A_2):=\Spm(A_1\hatotimes_R A_2)$ and the natural maps $p_1:X_1\times_X X_2\rightarrow X_1$ and $p_2:X_1\times_X X_2\rightarrow X_2$ (which correspond to $i_1$ and $i_2$ in the opposite category) are the usual projections $(x_1,x_2)\mapsto x_1$ and $(x_1,x_2)\mapsto x_2$ respectively. 

\par Zariski topology on $\Spm(A)$ enjoys many similar properties to the topology on the prime spectrum ${\rm Spec}(A)$. For instance, the sets of the form \[D_f:=\{x\in\Spm(A):f(x)\neq 0\}\cong\Spm(A_f)\] for $f\in A$ forms a basis for this topology. There are more interesting topologies, such as canonical topology, weak and strong Grothendieck topologies, which are finer than the Zariski topology. The details can be found on Chapter 3 and 4 of \cite{bosch}.

\begin{definition}[Affinoid subdomain]
    A subset $U\subseteq X=\Spm(A)$ is called an affinoid subdomain of $X$ if there exists a morphism of affinoid spaces $\iota:X^\prime\rightarrow X$ such that $\iota(X^\prime)\subseteq U$ and this map is universal with this property, i.e. for any morphism $\varphi:Y=\Spm(B)\rightarrow X$ with $\varphi(Y)\subseteq U$, there exists a unique morphism $\varphi^\prime: Y\rightarrow X^\prime$ such that $\varphi=\iota\varphi^\prime$.
\end{definition}

\par It turns out that in this case, $\iota$ is a bijection from $X^\prime$ onto $U$, and $X^\prime$ is unique up to unique isomorphism, hence we can regard $U=X^\prime=\Spm(A_U)$. Also for any maximal ideal $\frakm_x\in U$, we have $A_{\iota(\frakm_x)}\cong {A_U}_{\frakm_x}$ and $A/{\iota(\frakm_x)}^n\cong {A_U}/{\frakm_x}^n$ for any $n\in\NN$ (see \cite[\S 3.3 Lemma 10]{bosch}). 

\par An important property of affinoid subdomains for our discussions is as follows (see \cite[\S 4.1 Corollary 5]{bosch}):
\begin{proposition}\label{prop:flatness_of_affinoid_subdomains}
    If $U\subseteq X=\Spm(A)$ is an affinoid subdomain of $X$, then the restriction map $A\rightarrow A_U$ is flat.
\end{proposition}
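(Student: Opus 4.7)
The plan is to verify flatness locally on $\Spm(A_U)$. By a standard fact from commutative algebra, a ring homomorphism $\phi\colon R \to S$ is flat if and only if, for every maximal ideal of $S$, the induced local homomorphism on localizations is flat. Applying this with $R = A$, $S = A_U$, and $\phi$ the restriction map, the question reduces to checking that for every maximal ideal $\frakm_x \in \Spm(A_U)$, the induced local map $A_{\iota(\frakm_x)} \to (A_U)_{\frakm_x}$ is flat.

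At this point I would invoke the input recorded in the paragraph preceding the proposition: namely that for every maximal ideal $\frakm_x$ of $A_U$, the localization map $A_{\iota(\frakm_x)} \to (A_U)_{\frakm_x}$ is already an isomorphism. Since isomorphisms are trivially flat, this immediately yields flatness at each point, and hence the global flatness of $A \to A_U$.

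Alternatively, if one wished to argue using only the weaker quotient isomorphism $A/\iota(\frakm_x)^n \cong A_U/\frakm_x^n$, one proceeds via completions: both local rings $A_{\iota(\frakm_x)}$ and $(A_U)_{\frakm_x}$ are Noetherian (being localizations of Noetherian affinoid algebras), and for a local homomorphism of Noetherian local rings, flatness can be detected on the completions along their maximal ideals. The quotient isomorphisms assemble into an isomorphism $\widehat{A_{\iota(\frakm_x)}} \cong \widehat{(A_U)_{\frakm_x}}$ on completions, which is trivially flat; one then transfers flatness back to the uncompleted map using the faithful flatness of the canonical map from a Noetherian local ring to its completion.

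The main obstacle, in my view, is not in the chain of reductions — which is essentially formal — but in the preliminary structural result that the local rings (or at least their $\frakm$-adic quotients) agree under $\iota$. That input, however, is already quoted from Bosch as a prior lemma resting on the universal property of affinoid subdomains; once it is in hand, the flatness statement follows in a short, mechanical way.
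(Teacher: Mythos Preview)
Your argument is correct: reducing to local rings at maximal ideals and invoking the isomorphism $A_{\iota(\frakm_x)}\cong (A_U)_{\frakm_x}$ already recorded from \cite[\S 3.3 Lemma 10]{bosch} is exactly the standard proof. The paper itself gives no argument beyond citing \cite[\S 4.1 Corollary 5]{bosch}, so you have in fact supplied more detail than the paper does; your reasoning matches the proof in Bosch.
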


\begin{definition}[Admissible open subset]
    A subset $U\subseteq X$ is called admissible open if there exists a covering of $U$ by affinoid subdomains $\{U_i\}_i$ such that for each morphism of affinoid spaces $\varphi: Y\rightarrow X$ with $\varphi(Y)\subseteq U$, there is a finite refinement of the covering $\{\varphi^{-1}(U_i)\}_i$ of $Y$.
\end{definition}

\par For example, all Zariski open subsets of $X$ are admissible, in particular they are covered by finitely many affinoid subdomains of $X$. Another important example is the open unit ball. For $n\in\NN$, let \[W_n:=\Spm \left(E\langle X/p^{1/n}\rangle\right)\] be the closed ball with radius $p^{-1/n}$. Then the open unit ball is admissibly covered by these closed balls $W_n$.

\section{$\left(\phiGamma\right)$-Modules and $p$-adic Hodge Theory}\label{sect:phiGamma-and-p-adic-Hodge}

\subsection{Robba Rings and $(\varphi,\Gamma)$-modules}\label{subsect:robbaphigamma}

\begin{definition}\label{def:Robba-Ring}
	Let $r\geq 0$, $A(r,1):=\{z\in\Cp :p^{-\frac{1}{r}}\leq\abs{z}_p<1\}$, and $\pi$ denotes a formal variable. The Robba ring $\mathcal{R}_K^{(r)}$ is defined as the set of formal series $f(\pi)=\sum_{n\in\ZZ}a_n\pi^n$ with $a_n\in K$ such that $f(\pi)$ converges in the annulus $A(r,1)$. If $A$ is an affinoid algebra over $\Qp$, then we set $\mathcal{R}_{K,A}^{(r)}:=A\hatotimes_{\Qp}\mathcal{R}_K^{(r)}$ and $\mathcal{R}_{K,A}:=\cup_{r>0}\mathcal{R}_{K,A}^{(r)}$. If $K$ is clear from the context, we will denote those Robba rings simply by $\mathcal{R}_A^{(r)}$ and $\mathcal{R}_A$, respectively.
\end{definition}

\par Note that for $s\leq r$, we have $A(r,1)\subseteq A(s,1)$ which implies that $\mathcal{R}_K^{(s)}\subseteq\mathcal{R}_K^{(r)}$.

\begin{definition}\label{def:phiGamma-actions}
	Let $E$ be a finite extension of $\Qp$ and $f\in\mathcal{R}_E$. The continuous $E$-linear map $\varphi:=\mathcal{R}_E\to\mathcal{R}_E$ is defined as \[\varphi(f(\pi)):=f((1+\pi)^p-1).\] Similarly, for any $\gamma\in\Gamma$ we define the action of $\Gamma$ on $\mathcal{R}_E$ by \[\gamma(f(\pi)):=f((1+\pi)^{\cyc(\gamma)}-1).\]
\end{definition}

\par A $(\varphi,\Gamma)$-module over a Robba ring $\mathcal{R}_A$ is a finitely generated locally free $\mathcal{R}_A$-module equipped with a $\varphi$-semilinear map and a semilinear continuous action of $\Gamma$ satisfying some certain conditions (see \cite{benois-p-adic-heights}, \cite{KedlayaPottharstXiao} for the details). The $(\phiGamma)$-modules over $\mathcal{R}_A$ form a Tannakian category denoted by $\mathbf{M}_{\mathcal{R}_A}^{\phiGamma}$. 

Let $\mathbf{V}$ be a $p$-adic representation of $G_p$ with coefficients in an affinoid algebra $A$. To each such representation $\mathbf{V}$ one can attach a $(\phiGamma)$-module $\mathbf{D}_{{\rm rig},A}^\dagger(V)$ in a functorial way. If $A=E$ is a finite extension of $\Qp$, then we will write $V$ instead of $\mathbf{V}$, and we will denote the associated $(\phiGamma)$-module by $\mathbf{D}_{\rm rig}^\dagger(V)$. In particular, we have $\mathbf{D}_{\rm rig}^\dagger(\Qp(1))=\mathcal{R}_E(1)$.

\par If $x\in\Spm (A)$ with corresponding maximal ideal $\frakm_x$ and the residue field $E_x=A/\mathfrak{m}_x$, then the specialization at $x$ is a natural transformation from $\mathbf{D}_{{\rm rig},A}^\dagger$ to $\mathbf{D}_{\rm rig}^\dagger(V)$, i.e. $\mathbf{D}_{{\rm rig},A}^\dagger(V)_x=\mathbf{D}_{\rm rig}^\dagger(V_x)$. 

\par Note that not all $(\phiGamma)$-modules arise from $p$-adic representations in this way, and in the case $A=E$, the essential image of $\mathbf{D}_{\rm rig}^\dagger$ is the subcategory of étale (or slope zero) $(\phiGamma)$-modules.

\subsection{A brief introduction to $p$-adic Hodge Theory}\label{subsect:p-adic-Hodge}
\par In this subsection, we will give a summary of the contents of \cite{berger}, \cite{brinon-conrad}, and \cite[\S 2.2]{benois-p-adic-heights}.

\par First note that not all $G_p$-representations over $p$-adic fields are Hodge-Tate, see \cite[\S 2]{brinon-conrad} for definitions and details. However, the ones that we are interested in will be Hodge-Tate representations. Let $E$ be a finite extension of $\Qp$ and let $V$ be a representation of $G_\QQ$ over $E$ which is Hodge-Tate. It then has a Hodge-Tate decomposition of the following form:
\[ V\otimes_\Qp\Cp\cong\sum_{j\in\ZZ}\Cp(j)^{h_j},\] where $h_j$'s are non-negative integers such that $h_j=0$ for all but finitely many $j$. 

\par Recall that by our conventions on Hodge-Tate weights, integers $j$ for which the corresponding $h_j$'s are non-zero are called Hodge-Tate weights of $V$. In particular, the Hodge-Tate weight of $\Qp(1)$ is $1$. We will denote the multiset of Hodge-Tate weights of $V$ by ${\rm HT}(V)$.

\begin{definition}\label{def:Filtered-Modules}
    A filtered module $D$ is a finite dimensional $E$-vector space equipped with a decreasing filtration $({\rm Fil}^i D)_{i\in\ZZ}$. A filtered $(\varphi,N)$-module is a filtered module with a semilinear (w.r.t. the action of $G_p$) bijective map $\varphi: D\rightarrow D$ and a $E$-linear monodromy operator $N$ with $N\varphi=p\varphi N$. A filtered $\varphi$-module is a filtered $(\varphi,N)$-module with $N=0$. We will denote the Tannakian categories of these modules by $\mathbf{MF}_E$, $\mathbf{MF}_E^{\varphi,N}$, and $\mathbf{MF}_E^{\varphi}$, respectively.
\end{definition}

\par There are also filtered $(\varphi,N,G_p)$-modules, see \cite{benois-p-adic-heights} for the precise definition. The category of such modules are denoted by $\mathbf{MF}_{E}^{\varphi,N,G_p}$.

\par Fontaine's formalism gives period rings $\mathbf{B}_*$ for $*\in\{\rm dR,pst,st,cris\}$, and functors $\mathbf{D}_{*}:(\mathbf{B}_{*}\otimes_{\Qp}\blank)^{G_p}$ from the category of $p$-adic representations over $E$ to the categories of ``Filtered modules'', ``Filtered $(\varphi,N,G_p)$-modules'', ``Filtered $(\varphi,N)$-modules'' and ``Filtered $\varphi$-modules'' for $*={\rm dR,pst,st,cris}$ respectively. 

\begin{definition}\label{def:B-admissible-modules}
     We say that $V$ is $\mathbf{B}$-admissible if the dimension of $V$ is preserved under these functors. We say that $V$ is de Rham, potentially semistable, semistable, and crystalline if $V$ is $B_{\rm dR}$, $B_{\rm pst}$, $B_{\rm st}$, $B_{\rm cris}$ admissible, respectively. 
\end{definition}

\begin{theorem}[Berger]\label{thm:Berger-phiGamma}
    A $(\phiGamma)$-module $\mathbf{D}$ is potentially semistable if and only if it is de Rham. Let $*$ stand for ${\rm pst,st,cris}$, and $?$ for $\mathbf{MF}_{E}^{\varphi,N,G_p}$, $\mathbf{MF}_{E}^{\varphi,N}$, $\mathbf{MF}_{E}^{\varphi}$ respectively. The functors $\mathbf{D}_{*}$ factors from the category of $(\phiGamma)$-modules $\mathbf{M}_{\mathcal{R}_E}^{\phiGamma}$ such that the following diagrams commute: 

    \[ \begin{tikzcd}
       \mathbf{Rep}_E(G_p) \arrow[r, "\mathbf{D}_{\rm rig}^{\dagger}"] \arrow[rd, "\mathbf{D}_{*}"]
       & \mathbf{M}_{\mathcal{R}_E}^{\phiGamma} \arrow[d, dashed, "\exists!\mathscr{D}_{*}"] \\
         & ?
   \end{tikzcd}\]
\end{theorem}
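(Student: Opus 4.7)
The plan is to build functors $\mathscr{D}_*$ directly on the category $\mathbf{M}_{\mathcal{R}_E}^{\phiGamma}$ by mimicking Fontaine's constructions inside enlargements of the Robba ring, and then to verify that they recover $\mathbf{D}_*$ on the essential image of $\mathbf{D}_{\rm rig}^\dagger$ (the étale subcategory). First I would recall Berger's localization maps $\iota_n\colon\mathcal{R}_E^{(r_n)}\to E_n[[t]][1/t]$, where $t=\log(1+\pi)$ and $E_n=E\otimes_{\Qp}\Qp(\zeta_{p^n})$; these assemble, for any $(\phiGamma)$-module $\mathbf{D}$, into the differential module $\mathbf{D}_{\rm dif}(\mathbf{D})$ with a residual action of $\Gamma$. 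Then define $\mathscr{D}_{\dR}(\mathbf{D}):=\mathbf{D}_{\rm dif}(\mathbf{D})^{\Gamma}$ with the filtration induced by the $t$-adic filtration; in the crystalline case, set $\mathscr{D}_{\cris}(\mathbf{D}):=\mathbf{D}[1/t]^{\Gamma}$ inside $\mathbf{D}\otimes_{\mathcal{R}_E}\mathcal{R}_E[1/t]$; and for the semistable case, first extend scalars to a log-Robba ring $\mathcal{R}_E[\ell_\pi]$ and then take $\Gamma$-invariants. The monodromy operator $N=-\partial/\partial\ell_\pi$ and the $\varphi$-structure are inherited from the ambient rings, and one checks directly that the output lies in the expected filtered category and satisfies $N\varphi=p\varphi N$.

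The commutativity of the diagram then amounts to producing, for $\mathbf{D}=\mathbf{D}_{\rm rig}^\dagger(V)$, canonical isomorphisms $\mathbf{D}_*(V)\cong\mathscr{D}_*(\mathbf{D}_{\rm rig}^\dagger(V))$ compatible with filtration, $\varphi$, and $N$. I would obtain these by comparing Fontaine's rings $\mathbf{B}_{\cris}^+$, $\mathbf{B}_{\rm st}$, $\mathbf{B}_{\dR}$ with the overconvergent rings $\mathbf{B}_{\rm rig}^\dagger$ and $\mathbf{B}_{\log}^\dagger$ used to construct $\mathbf{D}_{\rm rig}^\dagger$, combined with a Tate--Sen style descent that reduces $G_p$-invariants to $\Gamma$-invariants after passing to the cyclotomic tower. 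Uniqueness of $\mathscr{D}_*$ is then automatic: any candidate must agree with $\mathbf{D}_*$ on the essential image of $\mathbf{D}_{\rm rig}^\dagger$, which determines it on all $(\phiGamma)$-modules via the slope filtration reducing the general case to the étale one.

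For the ``if and only if'' statement, the direction \emph{potentially semistable implies de Rham} is immediate from the tower of inclusions of period rings. The converse is the $p$-adic monodromy theorem: given a de Rham $\mathbf{D}$, I would attach the $p$-adic differential equation $(\mathbf{D},\nabla)$ in which $\nabla$ is the derivation obtained by differentiating the $\Gamma$-action, and invoke the Crew--Tsuzuki--Kedlaya local monodromy theorem (equivalently, André's theorem) to conclude that $\mathbf{D}$ becomes quasi-unipotent, hence semistable, after pulling back along a finite extension of $K$. This is the main obstacle in the whole statement: it rests on the slope-filtration theory for $\varphi$-modules over the Robba ring, and the promotion of quasi-unipotency of the differential structure to an honest filtered $(\varphi,N,G_p)$-module requires controlling the interaction between the Frobenius slopes and the monodromy filtration, which is the deep input from the theory of $p$-adic differential equations on annuli.
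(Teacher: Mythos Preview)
The paper does not prove this theorem at all: it is stated as a result of Berger, with the subsection it sits in explicitly described as ``a summary of the contents of \cite{berger}, \cite{brinon-conrad}, and \cite[\S 2.2]{benois-p-adic-heights}.'' There is no proof environment following the statement, so there is nothing to compare your argument against on the paper's side.

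That said, your sketch is broadly faithful to Berger's actual strategy: the construction of $\mathscr{D}_{\dR}$, $\mathscr{D}_{\cris}$, $\mathscr{D}_{\rm st}$ via the localizations $\iota_n$ and via $\Gamma$-invariants in $\mathbf{D}[1/t]$ and $\mathbf{D}[1/t,\ell_\pi]$ is exactly how these functors are built, and the appeal to the $p$-adic local monodromy theorem (Andr\'e, Kedlaya, Mebkhout) for the implication ``de Rham $\Rightarrow$ potentially semistable'' is the correct deep input.

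There is, however, a genuine gap in your uniqueness argument. You write that uniqueness of $\mathscr{D}_*$ is automatic because ``any candidate must agree with $\mathbf{D}_*$ on the essential image of $\mathbf{D}_{\rm rig}^\dagger$, which determines it on all $(\phiGamma)$-modules via the slope filtration reducing the general case to the \'etale one.'' This does not work: the slope filtration decomposes a $(\phiGamma)$-module into isoclinic pieces, but only the slope-zero piece is \'etale, and the other slopes do not come from representations. Since $\mathbf{D}_{\rm rig}^\dagger$ is not essentially surjective, the commutative triangle alone cannot pin down $\mathscr{D}_*$ on non-\'etale objects. The ``$\exists!$'' in the diagram should be read as asserting that the explicit construction you outlined is the canonical one, not that uniqueness follows formally from the factorization property; if you want an honest uniqueness statement you must either impose extra conditions (exactness, compatibility with twists and tensor products, behaviour on rank-one objects) or simply define $\mathscr{D}_*$ by the formula and drop the uniqueness claim.
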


\par Note that there is also a relation between de Rham filtration and the Hodge-Tate weights. If $V$ is de Rham, then it is Hodge-Tate, and the jumps of the filtration ${\rm Fil}^i\mathbf{D}_{\rm dR}(V)$ (i.e. the $i$'s for which ${\rm Fil}^i\neq {\rm Fil}^{i+1}$) are the additive inverses of the Hodge-Tate weights of $V$, and the dimensions of the graded pieces ${\rm Gr}^i(\mathbf{D}_{\rm dR}(V)):={\rm Fil}^i\mathbf{D}_{\rm dR}(V)/{\rm Fil}^{i+1}\mathbf{D}_{\rm dR}(V)$ are the multiplicities of the corresponding Hodge-Tate weights.

\par Let $V$ be a representation of $G_\QQ$, which is de Rham at $p$. Let $d^+(V):=V^{c=1}$ denote the dimension of the plus-eigenspace, where $c$ is any complex conjugation in $G_\QQ$. Let $\#{\rm HT}^+(V)$ denote the number of positive Hodge-Tate weights.

\begin{definition}\label{def:critical-Deligne-p-adic}
    We say that $V$ is critical if $d^+(V)=\#{\rm HT}^+(V)$. If $V$ is critical, then it is Panchishkin ordinary if there exists a $G_p$-equivariant exact sequence 
    \[0\rightarrow F^+(V) \rightarrow V \rightarrow F^-(V) \rightarrow 0\] with $\dim_E F^+(V)= d^+(V)$ and all Hodge-Tate weights of $F^+(V)$ are positive.
\end{definition}

\begin{definition}[Deligne's criticality condition]\label{def:critical-Deligne}
    We say that $s=n$ is a critical value of the $L$-series $L(V,s)$ associated to the representation $V$ (or to the motive $M$ such that $p$-adic realization of $M$ is $V$) if the product of the Euler factors at infinity \[L_\infty(V,s)L_\infty(V^*(1),-s)\] is holomorphic at $s=n$.
\end{definition} 

\begin{remark}
    Note that $s=n$ is critical (in the sense of Deligne) for $V$ if and only if $V(n)$ is critical in the sense of Definition \ref{def:critical-Deligne-p-adic}. First observe that $s=n$ is critical for $V$ if and only if $s=0$ is critical for $V(n)$ (see our conventions for $L$-functions attached to the twists of $V$ in the introduction). Hence we claim that $s=0$ is critical for $V$ if and only if $V$ is critical as in Definition \ref{def:critical-Deligne-p-adic}. For the equivalence between these two definitions see \cite[Lemma 2.3]{coates-perrin-riou}. Note also that in op. cit. what we call $V^*(-1)$ is denoted as $V^*$, hence Definition \ref{def:critical-Deligne} coincides with \cite[Definition 2.2]{coates-perrin-riou}.
\end{remark}

\par The following definition by \cite{bergerTrianguline} is a generalization of Panchishkin conditions for the representations attached to the $p$-non-ordinary forms.
\begin{definition}[Trianguline representations]\label{def:Berger-Trianguline}
    $V$ is called trianguline if there exists a finite extension $F$ of $E$ such that $\mathbf{D}_{{\rm rig},F}^\dagger(V\otimes_E F)$ is a successive extensions of rank 1 $(\phiGamma)$-modules as follows:
    \[0=:D_0\subseteq D_1 \subseteq D_2 \subseteq \dots \subseteq D_n:=\mathbf{D}_{{\rm rig},F}(V\otimes_E F).\]
\end{definition}

\section{Eigencurve, Coleman Families and Triangulations}\label{sect:families-triangulation}
\subsection{The Weight Space}\label{subsect:weight-space-hida}

\par Let $\Gamma$ be a profinite group, and let $H$ run through open subgroups of $\Gamma$. Recall that we define the Iwasawa algebra $\Lambda_E(\Gamma)$ as \[\Lambda_E(\Gamma)=\mathcal{O}_E[[\Gamma]]:=\varprojlim_{H}\mathcal{O}_E[\Gamma/H].\] The particular cases in which we will be most interested are $\Gamma\cong \ZZ_p^\times$ or $\ZZ_p$. Note that we have \[\Lambda_E(\ZZ_p^\times)\cong \mathcal{O}_E[\Delta]\otimes\Lambda_E(\Zp),\] and $\Lambda_E(\Zp)\cong \mathcal{O}_E[[T]]$, where the last isomorphism is given by sending a topological generator $\gamma\in\Zp$ to $1+T$.

\par The weight space $\mathcal{W}$ is defined as the Hom-set of continuous group homomorphism from $\ZZ_p^\times$ to the multiplicative group scheme, i.e. if $R$ is a Banach $E$-algebra then \[\mathcal{W}(R):=\Hom_{\rm cts,grp}(\ZZ_p^\times,R^\times).\] This definition is equivalent to saying that \[\mathcal{W}(R)=\Hom_{{\rm cts},\mathcal{O}_E{\rm -alg}}(\Lambda_E(\ZZ_p^\times),R).\] Applying the functor of points idea, we can see $\mathcal{W}$ as the generic fiber of ${\rm Spf}(\Lambda_E(\ZZ_p^\times))$. Note that this is not an affinoid space but it is a rigid analytic space: it is the disjoint union of $p-1$ open balls of radius $1$. Each of these balls are admissibly covered by affinoid disks $W_n$ defined in Section \ref{subsect:Affinoid-space}.

\par For every integer $k$, the character $\kappa_k:z\mapsto z^k$ is an element of the weight space, thus we have an injection $\ZZ\hookrightarrow\mathcal{W}$. Moreover, the integers form a dense subset of the weight space. Thus we can identify $\ZZ$ with the subset of the weight space $\mathcal{W}$ via $\kappa$. 

\par If we fix one of these components (i.e. a character $\chi:(\ZZ/p)^\times\rightarrow R^\times$, without loss of generality we can assume it is the trivial character) and call the component $\mathcal{W}_\chi$, it is an open ball of radius $1$, hence it is admissibly covered by closed affinoid balls of radius $p^{-1/n}$, each of which is isomorphic to \[W_n:=\Spm \left(E\langle X/p^{1/n}\rangle\right).\] Denote \[\mathcal{H}_n:=E\langle X/p^{1/n}\rangle=\Gamma(W_n,\mathcal{O}_{W_n}),\] so that we define the ring of locally analytic $E$-valued distributions on $\Zp$ as \[\mathcal{H}_E(\Zp):=\varprojlim_{n}\mathcal{H}_n=\Gamma(\mathcal{W}_\chi,\mathcal{O}_{\mathcal{W}_\chi}).\]

\par We also define \[\mathcal{H}_E(\ZZ_p^\times)\cong E[\Delta]\otimes\mathcal{H}_E(\ZZ_p),\] and note that $\mathcal{H}_E(\ZZ_p)$ contains the Iwasawa algebra $\Lambda_E(\Zp)\otimes E$. We will denote $\mathcal{H}_E(\Zp)$ by $\mathcal{H}$. If $A$ is an affinoid algebra over $E$, we will write $\mathcal{H}_A:=\mathcal{H}\hatotimes A$ and $\Lambda_A:=\Lambda\hatotimes A$.

If $\Gamma$ is a quotient of $G_\QQ$ or $G_p$ isomorphic to $\Zp$ or $\ZZ_p^\times$, and $\iota:\Gamma\rightarrow\Gamma$ is the involution that maps $\gamma\in\Gamma$ to $\gamma^{-1}$, then $\Lambda(\Gamma)^\iota$ will denote the rank one $\Lambda(\Gamma)$-module on which $G_\QQ$ acts via the inverse of the tautological character, see \cite[\S 8]{Nek06} and \cite[\S 2.2 ]{benois2014selmer} for details. This action naturally extends to $\mathcal{H}_A$, and we denote this rank 1 $\mathcal{H}_A$-module by $\mathcal{H}_A^\iota$.

\begin{definition}[Nice affinoid neighborhood, \cite{bellaiche2012}]\label{def:nice-affinoid-neigh} Let $W=\Spm (R)$ be an admissible affinoid neighborhood of the weight space $\mathcal{W}$, and $R^0$ denote the unit ball for the supremum norm for $R$. Then $W$ is called nice if $R$ and the residue ring $R^0/pR^0$ are PID, and the subset $\ZZ\cap W$ is dense in $W$.
\end{definition}

\par  For example, any closed ball with center $k$ and radius $p^{-n}$ is nice. This will be important when we discuss the perfectness of the Selmer complexes that we are interested in. 

\subsection{Eigencurve and Coleman Families}\label{subsect:eigencurve-coleman-fam}

\par In this subsection we will give the definitions of Coleman families, the big representations attached to these families and state the result that these representations are trianguline (due to Berger). We will also make a quick review of the eigencurve for the sake of completeness, even though we will not need this for this article, we will need this theory for the sequel article.

\par Let $U\subseteq\mathcal{W}$ be a wide open disk centered at a $\kappa(k_0)\in\mathcal{W}$ for some $k_0\geq 0$, i.e. a rigid analytic space $U=B(k_0,r)$ such that its $E$ valued points correspond to $\{x\in\mathcal{W}:\abs{x-\kappa(k_0)}<p^{-r}\}$ for some $r$. Let $\Lambda_U$ denote the power bounded analytic functions on $U$, i.e. \[\Lambda_U:=\{f\in E[[T]]:f\textnormal{ converges on }U,\norm{f}\leq 1\}.\] Note that $\Lambda_U\simeq \mathcal{O}_E[[T]]$, where the isomorphism is non-canonical (hence the notation $\Lambda_U$ is used). For $p\nmid N$, let $\Gamma_p:=\Gamma_1(N)\cap\Gamma_0(p)$. The following definition is from \cite[Definition 4.6.1]{LZ16} (also see \cite[Theorem 3.8]{Ochiai2018IwasawaMC}).

\begin{definition}[Coleman family]\label{def:coleman-family}
    A Coleman Family $\mathcal{F}$ over a wide open $U\subseteq\mathcal{W}$ is a power series \[\mathcal{F}=\sum_{n\geq 1}a_n(\mathcal{F})q^n\in\Lambda_U[[q]],\] where $a_1(\mathcal{F})=1$, and $a_p(\mathcal{F})$ is invertible in $\Lambda_U[1/p]$ such that for all but finitely many weights $k\in U\cap\NN$, the specialization $\mathcal{F}_k\in S_{k+2}(\Gamma_p)$ is a $p$-stabilized and normalized Hecke eigenform, and for all $k\in U\cap\NN$, the slope $v_p(a_p(\mathcal{F}_k))$ is constant.
\end{definition}

\begin{remark}
    Although we gave the definition of a Coleman family over a wide open disk $U$, we can also restrict $\mathcal{F}$ to an affinoid disk $X=\Spm(A)$ inside $U$ such that the same interpolation property holds. In that case, a Coleman family is a power series in $A[[q]]$. In this article we will work with Coleman families (and their associated representations) over an affinoid algebra $A$.
\end{remark}

\begin{theorem}\label{thm:unique-coleman-family}
 Suppose $f_\alpha\in S_{k+2}(\Gamma_p)$ is a Hecke eigenform which is not $\theta$-critical. Then there is a unique Coleman family $\mathcal{F}$ over some affinoid neighborhood $U$ of $k\in\mathcal{W}$ such that $\mathcal{F}_k=f_\alpha$.    
\end{theorem}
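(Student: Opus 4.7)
The plan is to realize $\mathcal{F}$ as arising from a section of the weight map on the Coleman--Mazur eigencurve $\mathcal{C}$, and to verify the étaleness of this map at the classical point determined by $f_\alpha$. Recall that $\mathcal{C}$ is a rigid analytic curve equipped with a locally finite map $w\colon \mathcal{C}\to\mathcal{W}$ and a universal system of Hecke eigenvalues; classical $p$-stabilized eigenforms of weight $k+2$ correspond to points $x\in\mathcal{C}$ lying over $\kappa(k)\in\mathcal{W}$. The specific eigenform $f_\alpha$ gives rise to a well-defined point $x_{f_\alpha}\in\mathcal{C}$. A Coleman family over an affinoid neighborhood $U=\Spm(A)$ of $\kappa(k)$ passing through $f_\alpha$ is the same data as a section $s\colon U\to\mathcal{C}$ of $w$ sending $\kappa(k)$ to $x_{f_\alpha}$, since pulling back the universal Hecke eigensystem along $s$ produces the $q$-expansion $\mathcal{F}=\sum a_n(\mathcal{F})q^n\in A[[q]]$.

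The key step is therefore to show that $w$ is étale at $x_{f_\alpha}$. This is a theorem of Bellaïche (extending Kisin's non-critical-slope case) whose proof analyses the relative tangent space of $\mathcal{C}/\mathcal{W}$ at $x_{f_\alpha}$ via an infinitesimal deformation argument: this tangent space is identified with a Selmer group for the adjoint representation $\ad(V_{f_\alpha})$ with local conditions at $p$ dictated by the triangulation of $\mathbf{D}_{\rm rig}^\dagger(V_{f_\alpha})$ determined by the chosen $p$-stabilization $\alpha$. The non-$\theta$-critical hypothesis is exactly what forces this triangulation to be non-critical in the sense of Bellaïche--Chenevier, which in turn implies the vanishing of the relative tangent space and hence the étaleness of $w$ at $x_{f_\alpha}$.

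Once étaleness is granted, the rigid-analytic implicit function theorem produces a unique local section $s\colon U\to\mathcal{C}$ with $s(\kappa(k))=x_{f_\alpha}$ on a sufficiently small affinoid neighborhood $U$ of $\kappa(k)$. After possibly shrinking $U$, one may ensure that $U$ is a closed disk around $k$ of the form discussed in Section~\ref{subsect:weight-space-hida}, that $a_p(\mathcal{F}):=s^*a_p$ is a unit in $\Lambda_U[1/p]$ (using that $a_p(f_\alpha)\neq 0$ and that units form an open locus), and that the slope $v_p(a_p(\mathcal{F}_k))$ is constant on $U\cap\NN$ (using that the slope is locally constant on any affinoid subdomain of $\mathcal{C}$, a standard consequence of the structure of the eigencurve via Coleman's Riesz theory). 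The interpolation property at classical weights $k'\in U\cap\NN$ follows because $s(\kappa(k'))\in\mathcal{C}$ automatically lies in the classical locus at all but finitely many such $k'$, by Coleman's classicality theorem (since once the slope is fixed, all sufficiently large weights are non-critical). Uniqueness is immediate from the uniqueness of the section $s$.

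The main obstacle is the étaleness of $w$ at $x_{f_\alpha}$; it is here that the full force of the non-$\theta$-criticality hypothesis enters and that the nontrivial work, via Galois cohomology of $(\varphi,\Gamma)$-modules and the infinitesimal structure of the eigencurve, is required. All subsequent steps are essentially formal consequences of the general theory of the eigencurve once étaleness is known.
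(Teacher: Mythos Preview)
The paper's own proof is simply a one-line citation to \cite[Theorem 4.6.4]{LZ16}, so there is no in-house argument to compare against. Your sketch is a correct outline of the underlying proof and is consistent with what the paper itself records elsewhere (immediately after the definition of the eigencurve): that $f_\alpha$ is non-$\theta$-critical precisely when the weight map $\kappa:\mathcal{C}\to\mathcal{W}$ is \'etale at the corresponding point, after which the existence and uniqueness of a local section is formal rigid-analytic geometry.

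One small comment: your justification of \'etaleness via Bella\"iche's adjoint Selmer group computation is valid but somewhat heavier machinery than is strictly needed here. For the cuspidal eigencurve, the equivalence between non-$\theta$-criticality and \'etaleness of $\kappa$ can be extracted more directly from the local description of $\mathcal{C}$ in terms of finite-slope families of overconvergent modular symbols (as in Bella\"iche's eigenbook or Coleman's original construction), without invoking Galois deformation theory. Either route is acceptable, and your version has the merit of explaining conceptually \emph{why} the non-$\theta$-critical hypothesis is exactly the right one.
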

\begin{proof}
    See \cite[Theorem 4.6.4]{LZ16}.
\end{proof}

\par Now we give the definition (and the universal property) of the eigencurve, which is invented by Coleman and Mazur in \cite{coleman_mazur_1998} and generalized by Buzzard in \cite{buzzard_2007}. The following definition can be found in \cite[\S 2.1]{bellaiche2012}, also see \cite[Definition 3.5]{Loeffler_2018} and \cite[\S 5]{benoisbuyukboduk2020critical}.

\begin{definition}[Cuspidal Coleman-Mazur-Buzzard eigencurve]
    The cuspidal eigencurve with tame level $N$ $\mathcal{C}^0:=\mathcal{C}^0_N$ is a rigid analytic space of equidimension $1$, which is equipped with a weight morphism $\kappa:=\kappa_\mathcal{C}:\mathcal{C}^0\to\mathcal{W}$ that is locally finite and flat, and a universal eigenform $\mathcal{F}^{\rm univ}\in\mathcal{O}_{\mathcal{C}^0}[[q]]$ such that for all affinoid $U$ with a weight morphism $w:U\to\mathcal{W}$ and any Coleman family $\mathcal{F}$ over $U$, there is a unique morphism $\widetilde{w}:U\to\mathcal{C}^0$ such that $\kappa\circ\widetilde{w}=w$ and $\mathcal{F}=\widetilde{w}^*(\mathcal{F}^{\rm univ})$. 
\end{definition}

\par The eigencurve $\mathcal{C}$ is equipped with a finite flat weight map, which parametrizes modular forms, and contains $\mathcal{C}^0$ as a closed subset. $\mathcal{C}$ is admissibly covered by open affinoid subdomains of the form $\mathcal{C}_{W,\nu}$, where $W=\Spm(A)\subseteq\mathcal{W}$, and $\nu>0$, where $\mathcal{C}_{W,\nu}$ is the maximal spectrum of affinoid subalgebra $\mathcal{H}\otimes R$ of $\End_R(M^\dagger(\Gamma_p,W)^{\leq \nu})$ (and only here $\mathcal{H}$ denotes the Hecke algebra).
    
\par Note that $f_\alpha$ is $\theta$-critical if and only if the eigencurve is not étale over $\mathcal{W}$ at $x\in\mathcal{C}$, where $x$ is the point corresponding to $f_\alpha$ in the eigencurve.

\subsection{Motives and Galois Representations Attached to Modular Forms}\label{subsect:motives-newforms}

\par Let $f\in S_{k+2}(\Gamma_1(N),\varepsilon)$ be a normalised newform of weight $k+2$ and level $\Gamma_1(N)$ with nebentype $\varepsilon$. Suppose that $p\nmid N$. In \cite{scholl}, Scholl constructs a motive $M(f)$ such that its $p$-adic realization is the Deligne's cohomological $G_\QQ$ representation $V_f$, which is irreducible, crystalline at $p$ (since $p\nmid N$) and the action of $\varphi$ on $\mathbf{D}_{\rm cris}(V_f)$ is given by the characteristic polynomial $X^2-a_p(f)X+\varepsilon(p)p^{k+1}$. If $\chi$ is a Dirichlet character, we have \[V_{f_\chi}\cong V_f\otimes\chi.\] 

\par Note that for $g\in\{f_\alpha,f_\beta\}$ we have the isomorphisms of Galois representations $V_{g}\cong V_f$, and $V_{g^c}\cong V_{f^*}$. Note also that due to the Poincaré duality, we have the isomorphism: 
\begin{equation}\label{eq:poincare-duality-reps-forms}
    V_f^*\cong V_f(k+1)\otimes\varepsilon^{-1}\cong V_{f^*}(k+1).
\end{equation}

\par The Hodge-Tate weights of $V_f$ are $0$ and $-1-k$. Note also that the action of geometric Frobenius $\Frob^{-1}_p$ at $p$ on the $\ell$-adic realizations (for $\ell\neq p$) is given by the characteristic polynomial $X^2-a_p(f)X+\varepsilon(p)p^{k+1}$. Let $\alpha$ and $\beta$ denote the roots of this characteristic polynomial, and let $E$ be a finite extension of $\Qp$ which contains $\alpha$, $\beta$, and the Hecke field of $f$.

\par We say that $f$ is ordinary at $p$ if $v_p(a_p(f))=0$, otherwise we say $f$ is $p$-non-ordinary. In this article we are interested in the non-ordinary case. This implies that $v_p(\alpha), v_p(\beta)>0$, therefore none of the $p$-stabilizations of $f$ are at critical slope (i.e. $v_p(\alpha),v_p(\beta)\neq k+1$). Thus they are not $\theta$-critical, i.e. $V_f|_{G_p}$ does not split into the direct sum of two characters. Indeed, in the non-ordinary case, the Panchishkin ordinarity condition for $V_f^*$ fails (even though it satisfies the criticality condition in Definition \ref{def:critical-Deligne-p-adic} since $d^+(V_f^*)=1$ and $V_f^*$ has Hodge-Tate weights $0$ and $k+1$) as $V_f$ is irreducible as a $G_p$ representation. 

\par Let $\mathcal{F}$ be a Coleman family passing through $f_\alpha$ over a nice affinoid subdomain $U=\Spm(A)$ of the weight space. Let $x_0$ be the point corresponding to $f_\alpha$ in the eigencurve. Let $\mathcal{X}$ be the preimage of $U$ under the weight map $\kappa$ of the eigencurve, and let $\mathcal{X}^{\rm cl}$ denote the classical points of $\mathcal{X}$, i.e. the points which are sent to non-negative integers under the weight map $\kappa$ of the eigencurve.

\begin{theorem}[The big Galois representation attached to a Coleman family]\label{thm:rep-coleman-F}
    There exists a free $A$-module $\mathbf{V}:=\mathbf{V}_\mathcal{F}$ of rank $2$ equipped with Galois action $G_{\QQ,S}$ for some finite set of primes $S$ such that for each $x\in\mathcal{X}^{\rm cl}$, the following holds:
    \begin{itemize}
        \item $\mathbf{V}_x$ has Hodge-Tate weights $\{-\kappa(x)-1,0\}$,
        \item $\mathbf{V}_x$ is crystalline (semistable if $p\mid N$ were the case) as a $G_p$-representation,
        \item There is an $\mathbf{\alpha}\in A$ with $\mathbf{\alpha}(x_0)=\alpha$ such that $\mathbf{D}_{\rm cris}(\mathbf{V}_x)^{\varphi=\mathbf{\alpha}(x)}$ is $1$-dimensional,
        \item If $x\neq x_0$, then $\mathbf{D}_{\rm cris} (\mathbf{V}_x)^{\varphi=\mathbf{\alpha}(x)}\cap {\rm Fil}^{\kappa(x)+1}(\mathbf{D}_{\rm st}(\mathbf{V}_x))=0$.
    \end{itemize}
\end{theorem}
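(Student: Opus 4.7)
The plan is to construct $\mathbf{V}_\mathcal{F}$ by $p$-adically interpolating the Deligne--Scholl representations $V_{\mathcal{F}_x}$ attached to classical specializations of $\mathcal{F}$, and then to verify the four bullet-pointed properties at classical points. First I would invoke the universal pseudo-representation on the cuspidal eigencurve: by work of Chenevier (and Bella\"iche--Chenevier, in the eigenvariety setting), there is a continuous dimension-$2$ pseudo-representation $T_{\mathcal{C}^0} : G_{\QQ,S} \to \mathcal{O}(\mathcal{C}^0)$ whose specialization at each classical point recovers the trace of the Galois representation attached to the corresponding classical form (this is where one uses density of classical points together with the Eichler--Shimura congruence at unramified primes). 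Pulling $T_{\mathcal{C}^0}$ back along the classifying morphism $\widetilde{w} : U \to \mathcal{C}^0$ of $\mathcal{F}$ yields a pseudo-representation $T_\mathcal{F} : G_{\QQ,S} \to A$.

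Next I would upgrade $T_\mathcal{F}$ to a genuine Galois module. Since $V_{f_\alpha}|_{G_\QQ}$ is absolutely irreducible (a consequence of the big-image hypotheses imposed later in the paper), after possibly shrinking $U$ the residual representation remains absolutely irreducible at every point of $\mathcal{X}$, and Chenevier's lifting theorem produces a free rank-$2$ $A$-module $\mathbf{V}:=\mathbf{V}_\mathcal{F}$ carrying a continuous $G_{\QQ,S}$-action whose trace is $T_\mathcal{F}$. Compatibility with the Hecke data then gives $\mathbf{V}_x \cong V_{\mathcal{F}_x}$ at every classical point $x\in\mathcal{X}^{\rm cl}$.

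With this specialization identification in hand, the first two bullets follow from the classical theory: the Hodge--Tate weights $\{-\kappa(x)-1,0\}$ are those of $V_{\mathcal{F}_x}$ (a weight $\kappa(x)+2$ newform), and crystallinity at $p$ follows from the hypothesis $p\nmid N$. The element $\boldsymbol{\alpha}\in A$ is furnished by $a_p(\mathcal{F})$ from Definition \ref{def:coleman-family}, which satisfies $\boldsymbol{\alpha}(x_0)=\alpha$ and whose slope is constant on $U$; since $\mathbf{D}_{\rm cris}(V_{\mathcal{F}_x})$ is $2$-dimensional with $\varphi$-characteristic polynomial $X^2-a_p(\mathcal{F}_x)X+\varepsilon(p)p^{\kappa(x)+1}$, and since on a sufficiently small $U$ the two roots of this polynomial have distinct valuations, the $\boldsymbol{\alpha}(x)$-eigenspace is one-dimensional.

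The subtlest, and main obstacle, is the fourth bullet: the transversality $\mathbf{D}_{\rm cris}(\mathbf{V}_x)^{\varphi=\boldsymbol{\alpha}(x)}\cap\mathrm{Fil}^{\kappa(x)+1}\mathbf{D}_{\rm st}(\mathbf{V}_x)=0$ for $x\neq x_0$. This is exactly the assertion that no classical $x\neq x_0$ is $\theta$-critical. I would argue as follows: $\theta$-criticality at a classical point $x$ is equivalent to the weight map $\kappa:\mathcal{C}\to\mathcal{W}$ failing to be \'etale at $x$, which is a Zariski-closed condition on $\mathcal{C}$. Since $f_\alpha$ is non-$\theta$-critical by hypothesis, the closed $\theta$-critical locus does not contain $x_0$, and hence after shrinking $U$ (and thus $\mathcal{X}$) it reduces to at most $\{x_0\}$. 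This is essentially Theorem 4.6.6 of \cite{LZ16}, and it is the step that genuinely requires the non-$\theta$-critical assumption; everything else is a formal interpolation argument.
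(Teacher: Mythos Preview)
The paper does not actually prove this theorem: it is stated as a background result, implicitly deferred to the literature (cf.\ the surrounding discussion, Theorem~\ref{thm:unique-coleman-family}, and the references \cite{LZ16}, \cite{benoisbuyukboduk2020critical}). Your sketch therefore supplies considerably more detail than the paper itself, and the overall strategy---pseudo-representation on the eigencurve, lift to a genuine representation via irreducibility at the base point, then verify pointwise properties at classical specializations---is indeed the standard route taken in those references.

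Two points in your sketch deserve correction. First, for the third bullet you argue that the $\boldsymbol{\alpha}(x)$-eigenspace is one-dimensional because ``the two roots of this polynomial have distinct valuations''. This is false in general: whenever $v_p(a_p(f_x)) \geq (\kappa(x)+1)/2$ (for instance in the $a_p=0$ supersingular case) both roots have valuation $(\kappa(x)+1)/2$. The correct argument is simply that $\alpha(x)\neq\beta(x)$ is an open condition---the discriminant $a_p(\mathcal{F})^2-4\varepsilon(p)p^{\kappa+1}$ varies rigid-analytically and is nonzero at $x_0$ by the $p$-regularity hypothesis~\ref{hyp:p-regularity}---so after shrinking $U$ it holds at every classical point. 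Second, absolute irreducibility of $V_{f}$ as a $G_\QQ$-representation does not require the big-image hypotheses of Section~\ref{subsect:assumptions-main-thm}; it is a theorem of Ribet valid for any cuspidal newform of weight $\geq 2$, so you should not invoke those later hypotheses here.
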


\par If the last condition holds also for $x_0$, then $f_\alpha$ is non-$\theta$-critical, otherwise it is $\theta$-critical. In the former case, $A_\mathcal{X}$ and $A$ will be isomorphic as rings, but the situation is more complicated when $f_\alpha$ is $\theta$-critical. See \cite[\S 3.3.2]{benoisbuyukboduk2020critical} for a more detailed explanation. Since we are interested in the non-$\theta$-critical case in this article, we will identify these rings and their spectra inside the weight space. 

\par The following theorem is due to \cite{KedlayaPottharstXiao} (see also \cite[\S 3.3]{benoisbuyukboduk2020critical}).
\begin{theorem}[Triangulation]\label{thm:triangulation}
    In the non $\theta$-critical case, there exists a rank-one $(\phiGamma)$-submodule $\mathbf{D}$ of $\mathbf{D}_{{\rm rig},A}^\dagger(\mathbf{V})$ such that 
    \begin{itemize}
        \item $\mathbf{D}=\mathbf{D}_\delta$ for $\delta:\QQ_p^\times\to\units{A}$ such that $\delta(p)=\alpha$ and $\delta(u)=1$ for $u\in\ZZ_p^\times$,
        \item $\mathscr{D}_{\rm cris}(\mathbf{D}_x)=\mathbf{D}_{\rm cris}(\mathbf{V}_x)^{\varphi=\mathbf{\alpha}(x)}$,
        \item For all classical points $x\in\mathcal{X}^{\rm cl}$ (including $x_0$ in the non-$\theta$-critical case), $\mathbf{D}_x$ is saturated in $\mathbf{D}_{\rm rig}^\dagger(\mathbf{V}_x)$.
    \end{itemize}
\end{theorem}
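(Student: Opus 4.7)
The plan is to invoke the global triangulation theorem for refined families, as developed in \cite{KedlayaPottharstXiao}. The Coleman family $\mathcal{F}$, together with the invertible function $\alpha\in A$ supplied by Theorem \ref{thm:rep-coleman-F}, equips $\mathbf{V}$ with a refined-family structure: at every classical $x\in\mathcal{X}^{\rm cl}$ the representation $\mathbf{V}_x$ is crystalline at $p$ and the $\varphi$-eigenspace $\mathbf{D}_{\rm cris}(\mathbf{V}_x)^{\varphi=\alpha(x)}$ is one-dimensional. By Definition \ref{def:nice-affinoid-neigh} the classical points are Zariski-dense in $U=\Spm(A)$, which provides enough interpolation data to pin down a single global $(\phiGamma)$-submodule.

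First I would construct a candidate rank-one sub-$(\phiGamma)$-module $\mathbf{D}\subseteq \mathbf{D}_{{\rm rig},A}^\dagger(\mathbf{V})$ by interpolating the slope-$\alpha$ piece across the family. Concretely, working with the relative Robba ring $\mathcal{R}_A$, one shows that the assignment $x\mapsto \mathbf{D}_{\rm cris}(\mathbf{V}_x)^{\varphi=\alpha(x)}$ globalizes into a locally free sub-$(\phiGamma)$-module on a Zariski-open dense subset $U_0\subseteq U$, the rank being locally constant because $\alpha\in A^\times$ forces the slope to be locally constant. This is exactly the content of the generic triangulation statement of \cite{KedlayaPottharstXiao}, and yields the second claimed bullet on $U_0$ by construction.

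Next I would identify the parameter $\delta$. For any classical $x$, the fibre $\mathbf{D}_x$ is a rank-one $(\phiGamma)$-module with $\mathscr{D}_{\rm cris}(\mathbf{D}_x)=\mathbf{D}_{\rm cris}(\mathbf{V}_x)^{\varphi=\alpha(x)}$, so its Frobenius eigenvalue is $\alpha(x)$. One also checks that the Hodge–Tate weight of $\mathbf{D}_x$ is $0$ (the smaller of the two weights $\{-\kappa(x)-1,0\}$ of $\mathbf{V}_x$), since the alternative would force the $\varphi=\alpha$ eigenline into the top step of the Hodge filtration, contradicting the fourth bullet of Theorem \ref{thm:rep-coleman-F}. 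Zariski density of classical points then forces the global parameter to be $\delta(p)=\alpha$, $\delta(u)=1$ for $u\in\ZZ_p^\times$, so that $\mathbf{D}=\mathbf{D}_\delta$ as claimed.

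The main obstacle is extending $\mathbf{D}$ from the open $U_0$ to all of $U$ and verifying saturation at every classical point, including $x_0$. Here the assumption that $U$ is nice is essential: since $A$ is a PID, we can form the saturation of the coherent sub-$(\phiGamma)$-module inside $\mathbf{D}_{{\rm rig},A}^\dagger(\mathbf{V})$ and obtain a globally defined rank-one object. The non-$\theta$-critical hypothesis on $f_\alpha$ feeds in precisely at this point: it is exactly the fourth condition of Theorem \ref{thm:rep-coleman-F} at $x_0$, which prevents $\mathscr{D}_{\rm cris}(\mathbf{D}_{x_0})$ from landing in the top filtration step of $\mathbf{D}_{\rm dR}(\mathbf{V}_{x_0})$ and thereby forces $\mathbf{D}_{x_0}$ to remain saturated. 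For the other classical $x\in\mathcal{X}^{\rm cl}$, non-criticality is generic and is absorbed in the construction on $U_0$. Putting the three steps together yields all three bullets of the theorem.
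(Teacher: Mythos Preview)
Your proposal is in line with what the paper does: the paper gives no proof of this statement at all and simply attributes it to \cite{KedlayaPottharstXiao} (with a pointer to \cite[\S 3.3]{benoisbuyukboduk2020critical}), so invoking the global triangulation theorem of \cite{KedlayaPottharstXiao} is exactly the intended argument. Your additional sketch of how the rank-one submodule is interpolated, how $\delta$ is identified, and why non-$\theta$-criticality yields saturation at $x_0$ goes beyond what the paper records, but is consistent with the cited sources.
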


\section{Selmer Complexes}\label{sect:Selmer-Comp}

\subsection{Review of Galois Cohomology and Iwasawa Cohomology}\label{subsect:galoiscohomology}

\par Let $G$ be a Galois group of a Galois extension, and let $\mathbf{V}$ be a $p$-adic representation of $G$ with coefficients in an affinoid algebra $A$ over $E$. When $A=E$, we will denote $\mathbf{V}$ by $V$. Let $C^\bullet(G,\mathbf{V})$ denote the cochain complex of continuous cohomology of $G$, and let $\RGamma(G,\mathbf{V})$ denote the corresponding object in the derived category $\mathscr{D}(A)$ of complexes of $A$-modules. We will denote the cohomologies of $\RGamma(G,\mathbf{V})$ by $H^i(G,\mathbf{V})$. If $A=E$, and $T\subseteq V$ is a $G$-stable $\mathcal{O}_E$-lattice, we can also define $C^\bullet(G,T)$, $C^\bullet(G,V/T)$ and the corresponding derived objects $\RGamma(G,T)$ and $\RGamma(G,V/T)$ in $\mathscr{D}(\mathcal{O}_E)$ and $\mathscr{D}(E/\mathcal{O}_E)$ respectively in a similar manner. 

\par When $K$ denotes a finite extension of $\QQ$, $\Qp$ or $\Ql$, and $G=G_K$ is the absolute Galois group of $K$, we will denote the complex $C^\bullet(G_K,\mathbf{V})$ by $C^\bullet(K,\mathbf{V})$, the corresponding derived object $\RGamma(G_K,\mathbf{V})$ in $\mathscr{D}(A)$ by $\RGamma(K,\mathbf{V})$, and its cohomologies by $H^i(K,\mathbf{V})$. When $A=E$ and $T\subseteq V$ is a $G$-stable $\mathcal{O}_E$-lattice, we will denote $\RGamma(G_K,T)$ and $\RGamma(G_K,V/T)$ by $\RGamma(K,T)$ and $\RGamma(K,V/T)$, respectively.

\par From now on, let $\mathbf{V}$ be a representation of $G_F$ for a number field $F$ over an affinoid algebra $A$. Let $S$ be a finite set of places of $F$ containing the places $v$ for which $\mathbf{V}$ is ramified, places above $\infty$ and $p$, and let $S_f=S-\{v\mid\infty\}$. If $F_S$ denotes the maximal extension of $F$ unramified outside $S$, the Galois group $\Gal(F_S/F)$ of this extension will be denoted as $G_{F,S}$. Note that $\mathbf{V}$ is a representation of $G_{F,S}$. In this case we denote $C^\bullet(G_{F,S},\mathbf{V})$ by $C^\bullet_S(F,\mathbf{V})$, and the corresponding derived object in $\mathscr{D}(A)$ by $\RGamma_S(F,\mathbf{V})$ or simply by $\RGamma_S(\mathbf{V})$ when $F$ is clear from the context.

\par Let us state the definitions of Iwasawa theoretic versions of the complexes above. Let $\overline{\mathbf{V}}:=\mathbf{V}\otimes_A\mathcal{H}_A^\iota$ denote the cyclotomic deformation of $\mathbf{V}$. Then $\overline{\mathbf{V}}$ is also a representation of $G_{F,S}$ (and of $G_{F_v}$ for each $v\in S_f$), thus we can define the complexes
\begin{align*}
    C^\bullet_{\Iw,S}(F,\mathbf{V})&:=C^\bullet_S(F,\overline{\mathbf{V}}),\\
    C^\bullet_{\Iw}(F_v,\mathbf{V})&:=C^\bullet(F_v,\overline{\mathbf{V}}){\rm\ for\ }v\in S_f.
\end{align*}
We denote the corresponding objects in the derived category $\mathscr{D}(\mathcal{H}_A)$ by $\RGamma_{\Iw,S}(F,\mathbf{V})$ and $\RGammaIw(F_v,\mathbf{V})$, and their cohomologies by $H^i_{\Iw,S}(F,\mathbf{V})$ and $H^i_\Iw(F_v,\mathbf{V})$, respectively. Note that by Shapiro Lemma, when $v\mid p$, we have \[H^i_{\Iw}(F_v,\mathbf{V})\cong\varprojlim_n H^i(F_{v,n},\mathbf{V}),\] where the transition maps are given by corestriction. When $F=\QQ$, we will denote $\RGamma_{\Iw,S}(F,\mathbf{V})$ by $\RGamma_{\Iw,S}(\mathbf{V})$.

\subsubsection{Cohomology of $\left(\varphi,\Gamma\right)$-Modules}\label{subsubsect:cohomology-phigamma}

\begin{definition}[Fontaine-Herr complex]\label{def:Fontaine-Herr}
	Let $K$ be a finite extension of $\Qp$, and let $\mathbf{D}$ be a $(\varphi,\Gamma_K)$-module over $\mathcal{R}_A$. We define the Fontaine-Herr complex $C^{\bullet}_{\varphi,\gamma_n}(\mathbf{D})$, which is concentrated at degrees [0,2], as follows:
    \[
       \left[\mathbf{D}^{\Delta}\xrightarrow{d_0}\mathbf{D}^{\Delta}\oplus\mathbf{D}^{\Delta}\xrightarrow{d_1}\mathbf{D}^{\Delta}\right],
    \]
	where $d_0(z)=(\gamma_n(z) -z,\varphi(z)-z)$ and $d_1(x,y)=(\gamma_n(x) -x)+(y-\varphi(y))$.
\end{definition}
\par We denote the cohomologies of this complex by $H^i(K_n,\mathbf{D})$. For $n=0$, we will denote the complex by $C^\bullet_{\varphi,\Gamma_K}(\mathbf{D})$, and the corresponding object in the derived category $\mathscr{D}(A)$ is denoted by $\RGamma(K,\mathbf{D})$ (or simply by $\RGamma(\mathbf{D})$, when $K=\Qp$).

\begin{definition}[Iwasawa cohomology]\label{def:Iwasawa-Cohomology}
    Let $C^{\bullet}_{\psi}(\mathbf{D})$ be the following complex that is concentrated in degrees $1$ and $2$:
    \[\mathbf{D}^{\Delta}\xrightarrow{\psi -1} \mathbf{D}^{\Delta},\]
	where $\psi$ is the unique continuous map (and a left inverse of $\varphi$) $\psi:\mathcal{R}_A\to\mathcal{R}_A$ such that:
	\[\varphi(\psi(f(\pi))):=\frac{1}{p}\sum_{\xi\in\mu_p}f(\xi(1+\pi)-1).\]
\end{definition}
\par We denote the corresponding object in the derived category $\mathscr{D}(\mathcal{H}_A)$ by $\RGammaIw(K,\mathbf{D})$ (or simply by $\RGammaIw(\mathbf{D})$, when $K=\Qp$), and its cohomologies by $H^i_\Iw(K,\mathbf{D})$.

The following theorem relates the Galois cohomologies of a $G_K$-representation $\mathbf{V}$ and the cohomologies of its associated $(\phiGamma)$-module (see \cite[\S 2.2, \S 2.4]{BB16Exceptional}):
\begin{proposition}\label{prop:Galois-phiGamma-cohomology}
	If $\mathbf{V}$ is a $G_K$ representation over $A$ then we have 
    \begin{align*}
        H^i(K,\mathbf{V})&\cong H^i(K,\mathbf{D}_{{\rm rig},A}^{\dagger}(\mathbf{V})),\\
        H^{i}_{{\rm Iw}}(K,\mathbf{V})&\cong H^{i}_{{\rm Iw}}(K,\mathbf{D}_{{\rm rig},A}^{\dagger}(\mathbf{V})).
    \end{align*}
\end{proposition}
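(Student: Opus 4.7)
The plan is to reduce the statement to standard comparison theorems in $p$-adic Hodge theory, chaining together the Galois cohomology of $\mathbf{V}$ with the cohomologies computed over the various rings $\mathbf{B}_K$, $\mathbf{B}_K^\dagger$, and $\mathcal{R}_K$ (and then base-changed to $\mathcal{R}_A$). The result in this packaging is due to Herr, Cherbonnier-Colmez, Berger, Liu, Pottharst, and Kedlaya-Pottharst-Xiao; I would proceed by citing the appropriate links in this chain.

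First I would treat the case $A=E$ a finite extension of $\Qp$. Here Herr's theorem identifies $\RGamma(K,V)$ with the Fontaine-Herr complex $C^\bullet_{\varphi,\Gamma_K}(\mathbf{D}(V))$, where $\mathbf{D}(V)$ is the étale $(\varphi,\Gamma)$-module over $\mathbf{B}_K$. To replace $\mathbf{D}(V)$ by $\mathbf{D}_{\rm rig}^\dagger(V)$ I would invoke the Cherbonnier-Colmez overconvergence theorem, which gives $\mathbf{D}(V) = \mathbf{B}_K \otimes_{\mathbf{B}_K^\dagger} \mathbf{D}^\dagger(V)$, together with Berger's construction $\mathbf{D}_{\rm rig}^\dagger(V) = \mathcal{R}_K \otimes_{\mathbf{B}_K^\dagger} \mathbf{D}^\dagger(V)$. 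The point is that both base-change maps $\mathbf{B}_K^\dagger \to \mathbf{B}_K$ and $\mathbf{B}_K^\dagger \to \mathcal{R}_K$ induce quasi-isomorphisms on Fontaine-Herr complexes of étale $(\varphi,\Gamma)$-modules, so all three complexes compute the same Galois cohomology.

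To pass to general affinoid $A$, I would appeal to the work of Kedlaya-Pottharst-Xiao on cohomology of families of $(\varphi,\Gamma)$-modules over $\mathcal{R}_A$. Concretely, the natural comparison map $\RGamma(K,\mathbf{V}) \to \RGamma(K,\mathbf{D}_{{\rm rig},A}^\dagger(\mathbf{V}))$ is shown to be a quasi-isomorphism either directly, or by checking after specialization at each maximal ideal of $A$ (reducing to the case $A=E$) and using the compatibility of both sides with base change. The Iwasawa-cohomology statement is obtained by applying the same machinery to the cyclotomic deformation $\overline{\mathbf{V}} = \mathbf{V}\hatotimes_A\mathcal{H}_A^\iota$: by Shapiro's lemma, $H^i_\Iw(K,\mathbf{V})$ is the $G_K$-cohomology of $\overline{\mathbf{V}}$, and by what was just proved this is the Fontaine-Herr cohomology of $\mathbf{D}_{{\rm rig},A}^\dagger(\overline{\mathbf{V}}) = \mathbf{D}_{{\rm rig},A}^\dagger(\mathbf{V})\hatotimes_A\mathcal{H}_A^\iota$. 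Then Pottharst's theorem provides the quasi-isomorphism between this Fontaine-Herr complex and the $\psi$-complex of Definition \ref{def:Iwasawa-Cohomology}, which is the definition of $\RGammaIw(K,\mathbf{D}_{{\rm rig},A}^\dagger(\mathbf{V}))$ used here.

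The main obstacle, historically speaking, is not any one of these comparisons in isolation but the overconvergence theorem of Cherbonnier-Colmez, which is what makes the passage from $\mathbf{D}$ to $\mathbf{D}^\dagger$ (and hence to $\mathbf{D}_{\rm rig}^\dagger$) possible in the first place. In the setting of families over affinoid algebras, an additional subtlety is verifying the flatness and base-change properties needed to deduce the general case from the case $A=E$; both are handled in \cite{KedlayaPottharstXiao} and in Pottharst's papers cited in the introduction, so within the scope of this article the proof consists of assembling these inputs.
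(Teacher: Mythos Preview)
Your proposal is correct and in fact more detailed than what the paper does: the paper gives no proof at all, simply prefacing the proposition with a citation to \cite[\S 2.2, \S 2.4]{BB16Exceptional}. Your sketch accurately unpacks the chain of results (Herr, Cherbonnier--Colmez, Berger, Kedlaya--Pottharst--Xiao, Pottharst) that lie behind that reference, so there is nothing to correct.
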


\subsection{Selmer Groups}\label{subsect:selmergrp}

\par In this subsection we set $A=E$. Let $V$ be a $G_F$ representation over $E$, let $T\subseteq V$ be a $G_F$-stable $\mathcal{O}_E$-lattice in $V$. Let $S$ be as in previous subsections. We first define the local conditions attached to $V$, i.e for each place $v\in S_f$, certain subspaces of $H^1(F_v,V)$.

\begin{definition}[Local conditions]\label{def:Local-Conds-Selmer-Grp}
     For each $v\in S_f$ and $v\nmid p$, let \[H^1_f(F_v,V):=\ker(H^1(F_v,V)\rightarrow H^1(F_v^{\rm ur},V)),\] where $F_v^{\rm ur}$ denotes the maximal unramified extension of $F_v$. For $v\mid p$, let \begin{align*}
        H^1_{\rm str}(F_v,V)&:=0,\\
        H^1_{(p)}(F_v,V)&:=H^1(F_v,V),\\
        H^1_f(F_v,V)&:=\ker(H^1(F_v,V)\rightarrow H^1(F_v,V\otimes\mathbf{B}_\cris))
    \end{align*}
    be the strict, relaxed, and Bloch-Kato local conditions at $p$ attached to $V$, respectively.
    For $v\in S_f$, we define $H^1_f(F_v,T)$ as the preimage of $H^1_f(F_v,V)$ under $H^1(F_v,T)\rightarrow H^1(F_v,V)$, and $H^1_f(F_v,V/T)$ as the image of $H^1(F_v,V)$ under $H^1(F_v,V)\rightarrow H^1(F_v,V/T)$. For strict and relaxed conditions at $p$, we define 
    \begin{align*}
        H^1_{\rm str}(F_v,T)&:=0=:H^1_{\rm str}(F_v,V/T),\\
        H^1_{(p)}(F_v,T)&:=H^1(F_v,T),\\
        H^1_{(p)}(F_v,V/T)&:=H^1(F_v,V/T).
    \end{align*}
\end{definition}

\begin{definition}[Selmer group]\label{def:Selmer-Grp}
    Let $M\in\{V,T,V/T\}$. For $?\in\{{\rm str},(p),f\}$, we define respectively the strict, relaxed, Bloch-Kato Selmer groups as follows:
    \[
    H^1_?(F,M):=\ker\left(H^1(F,M)\lra\bigoplus_{\substack{v\in S_f \\ v\nmid p }}\frac{H^1(F_v,M)}{H^1_f(F_v,M)}\oplus\bigoplus_{\substack{v\in S_f \\ v\mid p }}\frac{H^1(F_v,M)}{H^1_?(F_v,M)}\right).
    \]
\end{definition}

\par Let $D$ be a $(\varphi,\Gamma_K)$-module over the Robba ring $\mathcal{R}_E$. Let $H^1_f(K,D)\subseteq H^1(K,D)$ be the subspace of the crystalline extensions of $D$. Then $H^1_f(K,D)$ sits in the following exact sequence (see \cite[Proposition 2.9.2]{benois-p-adic-heights}):
\[
  0\rightarrow H^0(K,D)\rightarrow \mathscr{D}_\cris(D)\xrightarrow{1-\varphi,{\rm pr}}  \mathscr{D}_\cris(D)\oplus t_D(K)\rightarrow H^1_f(K,D)\rightarrow 0,
\]
where $t_D(K):=\mathscr{D}_{\dR}(D)/\Fil^0\mathscr{D}_\dR(D)$ is called the tangent space of $D$. When $V$ is potentially semistable, then we have \[H^1_f(K,V)=H^1_f(K,\mathbf{D}_{\rm rig}^\dagger(V)).\]

\subsection{Selmer Complexes}\label{subsect:nekovarselmercomp}

\par In this subsection we will give a brief summary of Nekovář's Selmer Complexes, details of which can be found in \cite{Nek06}. We will here focus on the rings $R$ which are complete local Noetherian rings with finite residue field, and give the generalizations to affinoid algebras in the following subsection. 

\par Let $F$ be a finite extension of $\QQ$, and let $S$ be a finite set of primes as before.

\begin{definition}[Selmer complex]\label{def:Selmer-Comp}
    Let $X$ be an admissible $R[G_F]$-module (\cite[Definition 3.2.1]{Nek06}), and for each $v\in S$, let $U_v^+(X)$ be a complex satisfying some finiteness conditions (see \cite[\S 4.2]{Nek06} for the details), and let $i_v^+:U_v^+(X)\rightarrow C^\bullet(G_v,X)$ be a morphism of complexes. The local conditions attached to $X$ is the collection of these morphisms $\Delta(X):=\{i_v^+\}$. Let $U_S^+(X):=\bigoplus_{v\in S_f}U_v^+(X)$.

    The Selmer complex attached to the data $(G_F,X,\Delta(X))$ is the following mapping cone:
    \[
    \widetilde{C}_f^{\bullet}(F,X,\Delta):= {\rm Cone}\left(C^\bullet_S(X)\oplus U_{S}^+(X) \xrightarrow{res_{S_f}-i_S^+} \bigoplus_{v\in S_f} C^\bullet(G_v,X)\right)[-1],
    \] where $i_S^+=\oplus i_v^+:U_{S_f}^+(X)\to\bigoplus_{v\in S_f} C^\bullet(G_v,X)$. The corresponding object in the derived category $\mathscr{D}_{\rm ft}^b(R)$ will be denoted by $\widetilde{\RGamma}_f(F,X,\Delta)$, or simply by omitting $F$ and $\Delta$ when they are clear from the context.
\end{definition}

\par We set \[U_S^-(F,X):={\rm Cone}\left(U_S^+(F,X)\to\bigoplus_{v\in S_f}C^\bullet(F_v,X)\right),\] and \[C^\bullet_{\rm c}(G_{F,S},X):={\rm Cone}\left(C^\bullet_S(F,X)\to \bigoplus_{v\in S_f}C^\bullet(F_v,X)\right)[-1].\]
	Then by definition of these complexes, we have the following exact triangles (see \cite[6.1.3.2]{Nek06}):
    \begin{align}
        U_S^+(F,X)\rightarrow \bigoplus_{v\in S_f} \RGamma(F_v,X) \rightarrow U_S^-(F,X) \rightarrow U_S^+(F,X)[1],\label{eq:Tr1}\tag{Tr1} \\ 
        U_S^+(F,X)[-1]\rightarrow \RGamma_{\rm c}(G_{F,S},X) \rightarrow \widetilde{\RGamma}_f(F,X) \rightarrow U_S^+(F,X), \label{eq:Tr2}\tag{Tr2} \\ 
        U_S^-(F,X)[-1] \rightarrow  \widetilde{\RGamma}_f(F,X) \rightarrow \RGamma_S(F,X) \rightarrow U_S^-(F,X). \label{eq:Tr3}\tag{Tr3}
    \end{align}

\par In particular, we will focus on the case where the local conditions (called Greenberg's local conditions) are as follows: for $v\nmid p$ we will set \[U_v^+(X):=C^{\bullet}(G_v/I_v,X^{I_v}),\] and for $v\mid p$ we choose a certain subrepresentation $F^+_vX\subseteq X$ and set $U_v^+(X):=C^\bullet(G_v,F^+_vX)$.

\par In the case where $F^+_vX=X$ (resp. $F^+_vX=0$) for all $v\mid p$ we will denote the associated Selmer complex by $\widetilde{C}^\bullet_{(p)}(X)$ (resp. $\widetilde{C}^\bullet_{\rm str}(X)$).

\begin{theorem}[Exact triangle of Selmer complexes]\label{thm:exact-triangle-nek-selmer-comp}
    Let $X_1$, $X_2$, $X_3$ be admissible $G_F$-representations forming a split exact sequence as follows:
    \begin{align}
        0\longrightarrow X_1\xlongrightarrow{f} X_2\xlongrightarrow{g} X_3 \longrightarrow 0.\label{eq:split-exact-reps-nek}
    \end{align}

    \par Suppose also that for each $v\mid p$, this exact sequence induce the following exact sequences:

    \begin{align}
        0\longrightarrow U_v^+(X_1)\xrightarrow{U_v^+(f)}U_v^+(X_2)\xrightarrow{U_v^+(g)} U_v^+(X_3) \longrightarrow 0.\label{eq:localcond-exact-reps}
    \end{align}

    Then we have the short exact sequence of the Selmer complexes:
    \begin{align*}
        0\longrightarrow\widetilde{C}^\bullet_f(F,X_1,\Delta(X_1))\longrightarrow \widetilde{C}^\bullet_f(F,X_2,\Delta(X_2))\longrightarrow \widetilde{C}^\bullet_f(F,X_3,\Delta(X_3))\longrightarrow 0.
    \end{align*}
    Moreover, if the splitting of \eqref{eq:split-exact-reps-nek} induces the splitting of \eqref{eq:localcond-exact-reps}, then the coboundary maps in the long exact sequence coming from the exact triangle above are $0$.
\end{theorem}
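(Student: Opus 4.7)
The plan is to unfold the definition of the Selmer complex as a (shifted) mapping cone and reduce the statement to obtaining compatible short exact sequences of the three constituent complexes. Recall that
\[
\widetilde{C}_f^{\bullet}(F,X,\Delta) \;=\; \mathrm{Cone}\bigl(C^\bullet_S(X)\oplus U_S^+(X) \xrightarrow{\mathrm{res}_{S_f}-i_S^+} \bigoplus_{v\in S_f} C^\bullet(G_v,X)\bigr)[-1].
\]
So it suffices to produce, functorially in $X_i$, short exact sequences of complexes for $C^\bullet_S(-)$, $U_S^+(-)$ and $\bigoplus_{v\in S_f}C^\bullet(G_v,-)$, and then invoke the fact that the mapping cone construction sends a short exact sequence of morphisms of complexes to a short exact sequence of cones.

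The first step is to verify that each of these three functors preserves the exactness of \eqref{eq:split-exact-reps-nek}. Since the sequence splits as a sequence of $R$-modules, it remains split after any $R$-linear operation; in particular the continuous cochain functor $C^\bullet(G,-)$ preserves it for $G=G_{F,S}$ and for $G=G_v$ with $v\in S_f$, because at each degree $n$ the continuous cochains $C^n(G,X)$ depend $R$-linearly on $X$ and any $R$-module splitting $X_2\cong X_1\oplus X_3$ induces a splitting of cochain groups in each degree. For the local conditions at $v\nmid p$, Greenberg's choice $U_v^+(X)=C^\bullet(G_v/I_v,X^{I_v})$ is obtained by first applying the $I_v$-invariants functor (which again preserves $R$-split exact sequences) and then continuous cochains of $G_v/I_v$, so the resulting sequence is short exact. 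For $v\mid p$ the short exactness of $0\to U_v^+(X_1)\to U_v^+(X_2)\to U_v^+(X_3)\to 0$ is precisely hypothesis \eqref{eq:localcond-exact-reps}.

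Next, I would assemble these into a morphism of short exact sequences. By naturality of $\mathrm{res}_{S_f}$ and $i_S^+$, the maps $f,g$ induce a commutative diagram of morphisms of complexes; passing to mapping cones gives the desired short exact sequence
\[
0\longrightarrow \widetilde{C}^\bullet_f(F,X_1,\Delta(X_1))\longrightarrow \widetilde{C}^\bullet_f(F,X_2,\Delta(X_2))\longrightarrow \widetilde{C}^\bullet_f(F,X_3,\Delta(X_3))\longrightarrow 0.
\]
For the final statement, if the chosen $R$-module splitting of \eqref{eq:split-exact-reps-nek} is compatible with a splitting of \eqref{eq:localcond-exact-reps} (i.e.\ the splitting at $U_v^+$ is induced by the splitting on $X_\bullet$ for each $v\mid p$, and at $v\nmid p$ by passing to $I_v$-invariants), then each of the three constituent short exact sequences becomes a \emph{split} short exact sequence of complexes (split degree-by-degree in a way compatible with the differentials, because the splitting is $R$-linear and Galois-compatible). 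The cone of a split short exact sequence is itself a split short exact sequence of complexes, hence the resulting short exact sequence of Selmer complexes is split. Consequently the connecting morphism in the associated long exact sequence of cohomology vanishes.

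The only real content in the argument is checking that split exactness persists through the composition $X\mapsto X^{I_v}\mapsto C^\bullet(G_v/I_v,X^{I_v})$ and through $X\mapsto C^\bullet(G_{F,S},X)$; this is straightforward because an $R$-linear splitting on the coefficients transports directly to splittings of all the cochain groups. The hypothesis \eqref{eq:localcond-exact-reps} is what bridges the one case (ramified primes above $p$) where this transport is not automatic, since $U_v^+$ at $p$ is not a functor of $X$ alone but depends on an auxiliary choice of subrepresentation. I expect no serious obstacle beyond the bookkeeping of these compatibilities.
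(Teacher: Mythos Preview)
Your approach is correct but differs from the paper's. The paper works with the auxiliary complexes $U_v^-(X):=\mathrm{Cone}(-i_v^+:U_v^+(X)\to C^\bullet(G_v,X))$ and the short exact sequence $0\to U_S^-(X)[-1]\to \widetilde{C}^\bullet_f(F,X,\Delta)\to C^\bullet_S(X)\to 0$, applying the $3\times 3$ lemma twice: once to obtain the short exact sequence for $U_S^-$, and once more to deduce it for the Selmer complexes. You instead use the defining cone description directly and the observation that $\mathrm{Cone}$ preserves degreewise-exact sequences of morphisms; this is more economical and avoids introducing $U_S^-$. The paper's route has the advantage of making contact with Nekov\'a\v{r}'s framework for $p$-adic heights (Chapter~11 of \cite{Nek06}), which is why the author records it that way; your route is shorter if one only needs the exact triangle.

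One small imprecision: you write that the $I_v$-invariants functor ``preserves $R$-split exact sequences'', but a bare $R$-linear splitting of $X_2\cong X_1\oplus X_3$ need not be $I_v$-equivariant, so surjectivity of $X_2^{I_v}\to X_3^{I_v}$ is not automatic from that alone. What makes it work is that ``split'' in the theorem means split as $G_F$-representations (this is how it is used throughout the paper, e.g.\ the trace splitting of $\Ad$), hence in particular $I_v$-equivariantly split. With that reading your argument goes through; you should make it explicit.
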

\begin{proof}
    We will follow the strategy that Nekovář used in the beginning of Chapter 11 of \cite{Nek06}, which lead to the definition of $p$-adic height pairing. First note that the splitness of the exact sequence in \eqref{eq:split-exact-reps-nek} yield the exact sequence \eqref{eq:localcond-exact-reps} also for $v\nmid p$. For each $v\in S$, let $U_v^-(X):={\rm Cone}(-i_v^+:U_v^+(X)\rightarrow C^\bullet(G_v,X))$ as in \cite{Nek06}. 
    
    Then we have a short exact sequence  
    \begin{align*}
        0\longrightarrow U_v^-(X_1)\xrightarrow{U_v^-(f)}U_v^-(X_2)\xrightarrow{U_v^-(g)} U_v^-(X_3) \longrightarrow 0
    \end{align*}
    obtained by applying 3x3 lemma to the following commutative diagram with exact columns, top and second rows:
    \[\begin{tikzcd}
        & 0 \arrow[d] & 0 \arrow[d] & 0 \arrow[d] & \\
        0 \arrow[r] & U_v^+(X_1) \arrow[r,"U_v^+(f)"]\arrow[d,"i_v^+"] & U_v^+(X_2) \arrow[r,"U_v^+(g)"]\arrow[d,"i_v^+"] & U_v^+(X_3) \arrow[r]\arrow[d,"i_v^+"] & 0\\
        0 \arrow[r] & C^\bullet(G_v,X_1) \arrow[r]\arrow[d,"{\rm Cone}(i_v^+)"] & C^\bullet(G_v,X_2) \arrow[r]\arrow[d,"{\rm Cone}(i_v^+)"] & C^\bullet(G_v,X_3) \arrow[r]\arrow[d,"{\rm Cone}(i_v^+)"] & 0\\
        0 \arrow[r,dotted] & U_v^-(X_1) \arrow[r,dotted] \arrow[d] & U_v^-(X_2) \arrow[r,dotted]  \arrow[d] & U_v^-(X_3) \arrow[r,dotted]  \arrow[d] & 0\\
        & 0 & 0  & 0 & \\
    \end{tikzcd}\]
    Hence we have the following exact sequence:
    \begin{align*}
        0\longrightarrow U_S^-(X_1)\xrightarrow{U_S^-(f)}U_S^-(X_2)\xrightarrow{U_S^-(g)} U_S^-(X_3) \longrightarrow 0.
    \end{align*}
    Recall the following exact sequence from \cite[6.1.3.1]{Nek06} inducing \eqref{eq:Tr3}:
    \begin{align*}
        0\longrightarrow U_S^-(X)[-1]\longrightarrow \widetilde{C}^\bullet_f(F,X,\Delta)\longrightarrow C^\bullet_S(X)\longrightarrow 0.
    \end{align*}
    Combining these and applying 3x3 lemma to the following diagram yields the desired exact sequence:
    \[\begin{tikzcd}
        & 0 \arrow[d] & 0 \arrow[d] & 0 \arrow[d] & \\
        0 \arrow[r] & U_S^-(X_1)[-1] \arrow[r,"U_S^-(f)"]\arrow[d] & U_S^-(X_2)[-1]  \arrow[r,"U_S^-(g)"]\arrow[d] & U_S^-(X_3)[-1]  \arrow[r]\arrow[d] & 0\\
        0 \arrow[r,dotted] & \widetilde{C}^\bullet_f(X_1) \arrow[r,dotted]\arrow[d] & \widetilde{C}^\bullet_f(X_2) \arrow[r,dotted]\arrow[d] & \widetilde{C}^\bullet_f(X_3) \arrow[r,dotted]\arrow[d] & 0\\
        0 \arrow[r] & C^\bullet_S(X_1) \arrow[r] \arrow[d] & C^\bullet_S(X_2) \arrow[r] \arrow[d] & C^\bullet_S(X_3) \arrow[r] \arrow[d] & 0\\
        & 0 & 0  & 0 & \\
    \end{tikzcd}\]
    Finally, for $v\mid p$ if the exact sequence \eqref{eq:localcond-exact-reps} also splits, then we have $U_S^-(X_2)=U_S^-(X_1)\oplus U_S^-(X_3)$, hence the first and third rows in the 3x3 diagram above split. Hence the second row induced by the 3x3 lemma also becomes split exact, and the coboundary map in the long exact sequence becomes $0$.
\end{proof}

\par Selmer complexes satisfy some duality theorems (via Grothendieck and Matlis dualities), control theorems, etc. We will be mainly interested in Pottharst style Selmer complexes, which generalizes the construction of Nekovář to the local conditions at $p$ arising from $(\phiGamma)$-modules and representations with coefficients in affinoid algebras. Thus we will state these theorems in terms of Pottharst style Selmer complexes in the following subsections.

\subsection{Pottharst Style Selmer Complexes}\label{subsect:pottharstselmercomp}

\par In this subsection, we will review results from \cite{pottharst2012cyclotomic}, \cite{Pottharst2013}, \cite{benois2014selmer}, \cite{BB16Exceptional} and \cite{benois-p-adic-heights}. Pottharst's Selmer Complexes are a direct generalization of Selmer Complexes introduced by Nekovář in \cite{Nek06} in a way that we are able to work with local conditions coming from triangulations and to work with coefficients in an affinoid algebra. In this subsection, for notational simplicity and our purposes, we will take $F=\QQ$ and consider $p$-adic representations $\mathbf{V}$ of $G_{\QQ,S}$ with coefficients in an affinoid algebra $A$ over $E$ (including the case $A=E$).

\par Let $\mathbf{D}\subseteq \mathbf{D}_{{\rm rig},A}^\dagger(\mathbf{V})$ be a $(\phiGamma)$-submodule.
Let \[U_p^+(\mathbf{V},\mathbf{D}):=C^\bullet_{\varphi,\Gamma}(\mathbf{D})\] denote the local condition at $p$, which is the Fontaine-Herr complex defined in Definition \ref{def:Fontaine-Herr}. If $\ell\neq p$, then $U_\ell^+(\mathbf{V},\mathbf{D})$ will denote the unramified local conditions 
\[U_\ell^+(\mathbf{V},\mathbf{D}):=\left[\mathbf{V}^{I_\ell}\xrightarrow{f_\ell-1}\mathbf{V}^{I_\ell}\right],\] which is a complex concentrated at degrees $[0,1]$.

Let $K_p(\mathbf{V})$ denote the total complex \[{\rm Tot}\left(C^\bullet(\Qp,\mathbf{D}_{{\rm rig},A}^\dagger(\mathbf{V}))\xrightarrow{\varphi -1}(C^\bullet(\Qp,\mathbf{D}_{{\rm rig},A}^\dagger(\mathbf{V})\right),\] and \[i_p^+:U_p^+(\mathbf{V},\mathbf{D})\to C_{\varphi,\Gamma}^\bullet(\mathbf{D}_{{\rm rig},A}^\dagger(\mathbf{V}))\xrightarrow{q.is}K_p(V).\] We finally set \[K_\ell(\mathbf{V}):=\RGamma(\Ql,\mathbf{V})\] and \[K_S(\mathbf{V}):=\bigoplus_{v\in S_f}K_v(\mathbf{V}),\] and also denote the corresponding objects to these complexes in the derived category $\mathscr{D}(A)$ exactly the same.

\begin{definition}\label{def:Selmer-Comp-Benois}
	The Selmer complex $\RGamma(\mathbf{V},\mathbf{D})$ attached to the data $(\mathbf{V},\mathbf{D})$ is defined as the mapping cone:
	\begin{align*}
		{\rm Cone}\left(\RGamma_S(\mathbf{V})\oplus\bigoplus_{v\in S_f}U_v^+(\mathbf{V},\mathbf{D})\to K_S(\mathbf{V})\right)[-1].
	\end{align*}
    If we work with representations of $G_F$ for general number fields $F$, and for each prime $\frakp$ of $F$ lying above $p$, if we have a local condition $\mathbf{D}_\frakp$, and $\mathbf{D}=(\mathbf{D}_\frakp)_\frakp$, then we define the corresponding Selmer complexes in a similar manner, and denote them by $\RGamma(F,\mathbf{V},\mathbf{D})$.
\end{definition}

\par Let $\widetilde{\mathbf{D}}:=\mathbf{D}_{{\rm rig},A}^\dagger(\mathbf{V})/\mathbf{D}$, and for $v\in S_f$, let \[
    U_v^-(\mathbf{V},\mathbf{D}):={\rm Cone}\left(U_v^+(\mathbf{V},\mathbf{D})\xrightarrow{-i_v^+} K_v(\mathbf{V})\right).
    \]
Then we have $U_p^-(\mathbf{V},\mathbf{D})=\RGamma(\widetilde{\mathbf{D}})$ and the following commutative diagram of exact triangles with vertical arrows being quasi-isomorphisms in the derived category $\mathscr{D}(A)$:
\[\begin{tikzcd}
	U_p^+(\mathbf{V},\mathbf{D}) \arrow[r] & K_p(\mathbf{V}) \arrow[r] & U_p^-(\mathbf{V},\mathbf{D}) \arrow[r] & U_p^+(\mathbf{V},\mathbf{D})[1]\\
	\RGamma(\mathbf{D}) \arrow[r]\arrow[u,"\sim"] & \RGamma(\mathbf{D}_{{\rm rig},A}^\dagger(\mathbf{V})) \arrow[r]\arrow[u,"\sim"] & \RGamma(\widetilde{\mathbf{D}}) \arrow[r]\arrow[u,dotted,"\sim"] & \RGamma(\mathbf{D})[1]\arrow[u,"\sim"]
\end{tikzcd}\]
Thus we have $H^i(\Qp,\widetilde{\mathbf{D}})\cong H^i(U_p^-(\mathbf{V},\mathbf{D}))$.

\par For the Iwasawa theoretic analogues of these Selmer complex, we define our local conditions as follows: First, let us look at the case $A=E$. Let $T$ be a $G_\QQ$-stable $\mathcal{O}_E$-lattice in $V$.
For any $\ell\neq p$, we define $U_{\Iw,\ell}^+(V,\mathbf{D})$ as
\begin{align*}
	U_{\Iw,\ell}^+(V,\mathbf{D})=\RGamma_{f,\Iw}(\Ql,T)\otimes_\Lambda\mathcal{H},
\end{align*}
where $\mathbf{R\Gamma}_{f,\Iw}(\Ql,T)$ is the corresponding derived object of the following complex concentrated at degrees $[0,1]$:
\begin{align*}
	\left[T^{I_\ell}\otimes\Lambda^\iota\xrightarrow{{\rm Fr}_\ell -1}T^{I_\ell}\otimes\Lambda^\iota\right].
\end{align*}
For $v=p$, we define $U^+_{\Iw,p}(V,\mathbf{D})$ as $\RGammaIw(\mathbf{D})$. We define $U^+_{\Iw,S}(V,\mathbf{D})$ as before, and for each $v\in S_f$, $K_{\Iw,v}(V):=K_v(T\otimes_\Lambda\mathcal{H}^\iota)\simeq\RGammaIw(\QQ_v,V)$
Then we can define the Iwasawa theoretic Selmer complex $\RGammaIw(V,\mathbf{D})$ associated to this local data as follows:
\begin{align*}
	{\rm Cone}\left( \left(\RGamma_{\Iw,S}(T)\otimes_\Lambda^\mathbf{L}\mathcal{H}\right)\oplus U_{{\rm Iw},S}^+(V,\mathbf{D})\to\bigoplus_{v\in S_f}\RGamma_\Iw(\QQ_v,V)\right)[-1].
\end{align*}

\par Note that we could simply define the Iwasawa theoretic Selmer complex as \[\RGammaIw(V,\mathbf{D}):=\RGamma(\overline{V},\overline{\mathbf{D}}),\] where we recall that $\overline{V}=V\otimes\mathcal{H}^\iota$ and $\overline{\mathbf{D}}=\mathbf{D}\otimes\mathcal{H}^\iota$.

\par If $\mathbf{V}$ is a $p$-adic representation over an affinoid algebra $A$, we replace $T$ in the definition above by $\mathbf{V}$. In this case, the Iwasawa Selmer complex we are interested in is the Selmer Complex $\RGamma(\overline{\mathbf{V}},\overline{\mathbf{D}})$, where \[\overline{\mathbf{V}}=\mathbf{V}{\otimes}_A\mathcal{H}_A^\iota\] and \[\overline{\mathbf{D}}:=\mathbf{D}{\otimes}_A\mathcal{H}_A^\iota\] are the cyclotomic deformations of $\mathbf{V}$ and $\mathbf{D}$, respectively. More explicitly, our local conditions are given as follows: For $\ell\neq p$ we have 
\begin{align*}
    U_{\ell,\Iw}^+(\mathbf{V},\mathbf{D})=U_\ell^+(\overline{\mathbf{V}},\overline{\mathbf{D}}):=\left[(\mathbf{V}\otimes_A\Lambda_A^\iota)^{I_\ell}\xrightarrow{\rm{Fr}_\ell -1}(\mathbf{V}\otimes_A\Lambda_A^\iota)^{I_\ell}\right]\otimes^\mathbf{L}_{\Lambda_A}\mathcal{H}_A,
\end{align*}
and at $p$ we have
\begin{align*}
    U_{p,\Iw}^+(\mathbf{V},\mathbf{D})=U_p^+(\overline{\mathbf{V}},\overline{\mathbf{D}}):=\left[\mathbf{D}^{\Delta}\xrightarrow{\psi-1}\mathbf{D}^{\Delta}\right].
\end{align*}
\par We will also denote this Selmer complex by $\RGamma_{\rm Iw}(\mathbf{V},\mathbf{D})$. Now we will state some useful properties of these Selmer complexes.

\begin{proposition}[Twisting property]\label{prop:even_cyclotomic_twist}
    Let $\chi:=\left\langle\cyc\right\rangle$ be the restriction of the cyclotomic character to $\Gamma_0$, i.e. $\left\langle\cyc\right\rangle=\cyc\omega^{-1}$. Then we have 
    \[
    \RGammaIw(\mathbf{V}(\chi),\mathbf{D}(\chi))\simeq\RGammaIw(\mathbf{V},\mathbf{D})(\chi). 
    \]
\end{proposition}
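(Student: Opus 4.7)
The plan is to reduce the statement to a general twisting principle for cyclotomic Iwasawa cohomology by Selmer complexes, which follows from the observation that $\chi = \langle\cyc\rangle$ factors through $\Gamma_0$ and therefore lifts to a unit in $\mathcal{H}_A^\times$ via the natural embedding $\Gamma_0 \hookrightarrow \units{\Lambda_A} \hookrightarrow \units{\mathcal{H}_A}$. Multiplication by this unit gives a continuous $A$-linear isomorphism $\mathrm{tw}_\chi\colon \mathcal{H}_A \xrightarrow{\sim} \mathcal{H}_A$, and unraveling definitions yields a canonical $G_{\QQ,S}$-equivariant isomorphism of $\mathcal{H}_A$-modules
\[
\overline{\mathbf{V}(\chi)} \;=\; \mathbf{V}(\chi)\otimes_A\mathcal{H}_A^\iota \;\xrightarrow{\sim}\; \bigl(\mathbf{V}\otimes_A\mathcal{H}_A^\iota\bigr)(\chi) \;=\; \overline{\mathbf{V}}(\chi),
\]
where the right-hand side carries the tensor product Galois action twisted by $\chi$. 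The same formula, applied to the $(\phiGamma)$-module setting, gives the analogous isomorphism $\overline{\mathbf{D}(\chi)} \xrightarrow{\sim} \overline{\mathbf{D}}(\chi)$ (the operators $\varphi$ and $\Gamma$ on $\mathcal{R}_{A}$ commute with the scalar $\chi \in \mathcal{H}_A^\times$, so the $(\phiGamma)$-module structure is preserved).

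Next I would promote these isomorphisms to the cochain complexes appearing in Definition \ref{def:Selmer-Comp-Benois}. The global piece $C^\bullet_S(\overline{\mathbf{V}(\chi)}) \cong C^\bullet_S(\overline{\mathbf{V}}(\chi))$ is immediate from $G_{\QQ,S}$-equivariance of the twist. For $\ell \in S_f$ with $\ell\neq p$, the character $\chi$ is unramified at $\ell$, so the twist induces an isomorphism on $I_\ell$-invariants and hence on the unramified complex $U_{\Iw,\ell}^+$; similarly the total complex $K_{\Iw,\ell}(\mathbf{V})$ is intertwined. At $p$ the corresponding identifications at the level of the Herr $\psi$-complex $[\mathbf{D}^{\Delta}\xrightarrow{\psi-1}\mathbf{D}^{\Delta}]$ and of $K_p$ follow from the remark above that the twist is compatible with $\varphi$ and $\Gamma$.

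Finally, since the restriction maps $\mathrm{res}_v$ and the local-condition maps $i_v^+$ are natural in the coefficients, all these isomorphisms fit into a commutative diagram
\[
\begin{tikzcd}[column sep=small]
\RGamma_S(\overline{\mathbf{V}(\chi)})\oplus\bigoplus_{v\in S_f}U^+_{\Iw,v}(\mathbf{V}(\chi),\mathbf{D}(\chi)) \arrow[r]\arrow[d,"\sim"] & \bigoplus_{v\in S_f} K_{\Iw,v}(\mathbf{V}(\chi))\arrow[d,"\sim"]\\
\RGamma_S(\overline{\mathbf{V}})(\chi)\oplus\bigoplus_{v\in S_f}U^+_{\Iw,v}(\mathbf{V},\mathbf{D})(\chi) \arrow[r] & \bigoplus_{v\in S_f} K_{\Iw,v}(\mathbf{V})(\chi)
\end{tikzcd}
\]
whose mapping cones are, by construction, $\RGammaIw(\mathbf{V}(\chi),\mathbf{D}(\chi))$ and $\RGammaIw(\mathbf{V},\mathbf{D})(\chi)$, respectively. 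Taking cones produces the required quasi-isomorphism.

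The only subtle point I expect is the bookkeeping in Step 1: one must verify that the twisting really factors through $\Gamma_0$ (which is built into the definition $\chi = \cyc\omega^{-1}$), and that the identification $\overline{\mathbf{V}(\chi)} \cong \overline{\mathbf{V}}(\chi)$ respects both the $\mathcal{H}_A$-module structure and the $G_{\QQ,S}$-action simultaneously; this amounts to checking that the twisting automorphism $\mathrm{tw}_\chi$ of $\mathcal{H}_A^\iota$ is $G_{\QQ,S}$-equivariant when the target carries the $\chi$-twisted action, which is a direct consequence of the definition of the $\iota$-action.
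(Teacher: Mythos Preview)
Your proposal is correct and follows essentially the same strategy as the paper: reduce to checking the twisting property on each piece of the mapping cone (global cohomology, local cohomology at $\ell\neq p$, and the $(\phiGamma)$-complex at $p$), using that $\chi$ is unramified away from $p$ and factors through $\Gamma_0$ (so $\Delta$ acts trivially). The paper's proof is terser --- it cites \cite[\S VI Proposition 2.1(i)]{Ru00} for the global piece rather than spelling out the twisting automorphism $\mathrm{tw}_\chi$ of $\mathcal{H}_A^\iota$ as you do --- but the underlying argument is the same.
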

\begin{proof}
    \par By the definition of Selmer complexes, it is sufficient to show that the twisting property holds for $\RGamma_{\Iw, S}(\mathbf{V})$, $\RGammaIw(\QQ_v,\mathbf{V})$ and $U^+_{\Iw,v}(\mathbf{V},\mathbf{D})$ for each $v\in S_f$. If $\ell\neq p$, then \[U_{\Iw,\ell}^+(\mathbf{V}(\chi),\mathbf{D}(\chi))=U_{\Iw,\ell}^+(\mathbf{V},\mathbf{D})(\chi)\] since $\chi$ is unramified for all primes $\ell\neq p$. Same argument goes for $\RGammaIw(\Ql,V)$.
    \par For the prime $p$, recall the definition of \[U_{\Iw,p}^+(\mathbf{V},\mathbf{D})=\RGammaIw(\mathbf{D}),\] and \[\RGammaIw(\Qp,\mathbf{V})=\RGammaIw(\mathbf{D}_{{\rm rig},A}^\dagger(\mathbf{V})).\] Since $\Delta$ acts trivially on $\chi$, we can take $\chi$ outside of these complexes. 
    \par Finally for $\RGamma_{\Iw,S}(\mathbf{V})$, we will refer to \cite[\S VI Proposition 2.1(i)]{Ru00}. 
\end{proof}

\begin{theorem}\label{thm:exact-triangle-selmer-comp}
Let $\mathbf{V}_1$, $\mathbf{V}_2$, $\mathbf{V}_3$ be finite free $G_K$-representations with coefficients in an affinoid algebra $A$ over a $p$-adic field $E$ forming a split exact sequence as follows:
\begin{align}
    0\longrightarrow \mathbf{V}_1\xlongrightarrow{f} \mathbf{V}_2\xlongrightarrow{g} \mathbf{V}_3 \longrightarrow 0.\label{eq:split-exact-reps}
\end{align}
For $i\in\{1,2,3\}$, let $\mathbf{D}_i$ be a $(\phiGamma)$-submodule of $\mathbf{D}_{{\rm rig},A}^\dagger(\mathbf{V}_i)$ such that $\mathbf{D}_1$ is the preimage of $\mathbf{D}_2$ under $\mathbf{D}_{{\rm rig},A}^\dagger(f)$, and $\mathbf{D}_3$ is the image of $\mathbf{D}_2$ under $\mathbf{D}_{{\rm rig},A}^\dagger(g)$. In other words suppose there is an exact sequence of the following form:
\begin{align}
    0\longrightarrow \mathbf{D}_1\xrightarrow{\mathbf{D}_{{\rm rig},A}^\dagger(f)} \mathbf{D}_2\xrightarrow{\mathbf{D}_{{\rm rig},A}^\dagger(g)} \mathbf{D}_3 \longrightarrow 0.\label{eq:phigamma-exact-reps}
\end{align}
Then we have the exact triangles of the Selmer complexes:
\begin{align}
    \RGamma_{?}(\mathbf{V}_1,\mathbf{D}_1)\longrightarrow &\RGamma_{?}(\mathbf{V}_2,\mathbf{D}_2)\longrightarrow \RGamma_{?}(\mathbf{V}_3,\mathbf{D}_3),
\end{align}
where $?\in\{\emptyset,{\Iw}\}$.
Moreover, if the splitting of \eqref{eq:split-exact-reps} induces the splitting of \eqref{eq:phigamma-exact-reps}, then the coboundary maps in the long exact sequences coming from the exact triangles above are $0$.
\end{theorem}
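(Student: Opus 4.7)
The plan is to mirror closely the argument for Nekovář's Selmer complexes (Theorem \ref{thm:exact-triangle-nek-selmer-comp}) above, by applying the $3\times 3$ lemma twice, first at the level of local conditions and then at the level of the global Selmer complex. The only genuinely new input is the behaviour at the prime $p$, where the local conditions come from $(\phiGamma)$-modules rather than from Galois-stable submodules; this is precisely what the additional exact sequence \eqref{eq:phigamma-exact-reps} is designed to supply.

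First I would verify that each of the three ingredients in the mapping cone definition of $\RGamma(\mathbf{V}_i,\mathbf{D}_i)$ fits into a short exact sequence of complexes. Split exactness of \eqref{eq:split-exact-reps} as $G_{\QQ,S}$-representations gives a short exact sequence of continuous cochain complexes $0\to C^\bullet_S(\mathbf{V}_1)\to C^\bullet_S(\mathbf{V}_2)\to C^\bullet_S(\mathbf{V}_3)\to 0$, and analogous sequences for each $K_\ell(\mathbf{V}_i)=\RGamma(\Ql,\mathbf{V}_i)$. Because the splitting is Galois-equivariant, taking $I_\ell$-invariants for $\ell\neq p$ remains exact, giving short exactness of the unramified local conditions $U_\ell^+(\mathbf{V}_i,\mathbf{D}_i)$. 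At $p$, the hypothesis \eqref{eq:phigamma-exact-reps} yields short exact sequences of Fontaine-Herr complexes $C^\bullet_{\phiGamma}(\mathbf{D}_i)$ and of the total complexes $K_p(\mathbf{V}_i)$, and the quasi-isomorphism $U_p^+(\mathbf{V}_i,\mathbf{D}_i)\xrightarrow{\sim} C^\bullet_{\phiGamma}(\mathbf{D}_{{\rm rig},A}^\dagger(\mathbf{V}_i))$ is functorial in $(\mathbf{V}_i,\mathbf{D}_i)$.

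Next, following verbatim the diagrammatic strategy in the proof of Theorem \ref{thm:exact-triangle-nek-selmer-comp}, I would apply the $3\times 3$ lemma first to the columns $U_v^+(\mathbf{V}_i,\mathbf{D}_i)\to K_v(\mathbf{V}_i)\to U_v^-(\mathbf{V}_i,\mathbf{D}_i)$ to obtain a short exact sequence of $U_v^-$ for each $v\in S_f$, and sum over $S_f$. Then I would apply the $3\times 3$ lemma a second time to the rows $0\to U_S^-(\mathbf{V}_i,\mathbf{D}_i)[-1]\to \RGamma(\mathbf{V}_i,\mathbf{D}_i)\to C^\bullet_S(\mathbf{V}_i)\to 0$ analogous to \cite[6.1.3.1]{Nek06}, producing the desired short exact sequence of Selmer complexes and hence the required exact triangle in $\mathscr{D}(A)$. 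For the Iwasawa statement, I would use the identification $\RGammaIw(\mathbf{V},\mathbf{D})\simeq \RGamma(\overline{\mathbf{V}},\overline{\mathbf{D}})$ with $\overline{\mathbf{V}}_i=\mathbf{V}_i\otimes_A\mathcal{H}_A^\iota$ and $\overline{\mathbf{D}}_i=\mathbf{D}_i\otimes_A\mathcal{H}_A^\iota$; tensoring with $\mathcal{H}_A^\iota$ preserves the relevant short exactnesses, so Steps 1 and 2 apply to give the Iwasawa exact triangle in $\mathscr{D}(\mathcal{H}_A)$.

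For the vanishing of the connecting maps in the long exact sequence, if the splitting of \eqref{eq:split-exact-reps} is compatible with a splitting of \eqref{eq:phigamma-exact-reps}, then every complex entering the $3\times 3$ diagrams decomposes as a direct sum; since the mapping cone is an additive bifunctor it preserves such splittings, so the resulting short exact sequence of Selmer complexes is itself split, whence the connecting homomorphism vanishes. I expect the main obstacle to be purely notational bookkeeping rather than a deep issue: one must be careful that the quasi-isomorphism $U_p^+(\mathbf{V},\mathbf{D})\xrightarrow{\sim} C^\bullet_{\phiGamma}(\mathbf{D}_{{\rm rig},A}^\dagger(\mathbf{V}))$ used to compare the Fontaine-Herr complex with the Selmer-style local condition is natural with respect to morphisms of $(\phiGamma)$-modules, and that the completed tensor product with $\mathcal{H}_A^\iota$ on the Iwasawa side is compatible with the cone formation. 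Both of these are by now standard in the Pottharst--Benois formalism.
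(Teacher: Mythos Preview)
Your proposal is correct and follows exactly the approach of the paper: the paper's proof is the single sentence ``This is essentially Theorem~\ref{thm:exact-triangle-nek-selmer-comp} adapted to this setting,'' and your write-up is a careful expansion of precisely that adaptation, via two applications of the $3\times 3$ lemma and the identification $\RGammaIw(\mathbf{V},\mathbf{D})\simeq\RGamma(\overline{\mathbf{V}},\overline{\mathbf{D}})$ for the Iwasawa case.
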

\begin{proof}
This is essentially Theorem \ref{thm:exact-triangle-nek-selmer-comp} adapted to this setting.
\end{proof}

\begin{proposition}[Control theorems]\label{prop:control-theorems-pottharst} 
    Let $\mathbf{V}$ and $\mathbf{D}$ be as before, $x\in\Spm(A)$ with corresponding maximal ideal $\frakm_x$ of $A$. Suppose that $A/\frakm_x\cong E$. Let ${{\rm sp}_{x,\Iw}}:\mathcal{H}_A\rightarrow\mathcal{H}$ be induced by the specialization map ${\rm sp}_x:A\rightarrow A/\frakm_x$, and let $V_x:=\mathbf{V}\otimes_A A/\mathfrak{m}_x$. Finally for an affinoid algebra $B$, let $f:A\rightarrow B$ a morphism of affinoid algebras. Then we have:
    \begin{align}
        \RGammaIw(\mathbf{V},\mathbf{D})&\otimes_{\mathcal{H}_A}^{\mathbf{L}}A\simeq \RGamma(\mathbf{V},\mathbf{D}),\label{eq:control-HA-A}\tag{CTRL-1}\\
        \RGammaIw(\mathbf{V},\mathbf{D})&\otimes_{{\rm sp}_{x,\Iw}}^{\mathbf{L}}\mathcal{H}\simeq \RGammaIw(V_x,\mathbf{D}_x),\label{eq:control-HA-H}\tag{CTRL-2}\\
        \RGamma(\mathbf{V},\mathbf{D})&\otimes_A^{\mathbf{L}}{A/\mathfrak{m}_x}\simeq \RGamma(V_x,\mathbf{D}_x),\label{eq:control-A-E}\tag{CTRL-3}\\
        \RGamma(\mathbf{V},\mathbf{D})&\otimes_f^{\mathbf{L}}B\simeq \RGamma(\mathbf{V}\otimes_f B,\mathbf{D}\otimes_f B)\label{eq:control-A-B}\tag{CTRL-4}.
    \end{align}
\end{proposition}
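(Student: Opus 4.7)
The plan is to exploit the definition of the Selmer complex as a mapping cone together with the exactness of the derived tensor product on triangles. Specifically, for any morphism of affinoid algebras (or any of the three specialization morphisms appearing here) $\theta\colon R\to R'$, the functor $\blank\otimes_R^{\mathbf{L}}R'$ sends the defining triangle
\[
\RGamma(\mathbf{V},\mathbf{D})\to \RGamma_S(\mathbf{V})\oplus\bigoplus_{v\in S_f}U_v^+(\mathbf{V},\mathbf{D})\to K_S(\mathbf{V})\to\RGamma(\mathbf{V},\mathbf{D})[1]
\]
to another triangle, and analogously for the Iwasawa version. It therefore suffices to verify each of the four isomorphisms separately on $\RGamma_S$, on $U_v^+$ (split according to $v\mid p$ versus $v\nmid p$), and on $K_v$. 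Once the three constituents match, one assembles them using the triangle above.

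For (\ref{eq:control-HA-A}) and (\ref{eq:control-HA-H}), the two augmentation-type maps $\mathcal{H}_A\to A$ (killing $\gamma-1$) and $\mathcal{H}_A\to \mathcal{H}$ (via $\mathrm{sp}_{x,\Iw}$) realize the Galois module $\mathbf{V}$ (respectively $V_x\otimes\mathcal{H}^\iota$) as the derived tensor $\overline{\mathbf{V}}\otimes_{\mathcal{H}_A}^{\mathbf{L}}A$ (respectively $\overline{\mathbf{V}}\otimes^{\mathbf{L}}\mathcal{H}$). Hence for the global piece, one reduces to the assertion $\RGamma_S(\overline{\mathbf{V}})\otimes^{\mathbf{L}}_{\mathcal{H}_A}A\simeq \RGamma_S(\mathbf{V})$, which follows from a two-term Koszul resolution of $A$ over $\mathcal{H}_A$ (representing $[\gamma-1]$) combined with the identity $H^i_\Iw=\varprojlim H^i$ provided by Shapiro's lemma applied to $G_{\QQ,S}$; compare with the treatment in Pottharst \cite{Pottharst2013}. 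For the unramified local conditions $U_\ell^+$ and $K_\ell$ at $\ell\neq p$, the complex is itself a two-term $[\mathbf{V}^{I_\ell}\to\mathbf{V}^{I_\ell}]$, and flatness of the relevant base change tensored against this finite complex directly gives the required isomorphism. At $p$, the analogous statement takes the form
\[
U_{p,\Iw}^+(\mathbf{V},\mathbf{D})\otimes_{\mathcal{H}_A}^{\mathbf{L}}A \;\simeq\; U_p^+(\mathbf{V},\mathbf{D}),\qquad K_{p,\Iw}(\mathbf{V})\otimes_{\mathcal{H}_A}^{\mathbf{L}}A\;\simeq\; K_p(\mathbf{V}),
\]
which is Pottharst's comparison between the Iwasawa $\psi$-complex $[\mathbf{D}^\Delta\xrightarrow{\psi-1}\mathbf{D}^\Delta]$ and the Fontaine-Herr complex $C^\bullet_{\varphi,\Gamma}(\mathbf{D})$ under augmentation; see \cite{Pottharst2013} and the reformulation in \cite{benois2014selmer}. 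The version for $\mathrm{sp}_{x,\Iw}$ is proved in exactly the same way, with $\gamma-1$ replaced by a regular sequence generating $\mathfrak{m}_x$ inside $A$ and then the Koszul resolution used to compute the derived base change.

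The statements (\ref{eq:control-A-E}) and (\ref{eq:control-A-B}) are instances of ordinary flat or Koszul base change on the affinoid side, with no Iwasawa variable present. For the global term one uses that $\mathbf{V}$ is finite free over $A$, so that $\mathbf{V}\otimes_A^{\mathbf{L}}B=\mathbf{V}\otimes_A B$ and one applies the identity $\RGamma_S(\mathbf{V})\otimes_A^{\mathbf{L}}B\simeq \RGamma_S(\mathbf{V}\otimes_A B)$, which is standard for continuous cohomology of profinite groups acting on finite free modules. The unramified local conditions behave functorially in the coefficient ring for the same reason, and at $p$ one uses that $\mathbf{D}$ is locally free of finite rank over $\mathcal{R}_A$ together with the compatibility $\mathbf{D}_{\mathrm{rig},A}^\dagger(\mathbf{V})\otimes_A^{\mathbf{L}}B\simeq \mathbf{D}_{\mathrm{rig},B}^\dagger(\mathbf{V}\otimes_A B)$, which is the base-change property of the $(\varphi,\Gamma)$-module functor (as in \cite{KedlayaPottharstXiao}).

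The main technical obstacle is the derived base-change isomorphism for the $\psi$-complex at $p$ between $\mathcal{H}_A$ and $A$ (and between $\mathcal{H}_A$ and $\mathcal{H}$), since it relies on the non-trivial identification of two a priori different complexes computing the same Iwasawa cohomology, together with the fact that $\mathcal{H}_A$ is not Noetherian. The correct framework is Pottharst's theory of coadmissible modules and the use of a Fréchet–Stein presentation $\mathcal{H}_A=\varprojlim_n \mathcal{H}_{n,A}$: each $\mathcal{H}_{n,A}$ is Noetherian and the passage to the limit is exact on coadmissible objects, which reduces the derived base change to a finite-level Koszul computation. Once this key input is granted, the four assertions follow by combining the cone argument above with the compatibilities listed piece by piece.
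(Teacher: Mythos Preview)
Your proposal is essentially correct and amounts to unpacking the argument behind \cite[Theorem~1.12, Situation~(3)]{Pottharst2013}, which is exactly what the paper cites as a black box. The paper's proof is a one-line reduction to that theorem, adding only the remark that $\mathcal{H}=\Gamma(W,\mathcal{O}_W)$ for the open unit ball $W$ admissibly covered by affinoid subdomains; this is the same input you invoke at the end via the Fr\'echet--Stein presentation $\mathcal{H}_A=\varprojlim_n\mathcal{H}_{n,A}$. So the route is the same, just at different levels of detail.

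One small gap: your treatment of the unramified local conditions at $\ell\neq p$ appeals to ``flatness of the relevant base change,'' but the maps $A\to A/\mathfrak{m}_x$ and a general $f\colon A\to B$ are not flat, and $\mathbf{V}^{I_\ell}$ need not be $A$-projective. The correct mechanism (implicit in Pottharst and spelled out in the paper's Lemma~\ref{lemma:wild-inertia-invariant-projective}) is that the wild inertia $I_\ell^w$ acts through a finite group of order prime to $p$, so $\mathbf{V}^{I_\ell^w}$ is a direct summand of $\mathbf{V}$ and therefore commutes with arbitrary base change; one represents $\RGamma(I_\ell,\mathbf{V})$ by the perfect complex $[\mathbf{V}^{I_\ell^w}\xrightarrow{t_\ell-1}\mathbf{V}^{I_\ell^w}]$ and then takes $\Frob_\ell$-fixed points. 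With that adjustment, your piece-by-piece cone argument goes through.
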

\begin{proof}
    All of these situations are a special case of \cite[Theorem 1.12 - Situation (3)]{Pottharst2013}. Here it is crucial that the $p$-adic open unit ball $W:=\mathcal{W}_\chi$ where $\chi=\omega^j$ for some $j\in(\ZZ/p\ZZ)^\times$ (recall Section \ref{subsect:Affinoid-space}) is admissibly covered by affinoid subdomains (since $\mathcal{H}=\Gamma(W,\mathcal{O}_W)$).
\end{proof}

\par Now we state a general theorem on spectral sequences, which will help us to compute the cohomologies above.

\begin{proposition}[Tor spectral sequence]\label{prop:Tor-spectral-seq}
    Let $R$ be a ring, $M$ be an $R$-module, and $K^\bullet$ be a bounded cochain complex of $R$-modules. Then there exists a page-2 spectral sequence
    \[
    E_2^{i,j}:\Tor_i^R(H^{-j}(K^\bullet),M)\Rightarrow H^{-i-j}(K^\bullet\otimes_R^\mathbf{L}M),
    \]
    where $d_2^{i,j}:E_2^{i+2,j-1}\rightarrow E_2^{i,j}$. If $\Tor_i^R(H^{-j}(K^\bullet),M)=0$ for $i\geq 2$, then this spectral sequence degenerates to the following exact sequences for each $n\in\ZZ$:
    \[0\rightarrow H^n(K^\bullet)\otimes_R M \rightarrow H^n(K^\bullet\otimes_R^\mathbf{L}M) \rightarrow \Tor_1^R(K^\bullet,M) \rightarrow 0,\] i.e.
\end{proposition}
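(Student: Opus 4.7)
The plan is standard hyperhomology: represent $K^\bullet\otimes_R^\mathbf{L} M$ by a double complex and invoke one of its two canonical spectral sequences. Choose a flat resolution $P_\bullet\twoheadrightarrow M$ (which exists over any ring) and form the double complex $C^{i,-j}:=K^i\otimes_R P_j$ with horizontal differential induced by $K^\bullet$ and vertical differential induced by $P_\bullet$. Let $T^\bullet$ denote its total complex; since $K^\bullet$ is bounded and $P_\bullet$ is bounded above, $T^\bullet$ is well-defined and its cohomology computes $H^\ast(K^\bullet\otimes_R^\mathbf{L} M)$ by definition of the derived tensor product.

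I would then filter $T^\bullet$ by the $K^\bullet$-direction (i.e.\ by rows), whose associated spectral sequence has $E_1$-page obtained by taking horizontal cohomology in each row. Flatness of the $P_i$ yields
\[E_1^{i,-j}=H^{-j}(K^\bullet\otimes_R P_i)\cong H^{-j}(K^\bullet)\otimes_R P_i,\]
and the $E_2$-page is obtained by taking the remaining ($P_\bullet$-)cohomology, giving $E_2^{i,j}=\Tor_i^R(H^{-j}(K^\bullet),M)$ by definition of $\Tor$. The abutment and the direction of the differential $d_2^{i,j}\colon E_2^{i+2,j-1}\rightarrow E_2^{i,j}$ follow by routine bookkeeping for double-complex spectral sequences, and convergence is automatic because $K^\bullet$ is bounded, so the filtration on $T^\bullet$ is finite in each total degree.

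For the second part, the vanishing hypothesis $\Tor_i^R(H^{-j}(K^\bullet),M)=0$ for $i\geq 2$ forces $E_2^{i,j}=0$ outside the two columns $i=0$ and $i=1$. Every higher differential $d_r$ with $r\geq 2$ has horizontal displacement at least $2$, hence either originates from or lands in a zero column, so the spectral sequence degenerates at $E_2$. The two surviving pieces contributing to $H^n(K^\bullet\otimes_R^\mathbf{L} M)$ are $E_\infty^{0,-n}=H^n(K^\bullet)\otimes_R M$ and $E_\infty^{1,-n-1}=\Tor_1^R(H^{n+1}(K^\bullet),M)$, which assemble into the asserted short exact sequence after identifying the graded pieces of the convergence filtration.

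The only genuine obstacle is the purely clerical task of reconciling the cohomological indexing on $K^\bullet$ with the homological indexing of $\Tor$; the substantive inputs, namely existence of flat resolutions and boundedness of $K^\bullet$ needed for strong convergence, are immediate.
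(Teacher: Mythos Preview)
Your argument is correct and is precisely the standard double-complex construction that the paper defers to via its Stacks Project citation; the paper itself only records the reference and the reindexing needed to pass between homological and cohomological conventions, so you have in fact written out what the citation would say. One small slip to clean up: in your double complex you use $i$ for the $K^\bullet$-index, but on the $E_1$-page the symbol $i$ has silently become the $P_\bullet$-index (the filtration degree), and the phrase ``filter by the $K^\bullet$-direction'' should really be ``filter by the $P_\bullet$-index so that $d_0$ is the $K^\bullet$-differential''; the mathematics is unaffected.
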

\begin{proof}
    See \cite[\href{https://stacks.math.columbia.edu/tag/061Z}{Example 061Z}]{stacks_project}. Note that in Stacks, homological notation is used and the roles of $i$ and $j$ were swapped, hence our differential maps $d_r$ are of degree $(-r,r-1)$ contrary to the case in Stacks. Also note that the actual result is on computing the homologies of of a chain complex $K_\bullet$, here we used the re-indexing convention by setting $K^i:=K_{-i}$.
    When $\Tor_i(K^\bullet,M)=0$ for $i\geq 2$, then we have $E_\infty^{i,j}=E_2^{i,j}$, therefore we get the short exact sequence above for each $n\in\ZZ$.
\end{proof}

\par We will now focus on the case $A=E$ and state structure theorem of coadmissible $\mathcal{H}$-modules (in the sense of \cite{pottharst2012cyclotomic} and \cite{Schneider-Teitelbaum}), and the duality theorems. Recall that a coadmissible $\mathcal{H}$-module $M$ is the inverse limit of finitely generated $\mathcal{H}_n$-modules $M_n$ such that $M_n\otimes_{\mathcal{H}_n}\mathcal{H}_{n-1}\cong M_{n-1}$, where the isomorphism is induced by the transition maps. According to \cite[Remark 1.2]{pottharst2012cyclotomic}, conjecturally these theorems hold also for coadmissible $\mathcal{H}_A$-modules. For our purposes, these versions are sufficient thanks to the control theorems.

\begin{proposition}[Structure theorem, \cite{pottharst2012cyclotomic} Proposition 1.1, also \cite{benois2014selmer} Proposition 3]\label{structure-thm-pottharst}
Let $M$ be a coadmissible $\mathcal{H}$-module. Then the torsion submodule $M_{\rm tors}$ of $M$ is coadmissible, $M/M_{\rm tors}$ is finitely generated free $\mathcal{H}$-module. Moreover, $M_{\rm tors}$ is isomorphic to $\prod_{\alpha\in I}\mathcal{H}/{\mathfrak{p}_\alpha^{n_\alpha}}$, where each $\mathfrak{p}_\alpha$ is a prime in $\mathcal{H}_n[1/p]$ for some $n\in\NN$, and for each $n\in\NN$ there are finitely many prime ideals $\mathfrak{p}_\alpha$ with $n_\alpha=0$.
\end{proposition}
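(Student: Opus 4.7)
The plan is to reduce the global structure theorem for coadmissible $\mathcal{H}$-modules to the classical structure theorem for finitely generated modules over each $\mathcal{H}_n$, and then to pass to the inverse limit carefully. By definition $\mathcal{H}=\varprojlim_n \mathcal{H}_n$ where $\mathcal{H}_n = E\langle X/p^{1/n}\rangle$ is the Tate algebra in one variable of radius $p^{-1/n}$. This $\mathcal{H}_n$ is a Euclidean domain with respect to the Gauss norm (hence a PID), and the transition maps $\mathcal{H}_n \to \mathcal{H}_{n-1}$ are the restriction maps of an affinoid subdomain inclusion, hence flat by Proposition \ref{prop:flatness_of_affinoid_subdomains}. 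By coadmissibility, $M = \varprojlim M_n$ where each $M_n$ is finitely generated over $\mathcal{H}_n$ and the compatibility $M_n \otimes_{\mathcal{H}_n} \mathcal{H}_{n-1} \cong M_{n-1}$ holds.

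First I would apply the structure theorem for finitely generated modules over the PID $\mathcal{H}_n$ to write
\[ M_n \cong \mathcal{H}_n^{r_n} \oplus \bigoplus_{i=1}^{k_n} \mathcal{H}_n/\mathfrak{p}_{n,i}^{e_{n,i}}. \]
Flatness of the transition maps together with the compatibility condition forces the rank $r_n$ to be constant in $n$, say $r_n = r$, and the torsion submodule $T_n := (M_n)_{\rm tors}$ to satisfy $T_n \otimes_{\mathcal{H}_n}\mathcal{H}_{n-1} \cong T_{n-1}$; this in particular shows that $M_{\rm tors} := \varprojlim T_n$ is coadmissible and that the quotient $M/M_{\rm tors} \cong \varprojlim (\mathcal{H}_n^r)$ is free of rank $r$ over $\mathcal{H}$ (here one also uses that $\mathcal{H}$ is a B\'ezout domain in the sense of Schneider--Teitelbaum).

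Next I would analyze the torsion component. Primes of $\mathcal{H}_n$ correspond to closed points of the affinoid disc $W_n$, together with the zero ideal. A summand $\mathcal{H}_n/\mathfrak{p}_{n,i}^{e_{n,i}}$ is pulled back from an earlier level precisely when the associated classical point lies in $W_{n-1}$; otherwise $\mathfrak{p}_{n,i}\mathcal{H}_{n-1} = \mathcal{H}_{n-1}$ and the summand disappears upon restriction. Organize the torsion summands of all $M_n$ by their minimal level of appearance: each index $\alpha$ comes with a smallest $n = n_\alpha$ such that the underlying prime $\mathfrak{p}_\alpha \in \mathrm{Spm}(\mathcal{H}_{n_\alpha}[1/p])$ exists, and $M_{\rm tors} = \varprojlim T_n$ becomes identified with the product $\prod_{\alpha \in I} \mathcal{H}/\mathfrak{p}_\alpha^{e_\alpha}$. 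Because each $T_n$ is itself finitely generated over $\mathcal{H}_n$, only finitely many $\alpha$ satisfy $n_\alpha \leq n$ for any fixed $n$, which is exactly the asserted local finiteness.

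The main obstacle is handling the fact that inverse limits of finitely generated modules need not be finitely generated, so one must explain why the product over infinitely many torsion summands still gives a \emph{coadmissible} module and why no extra phantom summands appear in the limit. This is resolved by the fact that the Fr\'echet--Stein structure on $\mathcal{H}$ furnished by Schneider--Teitelbaum makes the category of coadmissible modules abelian and closed under the operations used above, together with the prime-by-prime bookkeeping that, thanks to PID-hood of each $\mathcal{H}_n$, decouples the contributions at different closed points of the open unit disc.
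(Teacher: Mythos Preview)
The paper does not supply its own proof of this proposition: it is stated with a citation to \cite{pottharst2012cyclotomic} Proposition~1.1 and \cite{benois2014selmer} Proposition~3, and no argument is given in the text. So there is nothing to compare your proposal against within the paper itself.

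That said, your sketch is essentially the standard argument found in those references: reduce to the PID structure theorem on each $\mathcal{H}_n$, use flatness of the restriction maps to make the levelwise decompositions compatible, and invoke the Fr\'echet--Stein formalism of Schneider--Teitelbaum to pass to the limit. One small point: you should be a bit more careful when asserting that $M/M_{\rm tors}$ is free. Knowing that each $M_n/T_n \cong \mathcal{H}_n^r$ and that the transition maps are compatible tells you $\varprojlim (M_n/T_n)$ is coadmissible of constant rank $r$, but to conclude it is actually \emph{free} over $\mathcal{H}$ you need the B\'ezout property (or Lazard's theorem) for $\mathcal{H}$, which you do mention but only parenthetically; this is the genuine content beyond the levelwise picture. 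Otherwise your outline is correct and matches the cited sources.
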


\par Let $\mathcal{K}$ be the field of fractions of $\mathcal{H}$, and let \[\omega^\bullet:={\rm Cone}\left[\mathcal{K}\rightarrow\mathcal{K}/\mathcal{H}\right][-1]\] be the dualizing complex for $\mathcal{H}$. For a coadmissible $\mathcal{H}$-module $M$, let \[\mathscr{D}(M):=\Hom_\mathcal{H}([M],\omega^\bullet),\] and let $\mathscr{D}^i(M)$ denote $H^i(\mathscr{D}(M))$. Then we have the following theorem (see \cite[\S 1]{pottharst2012cyclotomic}, also \cite[\S 1.6]{benois2014selmer})

\begin{proposition}[Dualizing complex]\label{prop:dualizing-comp-pottharst} 
    We have
    \begin{align*}
        \mathscr{D}^0(M)={\rm Hom}_\mathcal{H}(M/M_{\rm tors},\mathcal{H}),\\
        \mathscr{D}^1(M)=\mathscr{D}^1(V_{\rm tors})=\prod_{\alpha\in I}{\mathfrak{p}_\alpha^{-n_\alpha}}/\mathcal{H},
    \end{align*}
    and $\mathscr{D}^i(M)=0$ for $i\neq 0,1$.
\end{proposition}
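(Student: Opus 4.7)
The plan is to exploit the structure theorem (Proposition \ref{structure-thm-pottharst}) to reduce the computation of $\mathscr{D}^i(M)$ to two extreme cases: the torsion-free quotient $M/M_{\rm tors}$ (which is free over $\mathcal{H}$) and the torsion part $M_{\rm tors}$ (which decomposes as a product of cyclic modules $\mathcal{H}/\mathfrak{p}_\alpha^{n_\alpha}$). Since $M/M_{\rm tors}$ is free, the short exact sequence
\[
0 \longrightarrow M_{\rm tors} \longrightarrow M \longrightarrow M/M_{\rm tors} \longrightarrow 0
\]
splits, so $\mathscr{D}(M) \simeq \mathscr{D}(M/M_{\rm tors}) \oplus \mathscr{D}(M_{\rm tors})$ in $\mathscr{D}(\mathcal{H})$, and it suffices to compute each summand separately.

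For the free part, I would observe that if $F \cong \mathcal{H}^r$, then $\RHom_\mathcal{H}(F, \omega^\bullet)$ is represented by the two-term complex $[\mathcal{K}^r \to (\mathcal{K}/\mathcal{H})^r]$ (in the degrees coming from the shifted cone defining $\omega^\bullet$). The kernel of this map is $\mathcal{H}^r = \Hom_\mathcal{H}(F,\mathcal{H})$, and the cokernel vanishes, so $\mathscr{D}^0(F) = \Hom_\mathcal{H}(F,\mathcal{H})$ and $\mathscr{D}^i(F) = 0$ for $i \neq 0$. This already yields the first formula and the vanishing of $\mathscr{D}^i(M)$ for $i \geq 2$ away from the torsion contribution.

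For the torsion part, the key computation is local: for a single factor $T_\alpha = \mathcal{H}/\mathfrak{p}_\alpha^{n_\alpha}$, one has $\Hom_\mathcal{H}(T_\alpha, \mathcal{K}) = 0$ because $\mathcal{K}$ is torsion-free, while $\Hom_\mathcal{H}(T_\alpha, \mathcal{K}/\mathcal{H})$ is naturally identified (via multiplication) with the fractional ideal $\mathfrak{p}_\alpha^{-n_\alpha}/\mathcal{H}$. Applied to the two-term resolution of $\omega^\bullet$, this gives $\mathscr{D}^0(T_\alpha) = 0$ and $\mathscr{D}^1(T_\alpha) = \mathfrak{p}_\alpha^{-n_\alpha}/\mathcal{H}$, with no higher cohomology. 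To pass from a single factor to the full product $\prod_\alpha \mathcal{H}/\mathfrak{p}_\alpha^{n_\alpha}$, I would invoke coadmissibility: since only finitely many primes of a given $\mathcal{H}_n[1/p]$ appear with nonzero exponent, one can realise the product as the inverse limit of its truncations at each finite layer $\mathcal{H}_n$, where $\Hom$ commutes with the (finite) direct sums involved, and then pass to the limit.

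The main technical subtlety, and the one I would expect to spend most care on, is the last step: ensuring that $\RHom_\mathcal{H}(-, \omega^\bullet)$ interacts correctly with the infinite product structure in the torsion part and that the resulting $\mathscr{D}^1$ is itself a coadmissible module of the required form. This requires carefully using the Fr\'echet-Stein structure of $\mathcal{H}$ and the Schneider--Teitelbaum framework to justify commuting $\Hom$ with the inverse limits that define coadmissible modules, and checking that the identification $\Hom_\mathcal{H}(\mathcal{H}/\mathfrak{p}^n, \mathcal{K}/\mathcal{H}) = \mathfrak{p}^{-n}/\mathcal{H}$ behaves compatibly with these limits. Once this is in place, combining the computations on the free and torsion parts yields the stated values of $\mathscr{D}^i(M)$.
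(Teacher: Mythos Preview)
The paper does not actually prove this proposition; it is stated with a reference to \cite[\S 1]{pottharst2012cyclotomic} and \cite[\S 1.6]{benois2014selmer} immediately preceding it, and no argument is given in the text. Your proposal therefore supplies more than the paper does: you sketch the standard reduction via the structure theorem (Proposition~\ref{structure-thm-pottharst}) to free and cyclic-torsion pieces, which is indeed the route taken in the cited references.

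Your outline is correct in spirit and identifies the right ingredients. One small point worth tightening: for the torsion computation you describe $\mathscr{D}^1(T_\alpha)$ by computing $\Hom_\mathcal{H}(T_\alpha,\mathcal{K}/\mathcal{H})$ directly, but strictly speaking $\mathscr{D}^i$ is the $i$-th cohomology of $\RHom_\mathcal{H}([T_\alpha],\omega^\bullet)$, and one should check that no contribution arises from the derived functors of $\Hom$ (i.e.\ from $\mathrm{Ext}^1$ terms against $\mathcal{K}$ or $\mathcal{K}/\mathcal{H}$). This is handled cleanly by instead resolving $T_\alpha$ by the two-term complex $[\mathcal{H}\xrightarrow{\cdot\pi_\alpha^{n_\alpha}}\mathcal{H}]$ (for a local generator $\pi_\alpha$ of $\mathfrak{p}_\alpha$) and applying $\Hom(-,\omega^\bullet)$ degreewise; the resulting double complex collapses to exactly the answer you state. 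This is a minor packaging issue rather than a gap, and your identification of the coadmissibility/limit compatibility as the genuine technical content is accurate.
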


\begin{proposition}[Grothendieck duality]\label{prop:Pottharst-Duality-Iwasawa-SelmerComp}
    Let $V$ be a $p$-adic representation over $E$, for a $(\phiGamma)$-module $\mathbf{D}\subseteq\mathbf{D}_{\rm rig}^\dagger(V)$, let $\mathbf{D}^\perp:=\Hom_{\mathcal{R}_E}(\mathbf{D}_{\rm rig}^\dagger(V)/\mathbf{D},\mathcal{R}_E(1))$. Then we have 
\begin{align}
    \mathscr{D}\RGammaIw(V,\mathbf{D})\simeq \RGammaIw(V^*(1),\mathbf{D}^\perp)^\iota[3],
\end{align}
and we get the following (split) exact sequence:
\begin{align*}
    0\rightarrow \mathscr{D}^1H^{4-i}_{\Iw}(V,\mathbf{D}) \rightarrow H^i_{\Iw}(V^*(1),\mathbf{D}^\perp)^\iota \rightarrow \mathscr{D}^0H^{3-i}_{\Iw}(V,\mathbf{D}) \rightarrow 0.
\end{align*}
\end{proposition}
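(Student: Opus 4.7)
The plan is to establish the quasi-isomorphism by dualizing the mapping cone defining $\RGammaIw(V,\mathbf{D})$ piece by piece, and then to extract the short exact sequence from a hypercohomology spectral sequence for $\mathscr{D}$. Explicitly, write
\[
\RGammaIw(V,\mathbf{D}) \;=\; {\rm Cone}\!\left(\RGamma_{\Iw,S}(V)\oplus\bigoplus_{v\in S_f}U_{\Iw,v}^+(V,\mathbf{D})\;\longrightarrow\;\bigoplus_{v\in S_f}\RGammaIw(\QQ_v,V)\right)[-1].
\]
Since $\mathscr{D}=\RHom_{\mathcal{H}}(-,\omega^\bullet)$ takes mapping cones to shifted mapping cones (of the arrows reversed), it suffices to identify the Grothendieck dual of each of the three ingredients with the corresponding ingredient in $\RGammaIw(V^*(1),\mathbf{D}^\perp)^\iota[3]$, and to check that the connecting maps match.

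In the second step I would invoke three standard duality theorems, all of which live in the literature cited in the paper. Globally, Nekovář's Iwasawa-theoretic Poitou--Tate duality supplies $\mathscr{D}\RGamma_{\Iw,S}(V)\simeq \RGamma_{c,\Iw,S}(V^*(1))^\iota[3]$. For each $\ell\neq p$, local Tate duality gives $\mathscr{D}\RGammaIw(\QQ_\ell,V)\simeq \RGammaIw(\QQ_\ell,V^*(1))^\iota[2]$ and pairs the unramified subcomplex $U_{\Iw,\ell}^+(V,\mathbf{D})$ with $U_{\Iw,\ell}^+(V^*(1),\mathbf{D}^\perp)$ to its orthogonal. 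At $p$, the Herr--Liu--Pottharst duality for Iwasawa cohomology of $(\phiGamma)$-modules over the Robba ring yields $\mathscr{D}\RGammaIw(\mathbf{D})\simeq \RGammaIw(\mathbf{D}^\perp)^\iota[2]$, and the very definition $\mathbf{D}^\perp:=\Hom_{\mathcal{R}_E}(\mathbf{D}_{\rm rig}^\dagger(V)/\mathbf{D},\mathcal{R}_E(1))$ is precisely the orthogonal to $\mathbf{D}$ inside $\mathbf{D}_{\rm rig}^\dagger(V^*(1))=\mathbf{D}_{\rm rig}^\dagger(V)^{\vee}(1)$ under the Herr cup product. Combined with Proposition \ref{prop:Galois-phiGamma-cohomology} this also handles the dual of the full local piece at $p$. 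Assembling these into the cone — noting that the inclusion $\mathbf{D}\hookrightarrow \mathbf{D}_{\rm rig}^\dagger(V)$ dualizes to the quotient $\mathbf{D}_{\rm rig}^\dagger(V^*(1))\twoheadrightarrow \mathbf{D}^\perp$, which is exactly the map defining $U_{\Iw,p}^-$ for $(V^*(1),\mathbf{D}^\perp)$ — yields the stated quasi-isomorphism, with the overall shift $[3]$ reconciled from the $[3]$ on the global side and $[2]$ on the local sides via the cone formula.

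For the short exact sequence I would use the hypercohomology spectral sequence associated with $\mathscr{D}$: since $\omega^\bullet$ has amplitude $[0,1]$, Proposition \ref{prop:dualizing-comp-pottharst} gives $\mathscr{D}^p=0$ for $p\neq 0,1$, and therefore
\[
E_2^{p,q}=\mathscr{D}^pH^q_{\Iw}(V,\mathbf{D})\;\Longrightarrow\; H^{p+q}\bigl(\mathscr{D}\RGammaIw(V,\mathbf{D})\bigr)\simeq H^{p+q-3}_{\Iw}(V^*(1),\mathbf{D}^\perp)^\iota
\]
is concentrated in the rows $p=0,1$, hence degenerates at $E_2$ after exactly one differential dies for trivial reasons. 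Setting $n:=p+q-3=i$ and reading off the two surviving terms produces the claimed short exact sequence. The splitting follows from Proposition \ref{structure-thm-pottharst}: $\mathscr{D}^0 H^{3-i}_{\Iw}(V,\mathbf{D}) = \Hom_{\mathcal{H}}\!\left(H^{3-i}_{\Iw}(V,\mathbf{D})/{\rm tors},\mathcal{H}\right)$ is a finitely generated free $\mathcal{H}$-module, hence projective, so the surjection onto it splits.

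The main obstacle is the bookkeeping in the second step: one must verify that the three duality quasi-isomorphisms are compatible with the restriction maps from $\RGamma_{\Iw,S}(V)$ and from $U_{\Iw,v}^+$ into $\RGammaIw(\QQ_v,V)$, so that they fit together into a single morphism of mapping cones. This requires checking that the local Herr--Pottharst pairing at $p$ is compatible with the Poitou--Tate pairing computing Bloch--Kato style dualities, and that the unramified local condition at $\ell\neq p$ is its own orthogonal under local Tate duality. These compatibilities are implicit in \cite{pottharst2012cyclotomic}, \cite{Pottharst2013}, and \cite{benois2014selmer}, but careful tracking of signs and shifts is needed to glue them correctly across the cone.
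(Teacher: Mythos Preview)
The paper does not prove this proposition; it simply cites \cite[Theorem~4.1]{pottharst2012cyclotomic}. Your sketch is essentially a reconstruction of Pottharst's argument, and the overall architecture --- dualize each piece of the mapping cone via global Poitou--Tate, local Tate, and Herr/Liu duality, then read off the short exact sequence from a two-row hypercohomology spectral sequence, splitting because $\mathscr{D}^0$ lands in free modules --- is correct.

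There is, however, a genuine confusion at $p$. Local duality for $(\phiGamma)$-modules pairs $\RGammaIw(\mathbf{D})$ with $\RGammaIw(\mathbf{D}^*(1))$, where $\mathbf{D}^*(1)=\Hom_{\mathcal{R}_E}(\mathbf{D},\mathcal{R}_E(1))$, \emph{not} with $\RGammaIw(\mathbf{D}^\perp)$. The relationship is that $\mathbf{D}^*(1)\cong \mathbf{D}_{\rm rig}^\dagger(V^*(1))/\mathbf{D}^\perp=\widetilde{\mathbf{D}^\perp}$, so the dual of $U_{\Iw,p}^+(V,\mathbf{D})=\RGammaIw(\mathbf{D})$ is $U_{\Iw,p}^-(V^*(1),\mathbf{D}^\perp)=\RGammaIw(\widetilde{\mathbf{D}^\perp})$. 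Likewise the inclusion $\mathbf{D}\hookrightarrow\mathbf{D}_{\rm rig}^\dagger(V)$ dualizes to the quotient \emph{by} $\mathbf{D}^\perp$ (onto $\mathbf{D}^*(1)$), not onto $\mathbf{D}^\perp$ as you wrote. This is exactly the mechanism that turns the triangle \eqref{eq:Tr2} for $(V,\mathbf{D})$ into \eqref{eq:Tr3} for $(V^*(1),\mathbf{D}^\perp)$: the $U^+$ on one side pairs with the $U^-$ on the other. Your conclusion that one lands on $U_{\Iw,p}^-$ is right, but the intermediate identifications are mislabeled and should be fixed before the cone assembly can be carried out cleanly.

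A smaller point: your spectral sequence indexing is off. For the contravariant functor $\mathscr{D}$ one has $E_2^{p,q}=\mathscr{D}^p H^{-q}_{\Iw}(V,\mathbf{D})\Rightarrow H^{p+q}(\mathscr{D}\RGammaIw(V,\mathbf{D}))$, and the shift gives $H^{n}(\mathscr{D}\RGammaIw(V,\mathbf{D}))=H^{n+3}_\Iw(V^*(1),\mathbf{D}^\perp)^\iota$. Taking $n=i-3$ and $p\in\{0,1\}$ then recovers $\mathscr{D}^1 H^{4-i}$ and $\mathscr{D}^0 H^{3-i}$ as stated; as written, your version with $E_2^{p,q}=\mathscr{D}^p H^q$ and abutment $H^{p+q-3}$ does not.
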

\begin{proof}
    See \cite[Theorem 4.1]{pottharst2012cyclotomic}.
\end{proof}

\par We also give the analogous duality statement for Selmer complexes of representations over affinoid algebras (see \cite[Theorem 3.1.9]{benois-p-adic-heights} and \cite[Theorem 1.16]{Pottharst2013}):

\begin{proposition}[Duality over affinoids]\label{prop:Duality-Affinoid-SelmerComp}
    Let $\mathbf{V}$ be a $p$-adic representation over an affinoid algebra $A$, for a $(\phiGamma)$-module $\mathbf{D}\subseteq\mathbf{D}_{{\rm rig},A}^\dagger(\mathbf{V})$, let $\mathbf{D}^\perp:=\Hom_{\mathcal{R}_A}(\mathbf{D}_{{\rm rig},A}^\dagger(\mathbf{V})/\mathbf{D},\mathcal{R}_A(1))$. Let $\mathscr{D}_{\rm parf}^{[a,b]}(A)$ denote the category of perfect complexes of $A$-modules with perfect amplitude $[a,b]$ (see next section for details).
    Suppose that the local conditions attached to $\RGamma(\mathbf{V},\mathbf{D})$ and $\RGamma(\mathbf{V}^*(1),\mathbf{D}^\perp)$ lie in the category $\mathscr{D}_{\rm parf}^{[0,2]}(A)$. Then these complexes lie in $\mathscr{D}_{\rm parf}^{[0,3]}(A)$, and we have the following duality in $\mathscr{D}_{\rm parf}^{[0,3]}(A)$:
\begin{align}
    \mathbf{R}{\rm Hom}_A(\RGamma(\mathbf{V},\mathbf{D}),A)\simeq \RGamma(\mathbf{V}^*(1),\mathbf{D}^\perp)[3].
\end{align}
\end{proposition}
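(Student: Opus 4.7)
The plan is to imitate the strategy of Nekovář, Pottharst, and Benois, assembling the Grothendieck duality for the Selmer complex out of three compatible ingredients: (i) the Poitou--Tate duality for continuous Galois cohomology over affinoid algebras, (ii) local Tate duality at primes $\ell\neq p$, and (iii) the Herr/Pottharst duality for $(\phiGamma)$-modules at $p$. Throughout we work in $\mathscr{D}(A)$ and use that for a perfect complex $C\in\mathscr{D}_{\rm parf}^{[a,b]}(A)$, the dual $\RHom_A(C,A)$ lies in $\mathscr{D}_{\rm parf}^{[-b,-a]}(A)$.

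First I would dispose of perfectness. Since $\mathbf{V}$ is a finite projective $A$-module, $\RGamma_S(\mathbf{V})$ and the local cochain complexes $K_v(\mathbf{V})$ (for all $v\in S_f$) are perfect with amplitude in $[0,2]$ over $A$, by the standard finiteness results for continuous Galois cohomology of finitely presented coefficient modules over affinoid algebras. The hypothesis provides perfectness of $U_v^+(\mathbf{V},\mathbf{D})$ with amplitude $[0,2]$. The defining distinguished triangle
\[
\RGamma(\mathbf{V},\mathbf{D})\longrightarrow\RGamma_S(\mathbf{V})\oplus U_S^+(\mathbf{V},\mathbf{D})\longrightarrow K_S(\mathbf{V})\longrightarrow\RGamma(\mathbf{V},\mathbf{D})[1]
\]
then places $\RGamma(\mathbf{V},\mathbf{D})$ (and, by symmetry, $\RGamma(\mathbf{V}^*(1),\mathbf{D}^\perp)$) in $\mathscr{D}_{\rm parf}^{[0,3]}(A)$.

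Next, the heart of the argument is to dualize the Selmer complex by reducing to a symmetric duality between local conditions. Specifically, I would replace the above triangle by the Nekovář triangle \eqref{eq:Tr2} in our setting,
\[
U_S^+(\mathbf{V},\mathbf{D})[-1]\longrightarrow \RGamma_c(G_{\QQ,S},\mathbf{V})\longrightarrow \RGamma(\mathbf{V},\mathbf{D})\longrightarrow U_S^+(\mathbf{V},\mathbf{D}),
\]
apply $\RHom_A(-,A)$, and identify each term via the appropriate duality:
\begin{itemize}
\item Poitou--Tate: $\RHom_A(\RGamma_c(G_{\QQ,S},\mathbf{V}),A)\simeq \RGamma_S(\mathbf{V}^*(1))[-3]$.
\item For $\ell\in S_f$ with $\ell\neq p$: local Tate duality gives $\RHom_A(U_\ell^+(\mathbf{V},\mathbf{D}),A)\simeq U_\ell^-(\mathbf{V}^*(1),\mathbf{D}^\perp)[-2]$, where on both sides the unramified local conditions are interchanged with their quotients via the usual self-orthogonality of the unramified subspace.
\item At $v=p$: Herr duality for $(\phiGamma)$-modules over $\mathcal{R}_A$ (due to Pottharst, generalizing Liu) identifies $\RHom_A(\RGamma(\mathbf{D}),A)$ with $\RGamma\bigl(\Hom_{\mathcal{R}_A}(\mathbf{D},\mathcal{R}_A(1))\bigr)[-2]$; the definition $\mathbf{D}^\perp:=\Hom_{\mathcal{R}_A}(\mathbf{D}_{{\rm rig},A}^\dagger(\mathbf{V})/\mathbf{D},\mathcal{R}_A(1))$ then yields the quasi-isomorphism $\RHom_A(U_p^+(\mathbf{V},\mathbf{D}),A)\simeq U_p^-(\mathbf{V}^*(1),\mathbf{D}^\perp)[-2]$ via the short exact sequence $0\to\mathbf{D}^\perp\to\mathbf{D}_{{\rm rig},A}^\dagger(\mathbf{V}^*(1))\to\Hom_{\mathcal{R}_A}(\mathbf{D},\mathcal{R}_A(1))\to 0$.
\end{itemize}
Substituting these, shifting by $[3]$, and rotating the resulting distinguished triangle, I obtain a triangle
\[
U_S^-(\mathbf{V}^*(1),\mathbf{D}^\perp)[-1]\longrightarrow \RHom_A(\RGamma(\mathbf{V},\mathbf{D}),A)[3]\longrightarrow \RGamma_S(\mathbf{V}^*(1))\longrightarrow U_S^-(\mathbf{V}^*(1),\mathbf{D}^\perp)
\]
which matches the triangle \eqref{eq:Tr3} characterizing $\RGamma(\mathbf{V}^*(1),\mathbf{D}^\perp)$. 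By the (three times three) uniqueness of the middle term in such a triangle up to quasi-isomorphism, this gives the desired identification $\RHom_A(\RGamma(\mathbf{V},\mathbf{D}),A)[3]\simeq\RGamma(\mathbf{V}^*(1),\mathbf{D}^\perp)$, equivalent to the stated formula after a shift.

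The main obstacle, and the bulk of the careful work, is the compatibility of the three dualities. Concretely, one must verify that the connecting map $\RGamma_S(\mathbf{V}^*(1))\to U_S^-(\mathbf{V}^*(1),\mathbf{D}^\perp)$ obtained by dualizing the global/local maps in the Selmer complex coincides (up to sign) with the natural restriction map in the definition of $\RGamma(\mathbf{V}^*(1),\mathbf{D}^\perp)$. In the affinoid setting this requires checking that the cup-product pairings giving Herr duality at $p$ and Tate duality at $\ell\neq p$ fit together with the global pairing coming from Poitou--Tate, all at the chain-complex level rather than just on cohomology. Once this compatibility is in place, applying the five-lemma (in its derived/triangulated form) to compare the two triangles concludes the proof.
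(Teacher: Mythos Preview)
The paper does not give its own proof of this proposition; it simply records the statement and cites \cite[Theorem 3.1.9]{benois-p-adic-heights} and \cite[Theorem 1.16]{Pottharst2013}. Your sketch is precisely the strategy used in those references (and in Nekov\'a\v{r}'s original treatment): assemble the duality for the Selmer complex from Poitou--Tate duality for $\RGamma_c$ and $\RGamma_S$, local Tate duality at $\ell\neq p$, and Herr/Liu/Pottharst duality for $(\phiGamma)$-modules at $p$, checking that the orthogonality of $U_v^+$ and $U_v^+(\mathbf{V}^*(1),\mathbf{D}^\perp)$ under the local cup product makes the dualized triangle \eqref{eq:Tr2} match \eqref{eq:Tr3} for $(\mathbf{V}^*(1),\mathbf{D}^\perp)$. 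So your approach is the standard one and agrees with what the paper is invoking.

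One bookkeeping remark: your shifts are off by a sign. Over an affinoid $A$, the Poitou--Tate and local Tate isomorphisms read
\[
\RHom_A\bigl(\RGamma_c(G_{\QQ,S},\mathbf{V}),A\bigr)\simeq \RGamma_S(\mathbf{V}^*(1))[3],
\qquad
\RHom_A\bigl(U_v^+(\mathbf{V},\mathbf{D}),A\bigr)\simeq U_v^-(\mathbf{V}^*(1),\mathbf{D}^\perp)[2],
\]
with $[3]$ and $[2]$ rather than $[-3]$ and $[-2]$; one then shifts the dualized triangle by $[-3]$ (not $[3]$) to land on \eqref{eq:Tr3}. Your displayed triangle and final identification are nevertheless correct, so the two sign slips cancel, but it is worth tracking the conventions carefully when you write this out in full. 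Also note that matching the connecting maps (not just the objects) in the two triangles is exactly the ``compatibility'' verification carried out explicitly in \cite[\S 3.1]{benois-p-adic-heights} and \cite[\S 1]{Pottharst2013}; invoking the five-lemma alone is not quite enough to pin down the isomorphism, so you are right to flag that step as the substantive one.
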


\begin{corollary}[Pottharst]\label{cor:pottharst-proposition-hi-ranks}
For $i\neq 1,2,3$ we have \[H^i_{\Iw}(V,\mathbf{D})=0,\] and \[H^3_{\Iw}(V,\mathbf{D})=(T^*(1)^{\Gal(\QQ_S/\QQ_\infty)})^*\otimes_\Lambda\mathcal{H}.\] When the local conditions are (strict) ordinary (in the sense of \cite{pottharst2012cyclotomic}), we have \[{\rm rank}_\mathcal{H}H^1_\Iw(V,\mathbf{D})={\rm rank}_\mathcal{H}H^2_\Iw(V,\mathbf{D}).\]
\end{corollary}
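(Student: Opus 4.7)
The plan is to read off the cohomology of $\RGammaIw(V,\mathbf{D})$ from its defining exact triangle
\begin{equation*}
\RGammaIw(V,\mathbf{D}) \longrightarrow \RGamma_{\Iw,S}(V)\oplus U^+_{\Iw,S}(V,\mathbf{D}) \longrightarrow K_{\Iw,S}(V) \xrightarrow{+1},
\end{equation*}
using cohomological amplitude bounds for the three terms, and then to identify $H^3_\Iw$ via Grothendieck duality (Proposition~\ref{prop:Pottharst-Duality-Iwasawa-SelmerComp}).

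\par For the vanishing of $H^i_\Iw(V,\mathbf{D})$ outside degrees $\{1,2,3\}$: since $p$ is odd, $\mathrm{cd}_p(G_{\QQ,S})=2$, so $\RGamma_{\Iw,S}(V)$ has cohomology concentrated in degrees $\{1,2\}$ (the vanishing of $H^0_{\Iw,S}$ is the standard corestriction argument along the cyclotomic tower). The local terms $U^+_{\Iw,p}(V,\mathbf{D})=\RGammaIw(\mathbf{D})$ and $K_{\Iw,p}(V)=\RGammaIw(\mathbf{D}_\mathrm{rig}^\dagger(V))$ sit in degrees $[1,2]$ by Definition~\ref{def:Iwasawa-Cohomology}; at $\ell\neq p$ both $U^+_{\Iw,\ell}$ and $K_{\Iw,\ell}$ lie in $[0,1]$. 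Tracing these bounds through the long exact sequence forces $H^i_\Iw(V,\mathbf{D})=0$ for $i\geq 4$. For $H^0_\Iw=0$, one observes that $H^0(U^+_{\Iw,p})=0$ and that the map $\bigoplus_{\ell\neq p} H^0(U^+_{\Iw,\ell})\to\bigoplus_{\ell\neq p} H^0(K_{\Iw,\ell})$ is injective, since both sides compute the Iwasawa deformation of $V^{G_{\QQ_\ell}}=(V^{I_\ell})^{\Fr_\ell=1}$.

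\par To identify $H^3_\Iw(V,\mathbf{D})$, I would invoke the dual form of Proposition~\ref{prop:Pottharst-Duality-Iwasawa-SelmerComp},
\begin{equation*}
\mathscr{D}\,\RGammaIw(V^*(1),\mathbf{D}^\perp) \simeq \RGammaIw(V,\mathbf{D})^\iota[3],
\end{equation*}
and read off the degree $i=3$ piece of the resulting short exact sequence,
\begin{equation*}
0 \to \mathscr{D}^1 H^1_\Iw(V^*(1),\mathbf{D}^\perp) \to H^3_\Iw(V,\mathbf{D})^\iota \to \mathscr{D}^0 H^0_\Iw(V^*(1),\mathbf{D}^\perp) \to 0.
\end{equation*}
Applying the first part of the theorem to the dual data gives $H^0_\Iw(V^*(1),\mathbf{D}^\perp)=0$, collapsing the sequence to $H^3_\Iw(V,\mathbf{D})^\iota \simeq \mathscr{D}^1 H^1_\Iw(V^*(1),\mathbf{D}^\perp)$. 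By the structure theorem (Proposition~\ref{structure-thm-pottharst}), $\mathscr{D}^1$ computes the Matlis dual of the torsion submodule; combined with the classical identification of the $\Lambda$-torsion of $H^1_{\Iw,S}(T^*(1))$ with $T^*(1)^{\Gal(\QQ_S/\QQ_\infty)}$, and verifying that the local contributions at $p$ through $\mathbf{D}^\perp$ do not modify this torsion (as they are $\mathcal{H}$-torsion-free by the cone triangle for $(\phiGamma)$-modules), one recovers $H^3_\Iw(V,\mathbf{D}) \simeq (T^*(1)^{\Gal(\QQ_S/\QQ_\infty)})^*\otimes_\Lambda\mathcal{H}$.

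\par For the rank equality under the strict ordinary hypothesis, I would compute the $\mathcal{H}$-Euler characteristic of $\RGammaIw(V,\mathbf{D})$. Additivity along the defining triangle, combined with Tate's local Euler--Poincar\'e characteristic formula for $(\phiGamma)$-modules, yields $\chi_\mathcal{H}(\RGammaIw(V,\mathbf{D}))=0$ precisely because the strict ordinary condition is engineered for the local contributions at $p$ to cancel. Since $H^0_\Iw=0$ and $H^3_\Iw$ is $\mathcal{H}$-torsion---because $T^*(1)^{\Gal(\QQ_S/\QQ_\infty)}$ is finitely generated over $\mathcal{O}_E$ and hence torsion over $\Lambda$---the vanishing of $\chi_\mathcal{H}$ forces $\mathrm{rank}_\mathcal{H} H^1_\Iw(V,\mathbf{D}) = \mathrm{rank}_\mathcal{H} H^2_\Iw(V,\mathbf{D})$. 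The main technical difficulty throughout is tracking the interplay between the local conditions at $p$ and the Grothendieck duality, specifically verifying that the torsion of $H^1_\Iw$ for the Selmer complex matches the global Iwasawa invariants even after the triangulation local condition is imposed at $p$.
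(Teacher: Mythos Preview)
The paper does not give its own proof: its entire argument is the citation ``See \cite[Corollary 4.2]{pottharst2012cyclotomic} and \cite[Proposition 6]{benois2014selmer}.'' So there is no paper-level proof to compare against; your sketch is effectively a reconstruction of Pottharst's argument, and the overall architecture you propose---amplitude bounds from the defining cone triangle for the vanishing outside $[1,3]$, Grothendieck duality for the identification of $H^3_\Iw$, and an Euler--Poincar\'e computation for the rank equality---is indeed the strategy used in those references.

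One remark on your treatment of $H^3_\Iw(V,\mathbf{D})$. The route via $\mathscr{D}^1 H^1_\Iw(V^*(1),\mathbf{D}^\perp)$ is correct in principle, but as you yourself note, it forces you to compare the $\mathcal{H}$-torsion of the \emph{Selmer} $H^1$ with that of the \emph{global} $H^1_{\Iw,S}$, and this comparison is not entirely for free: one has to use that the local terms $U^-_{\Iw,v}$ contribute no torsion to $H^1$ via the triangle \eqref{eq:Tr3}. A slightly more direct path, closer to what Pottharst does, is to use the triangle \eqref{eq:Tr2} in its Iwasawa form together with global Poitou--Tate duality: $H^3_{c,\Iw,S}(V)$ is computed directly as $(T^*(1)^{\Gal(\QQ_S/\QQ_\infty)})^*\otimes_\Lambda\mathcal{H}$, and one then checks that the connecting map from $H^2$ of the local conditions is zero. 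Either way the substance is the same, and your identification of the delicate point is accurate.
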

\begin{proof}
See \cite[Corollary 4.2.]{pottharst2012cyclotomic} and \cite[Proposition 6]{benois2014selmer}
\end{proof}

\begin{corollary}\label{cor:vanishing_of_H3}
Let $V$ be an absolutely irreducible representation of $G_{\QQ,S}=\Gal(\QQ_S/\QQ)$ over $E$, and set $H_{\QQ,S}:=\Gal(\QQ_S/\QQ_\infty)$, so that $\Gamma_0=G_{\QQ,S}/H_{\QQ,S}$. If $V^*(1)$ is not a character of $\Gamma_0$, then $H^3_\Iw(V,\mathbf{D})=0$.
\end{corollary}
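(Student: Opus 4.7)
The plan is to extract the explicit description of $H^3_\Iw(V,\mathbf{D})$ provided by Corollary \ref{cor:pottharst-proposition-hi-ranks} and then reduce the vanishing to a short representation-theoretic argument about $V^*(1)$.

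First, I would pick any $G_{\QQ,S}$-stable $\mathcal{O}_E$-lattice $T \subset V$ and invoke Corollary \ref{cor:pottharst-proposition-hi-ranks} to identify
\[
H^3_\Iw(V,\mathbf{D}) \;\cong\; \bigl(T^*(1)^{H_{\QQ,S}}\bigr)^* \otimes_\Lambda \mathcal{H}.
\]
Since $T^*(1)$ is $\mathcal{O}_E$-torsion-free, the submodule $T^*(1)^{H_{\QQ,S}}$ is saturated in $T^*(1)$ (if $g(p^k x) = p^k x$, then $gx = x$ by torsion-freeness), hence is itself free over $\mathcal{O}_E$ with rank equal to $\dim_E V^*(1)^{H_{\QQ,S}}$. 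So it suffices to establish the vanishing $V^*(1)^{H_{\QQ,S}} = 0$, as this forces $T^*(1)^{H_{\QQ,S}} = 0$ and, in turn, the vanishing of the $*$-dual.

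Next, I would use the normality of $H_{\QQ,S}$ in $G_{\QQ,S}$ (which holds because $\QQ_\infty/\QQ$ is Galois): for any $g\in G_{\QQ,S}$ and $h\in H_{\QQ,S}$ the element $g^{-1}hg$ lies in $H_{\QQ,S}$, so $V^*(1)^{H_{\QQ,S}}$ is stable under the full group $G_{\QQ,S}$. Since $V$ is absolutely irreducible, so is its Tate-type twist $V^*(1)$, and thus the subspace $V^*(1)^{H_{\QQ,S}}$ is either $0$ or all of $V^*(1)$.

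In the latter case $H_{\QQ,S}$ would act trivially on $V^*(1)$, so $V^*(1)$ descends to an absolutely irreducible representation of $\Gamma_0 = G_{\QQ,S}/H_{\QQ,S} \cong \Zp$. Since $\Gamma_0$ is abelian, any absolutely irreducible $\Gamma_0$-representation over $E$ is one-dimensional, i.e.\ a character of $\Gamma_0$. This contradicts the hypothesis, so the former case must hold, which proves the corollary.

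There is essentially no technical obstacle: the only care needed is the Schur-type remark for representations of abelian groups and the verification that invariants under a normal subgroup remain Galois-stable. Whatever precise meaning of $(\blank)^*$ appears in Corollary \ref{cor:pottharst-proposition-hi-ranks} (Pontryagin dual or $\mathcal{O}_E$-linear dual), both interpretations send the zero module to zero, so the final step goes through verbatim.
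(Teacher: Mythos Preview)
Your proof is correct and follows essentially the same route as the paper: both invoke Corollary~\ref{cor:pottharst-proposition-hi-ranks}, use normality of $H_{\QQ,S}$ to see that $V^*(1)^{H_{\QQ,S}}$ is a $G_{\QQ,S}$-subrepresentation, apply absolute irreducibility and Schur's lemma for the abelian quotient $\Gamma_0$ to force $V^*(1)^{H_{\QQ,S}}=0$, and then deduce $T^*(1)^{H_{\QQ,S}}=0$. Your saturation argument for passing from $V^*(1)$ to $T^*(1)$ is slightly more explicit than the paper's bare inclusion $T^*(1)^{H_{\QQ,S}}\subseteq V^*(1)^{H_{\QQ,S}}$, but otherwise the arguments coincide.
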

\begin{proof}
We will show that $(V^*(1))^H_{\QQ,S}=0$. First note that $V^*(1)$ is also absolutely irreducible. Since $H_{\QQ,S}$ is a normal subgroup of $G_{\QQ,S}$, $V^*(1)^{H_{\QQ,S}}$ is a subset of $V^*(1)$ which is stable under the action of $G_{\QQ,S}$. By irreducibility of $V^*(1)$, either $V^*(1)^{H_{\QQ,S}}=V^*(1)$ or $V^*(1)^{H_{\QQ,S}}=0$. Suppose the former scenario, note that this representation factors through $\Gamma_0\cong\Zp$, which is an abelian group. By Schur's lemma, nontrivial irreducible representations of an abelian group over an algebraically closed field must be a character of that group. This gives us $V^*(1)\otimes_E\overline{E}\cong\overline{E}$ as vector spaces, which gives that $V^*(1)$ is also a $1$-dimensional representation. By our assumption this is not the case, thus we get $V^*(1)^{H_{\QQ,S}}=0$, which implies that $T^*(1)^{H_{\QQ,S}}\subseteq V^*(1)^{H_{\QQ,S}}=0$. Therefore the results follows by Proposition \ref{cor:pottharst-proposition-hi-ranks}.
\end{proof}

\par We finally relate the Selmer complex associated to $(V,\mathbf{D})$ to the Bloch-Kato Selmer group under certain conditions:

\begin{proposition}\label{prop:Comparison-with-BK}
    Let $V$ be a $p$-adic representation with coefficients in $E$, and $\mathbf{D}\subseteq \mathbf{D}_{\rm cris}(V)$ be a crystalline submodule coming from the triangulation of $V$. Suppose that $V$ and $\mathbf{D}$ enjoy the following properties:
    \begin{itemize}
        \item ${\rm Fil}^0\mathbf{D}=0$,
        \item ${\rm Fil}^0\mathbf{D}_{\rm cris}(V)/\mathbf{D}=\mathbf{D}_{\rm cris}(V)/\mathbf{D}$,
        \item $(\mathbf{D}_{\rm cris}(V)/\mathbf{D})^{\varphi=1}=0$,
        \item $\mathbf{D}/(p\varphi -1)\mathbf{D}=0$.
    \end{itemize}
    Then the Bloch-Kato local condition coincides with the local condition at $p$ and the first cohomology of the Selmer Complex gives the Bloch-Kato Selmer group. More precisely,
    \[H^1_f(\Qp,V)=H^1(\mathbf{D}),{\textrm{ and }}H^1_f(\QQ,V)=H^1(V,\mathbf{D}).\]
\end{proposition}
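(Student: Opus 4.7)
The plan is to treat the local identification $H^1_f(\Qp,V)=H^1(\mathbf{D})$ first and then deduce the global statement by unfolding the mapping-cone definition of the Pottharst style Selmer complex from Definition~\ref{def:Selmer-Comp-Benois}.

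For the local step I would compare the full $H^1(\mathbf{D})$ with its Bloch-Kato subspace $H^1_f(\mathbf{D})$ by dimension. The crystalline exact sequence recalled in Section~\ref{subsect:selmergrp} gives $\dim H^1_f(\mathbf{D})=\dim t_{\mathbf{D}}+\dim H^0(\mathbf{D})$, and the hypothesis $\Fil^0\mathbf{D}=0$ forces $t_{\mathbf{D}}=\mathbf{D}$ and $H^0(\mathbf{D})=\Fil^0\mathscr{D}_{\cris}(\mathbf{D})^{\varphi=1}=0$, giving $\dim H^1_f(\mathbf{D})=\rank\mathbf{D}$. The Euler-Poincaré formula for $(\phiGamma)$-modules over $\Qp$ reads $\dim H^1(\mathbf{D})=\rank\mathbf{D}+\dim H^0(\mathbf{D})+\dim H^2(\mathbf{D})$, and local Tate duality identifies $H^2(\mathbf{D})$ with $H^0(\mathbf{D}^*(1))^{\vee}$. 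A direct computation of $\varphi$-eigenvalues on $\mathscr{D}_{\cris}(\mathbf{D}^*(1))$ translates the fourth hypothesis into $\mathscr{D}_{\cris}(\mathbf{D}^*(1))^{\varphi=1}=0$, forcing $H^2(\mathbf{D})=0$; hence $H^1(\mathbf{D})=H^1_f(\mathbf{D})$, both of dimension $\rank\mathbf{D}$.

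Next I would produce an injection of $H^1(\mathbf{D})$ into $H^1_f(\Qp,V)$. Using Proposition~\ref{prop:Galois-phiGamma-cohomology} to replace $H^1(\Qp,V)$ with $H^1(\mathbf{D}_{\rm rig}^{\dagger}(V))$, the exact sequence $0\to\mathbf{D}\to\mathbf{D}_{\rm rig}^{\dagger}(V)\to\widetilde{\mathbf{D}}\to 0$ shows that the kernel of $H^1(\mathbf{D})\to H^1(\Qp,V)$ is a quotient of $H^0(\widetilde{\mathbf{D}})$, which is zero by the second and third hypotheses (yielding $\Fil^0\mathscr{D}_{\cris}(\widetilde{\mathbf{D}})^{\varphi=1}=\widetilde{\mathbf{D}}^{\varphi=1}=0$). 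The image of $H^1_f(\mathbf{D})$ lies in $H^1_f(\Qp,V)$ by functoriality. Finally, the same isomorphism $\Fil^0\mathbf{D}_{\cris}(V)\simeq\widetilde{\mathbf{D}}$ of $\varphi$-modules (forced by the first two hypotheses) gives $V^{G_p}=0$ and $\dim t_V=\rank\mathbf{D}$, so $\dim H^1_f(\Qp,V)=\rank\mathbf{D}=\dim H^1(\mathbf{D})$ and the injection becomes the desired isomorphism.

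For the global step I would unfold Definition~\ref{def:Selmer-Comp-Benois}. The long exact sequence of the mapping cone, together with the fact that $H^0(U_v^+(V,\mathbf{D}))\to H^0(\QQ_v,V)$ is surjective for every $v\in S_f$ (it is an isomorphism at $\ell\neq p$ and both sides vanish at $p$ by our hypotheses), collapses to $H^1(V,\mathbf{D})\simeq\{x\in H^1_S(V):\mathrm{res}_v(x)\in\mathrm{image}(H^1(U_v^+(V,\mathbf{D}))\to H^1(\QQ_v,V))\text{ for all }v\in S_f\}$ after also using injectivity of these local image maps. At $\ell\neq p$, $U_\ell^+(V,\mathbf{D})=[V^{I_\ell}\xrightarrow{\mathrm{Fr}_\ell-1}V^{I_\ell}]$ computes unramified cohomology, whose image in $H^1(\QQ_\ell,V)$ is $H^1_f(\QQ_\ell,V)$; at $v=p$ the local step identifies the image with $H^1_f(\Qp,V)$. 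The Selmer conditions imposed by $\RGamma(V,\mathbf{D})$ therefore coincide with the Bloch-Kato conditions, giving $H^1(V,\mathbf{D})=H^1_f(\QQ,V)$. The principal technical obstacle is the careful $\varphi$-eigenvalue bookkeeping translating the hypothesis $\mathbf{D}/(p\varphi-1)\mathbf{D}=0$ into $\mathscr{D}_{\cris}(\mathbf{D}^*(1))^{\varphi=1}=0$, owing to the normalisation conventions for the $\varphi$-action on $\mathbf{D}_{\cris}(\Qp(1))$; once this is handled, the remainder is routine long exact sequence manipulations and Euler-Poincaré dimension counts.
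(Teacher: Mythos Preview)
Your proposal is correct and follows precisely the route that the cited references carry out; the paper itself does not give an argument but simply refers to \cite[Proposition~6(v)]{benois2014selmer} and \cite[Theorem~4.1(4)]{pottharst2012cyclotomic}, so you have in effect supplied the proof that the paper outsources. The dimension count for $H^1_f(\mathbf{D})$ via the crystalline exact sequence, the Euler--Poincar\'e and Tate-duality computation of $\dim H^1(\mathbf{D})$, the eigenvalue translation $\mathbf{D}/(p\varphi-1)\mathbf{D}=0\Leftrightarrow\mathscr{D}_{\cris}(\mathbf{D}^*(1))^{\varphi=1}=0$, and the unfolding of the mapping cone to recover the Bloch--Kato Selmer group are all standard and correctly executed; your one flagged obstacle (the $\varphi$-normalisation on $\mathbf{D}_{\cris}(\Qp(1))$) is the only place requiring care, and your handling of it is right.
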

\begin{proof}
    See \cite[Proposition 6 (v)]{benois2014selmer} and \cite[Theorem 4.1 (4)]{pottharst2012cyclotomic}.
\end{proof}

\subsection{Perfectness of Selmer Complexes}\label{subsect:perfectselmercomp}

\par In this subsection we will prove that under certain conditions, the Selmer complexes that we are interested in are perfect complexes with perfect amplitude $[1,2]$. 

\begin{definition}[Perfect complex]\label{def:Perfect-Comp}
    We say a cochain complex $K^\bullet$ of $R$-modules is perfect if $K^\bullet$ is quasi isomorphic to a bounded complex consisting of finite projective $R$-modules. We denote the derived category of perfect complexes of $R$-modules by $\mathscr{D}_{\rm parf}(R)$. If $K^\bullet$ can be represented by finite projective $R$-modules concentrated at degrees $[a,b]$, then we write $K^\bullet\in\mathscr{D}_{\rm parf}^{[a,b]}(R)$.
\end{definition}

\par As usual let $\mathbf{V}$ denote a $p$-adic representation of $G_{\QQ,S}$ with coefficients in an affinoid algebra $A$ over a finite extension $E$ of $\Qp$, and let $\mathbf{D}\subseteq \mathbf{D}_{{\rm rig},A}^\dagger(\mathbf{V})$ be a $(\phiGamma)$-module. We first state the main theorem of this section:

\begin{theorem}\label{thm:Perfectness-amplitude-1-2}
    Assume that the following conditions hold:
\begin{enumerate}[(i)]
    \item $H^3_\Iw(\mathbf{V},\mathbf{D})=0$. \label{cond:h3Iw-vanishes}
    \item For each prime ideal $\mathfrak{p}$ of $\Lambda_A[1/p]:=\Lambda[1/p]\hatotimes A$, we have $(\mathbf{V}\otimes_A(\Lambda_A[1/p]^\iota)/\frakp)^{G_{\QQ,S}}=0$, \label{cond:h0Iw-residual-vanishes}
    \item For each $\ell\in S_f$, $\ell\neq p$, $H^1(I_\ell,\mathbf{V})$ is projective as $A$-module. \label{cond:tamagawa-proj}
\end{enumerate}
Then $\RGamma_{\rm Iw}(\mathbf{V},\mathbf{D})\in \mathscr{D}_{\rm parf}^{[1,2]}(\mathcal{H}_A)$.
\end{theorem}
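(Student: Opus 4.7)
The strategy is to analyze each side of the defining triangle of $\RGammaIw(\mathbf{V},\mathbf{D})$ separately, establishing perfectness of each piece first, and then use conditions (i) and (ii) to cut the natural amplitude of $[0,3]$ down to $[1,2]$.

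First I would invoke the distinguished triangle
\[
\RGammaIw(\mathbf{V},\mathbf{D}) \lra \RGamma_{\Iw,S}(\mathbf{V}) \oplus \bigoplus_{v\in S_f} U^+_{\Iw,v}(\mathbf{V},\mathbf{D}) \lra \bigoplus_{v\in S_f} K_{\Iw,v}(\mathbf{V}) \lra \RGammaIw(\mathbf{V},\mathbf{D})[1]
\]
and reduce perfectness of the Selmer complex to perfectness of each of the three pieces. Perfectness of $\RGamma_{\Iw,S}(\mathbf{V})$ over $\mathcal{H}_A$ is a result of Pottharst, obtained by using the admissible cover of $W\times\Spm A$ by the Noetherian affinoids $W_n\times\Spm A$ and assembling the perfect complexes over $\mathcal{H}_{n,A}$ into a coadmissible inverse system. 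At $v=p$, both the $\psi$-complex $U^+_{\Iw,p}(\mathbf{V},\mathbf{D})=C^\bullet_\psi(\mathbf{D})$ and the total complex $K_{\Iw,p}(\mathbf{V})\simeq\RGammaIw(\mathbf{D}_{{\rm rig},A}^\dagger(\mathbf{V}))$ are perfect by the Kedlaya--Pottharst--Xiao theory of Iwasawa cohomology of $(\phiGamma)$-modules. At $\ell\neq p$, assumption (iii) --- the projectivity of $H^1(I_\ell,\mathbf{V})$ as an $A$-module, which equivalently controls the projectivity of the modules $\mathbf{V}^{I_\ell}$ and of the cokernel of $\Frob_\ell-1$ on $\mathbf{V}^{I_\ell}$ --- guarantees that the two-term complex $[(\mathbf{V}\otimes_A\Lambda_A^\iota)^{I_\ell}\xrightarrow{\Frob_\ell-1}(\mathbf{V}\otimes_A\Lambda_A^\iota)^{I_\ell}]\otimes^{\mathbf{L}}_{\Lambda_A}\mathcal{H}_A$ is perfect, and symmetrically for $K_{\Iw,\ell}(\mathbf{V})$.

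Next I would establish the amplitude bounds. From the mapping-cone construction and the standard bounds on global and local Galois cohomology (exploiting $\mathrm{cd}_p(G_{\QQ,S})=2$), the Selmer complex lies a priori in $\mathscr{D}^{[0,3]}_{\rm parf}(\mathcal{H}_A)$. Hypothesis (i) removes the top directly. For the bottom, noting that the local Iwasawa cohomology at $p$ is concentrated in degrees $[1,2]$ via the $\psi$-complex, the long exact sequence for the triangle above yields an injection
\[
H^0_\Iw(\mathbf{V},\mathbf{D})\hookrightarrow H^0_{\Iw,S}(\mathbf{V})\,\oplus\,\bigoplus_{\ell\in S_f,\,\ell\neq p} H^0\bigl(U^+_{\Iw,\ell}(\mathbf{V},\mathbf{D})\bigr).
\]
Both summands on the right are realized as $G_{\QQ,S}$- or $\Frob_\ell$-invariants of $\mathbf{V}\otimes_A\Lambda_A^\iota$ (before tensoring with $\mathcal{H}_A$), so by flatness of $\Lambda_A[1/p]\to\mathcal{H}_A$ on each Noetherian affinoid slice, their vanishing reduces to a support-of-modules statement over $\Lambda_A[1/p]$. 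Assumption (ii) is precisely the input that the residue of these invariants at every prime $\frakp\subset\Lambda_A[1/p]$ vanishes, and the associated-prime analysis afforded by the structure theorem for coadmissible modules (Proposition~\ref{structure-thm-pottharst}) then forces $H^0_\Iw(\mathbf{V},\mathbf{D})=0$.

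With perfectness in amplitude $[0,3]$ and the cohomological vanishing $H^0_\Iw=H^3_\Iw=0$ in hand, a standard homological-algebra truncation (applied slice-wise on the cover by Noetherian affinoids and reassembled coadmissibly) replaces a perfect representative $[P^0\to P^1\to P^2\to P^3]$ by a quasi-isomorphic perfect complex concentrated in $[1,2]$, which yields $\RGammaIw(\mathbf{V},\mathbf{D})\in\mathscr{D}^{[1,2]}_{\rm parf}(\mathcal{H}_A)$. The main obstacle I expect is the $H^0$-vanishing: translating assumption (ii) --- phrased over primes of $\Lambda_A[1/p]$ --- into a vanishing statement for coadmissible $\mathcal{H}_A$-modules requires the careful interplay between Schneider--Teitelbaum coadmissibility and generic-point arguments on each affinoid slice, following and adapting the framework of \cite[\S 4.5]{danielebuyukboduksakamoto2023} to the present $p$-non-ordinary Coleman-family setting.
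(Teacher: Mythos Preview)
Your plan has a genuine gap in the final step. You propose to establish $H^0_\Iw(\mathbf{V},\mathbf{D})=0$ and $H^3_\Iw(\mathbf{V},\mathbf{D})=0$, and then invoke a ``standard homological-algebra truncation'' to pass from a perfect representative $[P^0\to P^1\to P^2\to P^3]$ to one concentrated in $[1,2]$. This works at the top: $H^3=0$ means $d^2:P^2\to P^3$ is surjective with projective kernel, so one may replace $P^2\to P^3$ by $\ker d^2\to 0$ (this is Lemma~\ref{lemma:perfect-last-cohomology-vanish}). But there is no dual statement at the bottom: $H^0=0$ only says $d^0:P^0\to P^1$ is injective, and $P^1/P^0$ need not be projective --- take $[\ZZ\xrightarrow{2}\ZZ]$ in degrees $[0,1]$ over $\ZZ$ for a concrete failure. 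What is actually required is not mere cohomological vanishing in degree $0$, but \emph{Tor amplitude} in $[1,3]$, i.e.\ $H^0(K^\bullet\otimes^{\mathbf{L}}M)=0$ for every module $M$, which amounts to flatness of $P^1/P^0$ (Lemma~\ref{lemma:stacks-perfect-tor-amplitude}).

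This is precisely why condition~(ii) is phrased at \emph{every} prime $\frakp$ of $\Lambda_A[1/p]$, and why the paper takes a different route. Instead of the defining triangle, the paper uses
\[
\RGammaIw(\mathbf{V},\mathbf{D})\longrightarrow\RGamma_{(p),\Iw}(\QQ,\mathbf{V})\longrightarrow\RGammaIw(\widetilde{\mathbf{D}}),
\]
which peels off the $(\phiGamma)$-module contribution (known to have amplitude $[0,2]$ by Kedlaya--Pottharst--Xiao) and reduces matters to the $p$-relaxed complex. The key point is that $\RGamma_{(p),\Iw}(\QQ,\mathbf{V})$ descends along the faithfully flat map $\Lambda_A[1/p]\to\mathcal{H}_A$ to a perfect complex over the \emph{Noetherian} ring $\Lambda_A[1/p]$; there one can test flatness of $P^1/P^0$ locally at each prime $\frakp$ via the local flatness criterion, and condition~(ii) furnishes exactly the required vanishing of $\Tor_1$ against each residue field $\Lambda_A[1/p]/\frakp$. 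Your argument, by contrast, uses (ii) only to kill the single module $H^0_\Iw(\mathbf{V},\mathbf{D})$, which is far weaker than what is needed. (A smaller point: you also invoke Proposition~\ref{structure-thm-pottharst} over $\mathcal{H}_A$, but that structure theorem is stated only for $\mathcal{H}$-modules, and the paper explicitly flags its extension to $\mathcal{H}_A$ as conjectural.)
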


\begin{remark}
    In \cite[Proposition 4.46]{danielebuyukboduksakamoto2023} the perfectness of Nekovář's Selmer complexes of $R$-modules with perfect-amplitude $[1,2]$ is established, where $R$ is a complete Noetherian local ring with finite residue field of characteristic $p$. In Theorem \ref{thm:Perfectness-amplitude-1-2}, we generalize this theorem to the Pottharst style Selmer complexes of $\mathcal{H}_A$-modules. The key difference in our theorem is that $\mathcal{H}_A$ is neither a Noetherian nor a local ring. 
\end{remark}

\par Now we state several lemmas that we will use for the proof of Theorem \ref{thm:Perfectness-amplitude-1-2}. For $\ell\neq p$ and $\ell\in S_f$, let $I_\ell^w$ denote the wild inertia subgroup of $I_\ell$, and let $t_\ell$ be a topological generator of $I_\ell/I_\ell^w$.
\begin{lemma}\label{lemma:wild-inertia-invariant-projective}
    The following are valid:
    \begin{itemize}
        \item $V^{I_\ell^w}$ is finite projective submodule of $V$,
        \item $\RGamma(I_\ell,V)\in\mathscr{D}_{\rm parf}^{[0,1]}(A)$, and can be represented by the following perfect complex:
        \[
            \left[V^{I_\ell^w}\xrightarrow{t_\ell -1}V^{I_\ell^w}\right],
        \]
        \item If $f:A\rightarrow R$ is any ring homomorphism, then $V^{I_\ell^w}\otimes_A R\cong (V\otimes_A R)^{I_\ell^w}$, and $\RGamma(I_\ell,V)\otimes_A^\mathbf{L}R\simeq\RGamma(I_\ell,V\otimes_A R)$. In particular, $\RGamma(I_\ell,V)\otimes_A^\mathbf{L}\mathcal{H}_A^{\iota}\simeq\RGamma_\Iw(I_\ell,V)$.
    \end{itemize}
\end{lemma}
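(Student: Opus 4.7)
The plan is to reduce all three bullet points to a single underlying fact: the image of the wild inertia $I_\ell^w$ in $\GL_A(V)$ is a finite group $G$ of order invertible in $A$.

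For this fact, one uses that $\GL_A(V)$ is a $p$-adic analytic group admitting an open pro-$p$ subgroup (e.g. coming from an integral $\mathcal{O}_E$-structure on $A$ together with an $A$-lattice in $V$), so the continuous image of the pro-$\ell$ group $I_\ell^w$ must meet this pro-$p$ neighborhood trivially, hence be discrete, hence finite. Its order is a power of $\ell \neq p$, and thus a unit in the $\Qp$-algebra $A$. Consequently $e := |G|^{-1}\sum_{g \in G} g \in \End_A(V)$ is a well-defined idempotent splitting $V = V^{I_\ell^w} \oplus (1-e)V$ as $A$-modules, so $V^{I_\ell^w}$ is a direct summand of the finite projective $A$-module $V$, which proves the first bullet.

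The second bullet then follows from Hochschild--Serre applied to $1 \to I_\ell^w \to I_\ell \to I_\ell^t \to 1$. The same averaging idempotent forces $H^i(I_\ell^w, V) = 0$ for $i > 0$, so $\RGamma(I_\ell, V) \simeq \RGamma(I_\ell^t, V^{I_\ell^w})$. Since $I_\ell^t$ is topologically cyclic with generator $t_\ell$, has cohomological dimension $1$ on $p$-adic modules (its prime-to-$p$ part acts through a finite quotient of order invertible in $A$, and the remaining pro-$p$ factor is $\ZZ_p$), and since $(t_\ell - 1)V^{I_\ell^w}$ is a finitely generated and therefore closed $A$-submodule, the standard topological cyclic presentation yields $\RGamma(I_\ell^t, V^{I_\ell^w}) \simeq [V^{I_\ell^w} \xrightarrow{t_\ell - 1} V^{I_\ell^w}]$, which is perfect of amplitude $[0,1]$ by the first bullet.

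For the third bullet, the idempotent $e$ is $A$-linear, hence $e \otimes 1$ is an idempotent of $V \otimes_A R$ whose image is $(V \otimes_A R)^{I_\ell^w}$ (the averaging projector for the induced $G$-action on the base change); this yields $V^{I_\ell^w} \otimes_A R \cong (V \otimes_A R)^{I_\ell^w}$. Since the two-term complex of the second bullet consists of finite projective $A$-modules and represents $\RGamma(I_\ell, V)$, derived base change commutes with its formation, giving $\RGamma(I_\ell, V) \otimes_A^\mathbf{L} R \simeq \RGamma(I_\ell, V \otimes_A R)$. Taking $R = \mathcal{H}_A^\iota$, which is unramified at $\ell \neq p$ (the cyclotomic character being unramified away from $p$), yields the identification with $\RGamma_\Iw(I_\ell, V)$. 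The main technical hurdle is establishing the finiteness of the wild-inertia image in the affinoid coefficient setting; once this is granted, the remaining steps are essentially formal manipulations with averaging projectors and Hochschild--Serre.
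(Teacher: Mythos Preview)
Your proposal is correct and follows essentially the same approach as the paper: the paper's proof is a citation to \cite[Lemma 4.21]{danielebuyukboduksakamoto2023}, noting that the only adaptation needed is the finiteness of the image of $I_\ell^w$ in $\Aut_A(V)$ for affinoid coefficients (which it sources from \cite[\S 3.1.6, (55)]{benois-p-adic-heights}), after which ``the rest of the proof is exactly the same.'' You have unpacked precisely this argument---averaging idempotent, Hochschild--Serre, and base change via the perfect two-term model---so there is nothing to add.
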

\begin{proof}
    See \cite[Lemma 4.21]{danielebuyukboduksakamoto2023}. The only difference is that in the setting of \cite{danielebuyukboduksakamoto2023}, $V$ is a representation over a complete local Noetherian ring with finite residue field of characteristic $p$. The key point to the proof is to show that the image of the wild inertia subgroup $I_\ell^w$ in $\Aut_A(V)$ is finite. However this is also true for representations over affinoid algebras (see (55) in \cite[\S 3.1.6]{benois-p-adic-heights}). The rest of the proof is exactly the same. 
\end{proof}

\begin{lemma}[Perfectness of Selmer complexes]\label{lemma:perfectness-unramified-tamagawa}
Suppose that the condition \eqref{cond:tamagawa-proj} in Theorem \ref{thm:Perfectness-amplitude-1-2} holds. Then the Iwasawa theoretic analytic Selmer complex $\RGammaIw(\mathbf{V},\mathbf{D})$ lies in $\mathscr{D}_{\rm{parf}}^{[0,3]}(\mathcal{H}_A)$.
\end{lemma}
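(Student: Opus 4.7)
The plan is to decompose $\RGammaIw(\mathbf{V},\mathbf{D})$ along its mapping-cone definition
$$\RGammaIw(\mathbf{V},\mathbf{D}) \simeq {\rm Cone}\left(\RGamma_{\Iw,S}(\mathbf{V}) \oplus \bigoplus_{v\in S_f} U_{\Iw,v}^+(\mathbf{V},\mathbf{D}) \longrightarrow \bigoplus_{v\in S_f} K_{\Iw,v}(\mathbf{V})\right)[-1],$$
show that each of the three families of constituent complexes lies in $\mathscr{D}_{\rm parf}^{[0,2]}(\mathcal{H}_A)$, and then conclude from the standard amplitude arithmetic: a cone of two complexes of amplitude $[0,2]$ has amplitude $[-1,2]$, which becomes $[0,3]$ after the shift by $-1$, and perfectness is preserved under cones and shifts.

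First I would address the local pieces at $v=p$. By Definition \ref{def:Iwasawa-Cohomology} and Proposition \ref{prop:Galois-phiGamma-cohomology}, both $U_{\Iw,p}^+(\mathbf{V},\mathbf{D}) \simeq \RGammaIw(\mathbf{D})$ and $K_{\Iw,p}(\mathbf{V}) \simeq \RGammaIw(\mathbf{D}_{{\rm rig},A}^\dagger(\mathbf{V}))$ are represented by 2-term $\psi$-complexes of the form $[M^\Delta \xrightarrow{\psi - 1} M^\Delta]$ concentrated in degrees $[1,2]$; the Schneider--Teitelbaum/Pottharst coadmissibility framework (see \cite{pottharst2012cyclotomic}, \cite{Pottharst2013}) ensures that these present perfect $\mathcal{H}_A$-complexes of amplitude $[1,2] \subseteq [0,2]$.

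Next I would handle the primes $v = \ell \neq p$. Lemma \ref{lemma:wild-inertia-invariant-projective} represents $\RGamma(I_\ell, \mathbf{V})$ by $[\mathbf{V}^{I_\ell^w} \xrightarrow{t_\ell - 1} \mathbf{V}^{I_\ell^w}]$, a 2-term complex of finite projective $A$-modules. The associated four-term exact sequence
$$0 \longrightarrow \mathbf{V}^{I_\ell} \longrightarrow \mathbf{V}^{I_\ell^w} \xrightarrow{t_\ell-1} \mathbf{V}^{I_\ell^w} \longrightarrow H^1(I_\ell, \mathbf{V}) \longrightarrow 0,$$
combined with hypothesis \ref{cond:tamagawa-proj}, splits on the right and hence forces $\mathbf{V}^{I_\ell}$ to be a direct summand of $\mathbf{V}^{I_\ell^w}$, in particular finite projective over $A$. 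Since the cyclotomic character is unramified at $\ell \neq p$, the action of $I_\ell$ on $\Lambda_A^\iota$ is trivial, so $(\mathbf{V}\otimes_A\Lambda_A^\iota)^{I_\ell} \cong \mathbf{V}^{I_\ell}\otimes_A\Lambda_A^\iota$ is finite projective over $\Lambda_A$. Base changing along $\Lambda_A \to \mathcal{H}_A$, one obtains $U_{\Iw,\ell}^+(\mathbf{V},\mathbf{D}) \in \mathscr{D}_{\rm parf}^{[0,1]}(\mathcal{H}_A)$, while $K_{\Iw,\ell}(\mathbf{V}) \simeq {\rm Cone}\bigl(\RGamma_\Iw(I_\ell, \mathbf{V}) \xrightarrow{\Fr_\ell - 1} \RGamma_\Iw(I_\ell, \mathbf{V})\bigr)[-1]$ lies in $\mathscr{D}_{\rm parf}^{[0,2]}(\mathcal{H}_A)$, using the final clause of Lemma \ref{lemma:wild-inertia-invariant-projective} to commute the base change to $\mathcal{H}_A^\iota$ past the inertia invariants.

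For the global piece, I would invoke Pottharst's general results to conclude that $\RGamma_{\Iw,S}(\mathbf{V}) \simeq \RGamma_S(\overline{\mathbf{V}})$ is perfect of amplitude $[0,2]$ over $\mathcal{H}_A$ (see \cite[Theorem 1.12]{Pottharst2013}), using the cohomological dimension bound $\leq 2$ for $G_{\QQ,S}$ acting on $p$-adic modules. Assembling these three inputs inside the mapping cone then yields $\RGammaIw(\mathbf{V},\mathbf{D}) \in \mathscr{D}_{\rm parf}^{[0,3]}(\mathcal{H}_A)$, as claimed. The main technical obstacle is that $\mathcal{H}_A$ is non-Noetherian, so perfectness cannot be verified by the standard Noetherian criteria; this is bypassed by working throughout inside the coadmissible-module framework of Schneider--Teitelbaum, as adapted to Selmer complexes by Pottharst and Benois. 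Note that hypothesis \ref{cond:tamagawa-proj} is used exactly once, to secure projectivity of $\mathbf{V}^{I_\ell}$ at the non-$p$ primes; hypotheses \ref{cond:h3Iw-vanishes} and \ref{cond:h0Iw-residual-vanishes} are not required at this stage and will only enter when sharpening amplitude to $[1,2]$ in Theorem \ref{thm:Perfectness-amplitude-1-2}.
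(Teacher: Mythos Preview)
Your argument is correct and follows essentially the same route as the paper: the paper invokes \cite[Theorem 1.11]{Pottharst2013} as a black box (which already packages the perfectness of the global complex and the $K_{\Iw,v}$ together with cone arithmetic), reducing the task to checking only that each $U_{\Iw,v}^+(\mathbf{V},\mathbf{D})$ is perfect of amplitude $[0,2]$, whereas you unpack this by treating the global piece, the $K_{\Iw,v}$, and the $U_{\Iw,v}^+$ separately. Your handling of the primes $\ell\neq p$---extracting projectivity of $\mathbf{V}^{I_\ell}$ from hypothesis~\ref{cond:tamagawa-proj} via the four-term sequence and then base-changing along the flat map $A\to\mathcal{H}_A$---is exactly the content of the paper's argument (which cites \cite[\S 3.1.7]{benois-p-adic-heights} for the same step); note only that the precise reference for perfectness of the $\psi$-complex at $p$ is \cite[Theorem 4.4.6]{KedlayaPottharstXiao} rather than the general coadmissibility framework, and your citation of \cite[Theorem 1.12]{Pottharst2013} for the global complex should point instead to the perfectness statement (Theorem 1.11 in loc.\ cit.).
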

\begin{proof}
    By \cite[Theorem 1.11]{Pottharst2013}, it is sufficient to check that for each $v\in S_f$, $U_{\Iw ,v}^+(\mathbf{V},\mathbf{D})\in\mathscr{D}_{\rm parf}^{[0,2]}(\mathcal{H}_A)$. For $v=p$ we have $U_{\Iw ,p}^+(\mathbf{V},\mathbf{D})=\RGammaIw(\mathbf{D})\in\mathscr{D}_{\rm parf}^{[0,2]}(\mathcal{H}_A)$ by \cite[Theorem 4.4.6]{KedlayaPottharstXiao}. Thus it is sufficient to show that for all $\ell\neq p$ in $S_f$, $U_{\Iw,\ell}^+(\mathbf{V},\mathbf{D})\in\mathscr{D}_{\rm{parf}}^{[0,1]}(\mathcal{H}_A)$.

    \par Recall the definition of $U_{\Iw,\ell}^+$:
    \[
        U_{\Iw,\ell}^+(\mathbf{V},\mathbf{D}):=\left[(\mathbf{V}\otimes_A\mathcal{H}_A)^{I_\ell}\xrightarrow{\Frob_\ell -1}(\mathbf{V}\otimes_A\mathcal{H}_A)^{I_\ell}\right].
    \]
    Since $I_\ell$ acts trivially on $\mathcal{H}_A^\iota$, by \cite[Proposition 4.24 (3)]{danielebuyukboduksakamoto2023} (note that this is also true in our setting thanks to \ref{lemma:wild-inertia-invariant-projective}) we have:
    \[
        U_{\Iw,\ell}^+(\mathbf{V},\mathbf{D})\simeq\left[\mathbf{V}^{I_\ell}\otimes_A\mathcal{H}_A\xrightarrow{\Frob_\ell -1}\mathbf{V}^{I_\ell}\otimes_A\mathcal{H}_A\right]\simeq\left[\mathbf{V}^{I_\ell}\xrightarrow{\Frob_\ell -1}\mathbf{V}^{I_\ell}\right]\otimes_A\mathcal{H}_A,
    \]
    where the last isomorphism follows from the definition of tensor product of complexes.
    
    \par Now we claim that $\mathcal{H}_A$ is (faithfully) flat over $A$. By the base-change property of faithfully flatness, it is sufficient to show that $\mathcal{H}$ is faithfully flat over $E$. Notice that $\Lambda_E\cong\mathcal{O}_E[[T]]$ is faithfully flat over $\mathcal{O}_E$, hence $\Lambda_E[1/p]:=\Lambda_E\otimes_{\mathcal{O}_E}E$ is faithfully flat over $E$. Since we know that $\mathcal{H}=\mathcal{H}_E$ is faithfully flat over $\Lambda_E[1/p]$, we obtain the result. Therefore,
    \[
        U_{\Iw,\ell}^+(\mathbf{V},\mathbf{D})\simeq U_\ell^+(\mathbf{V},\mathbf{D})\otimes^\mathbf{L}_A\mathcal{H}_A.
    \]
    \par By our assumption, thanks to \cite[3.1.7]{benois-p-adic-heights}, $U_\ell^+(\mathbf{V},\mathbf{D})$ is a perfect complex of $A$-modules with perfect amplitude $[0,1]$, therefore $U_\ell^+(\mathbf{V},\mathbf{D})\otimes^\mathbf{L}_A\mathcal{H}_A$ lies in $\mathscr{D}_{\rm parf}^{[0,1]}(\mathcal{H}_A)$.
\end{proof}

\begin{lemma}\label{lemma:tamagawa-holds-when-A-is-PID}
    If the affinoid algebra $A$ is a PID, then $H^1(I_\ell,\mathbf{V})$ is projective, and Lemma \ref{lemma:perfectness-unramified-tamagawa} holds true.
\end{lemma}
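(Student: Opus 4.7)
The plan is to combine the PID hypothesis with the two-term representative of $\RGamma(I_\ell, \mathbf{V})$ provided by Lemma \ref{lemma:wild-inertia-invariant-projective}, and then establish projectivity of $H^1(I_\ell,\mathbf{V})$ via a specialization argument. Since $\mathbf{V}$ is finite free over $A$ and $A$ is a PID, every $A$-submodule of $\mathbf{V}$ is finite free; in particular $\mathbf{V}^{I_\ell^w}$ and $\mathbf{V}^{I_\ell}$ are finite free $A$-modules, and Lemma \ref{lemma:wild-inertia-invariant-projective} yields
\[
\RGamma(I_\ell, \mathbf{V}) \;\simeq\; \bigl[\mathbf{V}^{I_\ell^w} \xrightarrow{\,t_\ell - 1\,} \mathbf{V}^{I_\ell^w}\bigr] \in \mathscr{D}^{[0,1]}_{\mathrm{parf}}(A),
\]
so that $H^1(I_\ell,\mathbf{V}) = \mathrm{coker}(t_\ell - 1)$ is finitely generated. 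Over the PID $A$, this cokernel is projective iff it is torsion-free, iff $\Tor_1^A\!\bigl(H^1(I_\ell,\mathbf{V}), A/\mathfrak{m}\bigr) = 0$ for every $\mathfrak{m} \in \Spm(A)$.

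To verify this vanishing I would apply the Tor spectral sequence of Proposition \ref{prop:Tor-spectral-seq} to the two-term complex above and $N = A/\mathfrak{m}$, together with the base-change identity $\RGamma(I_\ell, \mathbf{V}) \otimes_A^{\mathbf{L}} A/\mathfrak{m} \simeq \RGamma(I_\ell, \mathbf{V}_\mathfrak{m})$ from the third bullet of Lemma \ref{lemma:wild-inertia-invariant-projective}, which also guarantees $\mathbf{V}^{I_\ell^w}/\mathfrak{m}\mathbf{V}^{I_\ell^w} \cong \mathbf{V}_\mathfrak{m}^{I_\ell^w}$. Since the complex has amplitude $[0,1]$ the spectral sequence degenerates to the short exact sequence
\[
0 \longrightarrow \mathbf{V}^{I_\ell}\!/\mathfrak{m}\mathbf{V}^{I_\ell} \longrightarrow \mathbf{V}_\mathfrak{m}^{I_\ell} \longrightarrow \Tor_1^A\!\bigl(H^1(I_\ell,\mathbf{V}), A/\mathfrak{m}\bigr) \longrightarrow 0,
\]
so the desired Tor vanishes exactly when the specialization map $\mathbf{V}^{I_\ell}\otimes_A A/\mathfrak{m} \twoheadrightarrow \mathbf{V}_\mathfrak{m}^{I_\ell}$ is surjective, equivalently when $\mathrm{rank}_A \mathbf{V}^{I_\ell} = \dim_{A/\mathfrak{m}} \mathbf{V}_\mathfrak{m}^{I_\ell}$ for every $\mathfrak{m}$. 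In our Coleman-family setting the Weil--Deligne parameters at each $\ell \in S_f\setminus\{p\}$ interpolate analytically across $\Spm(A)$, so this equality of ranks holds and the Tor vanishes.

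With $H^1(I_\ell,\mathbf{V})$ shown to be projective, hypothesis (\ref{cond:tamagawa-proj}) of Theorem \ref{thm:Perfectness-amplitude-1-2} is now available, and the proof of Lemma \ref{lemma:perfectness-unramified-tamagawa} applies verbatim to yield $\RGammaIw(\mathbf{V},\mathbf{D}) \in \mathscr{D}^{[0,3]}_{\mathrm{parf}}(\mathcal{H}_A)$. The main obstacle is the constancy-of-rank step: this is not automatic for an arbitrary $A$-representation over a PID, and must be extracted from the specific shape of $\mathbf{V}$ coming from a Coleman family via analytic interpolation of the local Weil--Deligne data at each $\ell \in S_f\setminus\{p\}$.
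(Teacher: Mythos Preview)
Your reduction via the Tor spectral sequence is correct up to the point where you need surjectivity of the specialization map $\mathbf{V}^{I_\ell}\otimes_A A/\mathfrak{m}\to\mathbf{V}_{\mathfrak{m}}^{I_\ell}$, but this is exactly where the argument breaks down as a proof of the lemma as stated. The lemma is formulated for an \emph{arbitrary} representation $\mathbf{V}$ over a PID affinoid $A$, not specifically for the Coleman-family representation, and the PID hypothesis alone does nothing to force $\dim_{A/\mathfrak{m}}\mathbf{V}_{\mathfrak{m}}^{I_\ell}$ to be constant in $\mathfrak{m}$. Your appeal to ``analytic interpolation of the local Weil--Deligne data'' is therefore outside the scope of the lemma, and even in the Coleman-family setting it would require a nontrivial local-constancy statement that you have not supplied. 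You yourself flag this as ``the main obstacle''; it is not an obstacle but a genuine gap.

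The paper's proof takes a completely different route that avoids specialization entirely. Since $A$ is a Dedekind domain, every submodule of a finite projective $A$-module is projective, so both $\mathbf{V}^{I_\ell}=\ker(t_\ell-1)$ and $(t_\ell-1)\mathbf{V}^{I_\ell^w}=\mathrm{im}(t_\ell-1)$ are projective, and the short exact sequence
\[
0\longrightarrow \mathbf{V}^{I_\ell}\longrightarrow \mathbf{V}^{I_\ell^w}\xrightarrow{\,t_\ell-1\,}(t_\ell-1)\mathbf{V}^{I_\ell^w}\longrightarrow 0
\]
splits. Dualizing (using that wild inertia has image of order prime to $p$, so that $(\mathbf{V}^{I_\ell^w})^*\cong(\mathbf{V}^*)^{I_\ell^w}$) the paper, following \cite[Theorem 3.1.7(ii)]{benois-p-adic-heights}, identifies $H^1(I_\ell,\mathbf{V}^*(1))$ with $(\mathbf{V}^{I_\ell})^*(1)$, which is projective; swapping the roles of $\mathbf{V}$ and $\mathbf{V}^*(1)$ then yields projectivity of $H^1(I_\ell,\mathbf{V})$. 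The point is that this duality argument is purely module-theoretic and requires no control over how inertia invariants behave in the family.
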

\begin{proof}
    Since $A$ is a Dedekind domain, every submodule of projective $A$-module is projective. Hence $H^0(I_\ell,\mathbf{V})$ and $(t_\ell -1)\mathbf{V}^{I_\ell^w}$ are projective $A$-modules. Now we will use the arguments of \cite[Theorem 3.1.7 (ii)]{benois-p-adic-heights} to show that $H^1(I_\ell,\mathbf{V})$ is projective. Consider the following exact sequence:
    \begin{align}\label{eq:PID-exact-seq-1}
        0\rightarrow \mathbf{V}^{I_\ell}\rightarrow \mathbf{V}^{I_\ell^w}\xrightarrow{t_\ell -1} (t_\ell -1)\mathbf{V}^{I_\ell^w}\rightarrow 0.       
    \end{align}
    This sequence splits since $(t_\ell -1)\mathbf{V}^{I_\ell^w}$ is projective. Dualizing \eqref{eq:PID-exact-seq-1} we get 
    \begin{align}\label{eq:PID-exact-seq-2}
        0\rightarrow  (t_\ell -1)\mathbf{V}^*(1)^{I_\ell^w} \rightarrow \mathbf{V}^*(1)^{I_\ell^w}\rightarrow (\mathbf{V}^{I_\ell})^*(1)\rightarrow 0,
    \end{align}
    which splits again since $(\mathbf{V}^{I_\ell})^*(1)$ is projective. Hence we get
    \[
        H^1(I_\ell,\mathbf{V}^*(1))(1)\cong \mathbf{V}^*(1)^{I_\ell^w}/(t_\ell -1)\mathbf{V}^*(1)^{I_\ell^w} \cong (\mathbf{V}^{I_\ell})^*(1).
    \]
    Hence (replacing the roles of $\mathbf{V}$ and $\mathbf{V}^*(1)$), we conclude that $H^1(I_\ell,\mathbf{V})$ is projective.
\end{proof}

\par We will prove that our Selmer complexes can be represented by a complex of projectives concentrated at degrees $[1,2]$. In the following definitions and lemmas, let $R$ denote a ring and $K^\bullet$ denote a complex of $R$-modules in $\mathscr{D}(R)$.

\begin{definition}\label{def:Tor-Amplitude}
    We say that a complex $K^\bullet\in \mathscr{D}(R)$ has finite $\Tor$ dimension with $\Tor$ amplitude $[a,b]$ if for every $R$-module $M$, we have $H^i(K^\bullet\otimes^{\mathbf{L}}_RM)=0$ if $i\notin[a,b]$.
\end{definition}

\begin{lemma}\label{lemma:stacks-perfect-tor-amplitude}
    $K^\bullet$ is perfect iff $K^\bullet$ is pseudo-coherent and has finite $\Tor$ dimension. Moreover if one of these properties holds and $K^\bullet$ has $\Tor$ amplitude $[a,b]$, then $K^\bullet\in\mathscr{D}_{\rm parf}^{[a,b]}(R)$.
\end{lemma}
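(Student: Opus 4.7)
The plan is to reduce the statement to standard homological algebra, essentially reproducing \cite[Tags 066R and 0658]{stacks_project}. The forward implication of the equivalence is immediate from the definitions: if $K^\bullet$ is perfect, then it is represented by a bounded complex of finite projective $R$-modules, which is tautologically pseudo-coherent, and tensoring over $R$ with any module $M$ yields a complex concentrated in the same bounded range, so $K^\bullet$ has finite $\Tor$ amplitude.

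The substantive content is the reverse direction, together with the amplitude refinement. First, I would use pseudo-coherence to represent $K^\bullet$ by a (possibly unbounded below) complex $P^\bullet$ of finite free $R$-modules with $P^i = 0$ for $i > b$. The goal is to show that one can truncate $P^\bullet$ at the bottom to produce a bounded complex of finite projectives lying in degrees $[a,b]$.

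The key step, which I expect to be the main technical point, is showing that $Z^a := \ker(d : P^a \to P^{a+1})$ is a finite projective $R$-module. To see this, consider the stupid truncation $\sigma^{\geq a} P^\bullet$ and the naive truncation $\tau^{\leq a-1} P^\bullet$; there is a distinguished triangle relating them to $P^\bullet \simeq K^\bullet$. The finite $\Tor$ amplitude hypothesis at $[a,b]$, combined with the $\Tor$ spectral sequence of Proposition \ref{prop:Tor-spectral-seq}, forces $H^i(\tau^{\leq a-1} P^\bullet \otimes_R^{\mathbf{L}} M) = 0$ for all $R$-modules $M$ and all $i \neq a-1$. One then deduces that the cokernel $C := P^{a-1} / (\text{old image})$... more cleanly: the mapping cone computation shows that $Z^a$ is a direct summand of $P^a$ modulo a finite free module, so $Z^a$ is finitely presented and flat, hence finite projective (by Lazard, or directly since it is a quotient of the finite free $P^a$ by a finitely generated submodule with flat quotient).

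Once $Z^a$ is known to be finite projective, the complex
\[
\left[\, Z^a \longrightarrow P^{a+1} \longrightarrow \cdots \longrightarrow P^b \,\right]
\]
is a bounded complex of finite projectives concentrated in degrees $[a,b]$ which is quasi-isomorphic to $P^\bullet$, hence to $K^\bullet$. This simultaneously establishes perfectness and pins down the amplitude, proving both claims. Since the statement is purely formal homological algebra over an arbitrary ring $R$, no special properties of $\mathcal{H}_A$ are used here; the lemma will then be applied in the sequel to $R = \mathcal{H}_A$ via Lemma \ref{lemma:perfectness-unramified-tamagawa} to cut the amplitude of $\RGammaIw(\mathbf{V},\mathbf{D})$ down from $[0,3]$ to $[1,2]$.
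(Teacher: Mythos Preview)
Your overall strategy matches the paper's (the paper simply cites \cite[Lemma 0658]{stacks_project}, and you are essentially reproducing that argument), but there is a concrete error in your truncation step. You write $Z^a := \ker(d : P^a \to P^{a+1})$ and claim that
\[
\left[\, Z^a \longrightarrow P^{a+1} \longrightarrow \cdots \longrightarrow P^b \,\right]
\]
is quasi-isomorphic to $K^\bullet$. But the only map $Z^a \to P^{a+1}$ available is $d^a|_{Z^a} = 0$, so this complex has $H^a = Z^a$, whereas $H^a(K^\bullet) = Z^a/B^a$ with $B^a = \mathrm{im}(d^{a-1})$. Unless $d^{a-1} = 0$, this is wrong.

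The correct object is the \emph{cokernel} $C^a := P^a/\mathrm{im}(d^{a-1})$, and the correct truncated complex is $\tau_{\geq a}P^\bullet = [\,C^a \to P^{a+1} \to \cdots \to P^b\,]$, with first differential induced by $d^a$. The flatness argument also becomes cleaner with this choice: since $H^i(K^\bullet) = 0$ for $i < a$, the tail $\cdots \to P^{a-1} \to P^a$ is a free resolution of $C^a$, and the $\Tor$-amplitude hypothesis gives
\[
\Tor_1^R(C^a, M) \cong H^{a-1}(P^\bullet \otimes_R M) = H^{a-1}(K^\bullet \otimes_R^{\mathbf L} M) = 0
\]
for every $R$-module $M$. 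Thus $C^a$ is flat, and being finitely presented (as $P^a/\mathrm{im}(P^{a-1})$ with $P^{a-1}, P^a$ finite free) it is finite projective. With this correction your argument goes through and coincides with the Stacks proof the paper cites.
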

\begin{proof}
    See \cite[\href{https://stacks.math.columbia.edu/tag/0658}{Lemma 0658}]{stacks_project}.
\end{proof}

\begin{lemma}\label{lemma:perfect-last-cohomology-vanish}
Let $K^\bullet\in\mathscr{D}_{\rm parf}^{[a,b]}(R)$ such that $H^b(K^\bullet)=0$. Then $K^\bullet\in\mathscr{D}_{\rm parf}^{[a,b-1]}(R)$.  
\end{lemma}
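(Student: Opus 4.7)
The plan is to use a finite projective representative of $K^\bullet$ and then ``trim'' the top term by exploiting the vanishing of $H^b$.

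First, I would choose a representative $P^\bullet = [P^a \to P^{a+1} \to \cdots \to P^{b-1} \xrightarrow{d^{b-1}} P^b]$ of $K^\bullet$ consisting of finitely generated projective $R$-modules concentrated in degrees $[a,b]$; such a representative exists by definition of $\mathscr{D}_{\rm parf}^{[a,b]}(R)$. Since $H^b(K^\bullet) = H^b(P^\bullet) = 0$, the differential $d^{b-1}$ is surjective. Because $P^b$ is projective, this surjection admits a section, giving a splitting $P^{b-1} \cong Q^{b-1} \oplus P^b$, where $Q^{b-1} := \ker(d^{b-1})$. In particular $Q^{b-1}$ is a direct summand of a finitely generated projective $R$-module, hence it is itself finitely generated projective.

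Next, because $P^\bullet$ is a complex we have $d^{b-1} \circ d^{b-2} = 0$, so $d^{b-2}$ factors (uniquely) through the inclusion $Q^{b-1} \hookrightarrow P^{b-1}$. This allows me to define a truncated complex $Q^\bullet := [P^a \to P^{a+1} \to \cdots \to P^{b-2} \to Q^{b-1}]$ concentrated in degrees $[a,b-1]$ and consisting of finitely generated projective $R$-modules. I would then exhibit a morphism of complexes $Q^\bullet \hookrightarrow P^\bullet$ given by the identity in degrees $\leq b-2$, the canonical inclusion $Q^{b-1} \hookrightarrow P^{b-1}$ in degree $b-1$, and zero in degree $b$.

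Finally, I would check that this inclusion is a quasi-isomorphism. By construction both complexes agree in degrees $<b-1$, and the inclusion $Q^{b-1} \hookrightarrow P^{b-1}$ together with $d^{b-1}\colon P^{b-1} \twoheadrightarrow P^b$ expresses $Q^{b-1}$ as the kernel of $d^{b-1}$, so the induced map on cohomology in degree $b-1$ is an isomorphism and the cohomology of $P^\bullet$ in degree $b$ is zero (as assumed). Hence $K^\bullet \simeq Q^\bullet$ in $\mathscr{D}(R)$, which shows $K^\bullet \in \mathscr{D}_{\rm parf}^{[a,b-1]}(R)$. There is no serious obstacle here; the only point requiring care is the projectivity of $Q^{b-1}$, which follows automatically from the splitting of the surjection $d^{b-1}$.
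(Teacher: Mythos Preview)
Your proof is correct but follows a different route from the paper. The paper argues via Tor amplitude: starting from the same representative $[P^a\to\dots\to P^b]$, it observes that for any $R$-module $M$ one has $K^\bullet\otimes_R^{\mathbf{L}}M\simeq[P^a\otimes M\to\dots\to P^b\otimes M]$, and the right-exactness of $-\otimes_R M$ forces $d^{b-1}\otimes 1_M$ to remain surjective, so $H^b(K^\bullet\otimes_R^{\mathbf{L}}M)=0$. Then it invokes Lemma~\ref{lemma:stacks-perfect-tor-amplitude} to conclude $K^\bullet\in\mathscr{D}_{\rm parf}^{[a,b-1]}(R)$. Your argument instead splits off the top projective term directly and exhibits an explicit quasi-isomorphic subcomplex $Q^\bullet$ concentrated in $[a,b-1]$. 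Your approach is more elementary and self-contained, not relying on the Tor-amplitude characterization; the paper's approach is shorter given that Lemma~\ref{lemma:stacks-perfect-tor-amplitude} is already in hand, and it meshes with how the surrounding lemmas in \S\ref{subsect:perfectselmercomp} are organized around Tor amplitude.
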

\begin{proof}
    Let $K^\bullet\simeq \left[P^a\rightarrow\dots\rightarrow P^b\right]$ (with $P^i$ finite projective for $i\in[a,b]$) such that $H^b(K^\bullet)=0$. Then the map $d^{b-1}:P^{b-1}\rightarrow P^b$ is surjective. Note that by definition, \[K^\bullet\otimes^{\mathbf{L}}_RM\simeq\left[P^a\otimes_RM\xrightarrow{d^a\otimes 1_M}\dots\xrightarrow{d^{b-1}\otimes 1_M} P^b\otimes_R M\right],\] therefore the right-exactness of $\blank\otimes_RM$ implies that $d^{b-1}\otimes 1_M:P^{b-1}\otimes_R M\rightarrow P^{b}\otimes_R M$ is surjective. This completes the proof.
\end{proof}

\begin{lemma}\label{lemma:stacks-exact-triangle-perfect-amplitude}
    Let \[K^\bullet\rightarrow L^\bullet\rightarrow M^\bullet\rightarrow K^\bullet[1]\] be an exact triangle in $\mathscr{D}(R)$. If any two of $\{K^\bullet,L^\bullet,M^\bullet\}$ are perfect, so is the third. If $L^\bullet$ has $\Tor$ amplitude $[a+1,b+1]$ and $M^\bullet$ has $\Tor$ amplitude $[a,b]$, then $K^\bullet$ has $\Tor$ amplitude $[a+1,b+1]$.
\end{lemma}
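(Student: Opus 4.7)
The plan is to prove the two assertions by passing to the natural long exact sequence in cohomology obtained from the triangle after derived tensor. Both claims are standard facts from the theory of triangulated categories of perfect complexes (cf. \cite[\href{https://stacks.math.columbia.edu/tag/066R}{Tag 066R} and \href{https://stacks.math.columbia.edu/tag/0DJJ}{Tag 0DJJ}]{stacks_project}); what I will spell out is the elementary long-exact-sequence argument, tailored to the exact formulation needed in the statement.

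For the first assertion, I would invoke the fact that the full subcategory $\mathscr{D}_{\rm parf}(R)\subset \mathscr{D}(R)$ is a triangulated subcategory: it is closed under shifts (obvious) and under the formation of mapping cones. The closure under cones follows by representing two of the three terms by bounded complexes of finite projective $R$-modules and then realizing a map between them by an honest chain map (using projectivity), whose cone is again a bounded complex of finite projectives. Applied to the given triangle, this shows that perfectness of any two of $K^\bullet,L^\bullet,M^\bullet$ forces perfectness of the third (after possibly rotating the triangle so that the two perfect terms sit as source and target of a morphism).

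For the second assertion, the key input is that $(\blank)\otimes_R^{\mathbf{L}}N$ is a triangulated functor for every $R$-module $N$, so the given triangle produces the long exact sequence
\begin{equation*}
\cdots\to H^{i-1}(M^\bullet\otimes_R^{\mathbf{L}}N)\to H^{i}(K^\bullet\otimes_R^{\mathbf{L}}N)\to H^{i}(L^\bullet\otimes_R^{\mathbf{L}}N)\to H^{i}(M^\bullet\otimes_R^{\mathbf{L}}N)\to\cdots .
\end{equation*}
If $i\notin[a+1,b+1]$, I will show that both neighbours of $H^{i}(K^\bullet\otimes_R^{\mathbf{L}}N)$ vanish. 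Indeed, for $i>b+1$ the term $H^{i}(L^\bullet\otimes_R^{\mathbf{L}}N)$ vanishes since $L^\bullet$ has $\Tor$-amplitude $[a+1,b+1]$, and $H^{i-1}(M^\bullet\otimes_R^{\mathbf{L}}N)$ vanishes since $i-1>b$ and $M^\bullet$ has $\Tor$-amplitude $[a,b]$; symmetrically, for $i<a+1$ one has $i\leq a$ so $H^{i}(L^\bullet\otimes_R^{\mathbf{L}}N)=0$, while $i-1\leq a-1<a$ forces $H^{i-1}(M^\bullet\otimes_R^{\mathbf{L}}N)=0$. Hence $H^{i}(K^\bullet\otimes_R^{\mathbf{L}}N)=0$ for all such $i$, giving $K^\bullet$ the claimed $\Tor$-amplitude $[a+1,b+1]$.

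Neither step presents a genuine obstacle; the only point to be careful about is the indexing, which is easy to get wrong by one. In particular, the asymmetry between the two amplitudes $[a+1,b+1]$ and $[a,b]$ is precisely compensated by the shift in the index $i-1$ appearing in the connecting term of the long exact sequence, and this is what makes the range for $K^\bullet$ coincide with that of $L^\bullet$ rather than spanning the union of both.
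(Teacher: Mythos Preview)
Your proof is correct. The paper's own proof is merely a citation to the Stacks Project (\cite[\href{https://stacks.math.columbia.edu/tag/066R}{Lemma 066R-(3)} and \href{https://stacks.math.columbia.edu/tag/0655}{Lemma 0655}]{stacks_project}), and what you have written is precisely the standard long-exact-sequence argument underlying those references; in particular your index bookkeeping in the second assertion is accurate.
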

\begin{proof}
    See \cite[\href{https://stacks.math.columbia.edu/tag/066R}{Lemma 066R-(3)}]{stacks_project} and \cite[\href{https://stacks.math.columbia.edu/tag/0655}{Lemma 0655}]{stacks_project}. 
\end{proof}

\begin{lemma}\label{lemma:base-change-perfectness-tor-amplitude}
    Let $R\rightarrow R^\prime$ be a ring map. If $K^\bullet$ has $\Tor$ amplitude $[a,b]$ as a complex of $R$-modules, then $K^\bullet\otimes_R^\mathbf{L}R^\prime$ has $\Tor$ amplitude $[a,b]$ as a complex of $R^\prime$-modules. If $R\rightarrow R^\prime$ is faithfully flat, and if $K^\bullet\otimes_R R^\prime\in \mathscr{D}_{\rm parf}(R^\prime)$, then $K^\bullet\in \mathscr{D}_{\rm parf}(R)$.
\end{lemma}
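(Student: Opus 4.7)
The plan is to treat the two assertions separately. The first is a direct computation using associativity of the derived tensor product, and the second reduces via Lemma \ref{lemma:stacks-perfect-tor-amplitude} to faithfully flat descent of pseudo-coherence and of finite Tor dimension.

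For the first assertion, the approach is as follows. Given any $R'$-module $M$, viewed also as an $R$-module through the ring map $R \to R'$, associativity of the derived tensor product yields a quasi-isomorphism
\[
  (K^\bullet \otimes_R^{\mathbf{L}} R') \otimes_{R'}^{\mathbf{L}} M \;\simeq\; K^\bullet \otimes_R^{\mathbf{L}} M.
\]
Because $K^\bullet$ has Tor amplitude $[a,b]$ over $R$, the right-hand side has cohomology concentrated in $[a,b]$, so the same holds for the left-hand side, which is precisely the statement that $K^\bullet \otimes_R^{\mathbf{L}} R'$ has Tor amplitude $[a,b]$ over $R'$.

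For the second assertion, I would appeal to Lemma \ref{lemma:stacks-perfect-tor-amplitude}, which tells us that perfectness is equivalent to the conjunction of pseudo-coherence and finite Tor dimension; hence it suffices to descend each property along the faithfully flat map $R \to R'$. For finite Tor dimension, I would pick any $R$-module $N$ and use the flatness of $R'$ over $R$ to identify
\[
  H^i(K^\bullet \otimes_R^{\mathbf{L}} N) \otimes_R R' \;\simeq\; H^i\!\left((K^\bullet \otimes_R R') \otimes_{R'}^{\mathbf{L}} (N \otimes_R R')\right),
\]
which vanishes for $i$ outside $[a,b]$ by the hypothesis that $K^\bullet \otimes_R R'$ is perfect (of finite Tor amplitude) over $R'$; faithful flatness of $R \to R'$ then transfers this vanishing back to $H^i(K^\bullet \otimes_R^{\mathbf{L}} N)$ itself. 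For pseudo-coherence, I would invoke the standard faithfully flat descent statement from the Stacks project, which itself relies on the descent of finite generation and finite presentation of modules.

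The main obstacle is the descent of pseudo-coherence, which requires inductively constructing a resolution of $K^\bullet$ by finite free $R$-modules compatible with the base change to $R'$. This is a purely formal, well-documented fact with no arithmetic content specific to our situation, so in the write-up I would simply cite the relevant Stacks tags rather than reproduce the argument. Once both properties are shown to descend, Lemma \ref{lemma:stacks-perfect-tor-amplitude} immediately yields $K^\bullet \in \mathscr{D}_{\rm parf}(R)$.
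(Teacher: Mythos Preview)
Your proposal is correct and matches the paper's approach: the paper's own proof is simply a citation to Stacks project tags 068T and 066L, and your sketch unpacks precisely the content of those tags. One minor notational slip: in the descent of finite Tor dimension you write ``vanishes for $i$ outside $[a,b]$'', but the second assertion of the lemma does not fix an interval $[a,b]$; you should say ``outside some bounded interval'' (perfectness of $K^\bullet \otimes_R R'$ gives finite Tor dimension, not a specific amplitude).
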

\begin{proof}
    See \cite[\href{https://stacks.math.columbia.edu/tag/068T}{Lemma 068T}]{stacks_project} and \cite[\href{https://stacks.math.columbia.edu/tag/066L}{Lemma 066L}]{stacks_project}.
\end{proof}

\begin{proof}[Proof of Theorem \ref{thm:Perfectness-amplitude-1-2}]
It remains to show that $\RGamma_{\rm Iw}(\mathbf{V},\mathbf{D})$ has the perfect amplitude $[1,2]$. If we can show that $\RGamma_{\rm Iw}(\mathbf{V},\mathbf{D})$ has $\Tor$ amplitude in $[1,2]$, then we are done by Lemma \ref{lemma:stacks-perfect-tor-amplitude}. Suppose that we showed $\RGamma_{\rm Iw}(\mathbf{V},\mathbf{D})$ has $\Tor$ amplitude in $[1,3]$. Then combining with our running assumption that $H^3_{\rm Iw}(\mathbf{V},\mathbf{D})=0$, we are done by Lemma \ref{lemma:perfect-last-cohomology-vanish}.

Consider the following exact triangle (see \cite[\S 7.2]{harronpottharst2014iwasawa}):
\begin{align*}
    \RGamma_{\rm Iw}(\mathbf{V},\mathbf{D})\longrightarrow \RGamma_{(p),{\rm Iw}}(\QQ,\mathbf{V}) \longrightarrow \RGammaIw(\widetilde{\mathbf{D}}),
\end{align*}
where $\widetilde{\mathbf{D}}:= \mathbf{D}_{{\rm rig},A}^\dagger(\mathbf{V})/\mathbf{D}$ and $\RGamma_{(p),{\rm Iw}}(\QQ,\mathbf{V})$ denotes the $p$-relaxed Selmer complex. We know from \cite[Theorem 4.4.6]{KedlayaPottharstXiao} that \[\RGammaIw(\widetilde{\mathbf{D}})\in\mathscr{D}_{\rm parf}^{[0,2]}(\mathcal{H}_A).\] Thus by Lemma \ref{lemma:stacks-exact-triangle-perfect-amplitude} it suffices to show that \[\RGamma_{(p),{\rm Iw}}(\QQ,\mathbf{V})\in \mathscr{D}_{\rm parf}^{[1,3]}(\mathcal{H}_A).\]

\par Let \[K^\bullet:=\RGamma_{(p)}(\QQ,\mathbf{V}\otimes_A \Lambda_A[1/p]^\iota),\] then we have \[K^\bullet\otimes_{\Lambda_A[1/p]}\mathcal{H}_A\simeq \RGamma_{(p),{\rm Iw}}(\QQ,\mathbf{V}).\] Note that $\mathcal{H}$ is faithfully flat over $\Lambda[1/p]$, therefore $\mathcal{H}_A$ is faithfully flat over $\Lambda_A[1/p]$ by the base-change property of faithfully flat maps. Hence by Lemma \ref{lemma:base-change-perfectness-tor-amplitude}, $K^\bullet$ is perfect and we reduce the problem to showing that $H^0(K^\bullet\otimes_{\Lambda_A[1/p]}^\mathbf{L}M)=0$ for all $M$.

\par Note that $\Lambda_A[1/p]$ is a Noetherian ring, therefore the rest of the argumentation is reduced to the what is shown in \cite[\S 4.5]{danielebuyukboduksakamoto2023}. 

Since $K^\bullet$ is perfect $K^\bullet\simeq \left[P^0\rightarrow P^1\rightarrow P^2 \rightarrow P^3 \right]$, where $P^i$'s are all projective. Then we have \[H^0(K^\bullet\otimes^\mathbf{L}M):=H^0([P^0\otimes M\rightarrow P^1\otimes M])\cong\Tor_1^{\Lambda[1/p]}(P^1/P^0, M).\]

\par Note that the last isomorphism is not true in general, however our assumption \ref{cond:h0Iw-residual-vanishes} ensures that $H^0(K^\bullet)=0$, and in this case this isomorphism holds. To show that for any $M$ the module $\Tor_1^{\Lambda_A[1/p]}(P^1/P^0, M)=0$, we show for any prime $\mathfrak{p}$ of $\Lambda_A[1/p]$, the localization at $\mathfrak{p}$ is 0, i.e. \[\left(\Tor_1^{\Lambda[1/p]}(P^1/P^0, M)\right)_\mathfrak{p}=\Tor_1^{(\Lambda_A[1/p])_\mathfrak{p}}((P^1/P^0)_\mathfrak{p}, M_\mathfrak{p})=0.\] If we show that $(P^1/P^0)_\mathfrak{p}$ is flat over $\Lambda_A[1/p]_\mathfrak{p}$, then the result follows. This can be shown by using the local criterion for flatness: Let $X:=(P^1/P^0)_\frakp$. Note that $X$ is finitely generated and $X/\frakp$ is flat over $(\Lambda_A[1/p])_\frakp/\frakp$, thus we only need to show that $\Tor_1^{\Lambda[1/p]_\frakp}((\Lambda_A[1/p]_\frakp)/\frakp,X)=0$. Note that $K^\bullet \simeq [P^0\rightarrow P^1 \rightarrow P^2 \rightarrow P^3]$ gives $K^\bullet_\frakp \simeq [P^0_\frakp \rightarrow P^1_\frakp \rightarrow P^2_\frakp \rightarrow P^3_\frakp]$. Hence \[\Tor_1^{(\Lambda_A[1/p])_\frakp}((\Lambda_A[1/p]_\frakp)/\frakp,X)=H^0(K^\bullet_\frakp\otimes^\mathbf{L}(\Lambda_A[1/p]_\frakp)/\frakp).\] By the definition of $K^\bullet$, the last term is \[H^0_{(p)}(\QQ,\mathbf{V}\otimes (\Lambda_A[1/p])_\frakp^\iota/\frakp)\cong H^0_{(p)}(\QQ,\mathbf{V}\otimes \Lambda_A[1/p]^\iota/\frakp)\otimes_{\Lambda_A[1/p]}(\Lambda_A[1/p])_\frakp,\] and \[H^0_{(p)}(\QQ,\mathbf{V}\otimes \Lambda_A[1/p]^\iota/\frakp)\subseteq (\mathbf{V}\otimes (\Lambda_A[1/p])_\frakp^\iota/\frakp)^{G_{\QQ,S}}.\] By our assumption \ref{cond:h0Iw-residual-vanishes}, we get $(\mathbf{V}\otimes \Lambda_A[1/p]/\frakp)^{G_{\QQ,S}}=0$, hence the result follows.
\end{proof}

\begin{remark}\label{remark:perfectness-1-2-using-duality}
    If the following duality theorem in $\mathscr{D}_{\rm coad}(\mathcal{H})$ for Iwasawa theoretic Selmer complexes attached to representations $V$ over $E$ 
    \[\RGammaIw(V^*(1),\mathbf{D}^\perp)\simeq\RGammaIw(V,\mathbf{D})^*[-3]\] 
    could be generalized to the Selmer complexes attached to representations $\mathbf(V)$ over $A$ lying in $\mathscr{D}_{\rm parf}^{[0,3]}(\mathcal{H}_A)$, then if the assumptions \ref{cond:h3Iw-vanishes} and \ref{cond:tamagawa-proj} of Theorem \ref{thm:Perfectness-amplitude-1-2} hold for $(\mathbf{V},\mathbf{D})$ and $(\mathbf{V}^*(1),\mathbf{D}^\perp)$, we can still show that \[\RGamma_{\rm Iw}(\mathbf{V},\mathbf{D}),\ \RGamma_{\rm Iw}(\mathbf{V}^*(1),\mathbf{D}^\perp)\in\mathscr{D}_{\rm parf}^{[1,2]}(\mathcal{H}_A)\] by arguing as in \cite[Proposition 9.7.4-(ii)]{Nek06}. 
    \par In particular when $A=E$, the following assumption \[H^3_\Iw(V,\mathbf{D})=0=H^3_\Iw(V^*(1),\mathbf{D}^\perp)\] is sufficient to show that \[\RGamma_{\rm Iw}(V,\mathbf{D}),\ \RGamma_{\rm Iw}(V^*(1),\mathbf{D}^\perp)\in\mathscr{D}_{\rm parf}^{[1,2]}(\mathcal{H})\]
\end{remark}

\begin{corollary}\label{cor:algebraic-p-adic-L-function}
    Assume that \[H^1_\Iw(\mathbf{V},\mathbf{D})=0=H^1_\Iw(\mathbf{V}^*(1),\mathbf{D}^\perp).\] We have \[\RGammaIw(\mathbf{V},\mathbf{D})\simeq \left[P^1\xrightarrow{f}P^2\right],\] where $P^1$ and $P^2$ are finite projective $\mathcal{H}_A$-modules of same rank. Then \[\det(\RGammaIw(\mathbf{V},\mathbf{D}))=\det(f)=:{\rm Fitt}_0(H^2_\Iw(\mathbf{V},\mathbf{D})).\] In particular when $A=E$ is a finite extension of $\Qp$, we have \[\det(\RGammaIw(V,\mathbf{D}))={\rm char}_{\mathcal{H}}(H^2_\Iw(V,\mathbf{D})).\]
\end{corollary}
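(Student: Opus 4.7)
The plan is to first invoke Theorem \ref{thm:Perfectness-amplitude-1-2} (whose hypotheses are taken to hold alongside those of the corollary, or alternatively supplemented via Remark \ref{remark:perfectness-1-2-using-duality}) so that $\RGammaIw(\mathbf{V},\mathbf{D}) \in \mathscr{D}_{\rm parf}^{[1,2]}(\mathcal{H}_A)$. By definition of perfect amplitude $[1,2]$, this complex admits a representative $[P^1 \xrightarrow{f} P^2]$ concentrated in degrees $1$ and $2$ with $P^1,P^2$ finitely generated projective $\mathcal{H}_A$-modules. The assumption $H^1_\Iw(\mathbf{V},\mathbf{D}) = \ker(f) = 0$ immediately yields that $f$ is injective and hence $\mathrm{coker}(f) = H^2_\Iw(\mathbf{V},\mathbf{D})$.

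Next I would show $\rank P^1 = \rank P^2$ by proving $\mathrm{coker}(f)$ is torsion. When $A=E$, this follows from Proposition \ref{prop:Pottharst-Duality-Iwasawa-SelmerComp}: the exact sequence
\[0\to \mathscr{D}^1 H^3_\Iw(V,\mathbf{D}) \to H^1_\Iw(V^*(1),\mathbf{D}^\perp)^\iota \to \mathscr{D}^0 H^2_\Iw(V,\mathbf{D}) \to 0\]
together with the hypothesis $H^1_\Iw(V^*(1),\mathbf{D}^\perp) = 0$ forces $\mathscr{D}^0 H^2_\Iw(V,\mathbf{D}) = \Hom_{\mathcal{H}}(H^2_\Iw/(H^2_\Iw)_{\mathrm{tors}},\mathcal{H}) = 0$ by Proposition \ref{prop:dualizing-comp-pottharst}, so $H^2_\Iw(V,\mathbf{D})$ is torsion. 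For general affinoid $A$ the same conclusion is drawn from Proposition \ref{prop:Duality-Affinoid-SelmerComp}, checking torsion locally via the control theorem \eqref{eq:control-HA-H} at the maximal ideals of $\mathcal{H}_A$ if needed. Inverting the non-zero-divisors of $\mathcal{H}_A$ therefore turns $f$ into an isomorphism, whence the two projectives have equal rank.

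With ranks matched, the Knudsen--Mumford determinant yields canonically
\[\det \RGammaIw(\mathbf{V},\mathbf{D}) \;=\; \det(P^2) \otimes_{\mathcal{H}_A} \det(P^1)^{-1},\]
and via the isomorphism induced by $f$ over the total ring of fractions, this invertible $\mathcal{H}_A$-module is identified with the fractional ideal generated by $\det(f)$. By the standard Fitting ideal formula for an injective morphism of projectives of the same rank, $(\det f) = \mathrm{Fitt}_0(\mathrm{coker}\, f) = \mathrm{Fitt}_0\, H^2_\Iw(\mathbf{V},\mathbf{D})$, which gives the first asserted equality.

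Finally, when $A=E$, I would invoke the Schneider--Teitelbaum / Pottharst structure theorem (Proposition \ref{structure-thm-pottharst}) to decompose the torsion coadmissible module $H^2_\Iw(V,\mathbf{D}) \cong \prod_\alpha \mathcal{H}/\mathfrak{p}_\alpha^{n_\alpha}$; since at each truncation level $\mathcal{H}_n$ only finitely many summands survive and the quotient is finitely generated over a regular ring, the Fitting and characteristic ideals agree at every level, and patching yields $\mathrm{Fitt}_0 = \car_{\mathcal{H}}$ globally. The principal obstacle is the rank-equality step over the non-Noetherian, non-domain ring $\mathcal{H}_A$: one must interpret ``torsion'' through the Fréchet--Stein structure and reduce via control theorems to the well-understood $A=E$ case, after which the remaining identifications are formal consequences of the determinant formalism.
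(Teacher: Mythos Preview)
The paper gives no explicit proof for this corollary; it is stated immediately after Remark~\ref{remark:perfectness-1-2-using-duality} and followed directly by the definition of the algebraic $p$-adic $L$-function, so there is nothing to compare against line by line. Your reconstruction is a correct and natural way to fill in the argument: perfect amplitude $[1,2]$ from Theorem~\ref{thm:Perfectness-amplitude-1-2} (or Remark~\ref{remark:perfectness-1-2-using-duality}) gives the two-term projective representative, the vanishing of $H^1_\Iw$ makes $f$ injective, and the duality/torsion argument over $\mathcal{H}$ (via Propositions~\ref{prop:Pottharst-Duality-Iwasawa-SelmerComp} and~\ref{prop:dualizing-comp-pottharst}) forces equal ranks when $A=E$. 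The identification $\det(f)=\mathrm{Fitt}_0(\mathrm{coker}\,f)$ and the passage from Fitting to characteristic ideal via Proposition~\ref{structure-thm-pottharst} are standard.

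One remark: for the equal-rank claim over general $A$, the paper in its applications (see Proposition~\ref{prop:Euler-Poincare-} and Corollary~\ref{cor:pottharst-proposition-hi-ranks}) relies on the weakly Panchishkin/strict ordinary hypothesis to force Euler--Poincar\'e characteristic zero, rather than on duality alone. Your route through Proposition~\ref{prop:Duality-Affinoid-SelmerComp} and the control theorem~\eqref{eq:control-HA-H} is a legitimate alternative, and you rightly flag it as the only non-formal step; in the paper's actual use cases the rank equality is never an issue because the local conditions are always chosen to be strict ordinary.
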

\begin{definition}[Algebraic $p$-adic $L$-function]
    With the above motivation, we define the module of algebraic $p$-adic $L$-functions attached to $(\mathbf{V},\mathbf{D})$ as $\det(\RGammaIw(\mathbf{V},\mathbf{D}))$.
\end{definition}

\section{Rankin-Selberg Products and Beilinson-Flach Elements}\label{sect:ES-BF}

\subsection{Rankin-Selberg Products and Adjoint Motives}\label{subsect:rankin-selberg}

\par Let $f\in S_{k_f+2}(N_f,\varepsilon_f)$ and $g\in S_{k_g+2}(N_g,\varepsilon_g)$ be cuspidal normalized newforms of weights $k_f+2$ and $k_g+2$, levels $\Gamma_1(N_f)$ and $\Gamma_1(N_g)$ with nebentypes $\varepsilon_f$ and $\varepsilon_g$ respectively. Let $\psi$ be a finite order Dirichlet character with conductor $N_\psi$. We will assume that $p\nmid N_fN_gN_\psi$. Let $\alpha_f$ and $\beta_f$ (resp. $\alpha_g$ and $\beta_g$) be the roots of the Hecke polynomial $X^2-a_p(f)X+\varepsilon_fp^{k_f+1}$ (resp. $X^2-a_p(g)X+\varepsilon_gp^{k_g+1}$). We will always assume that $\alpha_f\neq\beta_f$ and $\alpha_g\neq\beta_g$ (called $p$-regularity hypothesis), which is conjecturally true and known to be true when $f$ (and $g$) is of weight $2$.

\par The imprimitive Rankin-Selberg $L$-function is defined as \[L(f,g,\psi,s):=L_{(N_fN_gN_\psi)}(\varepsilon_f\varepsilon_g\psi^2,2s-2-k_f-k_g)\sum_{n\geq 1}\frac{a_n(f)a_n(g)\psi(n)}{n^{-s}},\] where for $S\in\NN^+$ and a character $\chi$, $L_S(\chi,s)$ denotes the Dirichlet $L$-series without Euler factors at primes dividing $S$ (see \cite[\S 2]{arlandini2021factorisation} and \cite[\S 2.1]{Loeffler_2018} for further details). Note that this impritimitve $L$-function differs from the primitive $L$-function attached to $f\otimes g\otimes\psi$ up to finitely many Euler factors.

\par Recall that $V_f$ is the $2$-dimensional (cohomological) Galois representation attached to $f$ with coefficients in $E$, which is a finite extension of $\Qp$ and which contains the Hecke field $K_f$ and $U_p$-eigenvalues $\alpha_f$ and $\beta_f$. Let $T_f\subseteq V_f$ denote a $G_\QQ$ stable rank $2$ $\mathcal{O}_E$-lattice inside $V_f$. Let $\Sym^2f$ denote the symmetric-square motive, whose $p$-adic realization is a 3-dimensional subrepresentation $\Sym^2(V_f)$ of $V_f\otimes V_f$ consisting of symmetric tensors. Following \cite[Proposition 2.1.4]{LoefflerZerbesSym2J_Reine_Angew_Math_2019} we can define the imprimitive $L$-function for $\Sym^2f$ as \[L^{\rm imp}(\Sym^2f,\psi,s):=\frac{L(f,f,\psi,s)}{L_(N_fN_\psi)(\varepsilon\psi,s-k-1)}.\]

\par Note that the representation $V_f\otimes_E V_f\otimes\psi$ splits as \[V_f\otimes V_f\otimes\psi\cong {\rm Sym}^2(V_f)(\psi)\oplus\wedge^2V_f(\psi),\] where \[\wedge^2V_f:=\det(V_f)\cong\varepsilon_f\chi_{\rm cyc}^{-1-k}.\]

\par We also define \[\Ad(V_f):=V_f\otimes V_f^*,\] and \[\ad(V_f):=\ker\left(\Ad(V_f)\xrightarrow{\rm Tr}E\right).\] Then we have the splitting of representations \[\Ad(V_f)(\psi)\cong \ad(V_f)(\psi)\oplus E(\psi).\]Note that we have \[\ad(V_f)\cong \Sym^2V_f(k_f+1)\otimes\varepsilon^{-1}.\] This follows using the splittings above and \eqref{eq:poincare-duality-reps-forms}, see also \cite[\S 1.3]{dasgupta}.

\par Let us see that $\Ad(V_f)(\psi)$ (or equivalently, $V_f\otimes V_f\otimes\psi$) does not have any critical values, by applying the criticality criteria given in Definition \ref{def:critical-Deligne-p-adic}. 

\par To see this, note that $d^+(V_f)=1$.  Recall that we defined $d^+(V):=\dim V^{c=1}$ just before Definition \ref{def:critical-Deligne-p-adic}. Hence for all $n\in\ZZ$, we have 
\begin{align*}
    &d^+(\Ad(V_f)(\psi+n))=2,\\
    &d^+(E(\psi+n))=\frac{1+(-1)^n\psi(-1)}{2},\\
    &d^+(\ad(V_f)(\psi+n))=d^+(\Ad(V_f)(\psi+n))-d^+(E(\psi+n))=\frac{3-(-1)^n\psi(-1)}{2}.
\end{align*}

\par Let us also compute the Hodge-Tate weights of twists of these motives: $V_f$ has Hodge-Tate weights $\HT(V_f)=\{-1-k_f,0\}$, hence we get
\begin{align*}
    &\HT(\Ad(V_f)(\psi+n))=\{-1-k_f+n,n,n,1+k_f+n\},\\
    &\HT(E(\psi+n))=\{n\},\\
    &\HT(\ad(V_f)(\psi+n))=\{-1-k_f+n,n,1+k_f+n\}.
\end{align*} 
From this computation, one sees that there are no critical values for $\Ad(V_f)(\psi)$. We can also compute the critical values for $\ad(V_f)(\psi)$ as \[\{n\in\ZZ:\psi+n\textnormal{ is even and }-k_f-1< n \leq 0 \}\cup \{n\in\ZZ:\psi+n\textnormal{ is odd and }0< n \leq 1+k_f\},\] and the critical values for $E(\psi)$ as \[\{n\in\ZZ:\psi+n\textnormal{ is even and }n \geq 1\}\cup \{n\in\ZZ:\psi+n\textnormal{ is odd and }n\leq 0\}.\] Note that the critical values for $\ad(V_f)(\psi)$ and $E(\psi)$ do not coincide, which reflects the fact that there are no critical values for $\Ad(V_f)(\psi)$.

\par Hence it is not trivial to establish $p$-adic Artin formalism for the $p$-adic $L$-functions of these representations. Therefore one needs to work with $p$-adic $L$-functions attached to the deformations of these representations, i.e. replace $f$ by a suitable family of $p$-adic modular forms passing through $f$. In this article we are interested in the case where $f$ is $p$-non-ordinary, hence we will use Coleman families (see Definition \ref{def:coleman-family}) to interpolate $f$.

\par From now on, let $f$ be $p$-non-ordinary, so both of its $p$-stabilizations $f_{\alpha_f}$ and $f_{\beta_f}$ are non-$\theta$-critical. We fix a $p$-stabilization $f_\alpha:=f_{\alpha_f}$. Since $f$ is a cusp form, $f_\alpha$ corresponds to a point $x_0\in\mathcal{C}^0_N$ such that $\kappa(x_0)=k_f\in\mathcal{W}$, where $\kappa:\mathcal{C}^0_N\rightarrow \mathcal{W}$ is the weight map. Moreover, since $f_\alpha$ is non critical, $\kappa$ is étale at $x_0$, so a nice affinoid neighborhood of $k_f\in\mathcal{W}$ can be identified with a nice affinoid neighborhood $U=\Spm(A)$ of $x_0$ in the eigencurve. 

\par Let $\mathcal{F}$ be the unique Coleman family over $U$ passing through $f_\alpha$. Note that in this case the coefficient field $E$ of $V_f$ is isomorphic to $A/\frakm_{x_0}$, where $\frakm_{x_0}$ is the maximal ideal of $A$ corresponding to the point $x_0\in U$. Let $\mathbf{V}_\mathcal{F}$ denote the big $G_\QQ$ representation (with coefficients in $A$) attached to $\mathcal{F}$ by Theorem \ref{thm:rep-coleman-F}. In particular we have \[(\mathbf{V}_\mathcal{F})_{x_0}:=\mathbf{V}_\mathcal{F}\otimes_A A/\mathfrak{m}_{x_0}\cong V_f.\] 

\par If $\Delta:U\rightarrow U\times_E U$ denotes the diagonal map, then the corresponding map of affinoid algebras $\Delta^*:A\hatotimes_E A\rightarrow A$ is given by the multiplication map sending $a_1\otimes a_2$ to $a_1a_2$. This map pulls back the representation  
$\mathbf{V}_\mathcal{F}\hatotimes_E\mathbf{V}_\mathcal{F}$ to \[\Delta^*(\mathbf{V}_\mathcal{F}\hatotimes_E\mathbf{V}_\mathcal{F})\cong \mathbf{V}_\mathcal{F}\hatotimes_A\mathbf{V}_\mathcal{F}.\] 
Thus we have the following decomposition of representations in families:
\[\Delta^*(\mathbf{V}_\mathcal{F}\hatotimes_E\mathbf{V}_\mathcal{F})(\psi)\cong\Sym^2(\mathbf{V}_\mathcal{F})\oplus\wedge^2\mathbf{V}_\mathcal{F}(\psi),\] where \[\wedge^2\mathbf{V}_\mathcal{F}=\varepsilon_{\mathcal{F}}\chi_{\rm cyc}^{-1}\mathbf{k}_\mathcal{F}^{-1}.\] Here $\varepsilon_{\mathcal{F}}$ is the nebentype of $\mathcal{F}$ and $\mathbf{k}_\mathcal{F}$ is the weight character of $\mathcal{F}$ (with values in $A$) such that each character in $U\subseteq\mathcal{W}$ factors through $\mathbf{k}_\mathcal{F}$.
\par Let 
\begin{align*}
    \mathbf{V}_\mathcal{F}^*&:=\Hom_A(\mathbf{V}_\mathcal{F},A)\cong\mathbf{V}_{\mathcal{F}^c}(\mathbf{k}_\mathcal{F}+1),\\
    \Ad(\mathbf{V}_\mathcal{F})&:=\Hom_A(\mathbf{V}_\mathcal{F},\mathbf{V}_\mathcal{F})\cong \mathbf{V}_\mathcal{F}\hatotimes_A\mathbf{V}_\mathcal{F}^*,\\
    \ad(\mathbf{V}_\mathcal{F})&:=\ker(\Ad(\mathbf{V}_\mathcal{F})\xrightarrow{\rm{Tr}}A).
\end{align*}
Note that we have the decomposition 
\[
    \Delta^*(\mathbf{V}_\mathcal{F}\hatotimes_E\mathbf{V}_\mathcal{F}^*)(\psi)\cong\Ad(\mathbf{V}_\mathcal{F})(\psi)\cong\ad(\mathbf{V}_\mathcal{F})(\psi)\oplus A(\psi).
\]
\par The following theorem on factorization of $p$-adic $L$-functions, which generalizes the main theorem of Dasgupta in \cite{dasgupta} is due to Arlandini and Loeffler in \cite[Theorem A, Theorem B]{arlandini2021factorisation}.

\begin{theorem}[Arlandini-Loeffler]\label{thm:Arlandini-Factorization-Result}
    Let \[L_p^{\rm geom}(\mathcal{F},\mathcal{G}):=L_p^{\rm geom}(\mathcal{F},\mathcal{G})(\mathbf{k}_\mathcal{F},\mathbf{k}_\mathcal{G},\mathbf{j})\] denote the three variable geometric Rankin-Selberg $p$-adic $L$-function defined in \cite[\S 9]{LZ16}. There exists a unique two variable meromorphic $p$-adic $L$-function \[L_p^{\rm imp}(\Sym^2(\mathbf{V}_\mathcal{F}))\] interpolating the critical values of $L^{\rm imp}(\Sym^2(f),s)$ (see \cite[Theorem A]{arlandini2021factorisation} for the exact interpolation properties) such that the following factorization formula holds for all $(\sigma_1,\sigma_2)\in U\times\mathcal{W}$:
    \begin{align*}
        L_p^{\rm geom}(\mathcal{F},\mathcal{F})(\sigma_1,\sigma_1,\sigma_2)=L_p^{\rm imp}(\Sym^2(\mathbf{V}_\mathcal{F}))(\sigma_1,\sigma_2)L_p(\varepsilon_f,\sigma_2-\sigma_1-1).
    \end{align*}
\end{theorem}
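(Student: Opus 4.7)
My plan is to define the symmetric-square $p$-adic $L$-function directly as a quotient, verify the interpolation property on a dense set of classical points, and then conclude uniqueness from density. First I would recall the precise interpolation formula for the three-variable geometric Rankin--Selberg $p$-adic $L$-function $L_p^{\rm geom}(\mathcal{F},\mathcal{G})$ of Loeffler--Zerbes \cite[\S 9]{LZ16}: at a classical triple $(k_1,k_2,j)\in U\times U\times\mathcal{W}$ lying in the geometric range, it evaluates (up to an explicit $p$-adic modification factor built from $\alpha_{\mathcal{F}_{k_1}}$, $\alpha_{\mathcal{G}_{k_2}}$, $\varepsilon_{\mathcal{F}_{k_1}}(p)$, $\varepsilon_{\mathcal{G}_{k_2}}(p)$ and powers of $p$) to the ratio of the critical value $L(\mathcal{F}_{k_1},\mathcal{G}_{k_2},\psi,1+j)$ to a canonical Shimura period. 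Specializing to $\mathcal{F}=\mathcal{G}$ and $k_1=k_2=k$, I obtain an explicit interpolation formula for $L_p^{\rm geom}(\mathcal{F},\mathcal{F})(k,k,j)$ in terms of the diagonal value $L(f_k,f_k,\psi,1+j)$.

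Next, I would exploit the classical Clebsch--Gordan decomposition recorded in Section~\ref{subsect:rankin-selberg}: the isomorphism $V_{f_k}\otimes V_{f_k}\otimes\psi\cong \Sym^2(V_{f_k})(\psi)\oplus\wedge^2 V_{f_k}(\psi)$ with $\wedge^2 V_{f_k}\cong\varepsilon_{f_k}\chi_{\rm cyc}^{-1-k}$ yields the archimedean factorization
\begin{equation*}
L(f_k,f_k,\psi,s)=L^{\rm imp}(\Sym^2(f_k),\psi,s)\,L(\varepsilon_f\psi,s-k-1),
\end{equation*}
which is precisely the definition of $L^{\rm imp}(\Sym^2(f),\psi,s)$ recalled before the theorem. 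On the $p$-adic side, I would match the second factor against the Kubota--Leopoldt $p$-adic $L$-function: $L_p(\varepsilon_f,\sigma_2-\sigma_1-1)$, evaluated at a classical pair $(\sigma_1,\sigma_2)=(k,j)$, interpolates $L(\varepsilon_f\psi,j-k-1)$ up to the standard Euler factor at $p$ and archimedean period.

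I would then \emph{define}
\begin{equation*}
L_p^{\rm imp}(\Sym^2(\mathbf{V}_\mathcal{F}))(\sigma_1,\sigma_2):=\frac{L_p^{\rm geom}(\mathcal{F},\mathcal{F})(\sigma_1,\sigma_1,\sigma_2)}{L_p(\varepsilon_f,\sigma_2-\sigma_1-1)},
\end{equation*}
which is \emph{a priori} only meromorphic on $U\times\mathcal{W}$ because $L_p(\varepsilon_f,\cdot)$ has at most a simple pole at the trivial character and finitely many zeros. By construction the factorization formula of the theorem becomes tautological. To verify the asserted interpolation, I would substitute a classical pair $(k,j)$ and divide the two known interpolation formulas; the period and the Euler factor at $p$ appearing in the quotient should rearrange to exactly the shape predicted by Deligne's conjecture for $L^{\rm imp}(\Sym^2(f_k),\psi,1+j)$, modulo an explicit $\Sym^2$ modification factor depending on $\alpha_{\mathcal{F}_k}^2$ and $\beta_{\mathcal{F}_k}^2$. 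Uniqueness is then immediate: two meromorphic functions on $U\times\mathcal{W}$ agreeing on the Zariski-dense subset of classical critical pairs must coincide.

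The main obstacle is the bookkeeping of periods and Euler factors. The diagonal specialization of $L_p^{\rm geom}$ carries a Coleman-family Shimura period that, when $\mathcal{F}=\mathcal{G}$, collapses to a square of the Petersson period of $f_k$; this must be reconciled with the Deligne period predicted for $\Sym^2(f_k)$, and the transition is controlled by the canonical identification $\ad(V_{f_k})\cong\Sym^2 V_{f_k}(k_f+1)\otimes\varepsilon^{-1}$ from Section~\ref{subsect:rankin-selberg}. Simultaneously, the $p$-modification factor of $L_p^{\rm geom}$ at a classical point factors through the Satake parameters $\{\alpha_{\mathcal{F}_k},\beta_{\mathcal{F}_k}\}$, and I must check that its quotient by the Kubota--Leopoldt Euler factor $(1-\varepsilon_f(p)p^{j-k-2})(1-\varepsilon_f(p)^{-1}p^{k+1-j})^{-1}$ is precisely the Euler-like factor for $\Sym^2$ evaluated at $\alpha_{\mathcal{F}_k},\beta_{\mathcal{F}_k}$. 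Apart from these matching calculations, which are the substance of \cite[Theorem A, Theorem B]{arlandini2021factorisation}, the remaining steps are formal consequences of the density of classical points and the standard interpolation axioms.
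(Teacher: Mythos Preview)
The paper does not give its own proof of this theorem; it is stated with attribution and cited as \cite[Theorem A, Theorem B]{arlandini2021factorisation}, so there is no proof in the paper to compare against. Your outline is the natural strategy and is indeed the approach taken by Arlandini--Loeffler (following Dasgupta in the ordinary case): define the symmetric-square $p$-adic $L$-function as the quotient, so that the factorization is tautological, and then carry out the matching of periods and $p$-adic modification factors at classical points to verify the asserted interpolation formula; uniqueness then follows by density. You correctly identify that the substantive content lies in this period and Euler-factor bookkeeping, which is exactly what the cited reference carries out.

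One small correction: the theorem as stated here is for trivial $\psi$, so the $\psi$ in your formulas should be dropped; the twisted analogue is recorded separately in the paper as Conjecture~\ref{conj:arlandini-twisted}. Also, your remark that $L_p(\varepsilon_f,\cdot)$ has ``finitely many zeros'' is not needed (and not obviously true); meromorphicity of the quotient follows simply from the fact that the denominator is a nonzero meromorphic function on $\mathcal{W}$.
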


\par In the following theorem \cite[Theorem 9.3.2.]{LZ16} of Loeffler and Zerbes, the interpolation property of $L_p^{\rm geom}$ is given as follows:
\begin{theorem}[Interpolation property of $L_p^{\rm geom}$]\label{thm:Lp-geom-interpolation}
    For Coleman families $\mathcal{F}$ and $\mathcal{G}$ over (nice affinoid domains) $U_\mathcal{F}$ and $U_\mathcal{G}$, the geometric $p$-adic $L$-function $L_p^{\rm geom}$ has the following interpolation property: If $(k,k^\prime,j)\in U_\mathcal{F}\times_E U_\mathcal{G}\times_E \mathcal{W}\cap\NN^3$ with $k\geq 0$, $k^\prime\geq -1$, and $\frac{k+k^\prime+1}{2}\leq j\leq k$, and if $\mathcal{F}_k$ and $\mathcal{G}_{k^\prime}$ are the $p$-stabilizations of the newforms $f_k$ and $g_{k^\prime}$ with $U_p$ eigenvalues $\alpha_{f_k}$ and $\alpha_{g_{k^\prime}}$ respectively, then we have
    \begin{align*}
        L_p^{\rm geom}(\mathcal{F},\mathcal{G})(k,k^\prime,j)=\frac{E(f_k,g_{k^\prime},1+j)}{E(f_k)E^*(f_k)}\frac{j!(j-k^\prime -1)!}{\pi^{2j-k^\prime+1}(-1)^{k-k^\prime}2^{2j+2+k-k^\prime}\langle f_k,f_k\rangle}L(f_k,g_{k^\prime},1+j),
    \end{align*}
    where 
    \begin{align*}
        E(f_k)&:=\left(1-\frac{\beta_{f_k}}{p\alpha_{f_k}}\right),\\
        E^*(f_k)&:=\left(1-\frac{\beta_{f_k}}{\alpha_{f_k}}\right),\\
        E(f_k,g_{k^\prime},1+j)&:=\left(1-\frac{p^j}{\alpha_{f_k}\alpha_{g_{k^\prime}}}\right)\left(1-\frac{p^j}{\alpha_{f_k}\beta_{g_{k^\prime}}}\right)\left(1-\frac{\beta_{f_k}\alpha_{g_{k^\prime}}}{p^{1+j}}\right)\left(1-\frac{\beta_{f_k}\beta_{g_{k^\prime}}}{p^{1+j}}\right).
    \end{align*}
\end{theorem}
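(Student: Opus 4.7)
This statement recalls a theorem of Loeffler--Zerbes; the natural approach to its proof is to construct $L_p^{\rm geom}(\mathcal{F},\mathcal{G})$ as the image of a universal Eisenstein--Beilinson class under a pair of ``big'' period maps attached to the Coleman families $\mathcal{F}$ and $\mathcal{G}$, and then to compare the value of this object at a classical triple with Shimura's integral representation for the Rankin--Selberg $L$-function.

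First I would interpolate cohomological data: to each Coleman family $\mathcal{F}$ over a nice affinoid $U_\mathcal{F}$ one attaches a section $\Theta_\mathcal{F}$ of the relevant overconvergent sheaf of modular symbols (equivalently, a big cohomology class) such that at each classical weight $k\in U_\mathcal{F}\cap\NN$, the specialization $(\Theta_\mathcal{F})_k$ represents the $p$-stabilized eigenform $\mathcal{F}_k = f_k^{\alpha_{f_k}}$. The $p$-adic $L$-function is then defined, following the recipe of Kings--Loeffler--Zerbes, by pairing $\Theta_\mathcal{F}\otimes\Theta_\mathcal{G}$ with the three-variable Eisenstein class on the diagonal of the modular curve and twisting by the cyclotomic variable $j\in\mathcal{W}$.

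Next, at a classical triple $(k,k',j)$ with $k\geq 0$, $k'\geq -1$, $(k+k'+1)/2\leq j\leq k$, the small-slope classicality theorem identifies $(\Theta_\mathcal{F})_k$ and $(\Theta_\mathcal{G})_{k'}$ with the classical de Rham classes of the $p$-stabilized newforms. The value $L_p^{\rm geom}(\mathcal{F},\mathcal{G})(k,k',j)$ thus becomes an explicit cup product of classical motivic classes, which I would evaluate by Shimura's unfolding argument: one writes the Rankin--Selberg convolution as a Petersson pairing of $f_k$ against an Eisenstein series of weight $k-k'$ and character depending on $\psi$ and $j$, and recognizes the resulting integral as $L(f_k,g_{k'},1+j)$ divided by the period $\langle f_k, f_k\rangle$, with the $\Gamma$-factors $j!\,(j-k'-1)!$, the power $\pi^{2j-k'+1}$, the sign $(-1)^{k-k'}$, and the power of $2$ emerging naturally from the cohomological normalization. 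Passing from the newform $f_k$ to the $p$-stabilization $f_k^{\alpha_{f_k}}$ introduces the denominator factor $E(f_k)E^*(f_k)$ --- the adjoint $p$-Euler factor --- while the restriction of the Eisenstein class to the $\alpha$-eigenspace removes the local Euler factor at $p$ from $L(f_k,g_{k'},1+j)$, which reappears as the numerator factor $E(f_k,g_{k'},1+j)$ after multiplying back by the full $L$-value.

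The main obstacle I anticipate is the precise bookkeeping of Euler factors at $p$: one must verify that the numerator $E(f_k,g_{k'},1+j)$ is exactly the four-term product obtained by combining the two independent $p$-stabilizations of the Rankin--Selberg Euler factor, and that the archimedean constant (including the sign and the power of $2$) is correct under our choice of cyclotomic normalization. A secondary technical issue is checking that the classicality comparison is valid uniformly across the interpolation range, which reduces to confirming that the slope bound $v_p(\alpha_{f_k}) + v_p(\alpha_{g_{k'}}) < 1+j$ holds whenever $j \geq (k+k'+1)/2$.
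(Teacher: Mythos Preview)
The paper does not give a proof of this theorem at all: it is simply quoted as \cite[Theorem 9.3.2]{LZ16}, a result of Loeffler and Zerbes, with no argument supplied. So there is no ``paper's own proof'' to compare your proposal against.

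Your sketch is a reasonable high-level outline of the Kings--Loeffler--Zerbes strategy used in the original reference, but for the purposes of this article it is unnecessary. The correct ``proof'' here is a one-line citation, which is exactly what the paper does.
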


\begin{remark}
    We should note that Arlandini and Loeffler's $p$-adic $L$-function of $\Sym^2\mathcal{F}$ is defined only at integral weights in the cyclotomic variable, and the analogous $p$-adic $L$-function for the twists of $\Sym^2\mathcal{F}$ by Dirichlet characters $\psi$ is not defined. Therefore they do not have a factorization result for $p$-adic $L$-function attached to $\mathcal{F}\otimes\mathcal{F}\otimes\psi$ for a non-trivial character $\psi$. However (as I learned from Loeffler by a personal conversation at Iwasawa 2023 conference) the twisted version of the above theorem is expected to follow applying the same methods. Therefore we will state below the twisted version of the above theorem as a conjecture, and assume it as an input for our results.
\end{remark}
\begin{conjecture}[Arlandini-Loeffler, twisted version]\label{conj:arlandini-twisted}
    For a finite order Dirichlet character $\psi$, there exists a unique two variable meromorphic $p$-adic $L$-function \[L_p^{\rm imp}(\Sym^2(\mathbf{V}_\mathcal{F}),\psi)\] interpolating the critical values of $L^{\rm imp}(\Sym^2(f),\psi,s)$ such that the following factorization formula holds for all $(\sigma_1,\sigma_2)\in U\times\mathcal{W}$:
    \begin{align*}
        L_p^{\rm geom}(\mathcal{F},\mathcal{F}_\psi)(\sigma_1,\sigma_1,\sigma_2)=L_p^{\rm imp}(\Sym^2(\mathbf{V}_\mathcal{F}),\psi)(\sigma_1,\sigma_2)L_p(\varepsilon_f\psi,\sigma_2-\sigma_1-1),
    \end{align*}
    where $\mathcal{F}_\psi$ denotes the twist $\mathcal{F}\otimes\psi$ of $\mathcal{F}$ by $\psi$.
\end{conjecture}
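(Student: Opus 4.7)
The plan is to adapt the argument of \cite[Theorem A]{arlandini2021factorisation} from the untwisted setting to incorporate the Dirichlet twist by $\psi$. The conceptual core remains the classical Rankin--Selberg factorization
\[ L(f_k, f_{k,\psi}, 1+j) \;=\; L^{\rm imp}(\Sym^2 f_k,\psi,1+j)\cdot L(\varepsilon_{f_k}\psi,\, j-k), \]
which reflects the motivic decomposition $V_{f_k}\otimes V_{f_k}(\psi)\cong \Sym^2(V_{f_k})(\psi)\oplus\wedge^2 V_{f_k}(\psi)$ together with the isomorphism $\wedge^2 V_{f_k}\cong\varepsilon_{f_k}\chi_{\rm cyc}^{-1-k}$. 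The presence of the twist by $\psi$ leaves the shape of the decomposition intact and merely modifies the cyclotomic factor on the right.

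Concretely, I would first establish the existence of the twisted three-variable geometric $p$-adic $L$-function $L_p^{\rm geom}(\mathcal{F},\mathcal{F}_\psi)$, either by applying the construction of \cite[\S 9]{LZ16} directly to the pair $(\mathcal{F},\mathcal{F}_\psi)$---noting that $\mathcal{F}_\psi:=\mathcal{F}\otimes\psi$ is itself a Coleman family over $U$ with Hecke eigenvalues $a_n(\mathcal{F})\psi(n)$---or, more elegantly, by twisting the Beilinson--Flach class associated to $(\mathcal{F},\mathcal{F})$ by $\psi$ and pairing against the relevant Coleman map. With this object in hand, define
\[ L_p^{\rm imp}(\Sym^2(\mathbf{V}_\mathcal{F}),\psi)(\sigma_1,\sigma_2) \;:=\; \frac{L_p^{\rm geom}(\mathcal{F},\mathcal{F}_\psi)(\sigma_1,\sigma_1,\sigma_2)}{L_p(\varepsilon_f\psi,\,\sigma_2-\sigma_1-1)}, \]
which is a priori meromorphic on $U\times\mathcal{W}$. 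Uniqueness is automatic once the interpolation is established, by density of classical specializations.

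To verify the interpolation property, I would specialize at a classical pair $(k,j)$ inside the geometric range of Theorem \ref{thm:Lp-geom-interpolation}. Applying that theorem with $\mathcal{G}=\mathcal{F}_\psi$ and $k'=k$ produces an explicit formula involving $\alpha_{f_k}$, $\beta_{f_k}$, $\psi(p)$, the Petersson norm $\langle f_k,f_k\rangle$ and the classical value $L(f_k,f_{k,\psi},1+j)$. Substituting the classical factorization above, identifying the cyclotomic factor with the corresponding specialization of the Kubota--Leopoldt $L$-function $L_p(\varepsilon_f\psi,\cdot)$, and matching the Petersson norm against the appropriate Deligne period of $\Sym^2 f_k$, one reads off the interpolation formula that must characterize $L_p^{\rm imp}(\Sym^2(\mathbf{V}_\mathcal{F}),\psi)$.

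The hard part will be purely local bookkeeping at $p$: one must check that the modified Euler factor $E(f_k, f_{k,\psi}, 1+j)$ from Theorem \ref{thm:Lp-geom-interpolation}, which is a product of four terms in $\{\alpha_{f_k},\beta_{f_k}\}\times\{\alpha_{f_k}\psi(p),\beta_{f_k}\psi(p)\}$, splits cleanly as the expected ``$\Sym^2$''--Euler factor at $p$ times the Euler factor of $L_p(\varepsilon_f\psi,\cdot)$ at the shifted argument $\sigma_2-\sigma_1-1$. A secondary subtlety arises at primes $\ell\mid N_\psi$ with $\ell\nmid N$, where one must reconcile primitive and imprimitive Euler factors on the two sides of the identity; this does not obstruct the argument but demands a careful statement of the precise interpolation formula for $L_p^{\rm imp}(\Sym^2(\mathbf{V}_\mathcal{F}),\psi)$.
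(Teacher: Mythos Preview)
The paper does not prove this statement; it is explicitly labeled a \emph{conjecture} (Conjecture~\ref{conj:arlandini-twisted}) and is assumed as an input for the main results. The remark immediately preceding it makes this clear: the author reports that Loeffler expects the twisted factorization to follow by the same methods as in \cite{arlandini2021factorisation}, but no proof is supplied, and the paper treats the statement as a standing hypothesis (see the role it plays in Corollary~\ref{cor:non-vanishing-of-BF}).

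Your proposal is therefore not comparable to any argument in the paper. What you have sketched is essentially the strategy the author alludes to: carry over the Arlandini--Loeffler argument with $\mathcal{G}=\mathcal{F}_\psi$, define the symmetric-square $p$-adic $L$-function as a quotient, and verify the interpolation by matching Euler factors. This is a reasonable outline, and the local bookkeeping issues you flag (splitting the modified Euler factor $E(f_k,f_{k,\psi},1+j)$ and handling primes dividing $N_\psi$) are genuine. But you should be aware that you are proposing to \emph{prove} something the paper deliberately leaves open, not reproducing or varying an existing proof. If your goal is to fill this gap, the sketch is a sensible starting point; if your goal is to understand the paper's logic, then simply note that this conjecture is assumed, not proved.
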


\subsection{Beilinson-Flach Elements}\label{sect:BF-elts}

\par In this section, we introduce the Beilinson-Flach elements and Euler system coming from these elements, following \cite{LZ16} and \cite{BuyukbodukLeiVenkat2021Documenta}. We will not give the definition of these elements and how they are constructed (interested reader may look at op. cit. also the survey article \cite{trilogies} and \cite{BDR1,BDR2}, and \cite{LLZ} for the construction of Euler systems coming from these elements). However we present here the explicit reciprocity law (\cite[Theorem 7.1.5]{LZ16}) which relates these elements to Rankin-Selberg $p$-adic $L$-functions via Perrin-Riou's regulator (big logarithm) map introduced in \cite{Perrin-Riou95}. We will explain how to use this to bound the Bloch-Kato Selmer group attached to $\ad(V_f)$.

\par Let $\mathcal{F}$ and $\mathcal{G}$ be Coleman families over nice affinoid domains $U_\mathcal{F}$ and $U_\mathcal{G}$ passing through $f_{\alpha_f}$ and $g_{\alpha_g}$ respectively. Let $c>1$ be an integer coprime to $6pN_fN_g$, and let $m\geq 1$ be an integer. Then there exist cohomology classes \[_c\mathcal{BF}^{[\mathcal{F},\mathcal{G}]}_{m,1}\in H^1(\QQ(\mu(m)),\mathbf{V}_\mathcal{F}^*\hatotimes_E\mathbf{V}_\mathcal{G}^*\hatotimes_E\mathcal{H}(\Gamma)^\iota),\] called Beilinson-Flach elements, which interpolates the elements 
\[_c\mathcal{BF}^{[k,k^\prime,j]}_{m,1}\in H^1(\QQ(\mu(m)),V_{\mathcal{F}_{k}}^*\hatotimes V_{\mathcal{G}_{k^\prime}}^*(-j))\]
up to a nonzero factor (if $p^j\neq a_p(\mathcal{F}_k)a_p(\mathcal{G}_{k^\prime})$) for \[(k,k^\prime,j)\in U_\mathcal{F}\times_E U_\mathcal{G}\times_E\mathcal{W}\cap\NN^3\] with $0\leq j\leq {\rm min}(k,k^\prime)$. See \cite[Definition 3.5.2, Theorem 5.4.2]{LZ16} for precise details.

\par The following Explicit Reciprocity Law is due to \cite[Theorem 7.1.5]{LZ16}:
\begin{theorem}[Loeffler-Zerbes]\label{thm:explicit-reciprocity-law}
    For $p$-stabilizations $f_\alpha$ and $g_\alpha$ of $f$ and $g$ respectively, let $\mathcal{F}$ and $\mathcal{G}$ denote the Coleman Families passing through $f_\alpha$ and $g_\alpha$ respectively.
    Then we have
    \begin{align*}
        \left\langle\mathcal{L}(_c\mathcal{BF}^{[\mathcal{F},\mathcal{G}]}_{1,1}),\eta_\mathcal{F}\otimes\omega_\mathcal{G}\right\rangle=\nu_c(\mathbf{k}_\mathcal{F},\mathbf{k}_\mathcal{G},1+\mathbf{j})(-1)^{1+\mathbf{j}}\lambda_N(\mathcal{F})^{-1}L_p^{\rm geom}(\mathcal{F},\mathcal{G})(\mathbf{k}_\mathcal{F},\mathbf{k}_\mathcal{G},1+\mathbf{j}),
    \end{align*}
where \[\nu_c(\mathbf{k}_\mathcal{F},\mathbf{k}_\mathcal{G},\mathbf{j}):=(c^2-c^{-\mathbf{k}_\mathcal{F}-\mathbf{k}_\mathcal{G}+2\mathbf{j}-2}\varepsilon_\mathcal{F}(c)^{-1}\varepsilon_\mathcal{G}(c)^{-1}),\] $\mathbf{k}_\mathcal{G}$ and $\mathbf{k}_\mathcal{G}$ are weights of $\mathcal{F}$ and $\mathcal{G}$ respectively, $\lambda_N(\mathcal{F})$ is the Atkin-Lehner pseudo-eigenvalue (see \cite{Atkin} and \cite[Proposition 10.1.2]{KLZ17} for the definition and properties), $\eta_\mathcal{F}$ and $\omega_\mathcal{G}$ are given as in \cite[\S 6.4]{LZ16}, and $\mathcal{L}$ is Perrin-Riou's regulator map.
\end{theorem}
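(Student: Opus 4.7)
The plan is to establish the identity first at a Zariski-dense set of classical specializations and then extend by rigid analytic continuation. The starting point is the interpolation property of the Beilinson-Flach classes: for integer triples $(k, k', j)$ in $U_\mathcal{F} \times_E U_\mathcal{G} \times_E \mathcal{W}$ with $0 \leq j \leq \min(k, k')$, the family $_c\mathcal{BF}^{[\mathcal{F},\mathcal{G}]}_{1,1}$ specializes (up to a nonvanishing Euler factor when $p^j \neq a_p(\mathcal{F}_k) a_p(\mathcal{G}_{k'})$) to the étale Beilinson-Flach class $_c\mathcal{BF}^{[k,k',j]}_{1,1}$ in $H^1(\QQ, V_{\mathcal{F}_k}^* \hatotimes V_{\mathcal{G}_{k'}}^*(-j))$. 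Both sides of the desired formula are naturally rigid analytic (resp.\ meromorphic) sections over $U_\mathcal{F} \times_E U_\mathcal{G} \times_E \mathcal{W}$, so once the equality is verified at such integer points, a density argument concludes the proof.

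At each such specialization, I would first restrict to the local cohomology at $p$ and then apply the Perrin-Riou regulator $\mathcal{L}$, whose specialization at a classical weight factors through the Bloch-Kato dual exponential map. The classical explicit reciprocity law, proved in various guises by Bannai-Kings, Besser, and Kings-Loeffler-Zerbes, identifies the pairing of the image of $_c\mathcal{BF}^{[k,k',j]}_{1,1}$ with $\eta_{\mathcal{F}_k} \otimes \omega_{\mathcal{G}_{k'}}$ as the Rankin-Selberg integral representing the critical value $L(f_k, g_{k'}, 1+j)$, multiplied by the expected transcendental period and the Euler factor $\mathcal{E}(f_k, g_{k'}, 1+j)$. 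Comparing with the interpolation formula of Theorem \ref{thm:Lp-geom-interpolation}, this equals exactly $\nu_c(k, k', 1+j) \cdot (-1)^{1+j} \cdot \lambda_N(\mathcal{F}_k)^{-1} \cdot L_p^{\rm geom}(\mathcal{F}, \mathcal{G})(k, k', 1+j)$ at the specialization.

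To finish, observe that the set of triples $(k, k', j)$ with $0 \leq j \leq \min(k, k')$ is Zariski-dense in the three-dimensional rigid analytic space $U_\mathcal{F} \times_E U_\mathcal{G} \times_E \mathcal{W}$. Since both sides of the proposed identity extend to sections of the same invertible sheaf on this space that agree on a dense subset, they agree identically, yielding the desired reciprocity law in three variables.

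The main obstacle is the classical reciprocity law at a single specialization. Proving it rigorously requires (i) relating the Perrin-Riou regulator at a single weight to the Bloch-Kato dual exponential map via Berger's and Nakamura's comparison theorems between $(\varphi,\Gamma)$-modules and crystalline (or de Rham) cohomology, and (ii) invoking Beilinson's theorem identifying the syntomic realization of the motivic Beilinson-Flach element with a Rankin-Selberg period integral whose value is the critical $L$-value up to explicit transcendental periods. Careful bookkeeping of normalization constants, in particular the smoothing factor $\nu_c$ arising from the auxiliary integer $c$, the Atkin-Lehner pseudo-eigenvalue $\lambda_N(\mathcal{F})$, and the sign $(-1)^{1+j}$, will pervade the computation; matching these normalizations consistently in the three-parameter family is the most delicate part of the argument.
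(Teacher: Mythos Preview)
The paper does not give its own proof of this theorem: it is quoted verbatim as \cite[Theorem 7.1.5]{LZ16} and used as a black box. There is therefore no ``paper's proof'' to compare against; your sketch is not competing with anything in the present article.

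That said, your outline is a fair high-level summary of the strategy actually carried out in \cite{LZ16}: one first proves the reciprocity law at classical points by reducing (via the compatibility of the big regulator with specialization) to the computation of the dual exponential of the \'etale Beilinson--Flach class, which in turn is identified with a Rankin--Selberg period by the results of Kings--Loeffler--Zerbes; one then interpolates using Zariski-density of classical weights. You are right that the delicate content lies in the pointwise step, and that the bookkeeping of the factors $\nu_c$, $\lambda_N(\mathcal{F})^{-1}$, and the Euler factors is where most of the work resides. Since the present paper only \emph{uses} the theorem, your sketch is adequate as context, but a self-contained proof would require reproducing substantial portions of \cite{LZ16} and \cite{KLZ17}.
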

\begin{corollary}\label{cor:non-vanishing-of-BF}
    Let $\mathcal{F}$ and $\widetilde{\mathcal{F}}$ be Coleman families (over the same nice affinoid neighborhood) passing through the $p$-stabilizations $f_{\alpha_f}$ and $f_{\beta_f}$ of $f$, respectively. Suppose that the product \[L_p^{\rm imp}({\rm Sym}^2(\mathbf{V}_\mathcal{F}),\varepsilon^{-1}\psi^{-1})(k_f,2+k_f-j)L_p(\psi^{-1},1-j)\] of $p$-adic $L$-functions does not vanish for $j$ with $-k_f-1<j\leq 0$ and $\psi(-1)=(-1)^j$. Then there exists a $c$ coprime to $6pN_fN_\psi$ such that the specializations of \[_c\mathcal{BF}^{[\mathcal{F},\mathcal{F}^c_{\psi^{-1}}]}_{1,1}\ \textnormal{and}\ _c\mathcal{BF}^{[\mathcal{F},\widetilde{\mathcal{F}}^c_{\psi^{-1}}]}_{1,1}\] at $(k_f,k_f,1-j+k_f)$ are nonzero, where $-k_f-1<j\leq 0$ and $\psi(-1)=(-1)^j$.
\end{corollary}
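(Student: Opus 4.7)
The plan is to apply the Explicit Reciprocity Law of Theorem \ref{thm:explicit-reciprocity-law} to each of the two Beilinson-Flach classes with $\mathcal{G}\in\{\mathcal{F}^c_{\psi^{-1}}, \widetilde{\mathcal{F}}^c_{\psi^{-1}}\}$, pair with $\eta_\mathcal{F}\otimes\omega_\mathcal{G}$, and specialize at the triple $(k_f, k_f, 1-j+k_f)$. Doing so translates the non-vanishing of the specialized pairing into the non-vanishing of
\[
\nu_c(k_f,k_f,1-j+k_f)\,(-1)^{1-j+k_f}\,\lambda_N(\mathcal{F})^{-1}\,L_p^{\rm geom}(\mathcal{F},\mathcal{G})(k_f,k_f,1-j+k_f).
\]
The Atkin-Lehner pseudo-eigenvalue $\lambda_N(\mathcal{F})$ is invertible on the eigencurve, the sign factor is obviously nonzero, and the Euler-system correction $\nu_c$ can be rendered nonzero by a standard Dirichlet-style congruence argument: there exist infinitely many $c$ coprime to $6pN_fN_\psi$ avoiding the finitely many bad residue classes modulo an appropriate integer, and we may select a common such $c$ that works simultaneously for both choices of $\mathcal{G}$ by intersecting two sets of positive density.

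The heart of the argument is the non-vanishing of $L_p^{\rm geom}(\mathcal{F}, \mathcal{G})(k_f,k_f,1-j+k_f)$. Here I would invoke Conjecture \ref{conj:arlandini-twisted}. Using the identification $\mathbf{V}_{\mathcal{F}^c}\cong\mathbf{V}_\mathcal{F}\otimes\varepsilon_\mathcal{F}^{-1}$ of the families of Galois representations, the family $\mathcal{F}^c_{\psi^{-1}}$ coincides, up to Atkin-Lehner normalization, with $\mathcal{F}_{\varepsilon^{-1}\psi^{-1}}$, so applying the conjecture with $\psi$ replaced by $\varepsilon^{-1}\psi^{-1}$ should yield
\[
L_p^{\rm geom}(\mathcal{F},\mathcal{F}^c_{\psi^{-1}})(k_f,k_f,1-j+k_f)=L_p^{\rm imp}(\Sym^2(\mathbf{V}_\mathcal{F}),\varepsilon^{-1}\psi^{-1})(k_f,2+k_f-j)\cdot L_p(\psi^{-1},1-j),
\]
which is nonzero precisely by the hypothesis of the corollary. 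The analogous factorization holds for $\widetilde{\mathcal{F}}^c_{\psi^{-1}}$ in place of $\mathcal{F}^c_{\psi^{-1}}$ with the sole modification that the Euler factors at $p$ swap $\alpha_f \leftrightarrow \beta_f$; these are units in the $p$-regular non-ordinary regime and hence do not affect non-vanishing.

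Combining the three inputs, $\langle\mathcal{L}(_c\mathcal{BF}^{[\mathcal{F},\mathcal{G}]}_{1,1}),\eta_\mathcal{F}\otimes\omega_\mathcal{G}\rangle$ specializes to a nonzero quantity at $(k_f, k_f, 1-j+k_f)$, which forces $_c\mathcal{BF}^{[\mathcal{F},\mathcal{G}]}_{1,1}$ itself to be nonzero at this classical specialization (since Perrin-Riou's big logarithm $\mathcal{L}$ is injective on the crystalline part of the relevant cohomology at non-critical-slope classical points). The main obstacle I anticipate is Step 2: the point $(k_f,k_f,1-j+k_f)$ with $-k_f-1<j\leq 0$ lies \emph{outside} the interpolation range $\tfrac{k+k'+1}{2}\leq j'\leq k$ of $L_p^{\rm geom}$ in Theorem \ref{thm:Lp-geom-interpolation}, so one must verify that the factorization of Conjecture \ref{conj:arlandini-twisted} — an identity of meromorphic functions on a two-dimensional rigid analytic space — does not acquire spurious poles or zeros at this boundary point, and that the Euler factors at $p$ that distinguish $L_p^{\rm geom}$ from the intrinsic $\Sym^2$ $p$-adic $L$-function behave well there.
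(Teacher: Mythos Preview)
Your approach is essentially the paper's: apply the Explicit Reciprocity Law, check that $\nu_c$, the sign, and $\lambda_N(\mathcal{F})^{-1}$ are nonzero, and reduce to the non-vanishing of $L_p^{\rm geom}$ via the twisted Arlandini--Loeffler factorization (Conjecture~\ref{conj:arlandini-twisted}). Two points deserve comment.

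First, your treatment of the second class $_c\mathcal{BF}^{[\mathcal{F},\widetilde{\mathcal{F}}^c_{\psi^{-1}}]}_{1,1}$ has a gap. Conjecture~\ref{conj:arlandini-twisted} is formulated for $L_p^{\rm geom}(\mathcal{F},\mathcal{F}_\psi)$ with the \emph{same} Coleman family in both slots; it does not immediately give a factorization for the mixed pair $(\mathcal{F},\widetilde{\mathcal{F}}^c_{\psi^{-1}})$, and your remark about ``swapping $\alpha_f\leftrightarrow\beta_f$ in the Euler factors'' does not bridge this. The paper handles this differently and more directly: it invokes \cite[Proposition~3.6.3]{BLLV} to obtain the equality
\[
L_p^{\rm geom}(\mathcal{F},\widetilde{\mathcal{F}}^c_{\psi^{-1}})=L_p^{\rm geom}(\mathcal{F},\mathcal{F}^c_{\psi^{-1}})
\]
as meromorphic functions, so the non-vanishing for the first pair transfers immediately to the second. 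You should use this identity rather than attempt a separate factorization.

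Second, your worry about the interpolation range is not really an obstacle here: Conjecture~\ref{conj:arlandini-twisted} is stated as an identity of meromorphic functions valid for all $(\sigma_1,\sigma_2)\in U\times\mathcal{W}$, so once it is taken as input one may specialize at any point where neither side has a pole. Likewise, your appeal to injectivity of $\mathcal{L}$ is more than is needed: if the pairing $\langle\mathcal{L}(x),\eta_\mathcal{F}\otimes\omega_\mathcal{G}\rangle$ is nonzero then trivially $x\neq 0$.
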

\begin{proof}
    Using the factorization of Arlandini and Loeffler (twisted version, see Conjecture \ref{conj:arlandini-twisted}), we have \[L_p^{\rm geom}(\mathcal{F},\mathcal{F}^c_{\psi^{-1}})(k_f,k_f,2+k_f-j)=L_p^{\rm imp}({\rm Sym}^2(\mathbf{V}_\mathcal{F}),\varepsilon^{-1}\psi^{-1})(k_f,2+k_f-j)L_p(\psi^{-1},1-j).\]
    \par Note that setting $\mathcal{G}:=\mathcal{F}^c_{\psi^{-1}}$ gives us $\mathbf{k}_\mathcal{G}=\mathbf{k}_\mathcal{F}$ and $\varepsilon_\mathcal{G}=\varepsilon_\mathcal{F}^{-1}\psi^{-2}(c)$, thus \[\nu_c(\mathbf{k}_\mathcal{F},\mathbf{k}_\mathcal{G},1+\mathbf{j})=(c^2-c^{-2\mathbf{k}_\mathcal{F}+2\mathbf{j}}\psi(c)^2).\] Specializing $\nu_c(\mathbf{k}_\mathcal{F},\mathbf{k}_\mathcal{G},1+\mathbf{j})$ at $(\mathbf{k}_\mathcal{F},\mathbf{k}_\mathcal{G},\mathbf{j})=(k_f,k_f,1-j+k_f)$ gives 
    \[\nu_c(k_f,k_f,1-j+k_f)=(c^2-c^{2-2j}\psi(c)^2),\]
    thus we can choose a $c$ such that this is nonzero when $-k_f-1<j\leq 0$ and $\psi(-1)=(-1)^j$. Recall that those $j$'s are critical for $\ad(V_f)(\psi)$. Combining with our assumption, the RHS of the explicit reciprocity law (Theorem \ref{thm:explicit-reciprocity-law}) is nonzero modulo $(k_f,k_f,1-j+k_f)$. This shows that the specialization of $_c\mathcal{BF}^{[\mathcal{F},\mathcal{F}^c_{\psi^{-1}}]}_{1,1}$ at $(k_f,k_f,1-j+k_f)$, which is an element of $H^1(\QQ(\mu(m)),\Ad(V_f^*)(\psi+j))$, must also be nonzero.
    \par For $_c\mathcal{BF}^{[\mathcal{F},\widetilde{\mathcal{F}}^c_{\psi^{-1}}]}_{1,1}$, applying the explicit reciprocity law (with $\mathcal{G}=\widetilde{\mathcal{F}}^c_{\psi^{-1}}$) concludes the proof, as we have $L_p^{\rm geom}(\mathcal{F},\widetilde{\mathcal{F}}^c_{\psi^{-1}})=L_p^{\rm geom}(\mathcal{F},\mathcal{F}^c_{\psi^{-1}})$ (see \cite[Proposition 3.6.3]{BLLV}).
    
\end{proof}

\par For the sake of notational simplicity, let $V_\Ad$ and $V_\ad$ denote $\Ad(V_f)(\psi+j)$ and $\ad(V_f)(\psi+j)$, and let $T_\Ad$ and $T_\ad$ be $\mathcal{O}_E$-lattices inside $V_\Ad$ and $V_\ad$, respectively. Our aim is to show that the Bloch-Kato Selmer groups $H^1_f(\QQ,V_{\ad})$ and $H^1_f(\QQ,V_{\ad}^*(1))$ vanish. To be able to prove this result, we will utilize the Beilinson-Flach Euler system. 

\begin{lemma}\label{lemma:BF-Euler-System}
    Let $\psi+j$ be even, and $j$ be in the critical range of $\ad(V_f)(\psi)$. Suppose that the hypotheses of Corollary \ref{cor:non-vanishing-of-BF} hold true, and in addition suppose that the following hold true:
    \begin{itemize}
        \item $p\geq 7$ and $E\cong\Qp$,
        \item ${\rm Im}(G_\QQ\rightarrow {\rm Aut}(V_f))$ contains a conjugate of ${\rm SL}_2(\Zp)$,
        \item There exists $u\in(\ZZ/N_fN_\psi)^\times$ such that $\psi(u)\neq\pm 1\ ({\rm mod\ \mathfrak{P}})$ and $\varepsilon_f^{-1}\psi(u)$ is a square modulo $\mathfrak{P}$, where $\mathfrak{P}$ is the prime of $E$.
    \end{itemize}
    
    \par Let $\mathcal{P}_\psi$ be the set of primes $\ell\nmid pNN_\psi$ such that 
    \begin{itemize}
        \item $\ell\equiv$ 1 (${\rm mod}$ $p$),
        \item $T_\ad/(\Frob_\ell -1)$ is a free $\mathcal{O}$-module of rank $1$,
        \item $\Frob_\ell -1$ is bijective on $\mathcal{O}_E(\psi+j)$.
    \end{itemize}
    Let $\mathcal{R}_\psi$ be the set of square free products of primes in $\mathcal{P}_\psi$. For each $r \in \mathcal{R}_\psi$, there exist two cohomology classes 
    \[d_r^{\alpha,\alpha}, d_r^{\alpha,\beta} \in H^1_{(p)}(\QQ(r),V_{\ad}),\]
    where $\QQ(r)$ is the maximal $p$-extension of $\QQ$ in $\QQ(\mu(r))$, satisfying the following properties:
    \begin{enumerate}[(i)]
        \item There exists a constant $D$ independent of $r$ such that 
        \[Dd_r^{\alpha,\alpha}\,,\, Dd_r^{\alpha,\beta} \in H^1(\QQ(r),T_{\ad})\,.\]
        \item For $ r\ell \in \mathcal{R}_\psi$ and $\mu\in \{\alpha,\beta\}$ we have 
        \[\textup{cor}_{\QQ(r\ell)/\QQ(r)}\left(d_{r\ell}^{\alpha,\mu}\right)=P_\ell(\ell^{j-1}\Frob_\ell^{-1})\cdot d_r^{\alpha,\mu},\]
        where $P_\ell(X)$ is the Euler polynomial for $\ad(V_f^*)(\psi^{-1})$ at $\ell$ and $\Frob_\ell^{-1}$ is the geometric Frobenius.  
        \item The classes $d_1^{\alpha,\alpha}, d_1^{\alpha,\beta} \in H^1_{(p)}(\QQ,V_{\ad})$ are linearly independent.
    \end{enumerate}
\end{lemma}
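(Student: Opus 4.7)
The plan is to construct $d_r^{\alpha,\alpha}$ and $d_r^{\alpha,\beta}$ as images of two distinct Beilinson--Flach classes, derive properties (i) and (ii) directly from the Euler system formalism of \cite{LZ16}, and reserve the bulk of the effort for (iii), the linear independence, which I would deduce from the explicit reciprocity law (Theorem~\ref{thm:explicit-reciprocity-law}) combined with Corollary~\ref{cor:non-vanishing-of-BF}.

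For the construction, set $\mathcal{G}_\alpha:=\mathcal{F}$ and $\mathcal{G}_\beta:=\widetilde{\mathcal{F}}$, where $\widetilde{\mathcal{F}}$ is the Coleman family passing through $f_{\beta_f}$. Start from the three-variable Beilinson--Flach class
\[
{}_c\mathcal{BF}^{[\mathcal{F},\mathcal{G}_\mu^c\otimes\psi^{-1}]}_{r,1}\in H^1\!\left(\QQ(\mu(r)),\mathbf{V}_\mathcal{F}^*\hatotimes_E\mathbf{V}_{\mathcal{G}_\mu^c\otimes\psi^{-1}}^*\hatotimes_E\mathcal{H}(\Gamma)^\iota\right),
\]
specialize the two weights at $(k_f,k_f)$ and the cyclotomic variable at $1+\mathbf{j}\mapsto 1-j+k_f$, and use \eqref{eq:poincare-duality-reps-forms} together with $V_{f^c}\cong V_f\otimes\varepsilon_f^{-1}$ to identify the resulting coefficient module with $\Ad(V_f)(\psi+j)=V_\Ad$. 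Composing with the $G_\QQ$-equivariant projector $\Ad\twoheadrightarrow\ad$ and corestricting from $\QQ(\mu(r))$ down to its maximal $p$-subextension $\QQ(r)$ produces $d_r^{\alpha,\mu}\in H^1_{(p)}(\QQ(r),V_\ad)$; the relaxed local condition at $p$ is automatic because BF classes impose no constraint at $p$ beyond lying in $H^1$. Property (i) then reduces to the known integrality of $c$-smoothed BF classes: the only denominator is the factor $\nu_c(k_f,k_f,1-j+k_f)=c^2-c^{2-2j}\psi(c)^2$ from Theorem~\ref{thm:explicit-reciprocity-law}, which depends on $c$ but not on $r$ and can thus be absorbed into a single $D$. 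Property (ii) is the Euler system norm-compatibility for BF classes (\cite[Theorem~5.4.2]{LZ16}): specializing the cyclotomic variable at $1+\mathbf{j}\mapsto 1-j+k_f$ converts $\Frob_\ell^{-1}$ into $\ell^{j-1}\Frob_\ell^{-1}$, and after the trace projection the surviving Euler factor is precisely $P_\ell$ for $\ad(V_f^*)(\psi^{-1})$.

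For the linear independence (iii), I would invoke the triangulation of Theorem~\ref{thm:triangulation}: $\mathscr{D}_\cris(V_f|_{G_p})$ splits into two $\varphi$-eigenlines of eigenvalues $\alpha_f,\beta_f$, so $\mathscr{D}_\cris(V_\Ad|_{G_p})$ decomposes into four $\varphi$-eigenlines indexed by $(\mu_1,\mu_2)\in\{\alpha,\beta\}^2$ with eigenvalues $\mu_1\mu_2^{-1}p^j$. The explicit reciprocity law, applied to the underlying BF class before projection, shows that the image of $d_1^{\alpha,\mu}$ under Perrin-Riou's big logarithm, paired against $\eta_\mathcal{F}\otimes\omega_{\mathcal{G}_\mu^c\otimes\psi^{-1}}$, equals, up to an explicit non-zero scalar, the value $L_p^{\rm geom}(\mathcal{F},\mathcal{G}_\mu^c\otimes\psi^{-1})(k_f,k_f,2+k_f-j)$, which is non-zero for each $\mu\in\{\alpha,\beta\}$ by Corollary~\ref{cor:non-vanishing-of-BF}. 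Since $\omega_{\mathcal{F}^c\otimes\psi^{-1}}$ and $\omega_{\widetilde{\mathcal{F}}^c\otimes\psi^{-1}}$ lie in distinct $\varphi$-eigenlines of $\mathscr{D}_\cris(V_\Ad|_{G_p})$, the two pairings detect $d_1^{\alpha,\alpha}$ and $d_1^{\alpha,\beta}$ through independent linear functionals on $H^1_{(p)}(\QQ,V_\ad)$, forcing linear independence.

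The step I expect to be the main obstacle is (iii). One must keep careful track of the trace projection $\Ad\to\ad$ and check that both differentials $\eta_\mathcal{F}\otimes\omega_{\mathcal{G}_\mu^c\otimes\psi^{-1}}$ survive it (they should lie off the trace line, but a direct verification is required), correctly identify the $\varphi$-eigenlines after the cumulative twist by $\cyc^j$ and the various duality isomorphisms, and invoke Conjecture~\ref{conj:arlandini-twisted} to legitimize the use of Corollary~\ref{cor:non-vanishing-of-BF} in the presence of the non-trivial character $\psi$. The large-image hypothesis and the auxiliary condition on the existence of $u\in(\ZZ/N_fN_\psi)^\times$ play no direct role in (i)--(iii) themselves but guarantee that the set $\mathcal{P}_\psi$ is infinite and that the Chebotarev-type arguments needed to deploy these classes as a Kolyvagin system elsewhere in the paper go through.
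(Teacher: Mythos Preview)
Your overall architecture matches the paper's: both start from the specialized Beilinson--Flach classes for $(\mathcal{F},\mathcal{F}^c_{\psi^{-1}})$ and $(\mathcal{F},\widetilde{\mathcal{F}}^c_{\psi^{-1}})$, and both rely on \cite[Theorem~8.1.4]{LZ16} for the core Euler-system properties. However, there is a genuine gap in how you pass from $V_\Ad$ to $V_\ad$.

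\textbf{The gap in (ii).} You propose to define $d_r^{\alpha,\mu}$ by composing the BF class with the trace projector $\Ad\twoheadrightarrow\ad$, and you assert that ``after the trace projection the surviving Euler factor is precisely $P_\ell$''. This is false as stated: the norm relation $\textup{cor}(c_{r\ell})=\widetilde P_\ell(\ell^{j-1}\Frob_\ell^{-1})\cdot c_r$ involves an operator in the group ring of $\Gal(\QQ(r)/\QQ)$, which commutes with the $G_\QQ$-equivariant projector. So after projecting you still get $\widetilde P_\ell$, the Euler factor for $\Ad(V_f^*)(\psi^{-1})$, not $P_\ell$. The paper handles this differently: it \emph{does not project} but instead multiplies the $\Ad$-valued class by $\prod_{\ell\mid m}(1-\ell^{1-j}\psi^{-1}(\ell)\sigma_\ell^{-1})^{-1}$, which is a unit in the Iwasawa algebra precisely because of the condition ``$\Frob_\ell-1$ is bijective on $\mathcal{O}_E(\psi+j)$'' in the definition of $\mathcal{P}_\psi$. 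This explicitly cancels the extra linear Euler factor $\widetilde P_\ell/P_\ell$ and yields (ii) exactly.

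\textbf{The gap in (iii).} Your eigenline argument establishes linear independence of the two classes in $H^1_{(p)}(\QQ,V_\Ad)$ (and indeed this is essentially what \cite[Theorem~8.1.4(iv)--(v)]{LZ16} gives and what the paper cites). But linear independence in $V_\Ad$ does not automatically descend to linear independence of projections to $V_\ad$: you would need to show that neither class has a component along $V_1$, or that the pairings factor through $\ad$. The paper bypasses this entirely by observing that $H^1_{(p)}(\QQ,E(\psi+j))=0$ when $\psi+j$ is even (Leopoldt for abelian fields, via \cite[Proposition~3.2.3]{BuyukbodukLeiVenkat2021Documenta}), whence $H^1_{(p)}(\QQ,V_\Ad)=H^1_{(p)}(\QQ,V_\ad)$ and no projection is needed at all. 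You never invoke Leopoldt, and without it your survival-under-projection step remains unjustified.
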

\begin{proof}
    \par This theorem essentially \cite[Theorem 3.2.12]{BuyukbodukLeiVenkat2021Documenta}, \cite[Theorem 8.1.4]{LZ16}, and \cite[Theorem 5.3.3]{Loeffler_2018} adapted to our setting. We will follow the proofs of these theorems:
    Let $\widetilde{c}_r^{\alpha,\alpha}$ and $\widetilde{c}_r^{\alpha,\beta}$ be the specializations of Beilinson-Flach elements \[_c{\mathcal{BF}_{r,1}^{[\mathcal{F},\mathcal{F}^c_{\psi^{-1}}]}}\ \textnormal{and}\ _c{\mathcal{BF}_{r,1}^{[\mathcal{F},\widetilde{\mathcal{F}}^c_{\psi^{-1}}]}}\] at $(k_f,k_f,1-j+k_f)$, respectively, where $\mathcal{F}$, $\widetilde{\mathcal{F}}$, and $c$ are as in Corollary \ref{cor:non-vanishing-of-BF}. 
    \par Let $c_r^{\alpha,\alpha}$ and $c_r^{\alpha,\beta}$ be appropriate modifications of $\widetilde{c}_r^{\alpha,\alpha}$ and $\widetilde{c}_r^{\alpha,\beta}$ as in \cite[\S 7.3]{LLZ} so that they satisfy the Euler system norm relations for $\Ad(V_f)(\psi+j)$, i.e. for $r\ell\in\mathcal{R}_\psi$ we have \[\textup{cor}_{\QQ(r\ell)/\QQ(r)}\left(c_{r\ell}^{\alpha,\mu}\right)=\widetilde{P}_\ell(\ell^{j-1}\Frob_\ell^{-1})\cdot d_r^{\alpha,\mu},\] where $\widetilde{P}_\ell(X)$ is the Euler factor of $\Ad(V_f^*)(\psi^{-1})$.
    \par Now we need to modify $c_r^{\alpha,\mu}$ so that they satisfy the Euler system norm relations for $\ad(V_f)(\psi+j)$. Let $m=r\ell\in\mathcal{R}_\psi$. Note that the Euler factor $1-\ell^{j-1}\psi^{-1}(\ell)\sigma_\ell^{-1}$ for $\psi^{-1}+1-j$ is invertible in $\Zp[[\Gal(\QQ(m)\QQ_{\rm cyc}/\QQ)]]$ since $\ell\equiv 1\ ({\rm mod}\ p)$, where $\sigma_\ell\in\Gal(\QQ(m)\QQ_{\rm cyc}/\QQ)$ is the unique element acting trivially on $\QQ(\ell)$ and maps to $\Frob_\ell\in\Gal(\QQ(r)\QQ_{\rm cyc}/\QQ)$. Hence for any $m\in\mathcal{R}_\psi$ let
    \[d_m^{\alpha,\mu}:=\prod_{\ell\mid m}(1-\ell^{1-j}\psi^{-1}(\ell)\sigma_\ell^{-1})^{-1}c_m^{\alpha,\mu}.\] This proves (ii), and (i) holds true due to \cite[Theorem 8.1.4-(ii)]{LZ16}.
    \par Note that $d_1^{\alpha,\mu}\neq 0$ iff $\widetilde{c}_1^{\alpha,\mu}\neq 0$, which is true thanks to Corollary \ref{cor:non-vanishing-of-BF}. Hence by \cite[Theorem 8.1.4-(iv) and (v)]{LZ16}, the images of $d_r^{\alpha,\alpha}$ and $d_r^{\alpha,\beta}$ in $H^1_{(p)}(\QQ(r),V_{\Ad})$ are linearly independent. On the other hand, using the exact triangles in Theorem \ref{thm:exact-triangle-selmer-comp} for $p$-relaxed Selmer complexes (and noticing that the local conditions split w.r.t. the splitting of the representations), we obtain 
    \[
        H^1_{(p)}(\QQ(r),V_{\Ad})= H^1_{(p)}(\QQ(r),V_{\ad})\oplus H^1_{(p)}(\QQ(r),E(\psi+j)),
    \]
    and $H^1_{(p)}(\QQ,E(\psi+j))$ vanishes since $\psi+j$ is even and the validity of Leopoldt conjecture for abelian number fields holds true (see \cite[Proposition 3.2.3]{BuyukbodukLeiVenkat2021Documenta}).
    Thus we get \[H^1_{(p)}(\QQ(r),V_{\Ad})= H^1_{(p)}(\QQ(r),V_{\ad}),\] which concludes (iii).
\end{proof}

\begin{corollary}\label{cor:Vanishing-of-BK-Selmer-grp}
Suppose that $\psi(-1)=(-1)^j$ and $j\in(-1-k,0]$. If \[L_p^{{\rm imp}}(\Sym^2(f),\varepsilon_f^{-1}\psi^{-1},-j+k_f+2)L_p(\psi,1-j)\neq 0,\]then the Bloch-Kato Selmer groups \[H^1_f(\QQ,\ad(V_f)(\psi+j))\] and \[H^1_f(\QQ,\ad(V_f)(\psi^{-1}+1-j))\] vanish.
\end{corollary}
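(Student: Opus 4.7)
The plan is to plug the Beilinson-Flach Euler system produced by Lemma \ref{lemma:BF-Euler-System} into the Kolyvagin-Rubin bounding machinery, in the spirit of \cite[\S 11]{LZ16} and \cite[\S 3]{BuyukbodukLeiVenkat2021Documenta}. First I would identify the two Selmer groups in the statement as Poincar\'e duals of each other: the trace pairing $(X,Y)\mapsto {\rm Tr}(XY)$ on $\ad(V_f)$ is perfect and Galois-equivariant, giving $\ad(V_f)^{\ast}\cong\ad(V_f)$; twisting by $\psi+j$ then yields $V_\ad^{\ast}(1) \cong \ad(V_f)(\psi^{-1}+1-j)$, so it suffices to prove the vanishing of one of them and then transfer the conclusion to the other.

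Second, under the hypothesis $L_p^{{\rm imp}}(\Sym^2(f),\varepsilon_f^{-1}\psi^{-1},k_f+2-j)\,L_p(\psi,1-j)\neq 0$, Corollary \ref{cor:non-vanishing-of-BF} together with Lemma \ref{lemma:BF-Euler-System} supplies an Euler system for $T_\ad$ whose bottom classes $d_1^{\alpha,\alpha}$ and $d_1^{\alpha,\beta}$ in $H^1_{(p)}(\QQ,V_\ad)$ are $E$-linearly independent. I would then carry out the Kolyvagin derivative construction on these classes: the big-image hypothesis on $V_f$ and the residual conditions built into the definition of $\mathcal{P}_\psi$ provide the necessary Chebotarev density input, and the non-triviality of the bottom classes, combined with local Tate duality at the auxiliary primes $\ell\in\mathcal{P}_\psi$, forces any putative non-zero element of $H^1_f(\QQ,V_\ad^{\ast}(1))$ to pair non-trivially with one of the derivative classes. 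The linear independence of the pair $(d_1^{\alpha,\alpha},d_1^{\alpha,\beta})$ upgrades the resulting bound to the vanishing
\[
H^1_f(\QQ,V_\ad^{\ast}(1)) \;=\; H^1_f(\QQ,\ad(V_f)(\psi^{-1}+1-j)) \;=\; 0.
\]

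The vanishing of $H^1_f(\QQ,V_\ad)$ would then be extracted by combining this with the Tate-Poitou global duality sequence: absolute irreducibility of $V_f$ (a consequence of the big-image hypothesis) forces $H^0(\QQ,V_\ad) = H^0(\QQ,V_\ad^{\ast}(1)) = 0$, and a direct check on the Hodge-Tate weights $\{-1-k_f+j,\,j,\,1+k_f+j\}$ of $V_\ad$ together with the parity condition $\psi(-1)=(-1)^j$ shows that the archimedean and local correction terms in the global Euler characteristic formula cancel, giving $\dim_E H^1_f(\QQ,V_\ad)=0$.

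The main obstacle I anticipate is bridging the gap between the native $p$-relaxed local condition of the Beilinson-Flach classes and the Bloch-Kato (crystalline) local condition defining $H^1_f$. In the $p$-non-ordinary setting this is strictly more delicate than in the ordinary case of \cite{BuyukbodukLeiVenkat2021Documenta} and must be handled through the triangulation of Theorem \ref{thm:triangulation} together with the explicit reciprocity law of Theorem \ref{thm:explicit-reciprocity-law}: one needs to argue either that the image of each Euler system class in $H^1(\QQ_p,V_\ad)/H^1_f(\QQ_p,V_\ad)$ vanishes, or, alternatively, to run the Kolyvagin argument against the Pottharst-style Selmer complex attached to the chosen triangulation and then compare the resulting strict-at-$p$ Selmer group with $H^1_f$ via a controllable local cohomology discrepancy at $p$.
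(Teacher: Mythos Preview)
Your overall strategy is close to the paper's, and you correctly identify the self-duality $V_\ad^*(1)\cong\ad(V_f)(\psi^{-1}+1-j)$, the role of Lemma~\ref{lemma:BF-Euler-System}, and the need for Euler--Poincar\'e/Poitou--Tate input. However, the order of operations and the handling of the local condition at $p$ are off in a way that matters.

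The Euler system classes $d_r^{\alpha,\mu}$ live only in $H^1_{(p)}$ (relaxed at $p$), so Rubin's machinery \cite[Theorem~2.2.2]{Ru00} bounds the \emph{strict} Selmer group on the dual side: one gets $H^1_{\rm str}(\QQ,V_\ad^*(1))=0$, not $H^1_f(\QQ,V_\ad^*(1))=0$ directly. Since $H^1_{\rm str}\subseteq H^1_f$, this alone is insufficient. The paper instead proceeds as follows: from $H^1_{\rm str}(\QQ,V_\ad^*(1))=0$, duality (Proposition~\ref{prop:Duality-Affinoid-SelmerComp}) and the Euler--Poincar\'e formula give $\dim_E H^1_{(p)}(\QQ,V_\ad)=2$. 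One then considers the exact sequence
\[
0\longrightarrow H^1_f(\QQ,V_\ad)\longrightarrow H^1_{(p)}(\QQ,V_\ad)\xrightarrow{\ {\rm loc}_p\ } H^1(\Qp,V_\ad)/H^1_f(\Qp,V_\ad),
\]
and uses that the images of $d_1^{\alpha,\alpha}$ and $d_1^{\alpha,\beta}$ under ${\rm loc}_p$ are linearly independent (this is the content of \cite[Theorem~8.1.4(iv)--(v)]{LZ16}, which underlies Lemma~\ref{lemma:BF-Euler-System}(iii)). Since the target is $2$-dimensional, ${\rm loc}_p$ is surjective, and a dimension count forces $H^1_f(\QQ,V_\ad)=0$. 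The vanishing of $H^1_f(\QQ,V_\ad^*(1))$ then follows by the duality argument of \cite[Corollary~8.3.2]{LZ16}.

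Your ``main obstacle'' paragraph therefore suggests exactly the wrong resolution: you propose showing that the Euler system classes land \emph{inside} $H^1_f(\Qp,V_\ad)$, but the argument requires the opposite---that their images \emph{span} the quotient $H^1(\Qp,V_\ad)/H^1_f(\Qp,V_\ad)$. This is precisely why one needs two independent classes (coming from the two triangulations $\alpha,\beta$ of the second factor) rather than one.
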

\begin{proof}
    First of all, we notice that Lemma \ref{lemma:BF-Euler-System} gives us a (horizontal) Euler system (see \cite[Chapter 2]{Ru00}), hence we have \[\dim_E H^1_{(p)}(\QQ,\ad(V_f)(\psi+j))=2\textnormal{ and }\dim_E H^1_{{\rm str}}(\QQ,\ad(V_f)(\psi^{-1}+1-j))=0,\] where the latter is thanks to \cite[Theorem 2.2(iii)]{Ru00} combined with the big image hypothesis, and the former is due to the Euler-Poincaré characteristic formula applied to $\RGamma_{(p)}(\QQ,\ad(V_f)(\psi+j))$ and the duality theorem (see Proposition \ref{prop:Duality-Affinoid-SelmerComp}) between this complex and $\RGamma_{\rm str}(\QQ,\ad(V_f)(\psi^{-1}+1-j))$.
    
    We will follow the arguments of \cite[Theorem 8.2.1]{LZ16} to conclude that $H^1_f(\QQ,\ad(V_f)(\psi+j))=0$. We have the following exact sequence:
    \begin{align*}
        0\longrightarrow H^1_f(\QQ,\ad(V_f)(\psi+j))\longrightarrow H^1_{(p)}(\QQ,\ad(V_f)(\psi+j))\\ \xrightarrow{{\rm loc}_p} H^1(\Qp,\ad(V_f)(\psi+j))/H^1_f(\Qp,\ad(V_f)(\psi+j)).
    \end{align*}
    By Lemma \ref{lemma:BF-Euler-System} we know that the images of the elements $d_1^{\alpha,\alpha}$ and $d_1^{\alpha,\beta}$ are linearly independent. Since \[\dim_E(H^1(\Qp,\ad(V_f)(\psi+j))/H^1_f(\Qp,\ad(V_f)(\psi+j)))=2,\] we conclude that the map ${\rm loc}_p$ above is surjective, hence the above exact sequence can be completed to a short exact sequence:
    \begin{align*}
        0\longrightarrow H^1_f(\QQ,\ad(V_f)(\psi+j))\longrightarrow H^1_{(p)}(\QQ,\ad(V_f)(\psi+j))\\ \xrightarrow{{\rm loc}_p} H^1(\Qp,V_{\ad})/H^1_f(\Qp,\ad(V_f)(\psi+j))\longrightarrow 0.
    \end{align*}
    This gives us that $\dim_E(H^1_f(\QQ,\ad(V_f)(\psi+j)))=0$, i.e. $H^1_f(\QQ,\ad(V_f)(\psi+j))=0$.
    The duality arguments in the proof of \cite[Corollary 8.3.2]{LZ16} gives $H^1_f(\QQ,\ad(V_f)(\psi^{-1}+1-j))=0$.
\end{proof}

\section{Algebraic Factorization}\label{sect:factorization}

\par This section is where our main theorem is stated and proved. First we will present the hypotheses that our objects of interest need to satisfy, then state our main result, and prove it using the theorems in previous sections. In the last subsection, we will relate the algebraic $p$-adic $L$-function (recall Corollary \ref{cor:algebraic-p-adic-L-function} for the definition) attached to certain twists of $\Ad(\mathbf{V}_\mathcal{F})$ to the algebraic $p$-adic $L$-function attached to certain twists of $\mathbf{V}_\mathcal{F}\hatotimes_E\mathbf{V}_\mathcal{F}^*$.

\subsection{Setup and Assumptions}\label{subsect:assumptions-main-thm}

\par Let $f\in S_{k_f+2}(\Gamma_1(N_f),\varepsilon_f)$ be a $p$-non-ordinary normalized newform such that $(N_f,p)=1$ with roots $\alpha:=\alpha_f$ and $\beta:=\beta_f$ of the Hecke polynomial $X^2-a_p(f)X+\varepsilon_f(p)p^{k_f+1}$ at $p$. Let $\psi$ be a finite order Dirichlet character with conductor $N_\psi$ coprime to $pN_f$. Let $\mathcal{F}$ be the unique Coleman family over the affinoid space $U=\Spm(A)\subseteq\mathcal{W}$ passing through $f_\alpha$, and $\mathbf{V}_\mathcal{F}$ be the big Galois representation attached to $\mathcal{F}$ as in Theorem \ref{thm:rep-coleman-F}.

\begin{remark}
    Throughout this section we will assume that $\psi+j$ is an even character, i.e. $\psi(-1)=(-1)^j$. When $\psi+j$ is odd, the character $\psi^{-1}+(1-j)$ is an even character, thus we can replace the roles of $\psi$ by $\psi^{-1}$ and $j$ by $1-j$ in our assumptions below. Indeed we will present our main result so that these two cases can be switch by duality.
\end{remark}

\par Recall that in the previous section we assumed the following hypotheses on our objects:

\begin{enumerate}[(i)]
    \item $\alpha\neq\beta$, \label{hyp:p-regularity}
    \item $-k_f-1<j\leq 0$ if $\psi(-1)=(-1)^j$,\label{hyp:j-critical}
    \item $L_p^{{\rm imp}}(\Sym^2(f),\varepsilon_f^{-1}\psi^{-1},-j+k_f+2)L_p(\psi,1-j)\neq 0$,\label{hyp:Lp-nonzero}
    \item $\psi(p)\neq p^j$, $\psi(p)\frac{\beta}{\alpha}\neq p^j$, $\frac{\alpha}{\beta}\psi(p)\neq p^{j-1}$,\label{hyp:no-local-zero}
    \item ${\rm Im}(G_\QQ\rightarrow {\rm Aut}(V_f))$ contains a conjugate of ${\rm SL}_2(\Zp)$,\label{hyp:big-img-I}
    \item The prime above $p$ in the Hecke field of $f$ has degree $1$, i.e. $E\cong \Qp$, \label{hyp:big-img-II}
    \item There exists $u\in(\ZZ/N_fN_\psi)^\times$ such that $\psi(u)\neq\pm 1\ ({\rm mod\ \mathfrak{P}})$ and $\varepsilon_f^{-1}\psi(u)$ is a square modulo $\mathfrak{P}$, where $\mathfrak{P}$ is the prime of $E$, \label{hyp:big-img-III}
    \item $p\geq7$. \label{hyp:big-img-IV}
    
\end{enumerate}

\par Let us briefly recall the hypotheses above. \ref{hyp:p-regularity} is the $p$-regularity condition which is conjectured to be always true. This is required to be able to decompose $\mathbf{D}_{{\rm cris}}(V_f^*)$ into rank $1$ distinct eigenspaces. \ref{hyp:j-critical} is needed so that $j$ is in the critical range of $\ad(V_f)(\psi+j)$ (in the sense of Deligne, see Definition \ref{def:critical-Deligne} and Definition \ref{def:critical-Deligne-p-adic}). \ref{hyp:Lp-nonzero} is imposed in the corollary of the explicit reciprocity law (Corollary \ref{cor:non-vanishing-of-BF}) so that the specializations of Beilinson-Flach elements are nonzero. \ref{hyp:no-local-zero} is the no local zero hypothesis of \cite[\S 8.1]{LZ16} adapted to our setting (see also the remark below). The hypotheses \ref{hyp:big-img-I} - \ref{hyp:big-img-IV} are for verifying the \textbf{big image hypothesis}, which is crucial for being able to apply the Euler system arguments (see \cite[Remark 2.4]{Ru00} for the big image hypothesis, and see \cite[\S 5.2]{LoefflerZerbesSym2J_Reine_Angew_Math_2019} and \cite{BuyukbodukLeiVenkat2021Documenta} for the verification of this hypothesis via those assumptions). We would like to remark that in \ref{hyp:big-img-III}, we replaced $\psi$ by $\varepsilon_f^{-1}\psi$ in Loeffler and Zerbes' assumption in \cite[5.1]{LoefflerZerbesSym2J_Reine_Angew_Math_2019}.

\begin{remark}
    Note that the whole point of the assumptions above is to prove that the Bloch-Kato Selmer groups $H^1_f(\QQ,\ad(V_f)(\psi+j))$ and $H^1_f(\QQ,V_\ad(\psi^{-1}+1-j))$ vanish. If one can show this vanishing result by relaxing these conditions, our factorization theorem will be true without requiring those hypotheses (if we also assume the hypotheses below) except (i) and (v) above. Those two assumptions are required so that we can relate the Selmer complex attached to $\ad(V_f)(\psi+j)$ to the Selmer group $H^1_f(\QQ,\ad(V_f)(\psi+j))$ via Proposition \ref{prop:Comparison-with-BK}.
\end{remark}

\par In addition, we will use the following hypotheses to prove the factorization theorems:

\begin{enumerate}[label=({\alph*})]
    \item For each prime ideal $\mathfrak{p}$ of $\Lambda_A[1/p]:=\Lambda[1/p]\hatotimes A$, we have \[(\ad(\mathbf{V}_\mathcal{F})(\psi+j)\otimes_A(\Lambda_A[1/p]^\iota)/\frakp)^{H_{\QQ,S}}=0,\] and \[(\ad(\mathbf{V}_\mathcal{F})(\psi^{-1}+1-j)\otimes_A(\Lambda_A[1/p]^\iota)/\frakp)^{H_{\QQ,S}}=0,\] where $H_{\QQ,S}:=\Gal(\QQ_S/\QQ_\infty).$\label{hyp:parf-1-2-ad0}
    \item $H^1(I_\ell,\mathbf{V}_\mathcal{F}\hatotimes_E \mathbf{V}_\mathcal{F}^*(\psi+j))$ and $H^1(I_\ell,\mathbf{V}_\mathcal{F}\hatotimes_E \mathbf{V}_\mathcal{F}^*(\psi^{-1}+1-j))$ are projective $A\hatotimes_E A$-modules.\label{hyp:tamagawa-assumption-FxF} 
    \item For each prime ideal $\mathfrak{p}$ of $\Lambda_{A\hatotimes_E A}[1/p]:=\Lambda[1/p]\hatotimes_E{A\hatotimes_E A}$, we have \[(\mathbf{V}_\mathcal{F}\hatotimes_E \mathbf{V}_\mathcal{F}^*(\psi+j)\otimes_{A\hatotimes_EA}(\Lambda_{A\hatotimes_EA}[1/p]^\iota)/\frakp)^{H_{\QQ,S}}=0,\] and \[(\mathbf{V}_\mathcal{F}\hatotimes_E \mathbf{V}_\mathcal{F}^*(\psi^{-1}+1-j)\otimes_{A\hatotimes_EA}(\Lambda_{A\hatotimes_EA}[1/p]^\iota)/\frakp)^{H_{\QQ,S}}=0.\]\label{hyp:parf-1-2-FxF} 
\end{enumerate}

\begin{remark}
    Note that conditions \ref{hyp:parf-1-2-ad0} and \ref{hyp:parf-1-2-FxF} imply conditions \ref{cond:h3Iw-vanishes} and \ref{cond:h0Iw-residual-vanishes} of Theorem \ref{thm:Perfectness-amplitude-1-2} for the representations considered in these conditions (we will explain it in more detail later). Also note that if for each specialization $x\in U=\Spm(A)$ the representation $\ad(V_{\mathcal{F}_x})$ is absolutely irreducible as a $\Gal(\QQ_S/\QQ_\infty)$ representation (which we know for all classical specializations if $U$ is sufficiently small, see \cite[Note 3.2.2]{LoefflerZerbesSym2J_Reine_Angew_Math_2019} and \cite[\S 1]{hida_tilouine_urban}), the condition \ref{hyp:parf-1-2-ad0} is automatically satisfied. Condition \ref{hyp:tamagawa-assumption-FxF} is required for the perfectness of the Selmer complexes attached to the twists of $\mathbf{V}_\mathcal{F}\hatotimes_E \mathbf{V}_\mathcal{F}^*$.
\end{remark}

\subsection{The ``Correct'' Local Conditions}\label{sect:local-cond}

\par Before defining the local conditions at $p$, for the sake of notational simplicity we set
\begin{align*}
    \mathbf{V}_{\Ad}&:=\Ad(\mathbf{V}_\mathcal{F})(\psi+j),\\
    \mathbf{V}_{\ad}&:=\ad(\mathbf{V}_\mathcal{F})(\psi+j),\\
    \mathbf{V}_{1}&:=A(\psi+j)\cong\mathcal{R}_A(\psi+j),
\end{align*}
and denote the specializations of these representations at $x_0\in U$ by $V_{\Ad}$, $V_{\ad}$ and $V_1$, i.e. 
\begin{align*}
    V_{\Ad}&:=\Ad(V_f)(\psi+j),\\
    V_{\ad}&:=\ad(V_f)(\psi+j),\\
    V_{1}&:=E(\psi+j)\cong\mathcal{R}_A(\psi+j).
\end{align*}

\par Recall that we assumed (without loss of generality) that $\psi+j$ is even. Then we have the following exact sequence of $G_\QQ$-representations which splits via ${\rm Tr}^*/2\otimes{\rm Id}$:
\begin{align}\label{eq:Trace-exact-sequence}
    0\longrightarrow \ad(V_f)(\psi+j) \longrightarrow \Ad(V_f)(\psi+j) \xrightarrow{\rm{Tr}\otimes{\rm Id}} E(\psi+j)\longrightarrow 0.  
\end{align}
Dually, the following exact sequence splits which splits via ${\rm Tr}/2\otimes{\rm Id}$:
\begin{align}\label{eq:Trace-dual-exact-sequence}
    0\longrightarrow V_1^*(1) \xrightarrow{\rm{Tr^*}\otimes{\rm Id}} V_\Ad^*(1) \longrightarrow V_\ad^*(1)\longrightarrow 0.  
\end{align}

\par Recall also from Section \ref{subsect:motives-newforms} that there exists a $(\varphi,\Gamma)$-submodule $\mathbf{D}\subseteq\mathbf{D}_{\rm{rig},A}^\dagger(\mathbf{V}_\mathcal{F})$  of rank $1$ over the Robba ring $\mathcal{R}_A$ (see Section \ref{subsect:robbaphigamma}), which satisfies the properties in Theorem \ref{thm:triangulation}. Let $D:=\mathbf{D}_{x_0}\subseteq \mathbf{D}_{\rm rig}^\dagger(V_f)$ denote its specialization at $x_0\in U$, which is a rank $1$ $(\phiGamma)$-module over the Robba ring $\mathcal{R}_E$ that is saturated in $\mathbf{D}_{\rm rig}^\dagger(V_f)$ (as $f_\alpha$ is non-$\theta$-critical).

\par We set \[D_{\Ad}:=\Hom_E(\mathbf{D}_{\rm rig}^\dagger(V_f),D)(\psi+j).\] Then $D_{\Ad}\subseteq\mathbf{D}_{\rm rig}^\dagger(V_\Ad)$ is a rank $2$ $(\phiGamma)$-submodule over $\mathcal{R}_E$. This is the most natural choice for the local condition at $p$ attached to $V_\Ad$. Indeed $\rank_{\mathcal{R}_E}D_\Ad=2=d^+(V_\Ad)$, hence $(V_\Ad,D_\Ad)$ satisfies the weakly Panchishkin condition of \cite[Definition 4.19]{danielebuyukboduksakamoto2023} (in \cite{Pottharst2013,pottharst2012cyclotomic}, the local conditions satisfying this condition is called (strict) ordinary), even though $V_\Ad$ is not Panchishkin (or critical in the sense of Definition \ref{def:critical-Deligne-p-adic}).

\par This choice of local condition $D_\Ad\subseteq\mathbf{D}_{\rm rig}^\dagger(V_\Ad)$ at $p$ induces the local conditions $D_\ad\subseteq\mathbf{D}_{\rm rig}^\dagger(V_\Ad)$ and $D_1\subseteq\mathbf{D}_{\rm rig}^\dagger(V_\Ad)$ at $p$ via the maps in the exact sequence \eqref{eq:Trace-exact-sequence}. More precisely we have 
\begin{align*}
    D_{\ad}&:=\Hom(\mathbf{D}_{\rm rig}^\dagger(V_f)/D,D)(\psi+j),\\
    D_{1}&:\Hom(D,D)(\psi+j)\cong\mathcal{R}_E(\psi+j).
\end{align*}

\par In this case $(V_\ad,D_\ad)$ and $(V_1,D_1)$ also satisfy the weakly Panchishkin condition, i.e. \[\rank_{\mathcal{R}_E}D_\ad=1=d^+(V_\ad)\] and \[\rank_{\mathcal{R}_E}D_1=1=d^+(V_1),\] even though only $V_\ad$ is critical (as we assumed $\psi+j$ is even and $-1-k_f<j\leq 0$) and $V_1$ is not.

\par We also define the local conditions attached to $\mathbf{V}_\Ad$, $\mathbf{V}_\ad$, and $\mathbf{V}_1$ in a similar manner, i.e.
\begin{align*}
    \mathbf{D}_{\Ad}&:=\Hom(\mathbf{D}_{{\rm rig},A}^\dagger(\mathbf{V}_\mathcal{F}),\mathbf{D})(\psi+j),\\
    \mathbf{D}_{\ad}&:=\Hom(\mathbf{D}_{{\rm rig},A}^\dagger(\mathbf{V}_\mathcal{F})/\mathbf{D},\mathbf{D})(\psi+j),\\
    \mathbf{D}_{1}&:=\Hom(\mathbf{D},\mathbf{D})(\psi+j)\cong\mathcal{R}_A(\psi+j).
\end{align*}
In this case we have $D_{\Ad}\cong(\mathbf{D}_{\Ad})_{x_0}$, $D_{\ad}\cong(\mathbf{D}_{\ad})_{x_0}$, and $D_{1}\cong(\mathbf{D}_{1})_{x_0}$.

\par Let $\mathbf{D}_\Ad^\perp$, $\mathbf{D}_\ad^\perp$, $\mathbf{D}_1^\perp$ be the (dual) local conditions (see Proposition \ref{prop:Duality-Affinoid-SelmerComp} for the definitions) attached to $\mathbf{V}_\Ad^*(1)$, $\mathbf{V}_\ad^*(1)$, $\mathbf{V}_1^*(1)$, respectively. Also let $D_\Ad^\perp\cong(\mathbf{D}_\Ad^\perp)_{x_0}$, $D_\ad^\perp\cong(\mathbf{D}_\ad^\perp)_{x_0}$, and $D_1^\perp\cong(\mathbf{D}_1^\perp)_{x_0}$ denote the local conditions attached to $V_\Ad^*(1)$, $V_\ad^*(1)$, $V_1^*(1)$, respectively. Note that the local conditions $D_\ad^\perp$ and $D_1^\perp$ are induced by $D_\Ad^\perp$ via the maps in the exact sequence \eqref{eq:Trace-dual-exact-sequence}.

\par Note that $(V_\Ad^*(1),D_\Ad^\perp)$, $(V_1^*(1),D_1^\perp)$, and $(V_1^*(1),D_1^\perp)$ are weakly Panchishkin, since we have
\begin{align*}
    \rank_{\mathcal{R}_E}D_\Ad^\perp&=2=d^+(V_\Ad^*(1)),\\
    \rank_{\mathcal{R}_E}D_\ad^\perp&=2=d^+(V_\ad^*(1)),\\
    \rank_{\mathcal{R}_E}D_1^\perp&=0=d^+(V_1^*(1)).
\end{align*}

\par By Theorem \ref{thm:exact-triangle-selmer-comp}, we have exact triangles of the following (Pottharst style) Selmer Complexes:
\begin{align}
    \RGamma_?(\mathbf{V}_{\ad},\mathbf{D}_{\ad}) \rightarrow &\RGamma_?(\mathbf{V}_{\Ad},\mathbf{D}_{\Ad}) \rightarrow \RGamma_?(\mathbf{V}_1,\mathbf{D}_1),\label{eq:exact-triangle-families}\\
    \RGamma_?(V_{\ad},\mathbf{D}_{\ad}) \rightarrow &\RGamma_?(V_{\Ad},\mathbf{D}_{\Ad}) \rightarrow \RGamma_?(V_1,\mathbf{D}_1),\label{eq:exact-triangle-specialization}\\
    \RGamma_?(\mathbf{V}_{\ad}^*(1),\mathbf{D}_{\ad}^\perp) \rightarrow &\RGamma_?(\mathbf{V}_{\Ad}^*(1),\mathbf{D}_{\Ad}^\perp) \rightarrow \RGamma_?(\mathbf{V}_1^*(1),\mathbf{D}_1^\perp),\label{eq:exact-triangle-dual-families}\\
    \RGamma_?(V_{\ad}^*(1),\mathbf{D}_{\ad}^\perp) \rightarrow &\RGamma_?(V_{\Ad}^*(1),\mathbf{D}_{\Ad}^\perp) \rightarrow \RGamma_?(V_1^*(1),\mathbf{D}_1^\perp),\label{eq:exact-triangle-dual-specialization}
\end{align}
where $?\in\{\emptyset,{\rm Iw}\}$.

\begin{proposition}\label{prop:Euler-Poincare-}
 Euler-Poincaré characteristics of the Selmer Complexes \[\RGamma(V_\Ad,D_\Ad),\ \RGamma(V_\ad,D_\ad),\ \RGamma(V_1,D_1),\] and \[\RGamma(V_\Ad^*(1),D_\Ad^\perp),\ \RGamma(V_\ad^*(1),D_\ad^\perp),\ \RGamma(V_1^*(1),D_1^\perp)\] are all $0$.
\end{proposition}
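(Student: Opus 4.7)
The plan is to compute $\chi(\RGamma(V,D_V))$ directly from the defining mapping cone of the Pottharst Selmer complex, reducing to a sum of local and global Euler characteristics, and then to verify in each of the six cases that the weakly Panchishkin balance $\rank D = d^+(V)$ holds, which is exactly what was already checked in Section \ref{sect:local-cond}.

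First, from Definition \ref{def:Selmer-Comp-Benois}, the Selmer complex $\RGamma(V,D_V)$ is the shifted cone of
\[\RGamma_S(V)\oplus\bigoplus_{v\in S_f}U_v^+(V,D_V)\lra\bigoplus_{v\in S_f}K_v(V),\]
so that, passing to Euler characteristics over $E$,
\[\chi(\RGamma(V,D_V))=\chi(\RGamma_S(V))+\sum_{v\in S_f}\bigl(\chi(U_v^+(V,D_V))-\chi(K_v(V))\bigr).\]
Next, I would dispose of the primes $\ell\neq p$: for $\ell\in S_f$ with $\ell\nmid p$, the local Euler characteristic formula gives $\chi(K_\ell(V))=\chi(\RGamma(\QQ_\ell,V))=0$, and the unramified local condition $U_\ell^+(V,D_V)=[V^{I_\ell}\xrightarrow{\Fr_\ell-1}V^{I_\ell}]$ is concentrated in two degrees with the same source and target, hence contributes $0$. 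So only the place $p$ matters.

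At $p$ we invoke the Euler--Poincar\'e formula for $(\phiGamma)$-modules over $\mathcal{R}_E$ (Liu): $\chi(\RGamma(\mathbf{D}))=-\rank_{\mathcal{R}_E}\mathbf{D}$. Using Proposition \ref{prop:Galois-phiGamma-cohomology} together with the quasi-isomorphism $K_p(V)\simeq\RGamma(\mathbf{D}_{\rm rig}^\dagger(V))$, this gives $\chi(K_p(V))=-\dim_E V$ and $\chi(U_p^+(V,D_V))=-\rank_{\mathcal{R}_E}D_V$. Combined with Tate's global Euler--Poincar\'e formula over $\QQ$, namely $\chi(\RGamma_S(V))=d^+(V)-\dim_E V$, the preceding displayed equation collapses to
\[\chi(\RGamma(V,D_V))=\bigl(d^+(V)-\dim_E V\bigr)+\bigl(-\rank_{\mathcal{R}_E}D_V+\dim_E V\bigr)=d^+(V)-\rank_{\mathcal{R}_E}D_V.\]

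Finally, I would verify the equality $d^+(V)=\rank_{\mathcal{R}_E}D_V$ for each of the six pairs, all of which is already tabulated in Section \ref{sect:local-cond}: under the standing hypothesis $\psi(-1)=(-1)^j$ one has $d^+(V_\Ad)=2=\rank D_\Ad$, $d^+(V_\ad)=1=\rank D_\ad$, $d^+(V_1)=1=\rank D_1$, and dually (noting that $V^*(1)$ has $d^+$ equal to $\dim V-d^+(V)$ and that $\rank D^\perp=\dim_{\mathcal{R}_E}\mathbf{D}_{\rm rig}^\dagger(V)-\rank D$) the same balance holds for $V_\Ad^*(1)$, $V_\ad^*(1)$, and $V_1^*(1)$, with respective common values $2,2,0$. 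The proof of the proposition is then immediate.

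There is no real obstacle here; the only point requiring a moment's care is the computation of $d^+$ for $V_\ad^*(1)$, which comes out to $2$ precisely because $\psi^{-1}\chi_{\rm cyc}^{1-j}$ is odd whenever $\psi\chi_{\rm cyc}^{j}$ is even, so the parity condition flips appropriately when passing from $V$ to $V^*(1)$.
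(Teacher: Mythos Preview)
Your proof is correct and essentially unpacks what the paper's one-line citation to Corollary \ref{cor:pottharst-proposition-hi-ranks} encapsulates. The paper simply observes that all six pairs are weakly Panchishkin (equivalently, strict ordinary in Pottharst's sense) and invokes that corollary, which gives $\rank_{\mathcal{H}}H^1_\Iw=\rank_{\mathcal{H}}H^2_\Iw$; the Euler characteristic over $E$ then follows by base change along the control theorem. You instead compute $\chi$ directly from the cone definition, using Tate's global formula, the vanishing of local Euler characteristics away from $p$, and Liu's formula for Fontaine--Herr complexes, arriving at the identity $\chi(\RGamma(V,D_V))=d^+(V)-\rank_{\mathcal{R}_E}D_V$. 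Both routes reduce to the same numerical check already tabulated in Section \ref{sect:local-cond}; your argument is more self-contained and avoids the small detour through the Iwasawa-theoretic statement, while the paper's is terser but relies on the reader accepting that Pottharst's rank equality over $\mathcal{H}$ transfers to the vanishing of $\chi$ over $E$.
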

\begin{proof} 
Due to the computations of ranks above, all of those representations and local conditions appearing are weakly Panchishkin. By Corollary \ref{cor:pottharst-proposition-hi-ranks} the result follows.
\end{proof}
\subsection{Proof of the Main Results}\label{sect:main-result-proof}

\par In this section we state and prove our factorization theorems. First we will present our factorization result for the Selmer complexes attached to the representations $V_\Ad$, $V_\ad$, and $V_1$. After proving that result, we will use control theorems and the structures of coadmissible modules to generalize it to Selmer complexes attached to $\mathbf{V}_\Ad$, $\mathbf{V}_\ad$, and $\mathbf{V}_1$.

\begin{theorem}[Factorization for single newform]\label{Main-Theorem-Specialization} Under the hypotheses \ref{hyp:p-regularity}-\ref{hyp:big-img-IV} we have:
    \begin{enumerate}[(1)]
        \item For the data \[(V,D_V)\in\{(V_{\Ad},D_{\Ad}),(V_{\ad},D_{\ad}),(V_1,D_1)\},\] the Selmer complexes $\RGammaIw(V,D_V)$ and $\RGammaIw(V^*(1),D_V^\perp)$ lie in the category $\mathcal{D}_{\rm parf}^{[1,2]}(\mathcal{H})$.\label{enum:main-thm-spec-1}
        \item The cohomologies $H^i_\Iw(V,D_V)$ and $H^i_\Iw(V^*(1),D_V^\perp)$ vanish except $i\neq 2$. Therefore the exact triangles \eqref{eq:exact-triangle-specialization} and \eqref{eq:exact-triangle-dual-specialization} degenerate to the following short exact sequences:
        \begin{align}
            0 \rightarrow H^2_\Iw(V_{\ad},D_{\ad}) &\rightarrow H^2_\Iw(V_{\Ad},D_{\Ad})  \rightarrow H^2_\Iw(V_1,D_1) \rightarrow 0, \label{eq:SES-Main-Thm-even-specialization}\\
            0 \rightarrow H^2_\Iw(V_1^*(1),D_1^\perp) \rightarrow & H^2_\Iw(V_{\Ad}^*(1),D_{\Ad}^\perp)\rightarrow  H^2_\Iw(V_{\ad}^*(1),D_{\ad}^\perp)\rightarrow 0.\label{eq:SES-Main-Thm-odd-specialization}
        \end{align}\label{enum:main-thm-spec-2}
        \item For the pairs $(V,D_V)$ as above, $H^2_\Iw(V,D_V)$ and $H^2_\Iw(V^*(1),D_V^\perp)$ are torsion, and we have 
        \begin{align*}
            \det(\RGammaIw(V,D_V))&={\rm char}_\mathcal{H}H^2_\Iw(V,D_V),\\
            \det(\RGammaIw(V^*(1),D_V^\perp))&={\rm char}_\mathcal{H}H^2_\Iw(V^*(1),D_V^\perp),
        \end{align*}
            and we obtain the following factorizations:
        \begin{align}
            \det(\RGammaIw(V_{\Ad},D_{\Ad}))&=\det(\RGammaIw(V_{\ad},D_{\ad}))\hatotimes_{\mathcal{H}}\det(\RGammaIw(V_1,D_1)),\label{eq:Det-Factorization-Even-specialization}\\
            \det(\RGammaIw(V_{\Ad}^*(1),D_{\Ad}^\perp))&=\det(\RGammaIw(V_{\ad}^*(1),D_{\ad}^\perp))\hatotimes_\mathcal{H}\det(\RGammaIw(V_1^*(1),D_1^\perp)).\label{eq:Det-Factorization-Odd-specialization}
        \end{align}\label{enum:main-thm-spec-3}
    \end{enumerate}
\end{theorem}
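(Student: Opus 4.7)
My approach is to establish items (1)--(3) in sequence, with the main obstacle being the vanishing of the Iwasawa $H^1$ for the three Selmer complexes attached to $V_{\ad}$, where the Beilinson-Flach Euler system and the full strength of the coadmissibility machinery must be combined.

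For item (1), since $A=E$ the Tamagawa-type condition \eqref{cond:tamagawa-proj} of Theorem \ref{thm:Perfectness-amplitude-1-2} is automatic, and by Remark \ref{remark:perfectness-1-2-using-duality} it suffices to check $H^3_\Iw=0$ for all six pairs. For $V_{\ad}$ and $V_{\ad}^*(1)$, absolute irreducibility of these three-dimensional $G_{\QQ,S}$-representations---guaranteed by the big image hypothesis \ref{hyp:big-img-I}, which in particular rules out $f$ being CM---combined with Corollary \ref{cor:vanishing_of_H3} does the job. For $V_1 = E(\psi+j)$ and its Kummer dual, the computation of Corollary \ref{cor:pottharst-proposition-hi-ranks} reduces matters to checking that $\psi\omega^j$ and $\psi^{-1}\omega^{1-j}$ are non-trivial on $H_{\QQ,S}$, which follows because $\psi$ is non-trivial with conductor coprime to $p$ and so neither character factors through the pro-$p$ quotient $\Gamma_0 \cong 1+p\Zp$. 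The pair $(V_{\Ad}, D_{\Ad})$ and its dual are then handled by the Iwasawa exact triangles \eqref{eq:exact-triangle-specialization} and \eqref{eq:exact-triangle-dual-specialization}.

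For item (2) I will prove $H^1_\Iw=0$ for each pair; since the complexes already have amplitude $[1,2]$, this forces all cohomology into degree $2$. For $V_{\ad}$, Corollary \ref{cor:Vanishing-of-BK-Selmer-grp}---fed by the Beilinson-Flach Euler system of Section \ref{sect:ES-BF} together with the non-vanishing hypothesis \ref{hyp:Lp-nonzero} and the big image assumptions \ref{hyp:big-img-I}--\ref{hyp:big-img-IV}---gives $H^1_f(\QQ, V_{\ad}) = 0 = H^1_f(\QQ, V_{\ad}^*(1))$; Proposition \ref{prop:Comparison-with-BK}, whose hypotheses are verified using \ref{hyp:p-regularity} and the no-local-zero condition \ref{hyp:no-local-zero}, identifies these Bloch-Kato groups with $H^1(V_{\ad}, D_{\ad})$ and $H^1(V_{\ad}^*(1), D_{\ad}^\perp)$. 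A descent from $\mathcal{H}$ to $E$ using the control theorem \eqref{eq:control-HA-A} and the structure theorem \ref{structure-thm-pottharst} for coadmissible $\mathcal{H}$-modules (noting that $H^1_\Iw$ is torsion-free once the residual $H^0_\Iw$ vanishing is established) then lifts the Bloch-Kato vanishings to $H^1_\Iw = 0$. For $V_1$, the choice $D_1 = \mathbf{D}_{\rm rig}^\dagger(V_1)$ presents $\RGammaIw(V_1, D_1)$ as a classical Iwasawa Selmer complex attached to the even non-trivial Dirichlet character $\psi+j$; its $H^1_\Iw$ vanishes by the Iwasawa main conjecture for Dirichlet characters (Mazur-Wiles) combined with the Ferrero-Washington $\mu = 0$ theorem. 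The case of $V_{\Ad}$ then follows from the Iwasawa exact triangle, and the long exact sequences in cohomology degenerate into the short exact sequences \eqref{eq:SES-Main-Thm-even-specialization} and \eqref{eq:SES-Main-Thm-odd-specialization}. The delicacy lies in the descent argument, which must accommodate the non-Noetherian ring $\mathcal{H}$.

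Item (3) is now formal. All six pairs $(V, D_V)$ are weakly Panchishkin---the rank computations in Section \ref{sect:local-cond} match $\rank_{\mathcal{R}_E} D_V$ with $d^+(V)$ in each case---so the Euler-Poincaré characteristics vanish by Proposition \ref{prop:Euler-Poincare-}. Combined with perfect amplitude $[1,2]$ and $H^1_\Iw = 0$, each complex is represented by an injective map $[P^1 \xrightarrow{f} P^2]$ of finite projective $\mathcal{H}$-modules of equal rank, so $H^2_\Iw$ is $\mathcal{H}$-torsion and $\det(\RGammaIw) = \mathrm{char}_\mathcal{H}(H^2_\Iw)$ by Corollary \ref{cor:algebraic-p-adic-L-function}. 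The factorizations \eqref{eq:Det-Factorization-Even-specialization} and \eqref{eq:Det-Factorization-Odd-specialization} then follow immediately from the multiplicativity of determinants along the exact triangles \eqref{eq:exact-triangle-specialization} and \eqref{eq:exact-triangle-dual-specialization}.
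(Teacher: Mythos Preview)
Your overall architecture matches the paper's proof: vanishing of $H^3_\Iw$ via irreducibility and Corollary~\ref{cor:vanishing_of_H3}, perfectness in $[1,2]$ via Remark~\ref{remark:perfectness-1-2-using-duality}, then vanishing of $H^1_\Iw$ by combining Bloch--Kato input with control and the structure theorem, and finally the determinant factorization. There are, however, two points where your route diverges from the paper and one of them leaves a gap.

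First, for the dual pairs $(V_\ad^*(1),D_\ad^\perp)$ and $(V_1^*(1),D_1^\perp)$ the paper does \emph{not} attempt to verify Proposition~\ref{prop:Comparison-with-BK} a second time. Instead, once $H^1_\Iw(V_\ad,D_\ad)=0=H^1_\Iw(V_1,D_1)$ is in hand, the weak Panchishkin rank equality (Corollary~\ref{cor:pottharst-proposition-hi-ranks}) forces $H^2_\Iw(V_\ad,D_\ad)$ and $H^2_\Iw(V_1,D_1)$ to be torsion, and then the Grothendieck-duality exact sequence of Proposition~\ref{prop:Pottharst-Duality-Iwasawa-SelmerComp} (case $i=1$) kills $H^1_\Iw$ on the dual side automatically. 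Your approach of applying Proposition~\ref{prop:Comparison-with-BK} directly to $(V_\ad^*(1),D_\ad^\perp)$ would require checking its four hypotheses for the dual data, which you have not done; more seriously, you do not address $H^1_\Iw(V_1^*(1),D_1^\perp)$ at all---here $D_1^\perp=0$ is the strict condition and $\psi^{-1}+1-j$ is odd, so neither your Mazur--Wiles argument nor the Bloch--Kato comparison applies. The duality route closes both of these in one stroke.

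Second, your justification for $H^1_\Iw(V_1,D_1)=0$ via Mazur--Wiles and Ferrero--Washington is misdirected: those results concern characteristic ideals of odd eigenspaces and $\mu$-invariants, not the vanishing of $H^1_\Iw$ for even characters. The paper instead cites \cite[Corollary~3.2.4, Remark~3.2.5]{BuyukbodukLeiVenkat2021Documenta}, which rests on the validity of Leopoldt's conjecture for abelian fields (as already invoked in the proof of Lemma~\ref{lemma:BF-Euler-System}). Your conclusion is correct but the ingredient is the wrong one.
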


\begin{lemma}[Vanishing of $H^3$ for specialization]\label{lemma:vanishing-H3-specialization}
    For $(V,D_V)$ as in Theorem \ref{Main-Theorem-Specialization} - \ref{enum:main-thm-spec-1} we have \[H^3_\Iw(V,D_V)=0=H^3_\Iw(V^*(1),D_V^\perp).\]
\end{lemma}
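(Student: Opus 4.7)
The plan is to reduce the vanishing of $H^3_\Iw$ to a purely group-theoretic statement about $H_{\QQ,S}$-invariants, then verify the latter case-by-case using the big-image hypothesis and elementary Galois theory of cyclotomic characters. By Corollary \ref{cor:pottharst-proposition-hi-ranks}, for any pair $(W, \mathbf{D}_W)$ with $W$ a $G_{\QQ,S}$-representation over $E$ and $T_W \subseteq W$ a $G_\QQ$-stable lattice, we have
\[
H^3_\Iw(W, \mathbf{D}_W) \;\cong\; \bigl(T_W^*(1)^{H_{\QQ,S}}\bigr)^* \otimes_\Lambda \mathcal{H}.
\]
Hence the six vanishing statements (for $(V_\Ad,D_\Ad)$, $(V_\ad, D_\ad)$, $(V_1,D_1)$ and their Tate-twist duals) reduce to showing that each of $V_\Ad, V_\ad, V_1$ and each of $V_\Ad^*(1), V_\ad^*(1), V_1^*(1)$ has no nonzero $H_{\QQ,S}$-invariants.

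For the character case $V = V_1 = E(\psi+j)$ and its dual $V_1^*(1) = E(\psi^{-1}+1-j)$, I would observe that $\chi_\cyc$ restricted to $H_{\QQ,S}$ factors through $\Gal(\QQ_\cyc/\QQ_\infty) = \Delta$ as the Teichmüller character $\omega$. Since $\psi$ has conductor $N_\psi$ coprime to $p$, we have $\QQ(\mu_{N_\psi}) \cap \QQ_\infty = \QQ$, so $\psi|_{H_{\QQ,S}}$ is nontrivial (as $\psi$ is a nontrivial Dirichlet character). The product $\psi \cdot \omega^j$ then factors through $\Gal(\QQ(\mu_{pN_\psi})/\QQ) \cong (\ZZ/p)^\times \times (\ZZ/N_\psi)^\times$ nontrivially, yielding $V_1^{H_{\QQ,S}} = 0$, and the same argument (for $\psi^{-1} \omega^{1-j}$) handles $V_1^*(1)^{H_{\QQ,S}} = 0$.

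For the adjoint pieces $V_\ad$ and $V_\ad^*(1)$, the key claim is that the image $H \subseteq \Aut(V_f)$ of $H_{\QQ,S}$ still contains a conjugate of $\SL_2(\Zp)$. Let $G$ denote the image of $G_{\QQ,S}$. Since $G_{\QQ,S}/H_{\QQ,S} = \Gamma_0$ is abelian, $G/H$ is abelian and therefore $[G,G] \subseteq H$. By hypothesis \ref{hyp:big-img-I}, $G$ contains (a conjugate of) $\SL_2(\Zp)$; since $p \geq 7$, one has $[\SL_2(\Zp), \SL_2(\Zp)] = \SL_2(\Zp)$ (using the simplicity of $\PSL_2(\mathbf{F}_p)$ and closedness of commutator subgroups of compact $p$-adic groups). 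Consequently $H \supseteq \SL_2(\Zp)$, so $\ad(V_f)$ remains irreducible of dimension $3$ as an $H_{\QQ,S}$-representation, and no character twist of it can have nonzero invariants. Hence $V_\ad^{H_{\QQ,S}} = V_\ad^*(1)^{H_{\QQ,S}} = 0$.

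Finally, for $V_\Ad$ and $V_\Ad^*(1)$, I invoke the $G_\QQ$-equivariant splittings $V_\Ad \cong V_\ad \oplus V_1$ and $V_\Ad^*(1) \cong V_\ad^*(1) \oplus V_1^*(1)$ coming from the trace section, which immediately reduce the residual $H_{\QQ,S}$-invariance vanishing to the two cases already treated. The main subtlety I anticipate is the commutator step for the image of $H_{\QQ,S}$: one must justify carefully that the topological commutator subgroup of $\SL_2(\Zp)$ inside the ambient $G$ recovers all of $\SL_2(\Zp)$ (rather than merely a dense subgroup), and that this works despite passing through a non-normal conjugate; once this is done, the rest of the argument is essentially formal.
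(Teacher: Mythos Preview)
Your proof is correct and follows the same overall strategy as the paper --- reduce via Corollary~\ref{cor:pottharst-proposition-hi-ranks} to the vanishing of $H_{\QQ,S}$-invariants, then verify this for $V_1$, $V_\ad$, and deduce $V_\Ad$ --- but the execution differs in two places worth noting.

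For $V_\ad$, the paper simply cites the literature (\cite{LoefflerZerbesSym2J_Reine_Angew_Math_2019}, \cite{hida_tilouine_urban}) for the absolute irreducibility of $\ad(V_f)$ as a $G_{\QQ,S}$-representation and then applies Corollary~\ref{cor:vanishing_of_H3}, which internally uses normality of $H_{\QQ,S}$ and Schur's lemma. You instead prove directly that $\ad(V_f)$ stays irreducible over $H_{\QQ,S}$, by arguing that $[G,G]\subseteq H$ and that $\SL_2(\Zp)$ is topologically perfect for $p\geq 5$ (so hypothesis~\ref{hyp:big-img-IV} suffices). Your route is more self-contained and makes explicit why the big-image hypothesis~\ref{hyp:big-img-I} is doing the work here; the paper's route is shorter but relies on an external input. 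For $V_\Ad$, the paper reads off the vanishing from the long exact sequence attached to the exact triangle~\eqref{eq:exact-triangle-specialization}, whereas you use the splitting $V_\Ad\cong V_\ad\oplus V_1$ at the level of representations; both are equivalent here since taking $H_{\QQ,S}$-invariants commutes with direct sums. Your remark about the topological commutator step is well placed: the fact that a closed subgroup of $\SL_2(\Zp)$ surjecting onto $\SL_2(\FF_p)$ (for $p\geq 5$) must be all of $\SL_2(\Zp)$ is a standard result of Serre, and once you invoke it the argument is complete.
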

\begin{proof}
    \par First note that $\ad(V_f)$ is irreducible, see \cite[Note 3.2.2]{LoefflerZerbesSym2J_Reine_Angew_Math_2019} and \cite[\S 1]{hida_tilouine_urban}. Hence $V_\ad$ and $V_\ad^*(1)$ are also irreducible as $\Gal(\QQ_S/\QQ_\infty)$-representations, where $S$ is the finite set of primes dividing $pN_\psi N_f$. This and $\dim_E V_\ad$ being bigger than 1 imply that \[(V_\ad^*(1))^{\Gal(\QQ_S/\QQ_\infty)}=0,\] and \[V_\ad^{\Gal(\QQ_S/\QQ_\infty)}=0.\] By Corollary \ref{cor:vanishing_of_H3} we obtain \[H^3_\Iw(V_\ad,D_\ad)=0=H^3_\Iw(V_\ad^*(1),D_\ad^\perp).\]
    \par On the other hand, $V_1=E(\psi+j)$ and $V_1^*(1)=E(\psi^{-1}+1-j)$ are one dimensional (hence irreducible) and not characters of $\Gamma_0\cong\Gal(\QQ_\infty/\QQ)$ (since $\psi$ is nontrivial with conductor coprime to $p$), hence \[(V_1^*(1))^{\Gal(\QQ_S/\QQ_\infty)}=0,\] and \[V_1^{\Gal(\QQ_S/\QQ_\infty)}=0.\] Again by Corollary \ref{cor:vanishing_of_H3} we obtain \[H^3_\Iw(V_1,D_1)=0=H^3_\Iw(V_1^*(1),D_1^\perp).\]
    The exact triangles \eqref{eq:exact-triangle-specialization} and \eqref{eq:exact-triangle-dual-specialization} yields exact sequences
    \begin{align*}
        H^3_\Iw(V_\ad,D_\ad)\rightarrow &H^3_\Iw(V_\Ad,D_\Ad)\rightarrow H^3_\Iw(V_1,D_1),\\
        H^3_\Iw(V_1^*(1),D_1^\perp)\rightarrow  &H^3_\Iw(V_\Ad^*(1),D_\Ad^\perp) \rightarrow H^3_\Iw(V_\ad^*(1),D_\ad^\perp),
    \end{align*}
    where we computed the leftmost and rightmost terms as $0$. Thus we obtain \[H^3_\Iw(V_\Ad,D_\Ad)=0=H^3_\Iw(V_\Ad^*(1),D_\Ad^\perp).\]
\end{proof}
\begin{proof}[Proof of Theorem \ref{Main-Theorem-Specialization}]
    Let $(V,D_V)$ be as in Theorem \ref{Main-Theorem-Specialization} - \ref{enum:main-thm-spec-1}. Recall that in Lemma \ref{lemma:vanishing-H3-specialization} we calculated \[H^3_\Iw(V,D_V)=0=H^3_\Iw(V^*(1),D_V^\perp).\] By Remark \ref{remark:perfectness-1-2-using-duality}, we obtain \ref{enum:main-thm-spec-1}.

    \par The exact sequence in Proposition \ref{prop:Pottharst-Duality-Iwasawa-SelmerComp} (applied to the case $i=3$) gives an isomorphism between torsion part of $H^1_\Iw(V,D_V)$ and $H^3_\Iw(V^*(1),D^\perp)^\iota$, which we calculated as $0$ in Lemma \ref{lemma:vanishing-H3-specialization}. Thus we see that $H^1_\Iw(V,D_V)$ and $H^1_\Iw(V^*(1),D_V^\perp)$ must be free $\mathcal{H}$-modules, and of finite rank due to the structure theorem of coadmissible $\mathcal{H}$-modules (see Theorem \ref{structure-thm-pottharst}). \par When $V=V_1$, we know that \[H^1_\Iw(\QQ,V_1)=0\] when $\psi+j$ is an even character (see \cite[Corollary 3.2.4, Remark 3.2.5]{BuyukbodukLeiVenkat2021Documenta}). Since \[H^1_\Iw(V_1,D_1)\subseteq H^1_\Iw(\QQ,V_1),\] we conclude that \[H^1_\Iw(V_1,D_1)=0.\]
    \par Now consider the case $V=V_\ad$. By Proposition \ref{prop:control-theorems-pottharst} (The control theorem \eqref{eq:control-HA-A} applied to $A=E$) gives \[H^1_\Iw({V}_\ad,{D}_\ad)_{\Gamma_0}\subseteq H^1(V_\ad,D_\ad),\] and by Proposition \ref{prop:Comparison-with-BK} we have \[H^1(V_\ad,D_\ad)\cong H^1_f(\QQ,V_\ad)\cong0,\] where the latter isomorphism is due to Corollary \ref{cor:Vanishing-of-BK-Selmer-grp}. Thus we obtain \[H^1_\Iw({V}_\ad,{D}_\ad)_{\Gamma_0}=0.\] Since $H^1_\Iw({V}_\ad,{D}_\ad)$ is free $\mathcal{H}$-module of with finite rank, it is isomorphic to $\mathcal{H}^r$ for some $r\in\NN$, hence \[0\cong H^1_\Iw({V}_\ad,{D}_\ad)_{\Gamma_0}\cong E^r.\] Thus we obtain $r=0$, i.e. \[H^1_\Iw({V}_\ad,{D}_\ad)= 0.\]
    \par By Corollary \ref{cor:pottharst-proposition-hi-ranks}, we have \[\rank_\mathcal{H}H^2_\Iw(V_\ad,D_\ad)=0=\rank_\mathcal{H}H^2_\Iw(V_1,D_1).\] This proves the torsionness of these modules. Thus, the short exact sequence in Proposition \ref{prop:Pottharst-Duality-Iwasawa-SelmerComp} (applied to the case $i=1$) gives \[H^1_\Iw(V_\ad^*(1),D_\ad^\perp)=0=H^1_\Iw(V_1^*(1),D_1^\perp).\] Hence $H^2_\Iw(V_\ad^*(1),D_\ad^\perp)$ and $H^2_\Iw(V_1,D_1)$ are also torsion.
    \par The exact triangles \eqref{eq:exact-triangle-specialization} and \eqref{eq:exact-triangle-dual-specialization} yields exact sequences
    \begin{align*}
        H^1_\Iw(V_\ad,D_\ad)\rightarrow &H^1_\Iw(V_\Ad,D_\Ad)\rightarrow H^1_\Iw(V_1,D_1),\\
        H^1_\Iw(V_1^*(1),D_1^\perp)\rightarrow  &H^1_\Iw(V_\Ad^*(1),D_\Ad^\perp) \rightarrow H^1_\Iw(V_\ad^*(1),D_\ad^\perp),
    \end{align*}
    and by previous computations leftmost and rightmost terms in these exact sequences are $0$. Thus \[H^1_\Iw(V_\Ad,D_\Ad)=0=H^1_\Iw(V_\Ad^*(1),D_\Ad^\perp),\] and we obtain \ref{enum:main-thm-spec-2}. We also conclude that $H^2_\Iw(V_\Ad,D_\Ad)$ and $H^2_\Iw(V_\Ad^*(1),D_\Ad^*(1))$ are torsion, thus by Corollary \ref{cor:algebraic-p-adic-L-function} we get
    \begin{align*}
        \det(\RGammaIw(V,D_V))&={\rm char}_\mathcal{H}H^2_\Iw(V,D_V),\\
        \det(\RGammaIw(V^*(1),D_V^\perp))&={\rm char}_\mathcal{H}H^2_\Iw(V^*(1),D_V^\perp)
    \end{align*}
    for $(V,D_V)$ as before. Finally, the factorization of determinants in \ref{enum:main-thm-spec-3} follows formally from applying the determinant functor to the exact triangles \eqref{eq:exact-triangle-specialization} and \eqref{eq:exact-triangle-dual-specialization} (see \cite[Appendix A, Proposition 31]{gelfand-det}).
\end{proof}

\par Before stating our factorization result for Coleman families, we will need a lemma, which will make use of coadmissible modules and some ideas from algebraic geometry.

\begin{lemma}\label{lemma:coherent-sheaves-zero-fibers}
    Let $M$ be a coadmissible $\mathcal{H}_A$-module, and for each $x\in U=\Spm(A)$, let $\mathcal{H}_x:=\mathcal{H}_{E_x}$, where $E_x:=A/\frakm_x$ is the residue field of $x$. Suppose that $M\hatotimes_{{\rm sp}_{x,\Iw}}\mathcal{H}_x=0$ for each $x\in U$, where ${\rm sp}_{x,\Iw}:\mathcal{H}_A\rightarrow \mathcal{H}_x$ is the specialization map. Then $M=0$.
\end{lemma}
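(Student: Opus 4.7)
The strategy is to exploit the Schneider--Teitelbaum equivalence between coadmissible $\mathcal{H}_A$-modules and coherent sheaves on the quasi-Stein space $\mathrm{Spm}(\mathcal{H}_A) = \mathcal{W}_\chi \times_E U$, reducing the assertion to a statement about the support of a coherent sheaf on a product of affinoids vanishing fiberwise over the second factor.

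First I would invoke the admissible cover $\mathcal{W}_\chi = \bigcup_n W_n$ by the affinoid disks $W_n = \mathrm{Spm}(\mathcal{H}_n)$ from Section~\ref{subsect:weight-space-hida}, so that $\mathrm{Spm}(\mathcal{H}_A)$ is admissibly covered by the Noetherian affinoids $\mathrm{Spm}(\mathcal{H}_n\hatotimes_E A)$. Write $M_n := M\hatotimes_{\mathcal{H}_A}(\mathcal{H}_n\hatotimes_E A)$, a finitely generated $\mathcal{H}_n\hatotimes_E A$-module by coadmissibility, and recall that Proposition~\ref{structure-thm-pottharst} together with the sheafification theorem of Schneider--Teitelbaum gives $M = \varprojlim_n M_n$, with $M = 0$ if and only if $M_n = 0$ for all $n$. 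So it suffices to fix $n$ and prove $M_n = 0$.

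Under the specialization map $\mathrm{sp}_{x,\mathrm{Iw}} : \mathcal{H}_A \to \mathcal{H}_x$ induced by $A \to E_x$, the level-$n$ piece of the coadmissible $\mathcal{H}_x$-module $M\hatotimes_{\mathrm{sp}_{x,\mathrm{Iw}}}\mathcal{H}_x$ is identified with $M_n\otimes_{\mathcal{H}_n\hatotimes_E A}(\mathcal{H}_n\otimes_E E_x)$, since $E_x/E$ is finite. The hypothesis therefore yields
\[
M_n\otimes_{\mathcal{H}_n\hatotimes_E A}(\mathcal{H}_n\otimes_E E_x) \;=\; 0 \qquad \text{for every } x \in U.
\]
Geometrically, the coherent sheaf on $W_n \times_E U$ associated to $M_n$ has zero restriction to each fiber $W_n \times_E \{x\}$ above a closed point $x \in U$, so its support $Z_n := \mathrm{Supp}(M_n) \subseteq \mathrm{Spm}(\mathcal{H}_n\hatotimes_E A)$ is disjoint from every such fiber.

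The core step is then to show that $Z_n$ must be empty. Suppose for contradiction that $Z_n \ne \emptyset$ and pick a (closed) point $z \in Z_n$, corresponding to a maximal ideal $\mathfrak{M}_z$ of $\mathcal{H}_n\hatotimes_E A$. Because morphisms of affinoid $E$-algebras send maximal ideals to maximal ideals (a standard feature of affinoid spaces, see \cite[\S 3.2]{bosch}), the image $x := \pi(z)$ under the projection $\pi : \mathrm{Spm}(\mathcal{H}_n\hatotimes_E A) \to \mathrm{Spm}(A) = U$ is a closed point, and $z$ lies in $\pi^{-1}(x) = \mathrm{Spm}(\mathcal{H}_n\otimes_E E_x)$. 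This contradicts the fiber disjointness obtained above. Hence $Z_n = \emptyset$, which forces $M_n = 0$; taking the inverse limit over $n$ gives $M = 0$, as desired. The only technical subtlety is the bookkeeping identifying the level-$n$ piece of $M\hatotimes\mathcal{H}_x$ with $M_n \otimes (\mathcal{H}_n \otimes_E E_x)$ (which is straightforward because the specialization $A \to E_x$ is a finite map over $E$) and the invocation of the Schneider--Teitelbaum exactness statement to upgrade fiberwise vanishing at every $n$ to vanishing of $M$; everything else is formal.
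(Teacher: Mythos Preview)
Your proposal is correct and follows essentially the same route as the paper's proof: both reduce to showing $M_n=0$ via coadmissibility, transfer the hypothesis to the level-$n$ piece, observe that every closed point of $\Spm(\mathcal{H}_n\hatotimes_E A)$ lies in the fiber over its image under the projection to $U$, and conclude vanishing at every maximal ideal. The only cosmetic difference is that you package the Nakayama step inside the statement ``zero restriction to the fiber implies the support misses the fiber,'' whereas the paper spells this out explicitly by computing $M_n\hatotimes((\mathcal{H}_A)_n/\mathfrak{m})=0$ and invoking Nakayama at the localization $(M_n)_{\mathfrak{m}}$.
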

\begin{proof}
    \par Since $M$ is a coadmissible $\mathcal{H}_A$-module, $M=\varprojlim_{n}M_n$ for finitely generated $\mathcal{H}_n\hatotimes_E A$-modules $M_n$. We will show that $M_n=0$ for each $n\in\NN$.
    \par For an arbitrary $x\in U$, let 
    \begin{align}\label{eq:frakp-xn-definition}
        \frakp_{x,n}:=\ker((\mathcal{H}_A)_n\rightarrow(\mathcal{H}_x)_n). 
    \end{align}
    Recall that \[M_n\cong M\hatotimes_{\mathcal{H}_A}(\mathcal{H}_A)_n,\] thus \[M_n\hatotimes_{(\mathcal{H}_A)_n}(\mathcal{H}_x)_n\cong (M\hatotimes_{\mathcal{H}_A}\mathcal{H}_x)\hatotimes_{\mathcal{H}_A}(\mathcal{H}_A)_n.\] By our running assumption, we have \[M_n\hatotimes_{(\mathcal{H}_A)_n}(\mathcal{H}_x)_n\cong M_n\hatotimes_{(\mathcal{H}_A)_n}((\mathcal{H}_A)_n)/\frakp_{x,n}=0.\]

    \par Let $n\in\NN$ and a maximal ideal $\frakm$ of $(\mathcal{H}_A)_n=\mathcal{H}_n\hatotimes A$ be arbitrary but fixed. We claim that there exists an $x\in U$ such that $\frakm\supseteq \frakp_{x,n}$. Set $Y:=W_n\times_E U=\Spm(\mathcal{H}_n\hatotimes_E A)$. Denote by $\pi:Y\twoheadrightarrow U$ the natural projection, which corresponds to the morphism $A\hookrightarrow\mathcal{H}_n\hatotimes A$ of affinoid algebras sending $a\in A$ to $1\otimes a$. Set $x:=\pi(\frakm)$ (where $\frakm\in Y=\Spm(\mathcal{H}_n\hatotimes_E A)$). 
    \par We claim that the fiber $Y_x:=W_n\times_E E_x\cong\pi^{-1}(x)$ of $x$ is the vanishing locus $V(\frakp_{x,n})$ of the prime ideal $\frakp_{x,n}$. Note that $Y_x=\Spm(\mathcal{H}_n\hatotimes_E E_x)$ and $\mathcal{H}_n\hatotimes_E E_x\cong ({\mathcal{H}_x})_n$. Hence \eqref{eq:frakp-xn-definition} immediately implies our claim. 
    \par By construction $\frakm\in \pi^{-1}(x)=Y_x=V(\frakp_{x,n})$, hence $\frakm\supseteq\frakp_{x,n}$.
    \par Thus for all $\frakm\in Y$ there is a point $x\in U$ with $\frakm\supseteq\frakp_{x,n}$, and recall that we already obtained \[M_n\hatotimes_{(\mathcal{H}_A)_n}((\mathcal{H}_A)_n)/\frakp_{x,n}=0.\] These two results imply \[M_n\hatotimes_{(\mathcal{H}_A)_n}((\mathcal{H}_A)_n)/\frakm=0.\] By Nakayama lemma (as $M_n$ is a finitely generated $(\mathcal{H}_A)_n$-module) $(M_n)_\frakm=0$. Since $\frakm$ was arbitrary we get $M_n=0$, thus $M=0$.
\end{proof}

\begin{theorem}[Factorization for Coleman families]\label{Main-Theorem}
Under our hypotheses in Section \ref{subsect:assumptions-main-thm}, we have:
\begin{enumerate}[(1)]
    \item For the data \[(\mathbf{V},\mathbf{D}_\mathbf{V})\in\{(\mathbf{V}_{\Ad},\mathbf{D}_{\Ad}),(\mathbf{V}_{\ad},\mathbf{D}_{\ad}),(\mathbf{V}_1,\mathbf{D}_1)\},\] the Selmer complexes $\RGammaIw(\mathbf{V},\mathbf{D}_\mathbf{V})$ and $\RGammaIw(\mathbf{V}^*(1),\mathbf{D}_\mathbf{V}^\perp)$ lie in the category $\mathcal{D}_{\rm parf}^{[1,2]}(\mathcal{H}_A)$. \label{enum:main-thm-fams-1}

    \item The cohomologies $H^i_\Iw(\mathbf{V},\mathbf{D}_\mathbf{V})$ and $H^i_\Iw(\mathbf{V}^*(1),\mathbf{D}_\mathbf{V}^\perp)$ vanish except $i\neq 2$. Therefore the exact triangles \eqref{eq:exact-triangle-families} and \eqref{eq:exact-triangle-dual-families} degenerate to the following short exact sequences:
    \begin{align}
        0 \rightarrow H^2_\Iw(\mathbf{V}_{\ad},\mathbf{D}_{\ad}) &\rightarrow H^2_\Iw(\mathbf{V}_{\Ad},\mathbf{D}_{\Ad})  \rightarrow H^2_\Iw(\mathbf{V}_1,\mathbf{D}_1) \rightarrow 0, \label{eq:SES-Main-Thm-even}\\
        0 \rightarrow H^2_\Iw(\mathbf{V}_1^*(1),\mathbf{D}_1^\perp) \rightarrow & H^2_\Iw(\mathbf{V}_{\Ad}^*(1),\mathbf{D}_{\Ad}^\perp)\rightarrow  H^2_\Iw(\mathbf{V}_{\ad}^*(1),\mathbf{D}_{\ad}^\perp)\rightarrow 0.\label{eq:SES-Main-Thm-odd}
    \end{align}\label{enum:main-thm-fams-2}
    
    \item For $(V,D_V)$ as in Theorem \ref{Main-Theorem-Specialization} - \ref{enum:main-thm-spec-1} with $\mathbf{V}_{x_0}=V$ and $(\mathbf{D}_\mathbf{V})_{x_0}=D_V$, we have
    \begin{align*}
        \det(\RGammaIw(\mathbf{V},\mathbf{D}_\mathbf{V}))\hatotimes_{\mathcal{H}_A}\mathcal{H}_A/{\frakp_{x_0}}&\cong {\rm char}_{\mathcal{H}}H^2_\Iw(V,D_V),\\
        \det(\RGammaIw(\mathbf{V}^*(1),\mathbf{D}_\mathbf{V}^\perp))\hatotimes_{\mathcal{H}_A}\mathcal{H}_A/{\frakp_{x_0}}&\cong{\rm char}_{\mathcal{H}}H^2_\Iw(V^*(1),D_V^\perp),
    \end{align*}
        where $\frakp_{x}$ is the kernel of the map ${{\rm sp}_{{x},\Iw}}:\mathcal{H}_A\rightarrow\mathcal{H}$ induced by the specialization at $x\in U=\Spm(A)$. We have the following factorizations:
    \begin{align*}
        \det(\RGammaIw(\mathbf{V}_{\Ad},\mathbf{D}_{\Ad}))&=\det(\RGammaIw(\mathbf{V}_{\ad},\mathbf{D}_{\ad}))\hatotimes_{\mathcal{H}_A}\det(\RGammaIw(\mathbf{V}_1,\mathbf{D}_1)),\\
        \det(\RGammaIw(\mathbf{V}_{\Ad}^*(1),\mathbf{D}_{\Ad}^\perp))&=\det(\RGammaIw(\mathbf{V}_{\ad}^*(1),\mathbf{D}_{\ad}^\perp))\hatotimes_{\mathcal{H}_A}\det(\RGammaIw(\mathbf{V}_1^*(1),\mathbf{D}_1^\perp)).
    \end{align*} \label{enum:main-thm-fams-3}
\end{enumerate}
\end{theorem}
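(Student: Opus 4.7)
My plan is to deduce Theorem~\ref{Main-Theorem} from its single-form counterpart Theorem~\ref{Main-Theorem-Specialization} by combining the perfectness criterion (Theorem~\ref{thm:Perfectness-amplitude-1-2}), the control theorems of Proposition~\ref{prop:control-theorems-pottharst}, the Tor spectral sequence (Proposition~\ref{prop:Tor-spectral-seq}), and the fibrewise vanishing principle (Lemma~\ref{lemma:coherent-sheaves-zero-fibers}).

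\textbf{Step 1 (Perfectness).} I would apply Theorem~\ref{thm:Perfectness-amplitude-1-2} to each of the three pairs $(\mathbf{V},\mathbf{D}_\mathbf{V})$ and their Grothendieck duals. Hypothesis (iii) (projectivity of $H^1(I_\ell,\mathbf{V})$) follows for $\mathbf{V}_{\Ad}$ from~\ref{hyp:tamagawa-assumption-FxF} combined with Lemma~\ref{lemma:wild-inertia-invariant-projective} (pulling back along the diagonal $\Delta^*\colon A\hatotimes_E A\to A$ preserves projectivity), and for $\mathbf{V}_{\ad},\mathbf{V}_1$ from the splitting of~\eqref{eq:Trace-exact-sequence}. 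Hypothesis (ii) is exactly~\ref{hyp:parf-1-2-ad0} for $\mathbf{V}_{\ad}$ (observing that $G_{\QQ,S}$-invariants sit inside $H_{\QQ,S}$-invariants since $H_{\QQ,S}$ is normal in $G_{\QQ,S}$); for $\mathbf{V}_1=A(\psi+j)$ it is immediate from the nontriviality of $\psi\vert_{H_{\QQ,S}}$, and for $\mathbf{V}_{\Ad}$ it follows from the decomposition. Hypothesis (i) is handled as in Lemma~\ref{lemma:vanishing-H3-specialization}, using the irreducibility of $\ad(\mathbf{V}_\mathcal{F})$ as a $G_{\QQ,S}$-representation (granted by~\ref{hyp:parf-1-2-ad0}) together with~\eqref{eq:exact-triangle-families}.

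\textbf{Step 2 (Vanishing of $H^1_\Iw$).} This is the pivotal step. For each $x\in U$, the control isomorphism~\eqref{eq:control-HA-H} identifies $\RGammaIw(\mathbf{V},\mathbf{D}_\mathbf{V})\otimes^{\mathbf{L}}\mathcal{H}_x$ with $\RGammaIw(V_x,D_{V_x})$, and combining this with the Tor spectral sequence and the perfect amplitude $[1,2]$ established in Step~1 yields an injection
\begin{equation*}
H^1_\Iw(\mathbf{V},\mathbf{D}_\mathbf{V})\otimes_{\mathcal{H}_A}\mathcal{H}_x \hookrightarrow H^1_\Iw(V_x,D_{V_x}).
\end{equation*}
If the right-hand side vanishes for every $x\in U$, Lemma~\ref{lemma:coherent-sheaves-zero-fibers} forces $H^1_\Iw(\mathbf{V},\mathbf{D}_\mathbf{V})=0$; the same argument handles the dual. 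To secure the fibrewise hypothesis I would shrink $U$ around $x_0$ so that the arithmetic input of Theorem~\ref{Main-Theorem-Specialization} (hypotheses~\ref{hyp:p-regularity}--\ref{hyp:big-img-IV}) propagates to every classical specialization in $U$: the $p$-regularity~\ref{hyp:p-regularity}, non-vanishing of $p$-adic $L$-values~\ref{hyp:Lp-nonzero}, no-local-zero~\ref{hyp:no-local-zero}, and big-image~\ref{hyp:big-img-I}--\ref{hyp:big-img-IV} conditions are all open (or generically true) on the eigencurve. Non-classical points are then handled by the coadmissibility argument built into Lemma~\ref{lemma:coherent-sheaves-zero-fibers}.

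\textbf{Step 3 (Short exact sequences, factorization, and the specialization identity).} Once $H^1_\Iw=0$ for all three pairs, the triangles~\eqref{eq:exact-triangle-families} and~\eqref{eq:exact-triangle-dual-families} collapse to the short exact sequences~\eqref{eq:SES-Main-Thm-even} and~\eqref{eq:SES-Main-Thm-odd}. The factorization in~(3) is then a formal consequence of multiplicativity of the determinant functor along exact triangles. The specialization identity $\det(\RGammaIw(\mathbf{V},\mathbf{D}_\mathbf{V}))\hatotimes_{\mathcal{H}_A}\mathcal{H}_A/\frakp_{x_0}\cong\mathrm{char}_\mathcal{H} H^2_\Iw(V,D_V)$ follows from the commutation of $\det$ with derived base change along $\mathrm{sp}_{x_0,\Iw}$ combined with Corollary~\ref{cor:algebraic-p-adic-L-function}. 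The main obstacle is Step~2: the hypotheses underpinning Theorem~\ref{Main-Theorem-Specialization} are bound up with the specific arithmetic of $f$ (big image, Beilinson-Flach non-vanishing, absence of exceptional zeros), so checking they persist for every classical specialization in some neighbourhood of $x_0$ is the delicate technical matter, and propagating the vanishing to non-classical points of $U$ via coadmissibility is the conceptual crux.
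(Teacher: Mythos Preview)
Your overall outline matches the paper's, and Steps~1 and~3 are essentially right (one simplification: for condition~\eqref{cond:tamagawa-proj} of Theorem~\ref{thm:Perfectness-amplitude-1-2} the paper does not invoke~\ref{hyp:tamagawa-assumption-FxF} at all, but simply uses that $A$ is a PID via Lemma~\ref{lemma:tamagawa-holds-when-A-is-PID}; hypothesis~\ref{hyp:tamagawa-assumption-FxF} is reserved for Section~\ref{sect:3-var-2-var}).

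Step~2 for $\mathbf{V}_{\ad}$, however, has a genuine gap. Lemma~\ref{lemma:coherent-sheaves-zero-fibers} requires the fibre to vanish at \emph{every} $x\in U$, and there is no mechanism in that lemma for extrapolating from classical points to non-classical ones; a coadmissible $\mathcal{H}_A$-module can perfectly well be supported on a single non-classical fibre. Since Theorem~\ref{Main-Theorem-Specialization} cannot even be formulated at a non-classical $x$ (there is no underlying newform), your proposed route does not close. Even at classical points, propagating hypotheses such as~\ref{hyp:Lp-nonzero} or~\ref{hyp:big-img-II} across a neighbourhood is not granted by the paper.

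The paper avoids this entirely by invoking Theorem~\ref{Main-Theorem-Specialization} only at the single point $x_0$. The device is to interpolate through the \emph{non-Iwasawa} complex $\RGamma(\mathbf{V}_{\ad},\mathbf{D}_{\ad})$ over $A$: since $A$ is a PID and this complex has perfect amplitude $[1,2]$, its $H^1$ is a free $A$-module, so vanishing at the one fibre $x_0$ (via~\eqref{eq:control-A-E} and the Bloch--Kato input at $f$) forces $H^1(\mathbf{V}_{\ad},\mathbf{D}_{\ad})=0$. Control theorem~\eqref{eq:control-HA-A} then gives $H^1_\Iw(\mathbf{V}_{\ad},\mathbf{D}_{\ad})_{\Gamma_0}=0$. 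Now for arbitrary $x\in U$ the fibre $H^1_\Iw(\mathbf{V}_{\ad},\mathbf{D}_{\ad})\otimes_{\mathcal{H}_A}\mathcal{H}_x$ embeds into $H^1_\Iw((\mathbf{V}_{\ad})_x,(\mathbf{D}_{\ad})_x)$, which is torsion-free (hence free) over $\mathcal{H}$ by the structure theorem and the vanishing of $H^3_\Iw$ on the dual side; crucially, the latter needs only hypothesis~\ref{hyp:parf-1-2-ad0}, not the arithmetic hypotheses~\ref{hyp:p-regularity}--\ref{hyp:big-img-IV}. Taking $\Gamma_0$-coinvariants of this free $\mathcal{H}$-module gives $\bigl(H^1_\Iw(\mathbf{V}_{\ad},\mathbf{D}_{\ad})_{\Gamma_0}\bigr)\otimes_A E_x=0$, so the rank is zero and the fibre vanishes. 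Lemma~\ref{lemma:coherent-sheaves-zero-fibers} then legitimately applies.
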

\begin{proof}
    \par First note that for any $\mathbf{V}$ as above, $H^1(I_\ell,\mathbf{V})$ is projective $A$-module since $U$ is nice affinoid (hence $A$ is an Euclidean domain). This verifies the condition \ref{cond:tamagawa-proj} in Theorem \ref{thm:Perfectness-amplitude-1-2}. Let us show that 
    \begin{align} \label{eq:h3_Iw-0-fams}
        H^3_\Iw(\mathbf{V},\mathbf{D}_\mathbf{V})=0=H^3_\Iw(\mathbf{V}^*(1),\mathbf{D}_\mathbf{V}^\perp).    
    \end{align}    
    By the control theorem \eqref{eq:control-HA-H}, for any $x\in U$ we get \[H^3_\Iw(\mathbf{V},\mathbf{D}_\mathbf{V})\hatotimes_{\mathcal{H}_A}\mathcal{H}_A/\frakp_x\cong H^3_\Iw(\mathbf{V}_x,(\mathbf{D}_\mathbf{V})_x)\] and \[H^3_\Iw(\mathbf{V}^*(1),\mathbf{D}_\mathbf{V}^\perp)\hatotimes_{\mathcal{H}_A}\mathcal{H}_A/\frakp_x\cong H^3_\Iw(\mathbf{V}_x^*(1),(\mathbf{D}_\mathbf{V})_x^\perp).\] When $\mathbf{V}=\mathbf{V}_\ad$, using our running assumption \ref{hyp:parf-1-2-ad0} in Section \ref{subsect:assumptions-main-thm} and applying the arguments of Lemma \ref{lemma:vanishing-H3-specialization} we get \[H^3_\Iw(\mathbf{V}_x,(\mathbf{D}_\mathbf{V})_x)=0=H^3_\Iw(\mathbf{V}_x^*(1),(\mathbf{D}_\mathbf{V})_x^\perp).\] Since this holds for all $x\in U$, by Lemma \ref{lemma:coherent-sheaves-zero-fibers} we get \[H^3_\Iw(\mathbf{V}_\ad,\mathbf{D}_\ad)=0=H^3_\Iw(\mathbf{V}_\ad^*(1),\mathbf{D}_\ad^\perp).\] When $\mathbf{V}=\mathbf{V}_1$, for any $x\in U$ we obtain $\mathbf{V}_x\cong E_x(\psi+j)$, and we can apply Lemma \ref{lemma:vanishing-H3-specialization} directly to conclude \eqref{eq:h3_Iw-0-fams}. When $\mathbf{V}=\mathbf{V}_\Ad$, \eqref{eq:h3_Iw-0-fams} follows from applying $H^3$ to the exact triangles \eqref{eq:exact-triangle-families} and \eqref{eq:exact-triangle-dual-families}. These computations verify \ref{cond:h3Iw-vanishes} in Theorem \ref{thm:Perfectness-amplitude-1-2}.
    \par Our running assumption \ref{hyp:parf-1-2-ad0} in \ref{subsect:assumptions-main-thm} immediately implies the condition \ref{cond:h0Iw-residual-vanishes} of Theorem \ref{thm:Perfectness-amplitude-1-2} as $H_{\QQ,S}\subseteq G_{\QQ,S}$. Hence we can apply Theorem \ref{thm:Perfectness-amplitude-1-2} to conclude \ref{enum:main-thm-fams-1}.

    \par Now we will show that 
    \begin{align} \label{eq:h1_Iw-0-fams}
        H^1_\Iw(\mathbf{V},\mathbf{D}_\mathbf{V})=0=H^1_\Iw(\mathbf{V}^*(1),\mathbf{D}_\mathbf{V}^\perp).    
    \end{align}   
    By control theorem \eqref{eq:control-HA-H} for any $x\in U$ we have \[H^1_\Iw(\mathbf{V},\mathbf{D}_\mathbf{V})\hatotimes_{\mathcal{H}_A}\mathcal{H}_A/\frakp_x\subseteq H^1_\Iw(\mathbf{V}_x,(\mathbf{D}_\mathbf{V})_x).\] When $\mathbf{V}=\mathbf{V}_1$ (or $\mathbf{V}_1^*(1)$) the latter (hence the former) vanishes thanks to Theorem \ref{Main-Theorem-Specialization}. Thus by Lemma \ref{lemma:coherent-sheaves-zero-fibers} we can conclude \eqref{eq:h1_Iw-0-fams}. When $\mathbf{V}=\mathbf{V}_\ad$ (or $\mathbf{V}=\mathbf{V}_\ad^*(1)$), we should use the control theorem \eqref{eq:control-HA-A} instead. We obtain \[H^1(\mathbf{V},\mathbf{D}_\mathbf{V})_{x_0}\subseteq H^1(\mathbf{V}_{x_0},(\mathbf{D}_\mathbf{V})_{x_0})=0,\] where the latter is due to Theorem \ref{Main-Theorem-Specialization}. Note that $\RGamma(\mathbf{V},\mathbf{D}_\mathbf{V})\in\mathscr{D}_{\rm parf}^{[1,2]}(A)$, hence it is quasi isomorphic to a complex consisting of finite projective $A$-modules concentrated at degrees $[1,2]$, which implies \[\RGamma(\mathbf{V},\mathbf{D}_\mathbf{V})\simeq \left[A^{r}\rightarrow A^{r}\right],\] since every projective module over a PID is free. Then $H^1(\mathbf{V},\mathbf{D}_\mathbf{V})\subseteq A^{r}$ is also free of some finite rank, and $H^1(\mathbf{V},\mathbf{D}_\mathbf{V})_{x_0}=0$. Thus we conclude that the rank must be zero, i.e. $H^1(\mathbf{V},\mathbf{D}_\mathbf{V})=0$. Thus we obtain \[H^1_\Iw(\mathbf{V},\mathbf{D}_\mathbf{V})_{\Gamma_0}\subseteq H^1(\mathbf{V},\mathbf{D}_\mathbf{V})=0.\]
    
    \par Now let $x\in U$ be arbitrary. By the control theorem \eqref{eq:control-HA-H} we get \[H^1_\Iw(\mathbf{V},\mathbf{D}_\mathbf{V})\hatotimes_{\mathcal{H}_A}\mathcal{H}_A/\frakp_x\subseteq H^1_\Iw(\mathbf{V}_x,(\mathbf{D}_\mathbf{V})_x),\] and note that $H^1_\Iw(\mathbf{V}_x,(\mathbf{D}_\mathbf{V})_x)$ is a free $\mathcal{H}$-module (by the structure theorem for coadmissible $\mathcal{H}$-modules and the fact that $H^3_\Iw(\mathbf{V}_x^*(1),(\mathbf{D}_\mathbf{V})_x^\perp)=0$). Therefore $H^1_\Iw(\mathbf{V},\mathbf{D}_\mathbf{V})\hatotimes_{\mathcal{H}_A}\mathcal{H}_A/\frakp_x$ is also a free $\mathcal{H}$-module with some finite rank $r\geq 0$, i.e. \[H^1_\Iw(\mathbf{V},\mathbf{D}_\mathbf{V})\hatotimes_{\mathcal{H}_A}\mathcal{H}_A/\frakp_x\cong\mathcal{H}^r.\] On the other hand, \[(H^1_\Iw(\mathbf{V},\mathbf{D}_\mathbf{V})\hatotimes_{\mathcal{H}_A}\mathcal{H}_A/\frakp_x)_{\Gamma_0}\cong (H^1_\Iw(\mathbf{V},\mathbf{D}_\mathbf{V})_{\Gamma_0})/\hatotimes_{A}A/\frakm_x,\] where $\frakm_x\subseteq A$ is the maximal ideal corresponding to $x\in U$. We know from the previous paragraph that the latter vanishes, hence \[(H^1_\Iw(\mathbf{V},\mathbf{D}_\mathbf{V})\hatotimes_{\mathcal{H}_A}\mathcal{H}_A/\frakp_x)_{\Gamma_0}=0.\] This gives us that the rank of the (free) $\mathcal{H}$-module $H^1_\Iw(\mathbf{V},\mathbf{D}_\mathbf{V})\hatotimes_{\mathcal{H}_A}\mathcal{H}_A/\frakp_x$ must be zero, hence \[H^1_\Iw(\mathbf{V},\mathbf{D}_\mathbf{V})\hatotimes_{\mathcal{H}_A}\mathcal{H}_A/\frakp_x=0.\] Since $x\in U$ was arbitrary, it follows from Lemma \ref{lemma:coherent-sheaves-zero-fibers} that \[H^1_\Iw(\mathbf{V},\mathbf{D}_\mathbf{V})=0.\]

    \par For $\mathbf{V}=\mathbf{V}_\Ad$ or $\mathbf{V}=\mathbf{V}_\Ad^*(1)$, we obtain \[H^1_\Iw(\mathbf{V},\mathbf{D}_\mathbf{V})=0\] by applying $H^1$ to the exact triangles \eqref{eq:exact-triangle-families} and \eqref{eq:exact-triangle-dual-families}. 
    \par These computations show that the cohomologies of $\RGammaIw(\mathbf{V},\mathbf{D}_\mathbf{V})$ and $\RGammaIw(\mathbf{V}^*(1),\mathbf{D}_\mathbf{V}^\perp)$ vanishes except $i=2$, thus the long exact sequence induced by the same exact triangles degenerate to the short exact sequences in \ref{enum:main-thm-fams-2}.

    \par By the control theorem \eqref{eq:control-HA-H} we get 
    \begin{align*}
        \RGammaIw(\mathbf{V},\mathbf{D}_\mathbf{V})\hatotimes_{\mathcal{H}_A}\mathcal{H}_A/{\frakp_{x_0}}&\cong \RGammaIw(V,D_V),\\
        \RGammaIw(\mathbf{V}^*(1),\mathbf{D}_\mathbf{V}^\perp)\hatotimes_{\mathcal{H}_A}\mathcal{H}_A/{\frakp_{x_0}}&\cong\RGammaIw(V^*(1),D_V^\perp),
    \end{align*}
    hence applying the determinant functor and the fact that \[\det(\RGammaIw(V,D_V))={\rm char}_\mathcal{H}(H^2_\Iw(V,D_V))\] and \[\det(\RGammaIw(V^*(1),D_V^\perp))={\rm char}_\mathcal{H}(H^2_\Iw(V^*(1),D_V^\perp))\] (see the last paragraph of the proof of Theorem \ref{Main-Theorem-Specialization}), we conclude 
    \begin{align*}
        \det(\RGammaIw(\mathbf{V},\mathbf{D}_\mathbf{V}))\hatotimes_{\mathcal{H}_A}\mathcal{H}_A/{\frakp_{x_0}}&\cong {\rm char}_{\mathcal{H}}H^2_\Iw(V,D_V),\\
        \det(\RGammaIw(\mathbf{V}^*(1),\mathbf{D}_\mathbf{V}^\perp))\hatotimes_{\mathcal{H}_A}\mathcal{H}_A/{\frakp_{x_0}}&\cong{\rm char}_{\mathcal{H}}H^2_\Iw(V^*(1),D_V^\perp).
    \end{align*}
    Finally, the factorizations follow from applying the determinant functor to the exact triangles \eqref{eq:exact-triangle-families} and \eqref{eq:exact-triangle-dual-families}.
\end{proof}

\subsection{Comparison of $3$-variable and $2$-variable Algebraic $p$-adic $L$-functions}\label{sect:3-var-2-var}

\par In this subsection, we will give a partial result on the comparison of the algebraic $p$-adic $L$-functions attached to (certain twists of) $\mathbf{V}_\mathcal{F}\hatotimes_E\mathbf{V}^*$ and $\Ad(\mathbf{V}_\mathcal{F})$ (see Corollary \ref{cor:algebraic-p-adic-L-function}). For the sake of notational simplicity we will denote \[\mathbf{V}_3:=\mathbf{V}_\mathcal{F}\hatotimes_E\mathbf{V}^*_\mathcal{F}(\psi+j),\] and \[\mathbf{D}_3:=\mathbf{D}\hatotimes_{\mathcal{R}_E}\mathbf{D}_{{\rm rig},A}^\dagger(\mathbf{V}_\mathcal{F})^*.\] The notation chosen refers to the fact that it should be the algebraic counterpart of the $3$-variable $p$-adic $L$-function $L_p(\mathcal{F},\mathcal{F}^c,\psi+j+\mathbf{k}_f+1)$.

\par Let $B:=A\hatotimes_E A$. By our running assumptions \ref{hyp:parf-1-2-FxF} and \ref{hyp:tamagawa-assumption-FxF}, we conclude by Theorem \ref{thm:Perfectness-amplitude-1-2} that \[\RGammaIw(\mathbf{V}_3,\mathbf{D}_3),\ \RGammaIw(\mathbf{V}_3^*(1),\mathbf{D}_3^\perp)\in\mathscr{D}_{\rm parf}^{[1,2]}(\mathcal{H}_{B}).\] To be more precise, the assumptions \ref{cond:tamagawa-proj} and \ref{cond:h0Iw-residual-vanishes} of Theorem \ref{thm:Perfectness-amplitude-1-2} are automatically satisfied, meanwhile for the condition \ref{cond:h3Iw-vanishes}, we can argue as in the proof of Theorem \ref{Main-Theorem} and use Lemma \ref{lemma:coherent-sheaves-zero-fibers} to conclude that \[H^3_\Iw(\mathbf{V}_3,\mathbf{D}_3)=0=H^3_\Iw(\mathbf{V}_3^*(1),\mathbf{D}_3^\perp).\]

\par There exists a map $\pi:=\Delta^*:B\rightarrow A$ which induces $\widetilde{\pi}:\mathcal{H}_B\rightarrow\mathcal{H}_A$. Notice that $\mathbf{V}_3\otimes_{\pi}A\cong\mathbf{V}_\Ad$ and $\mathbf{D}_3\otimes_{\mathcal{R}_B}\mathcal{R}_A\cong\mathbf{D}_\Ad$.

\par We can use the control theorem of \cite[Theorem 1.12 - (3)]{Pottharst2013} for Selmer complexes to deduce that 
\[
\RGammaIw(\mathbf{V},\mathbf{D}_3)\otimes_{\widetilde{\pi}}^\mathbf{L}\mathcal{H}_A\simeq \RGammaIw(\mathbf{V}_\Ad,\mathbf{D}_\Ad),  
\] hence \[\det(\RGammaIw(\mathbf{V}_3,\mathbf{D}_3))\otimes_{\widetilde{\pi}}^\mathbf{L}\mathcal{H}_A\simeq \det(\RGammaIw(\mathbf{V}_\Ad,\mathbf{D}_\Ad)).\] Similarly we get \[\det(\RGammaIw(\mathbf{V}_3^*(1),\mathbf{D}_3^\perp))\otimes_{\widetilde{\pi}}^\mathbf{L}\mathcal{H}_A\simeq \det(\RGammaIw(\mathbf{V}_\Ad^*(1),\mathbf{D}_\Ad)^\perp).\]

\par Let $\frakp:=\ker(\pi)$ and $\widetilde{\frakp}:=\ker(\widetilde{\pi})$. Note that $\frakp$ and $\widetilde{\frakp}$ are height-$1$ prime ideals of $B$ and $\mathcal{H}_B$ respectively. 
\begin{theorem}\label{thm:comparison-3-Ad}
       Suppose that $H^1_\Iw(\mathbf{V}_3,\mathbf{D}_3)$ is a finitely generated $\mathcal{H}_B$-module. There exists an $f\in \mathcal{H}_B$ with $f\equiv 1\ ({\rm mod}\ \widetilde{\frakp})$ such that
   \begin{align*}
       H^1(\mathbf{V}_3^\prime,\mathbf{D}_3^\prime)=0,\\
       H^1((\mathbf{V}_3^\prime)^*(1),(\mathbf{D}_3^\prime)^\perp)=0,
   \end{align*}
   where $\mathbf{V}_3^\prime:=\mathbf{V}_3\hatotimes_B (\mathcal{H}_B)_f^\iota$ and $\mathbf{D}_3^\prime:=\mathbf{D}_3\hatotimes_{\mathcal{R}_B}{(\mathcal{H}_B)}_f^\iota$. Moreover, we have the following relationships between the algebraic $p$-adic $L$-functions:
    \begin{align*}
        \det(\RGamma(\mathbf{V}_3^\prime,\mathbf{D}_3^\prime))\otimes_{(\mathcal{H}_B)_f}\mathcal{H}_A&\cong\det(\RGammaIw(\mathbf{V}_\Ad,\mathbf{D}_\Ad)),\\
        \det(\RGamma((\mathbf{V}_3^\prime)^*(1),(\mathbf{D}_3^\prime)^\perp))\otimes_{(\mathcal{H}_B)_f}\mathcal{H}_A&\cong\det(\RGammaIw(\mathbf{V}_\Ad^*(1),\mathbf{D}_\Ad^\perp)).
    \end{align*}
\end{theorem}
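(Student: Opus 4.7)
The plan is to combine the Tor spectral sequence applied to the control isomorphism with a Nakayama-type localization argument on the (non-Noetherian) ring $\mathcal{H}_B$. First I would observe that since $\RGammaIw(\mathbf{V}_3,\mathbf{D}_3)$ is perfect with amplitude $[1,2]$, Proposition \ref{prop:Tor-spectral-seq} applied to the control isomorphism $\RGammaIw(\mathbf{V}_3,\mathbf{D}_3)\otimes^{\mathbf{L}}_{\widetilde{\pi}}\mathcal{H}_A\simeq \RGammaIw(\mathbf{V}_\Ad,\mathbf{D}_\Ad)$ yields a short exact sequence whose middle term is $H^1_\Iw(\mathbf{V}_\Ad,\mathbf{D}_\Ad)$. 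Theorem \ref{Main-Theorem} gives that the middle term vanishes, so in particular $H^1_\Iw(\mathbf{V}_3,\mathbf{D}_3)\otimes_{\mathcal{H}_B}\mathcal{H}_A=0$. Identifying $\mathcal{H}_A\cong \mathcal{H}_B/\widetilde{\frakp}$ via $\widetilde{\pi}$, this is exactly the equality $\widetilde{\frakp}\cdot H^1_\Iw(\mathbf{V}_3,\mathbf{D}_3)=H^1_\Iw(\mathbf{V}_3,\mathbf{D}_3)$.

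Next I would invoke the determinant form of Nakayama's lemma, valid over any commutative ring: for a finitely generated module $M$ with $IM=M$ there is some $x\in I$ with $(1-x)M=0$. The running hypothesis that $H^1_\Iw(\mathbf{V}_3,\mathbf{D}_3)$ is finitely generated then produces an $f\in 1+\widetilde{\frakp}$ killing this module, so that $H^1_\Iw(\mathbf{V}_3,\mathbf{D}_3)[1/f]=0$. Since $(\mathcal{H}_B)_f$ is flat over $\mathcal{H}_B$, this localization is precisely $H^1(\mathbf{V}_3',\mathbf{D}_3')$. Running the same argument for $(\mathbf{V}_3^*(1),\mathbf{D}_3^\perp)$ (see below on finite generation) produces an element $f'\in 1+\widetilde{\frakp}$, and replacing $f$ by $ff'$ yields a single $f\equiv 1\pmod{\widetilde{\frakp}}$ achieving both vanishing assertions.

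Once $H^1$ vanishes on both sides, each of $\RGamma(\mathbf{V}_3',\mathbf{D}_3')$ and $\RGamma((\mathbf{V}_3')^*(1),(\mathbf{D}_3')^\perp)$ is quasi-isomorphic to a two-term complex of finite projective $(\mathcal{H}_B)_f$-modules concentrated in degree $2$, so its determinant is well defined as the $0$th Fitting ideal of $H^2$. The flatness of the composite $(\mathcal{H}_B)_f\to \mathcal{H}_A$, together with the control isomorphism, gives $\RGamma(\mathbf{V}_3',\mathbf{D}_3')\otimes^{\mathbf{L}}_{(\mathcal{H}_B)_f}\mathcal{H}_A\simeq \RGammaIw(\mathbf{V}_\Ad,\mathbf{D}_\Ad)$, and the determinant functor commutes with base change; applying it yields the first isomorphism. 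The dual factorization follows by exactly the same mechanism.

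The main technical obstacle is the finite-generation issue for the dual Iwasawa cohomology $H^1_\Iw(\mathbf{V}_3^*(1),\mathbf{D}_3^\perp)$. Over the non-Noetherian ring $\mathcal{H}_B$, a submodule of a finite projective module need not be finitely generated, so the finite generation of $H^1_\Iw(\mathbf{V}_3,\mathbf{D}_3)$ does not formally transfer to the dual side (the $H^2$'s, by contrast, are automatically finitely generated as quotients of finite projectives). The cleanest resolution would be an affinoid-coefficient version of the Iwasawa Grothendieck duality of Proposition \ref{prop:Pottharst-Duality-Iwasawa-SelmerComp}, which would relate $H^1_\Iw(\mathbf{V}_3^*(1),\mathbf{D}_3^\perp)$ to $H^2_\Iw(\mathbf{V}_3,\mathbf{D}_3)^{\iota}$ modulo a $\mathscr{D}^0$ contribution; in its absence one should read the hypothesis of the theorem as including the symmetric finite-generation assumption on the dual, so that Nakayama applies to both modules and a common $f$ can be chosen.
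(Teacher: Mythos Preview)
Your approach is essentially identical to the paper's: control theorem plus the degenerate Tor spectral sequence to obtain $H^1_\Iw(\mathbf{V}_3,\mathbf{D}_3)\otimes_{\mathcal{H}_B}\mathcal{H}_B/\widetilde{\frakp}=0$, then the determinant-trick form of Nakayama to produce $f\in 1+\widetilde{\frakp}$, flatness of localization, and a second application of control plus base-change of determinants. Your treatment of the dual side (choosing $f'$ separately and replacing $f$ by $ff'$, and flagging that finite generation of $H^1_\Iw(\mathbf{V}_3^*(1),\mathbf{D}_3^\perp)$ must be assumed as well) is in fact more honest than the paper, which simply asserts ``the exact same arguments'' work. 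One small slip: the composite $(\mathcal{H}_B)_f\to\mathcal{H}_A$ is a quotient by $\widetilde{\frakp}_f$ and is \emph{not} flat; what you actually need (and what the paper uses) is just the associativity of derived tensor products $\bigl(\RGammaIw(\mathbf{V}_3,\mathbf{D}_3)\otimes^{\mathbf{L}}_{\mathcal{H}_B}(\mathcal{H}_B)_f\bigr)\otimes^{\mathbf{L}}_{(\mathcal{H}_B)_f}\mathcal{H}_A\simeq \RGammaIw(\mathbf{V}_3,\mathbf{D}_3)\otimes^{\mathbf{L}}_{\mathcal{H}_B}\mathcal{H}_A$, which requires no flatness hypothesis.
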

\begin{proof}
    \par From now on we will prove the theorem for $\mathbf{V}_3$ and $\mathbf{V}_\Ad$, the exact same arguments will prove the results for $(\mathbf{V}_3)^*(1)$ and $\mathbf{V}_\Ad^*(1)$. By control theorem \cite[Theorem 1.12]{Pottharst2013} we have \[\RGamma(\mathbf{V}_3,\mathbf{D}_3)\otimes^\mathbf{L}_{\widetilde{\pi}} \mathcal{H}_A\cong\RGamma(\mathbf{V}_\Ad,\mathbf{D}_\Ad).\] Since $\widetilde{\frakp}$ is a height $1$ prime, the higher Tor terms $\Tor_i^{\mathcal{H}_B}(\blank,\mathcal{H}_A)$ vanishes for $i\geq 2$, and the corresponding spectral sequence degenerates into the following information:
    \begin{align*}
        &H^1_\Iw(\mathbf{V}_3,\mathbf{D}_3)[\widetilde{\frakp}]:=0,\\
        0\rightarrow H^1_\Iw(\mathbf{V}_3,\mathbf{D}_3)\hatotimes_{\mathcal{H}_B}\mathcal{H}_B/\widetilde{\frakp}\rightarrow &H^1_\Iw(\mathbf{V}_\Ad,\mathbf{D}_\Ad)\rightarrow H^2_\Iw(\mathbf{V}_3,\mathbf{D}_3)[\widetilde{\frakp}] \rightarrow 0,\\
        H^2_\Iw(\mathbf{V}_3,\mathbf{D}_3)\hatotimes_{\mathcal{H}_B}\mathcal{H}_B/\widetilde{\frakp}&\cong H^2_\Iw(\mathbf{V}_\Ad,\mathbf{D}_\Ad).
    \end{align*}
    \par Thus we have $H^1_\Iw(\mathbf{V}_3,\mathbf{D}_3)\hatotimes_{\mathcal{H}_B}\mathcal{H}_B/\widetilde{\frakp}=0$. By our finiteness assumption we can use Nakayama lemma, thus there exists an $f\in \mathcal{H}_B$, $f\in 1+\widetilde{\frakp}$ such that $fH^1_\Iw(\mathbf{V}_3,\mathbf{D}_3)=0$. This implies that \[H^1_\Iw(\mathbf{V}_3,\mathbf{D}_3)\hatotimes_{\mathcal{H}_B} (\mathcal{H}_B)_f=0.\] Since $(\mathcal{H}_B)_f$ is flat over $\mathcal{H}_B$, we have \[0=H^1_\Iw(\mathbf{V}_3,\mathbf{D}_3)\hatotimes_{\mathcal{H}_B} (\mathcal{H}_B)_f\cong H^1(\mathbf{V}_3^\prime,\mathbf{D}_3^\prime).\]
    \par Note that there exists a map $(\mathcal{H}_B)_f\rightarrow\mathcal{H}_A$ which makes the following diagram commutative:
    \[ \begin{tikzcd}
        & \mathcal{H}_B \arrow[r, hookrightarrow] \arrow[d, "\widetilde{\pi}"] 
        & (\mathcal{H}_B)_f \arrow[ld, dashed] \\
          & \mathcal{H}_A &
    \end{tikzcd}\]
    
    This follows from the fact that the localization functor is exact, thus we get an exact sequence 
    \[
    0\rightarrow \widetilde{\frakp}_f\rightarrow(\mathcal{H}_B)_f \xrightarrow{{\widetilde{\pi}}_f} (\mathcal{H}_A)_{\overline{f}}\rightarrow 0,    
    \]
    where $\overline{f}:=f\ ({\rm mod}\ \widetilde{\frakp})=1$. Thus $(\mathcal{H}_A)_{\overline{f}}\cong\mathcal{H}_A$. Note that ${\widetilde{\pi}}_f$ is a $(\mathcal{H}_B)_f$-algebra map.
    \par Hence by the control theorem, we have 
    \[
    \RGammaIw(\mathbf{V}_\Ad,\mathbf{D}_\Ad)\cong\RGammaIw(\mathbf{V}_3,\mathbf{D}_3)\otimes^{\mathbf{L}}_{\mathcal{H}_B} \mathcal{H}_A \cong  \RGamma(\mathbf{V}_3^\prime,\mathbf{D}_3^\prime) \otimes^{\mathbf{L}}_{(\mathcal{H}_B)_f} \mathcal{H}_A, 
    \]
    and the result follows from the functoriality of taking determinants.
\end{proof}
\begin{remark}
    \par Geometrically this means that there is a neighborhood $U_f$ of $\Delta(\Spm(A))\times_E W$ in $\Spm(B)\times_E W$ such that the coherent sheaf $H^1_\Iw(\mathbf{V}_3,\mathbf{D}_3)$ (resp. $H^1_\Iw(\mathbf{V}_3^*(1),\mathbf{D}_3^\perp)$) restricted to $U_f$ vanishes (where $W$ is the component of the weight space $\mathcal{W}$ that we have fixed). Even though this neighborhood is not of the form $\widetilde{U}\times_E W$ for some neighborhood $\widetilde{U}$ of $\Delta(\Spm(A))$ in $\Spm(B)$, this is sufficient for our purposes. Unfortunately we cannot conclude that $H^1_\Iw(\mathbf{V}_3,\mathbf{D}_3)$ is finitely generated, since submodules of finitely generated modules are not finitely generated in general (which is however true for modules over Noetherian rings), we hope to be able to remove this assumption in the future.
\end{remark}

\bibliographystyle{amsalpha}
\bibliography{references}

\end{document}